\providecommand{\emph}[1]{{\it #1}}
\renewcommand{\emph}[1]{{\it #1}}
\definecolor{MyDarkBlue}{rgb}{0,0.08,0.45}
\providecommand{\url}[1]{\small\textcolor{blue}{#1}}
\providecommand{\eprint}[1]{}
\renewcommand{\eprint}[1]{arXiv:\href{http://arxiv.org/abs/#1}{#1}}
\providecommand{\eqref}[1]{{\rm (\ref{#1})}}
\providecommand{\itref}[1]{{\it (\ref{#1})}}
\DeclareSymbolFont{AMSb}{U}{msb}{m}{n}
\DeclareSymbolFontAlphabet{\mathbb}{AMSb}
\DeclareSymbolFont{EUR}{U}{eur}{m}{n}
\DeclareSymbolFontAlphabet{\eur}{EUR}
\DeclareSymbolFont{EUB}{U}{eur}{b}{n}
\DeclareSymbolFontAlphabet{\eub}{EUB}
\newcommand{\jj}{\mathrm{i}}
\newcommand{\End}{\,{\rm End}\,}
\newcommand\scrV{\mathscr{V}}
\newcommand\scrU{\mathscr{U}}
\newcommand{\calK}{\mathcal{K}}
\newcommand{\eurl}{\eur{l}}
\newcommand{\bmuprho}{\bm\uprho}
\newcommand{\notyet}[1]{{}}
\newcommand{\sgn}{\mathop{\rm sgn}}
\newcommand{\tr}{\mathop{\rm Tr}}
\newcommand{\p}{\partial}
\newcommand{\at}[1]{\vert\sb{\sb{#1}}}
\def\R{\mathbb{R}}
\providecommand{\C}{\mathbb{C}}
\renewcommand{\C}{\mathbb{C}}
\newcommand{\N}{\mathbb{N}}
\newcommand{\Abs}[1]{\left\vert#1\right\vert}
\newcommand{\abs}[1]{\vert #1 \vert}
\newcommand{\Norm}[1]{\Big\Vert #1 \Big\Vert}
\newcommand{\norm}[1]{\Vert #1 \Vert}
\newcommand{\sothat}{\,\,{\rm ;}\ \,}
\DeclareMathSymbol{\varGamma}{\mathord}{letters}{"00}
\DeclareMathSymbol{\varDelta}{\mathord}{letters}{"01}
\DeclareMathSymbol{\varTheta}{\mathord}{letters}{"02}
\DeclareMathSymbol{\varLambda}{\mathord}{letters}{"03}
\DeclareMathSymbol{\varXi}{\mathord}{letters}{"04}
\DeclareMathSymbol{\varPi}{\mathord}{letters}{"05}
\DeclareMathSymbol{\varSigma}{\mathord}{letters}{"06}
\DeclareMathSymbol{\varUpsilon}{\mathord}{letters}{"07}
\DeclareMathSymbol{\varPhi}{\mathord}{letters}{"08}
\DeclareMathSymbol{\varPsi}{\mathord}{letters}{"09}
\DeclareMathSymbol{\varOmega}{\mathord}{letters}{"0A}
\newtheorem{remark}{Remark}[section]
\newcounter{step} 
\providecommand{\qedhere}{}
\makeatletter\@addtoreset{equation}{section}
\renewcommand{\Re}{\mathop{\rm{R\hskip -1pt e}}\nolimits}
\numberwithin{theorem}{section}
\newcommand{\TheTitle}{Nonrelativistic asymptotics of solitary waves in the Dirac equation with Soler-type nonlinearity}
\newcommand{\TheAuthors}{Nabile Boussa\"id and Andrew Comech}
\title{{\TheTitle}
}
\author{
  Nabile Boussa\"id\thanks{Universit\'e Bourgogne Franche-Comt\'e,
                                               25030 Besan\c{c}on CEDEX, France
    (\email{nabile.boussaid@univ-fcomte.fr}).}
  \and
  Andrew Comech\thanks{
  Texas A\&M University, College Station, Texas 77843, USA;
  St.\,Petersburg State University, St.\,Petersburg 199178, Russia;
  IITP, Moscow 127051, Russia
  (\email{comech@math.tamu.edu}).}
}
\begin{document}

\maketitle

\begin{abstract}
We use the perturbation theory to build solitary wave solutions $\phi_\omega(x)e^{-i\omega t}$ to the nonlinear Dirac equation in $\mathbb{R}^n$, $n\ge 1$, with the Soler-type nonlinear term $f(\psi^*\beta\psi)\beta\psi$, with $f(\tau)=|\tau|^k+o(|\tau|^k)$, $k>0$, which is continuous but not necessarily differentiable.  We obtain the asymptotics of solitary waves in the nonrelativistic limit $\omega\lesssim m$; these asymptotics are important for the linear stability analysis of solitary wave solutions.  We also show that in the case when the power of the nonlinearity is Schr\"odinger charge-critical ($k=2/n$), then one has $Q'(\omega)<0$ for $\omega\lesssim m$, with $Q(\omega)$ being the charge of the corresponding solitary wave; this implies the absence of the degeneracy of zero eigenvalue of the linearization at this solitary wave.
\end{abstract}

\begin{keywords}
Soler model, nonlinear Dirac equation,
solitary waves, nonrelativistic limit
\end{keywords}

\begin{AMS}
35B32, 35C08, 35Q41, 81Q05
\end{AMS}


\section{Introduction}
Construction of solitary wave solutions
in Dirac-type systems has a long history.
In the three-dimensional nonlinear Dirac equation,
the solitary waves were numerically constructed
by Soler \cite{PhysRevD.1.2766}
and then proved to exist in
\cite{MR0456046,MR847126,MR949625,MR1344729}.
In the Dirac--Maxwell system,
solitary waves were obtained
numerically~\cite{MR0190520,wakano-1966,MR1364144}
and then analytically~\cite{MR1386737} (for $\omega\in(-m,0)$)
and~\cite{MR1618672} (for $\omega\in(-m,m)$);
for an overview of these results, see~\cite{MR1897689}.
A perturbation method
for the construction of solitary waves
in the nonlinear Dirac equation
was used in \cite{MR1750047}.
This work was later followed
in \cite{2008arXiv0812.2273G,MR3208458}
and also generalized
to the Einstein-Dirac and Einstein-Dirac-Maxwell systems
\cite{MR2593110,MR2647868,MR2671162}
and to the Dirac--Maxwell system \cite{dm-existence}.
Our aim here
is to make the perturbative approach
of the seminal work \cite{MR1750047} rigorous
for the important case of lower order nonlinearities.
The usefulness of such an approach
is that it gives the asymptotic behaviour
of solitary waves
which is needed for the study of their
stability properties.
The bifurcation approach
(in the nonrelativistic limit $\omega\gtrsim -m$)
to obtain Dirac--Maxwell solitary waves
as perturbations of solitary waves of the Choquard equation
was developed in~\cite{dm-existence}.

In the present analysis, we use the bifurcation approach
to construct solitary wave solutions
to the nonlinear Dirac equation
with scalar-type self-interaction,
known as the Soler model
\cite{jetp.8.260,PhysRevD.1.2766}:
\begin{eqnarray}\label{nld-0}
\jj\p\sb t\psi=D_m\psi-f(\bar\psi\psi)\beta\psi,
\qquad
\psi(t,x)\in\C\sp N,
\quad x\in\R^n,
\quad
n\ge 1,
\end{eqnarray}
where
$D\sb m=
-\jj \bm\alpha\cdot\nabla+\beta m
$
is the free Dirac operator,
with $\bm\alpha=(\alpha\sp j)\sb{1\le j\le n}$,
$\alpha\sp j$ and $\beta$
being the self-adjoint $N\times N$ Dirac matrices
(see Remark~\ref{remark-N} below for a possible choice
of such matrices);
$m>0$ is the mass.
We use the standard Physics notation
$\bar\psi:=\psi\sp\ast\beta$.
The real-valued function
\begin{eqnarray}\label{f-such}
f\in C(\R),
\qquad
f(\tau)=\abs{\tau}^k+o(\abs{\tau}^k),
\qquad
\tau\in\R,
\quad
k>0,
\end{eqnarray}
describes the nonlinearity.
We obtain solitary wave solutions
to \eqref{nld-0}
in the nonrelativistic limit,
\[
\phi\sb\omega(x)e^{-\jj\omega t},
\qquad
\phi\sb\omega\in H^1(\R^n),
\qquad
\omega\lesssim m,
\]
building them as bifurcations from solitary waves
of nonlinear Schr\"odinger equation;
the construction provides description of solitary waves
which we will need for the analysis of their spectral stability 
(presence or absence of eigenvalues with positive real part 
in the spectrum of the linearization at a solitary wave),
continuing the program
started in \cite{MR3530581}.
 We refer to that work for more details
and the background on the subject.

Most common models considered by physicists and chemists (e.g.~\cite{ranada1983classical}) are pure powers $f(\tau)=\abs{\tau}^k$,
usually cubic ($k=1$) and quintic ($k=2$).
As we already mentioned, there have been several implementations
of constructing solitary waves via the bifurcation method for such models,
but
these approaches did not allow one to handle
the low regularity case, such as
$
f(\tau)=\abs{\tau}^k,
$
with $k\in(0,1)$,
when $f(\tau)$ is no longer differentiable at $\tau=0$,
so that
the derivative of $f$ would contribute a singularity
if the Lorentz scalar
$\bar\phi\sb\omega\phi\sb\omega
:=\phi\sb\omega\sp\ast\beta\phi\sb\omega$
vanished.
On the other hand, this low regularity case also corresponds to the interesting 
``Schr\"odinger charge-subcritical'' case, 
when $k\in(0,2/n)$ (with $n\geq 2$),
so that the ``groundstate'' solitary waves
for NLS are stable
(groundstate is understood in the sense of~\cite{MR695535}:
it is a strictly positive, spherically symmetric,
decaying solution to the stationary NLS).
With these values of $k$, one can compare stability properties in both models,  
pushing further the discussion from~\cite{PhysRevLett.116.214101}.
We overcome the difficulties
resulting from the low regularity of $f$
in the nonrelativistic limit $\omega\lesssim m$,
constructing solitary waves for arbitrary $f$
from \eqref{f-such}.
The main points are to base the construction
on the Schauder fixed point theorem
(instead of the contraction mapping principle
which is not available to us when $f(\tau)$
is not Lipschitz)
and to prove that
$\phi\sb\omega(x)\sp\ast\beta\phi\sb\omega(x)$
is bounded from below by $c
\phi\sb\omega(x)\sp\ast
\phi\sb\omega(x)
$
with some $c\in(0,1)$,
for $\omega$ sufficiently close to $m$.
In the case when $f$ is differentiable away from the origin,
we will additionally prove uniqueness of $\phi\sb\omega$
(up to the symmetry transformations)
and also its differentiability with respect to $\omega$.

We note that quintic nonlinear Schr\"odinger equation
in (1+1)D and the cubic one in (2+1)D
are ``charge critical'',  in the sense that the equation has the same scaling as the charge and as a consequence all groundstate solitary waves have the same charge.
As a consequence, by~\cite{VaKo}, 
the linearization at any solitary wave
has a $4\times 4$ Jordan block at $\lambda=0$.
We mention that
there is also a blow-up phenomenon in the charge-critical
as well as in the charge-supercritical cases;
see in particular
\cite{1971ZhPmR..14..564Z,ZhETF.68.465,MR0460850,MR691044,MR1048692}.
In the present work, we will show that,
on the contrary, for the nonlinear Dirac
with the ``Schr\"odinger charge-critical'' power 
$f(\tau)=\abs{\tau}^{k}$, with $k=2/n$ (in any dimension $n\ge 1$)
the charge of solitary waves is no longer the same,
satisfying $\p\sb\omega Q(\phi\sb\omega)<0$
for $\omega\lesssim m$,
where $Q(\phi\sb\omega)=\int\sb{\R^n}
\phi\sb\omega(x)\sp\ast\phi\sb\omega(x)
\,dx$
is the corresponding charge.
This reduces the degeneracy of the zero eigenvalue
of the linearization at the corresponding solitary wave
(see e.g. \cite{MR3311594}).
In formal agreement
with the Vakhitov--Kolokolov stability criterion \cite{VaKo},
one expects that the solitary wave solutions
to the nonlinear Dirac equation
in the nonrelativistic limit $\omega\lesssim m$
are spectrally stable;
indeed,
this has been verified numerically in one-
and in two-dimensional cases
\cite{gn-stability,PhysRevLett.116.214101}.

Let us make a few more remarks on the relation to the
Vakhitov--Kolokolov stability criterion \cite{VaKo}.
In the case of self-interacting 
classical spinor fields,
although the relation of the sign of the quantity
$\p\sb\omega Q(\phi\sb\omega)$
entering the Vakhitov--Kolokolov stability criterion
and the presence or absence of positive eigenvalues
in the spectrum is no longer clear,
the vanishing of $\p\sb\omega Q(\phi\sb\omega)$,
together with the energy vanishing,
indicate the collision of point eigenvalues at the origin; for more details, see~\cite{MR3311594}.
Moreover, we point out that,
unlike in  the Schr\"odinger equation,
in the Dirac context eigenvalues with nonzero real parts can emerge
not only from the collision of purely imaginary eigenvalues at the origin,
but also from collision of purely imaginary eigenvalues
away from the origin \cite{PhysRevLett.116.214101}
and directly from the essential spectrum~\cite{PhysRevLett.80.5117}.
Thus the Vakhitov-Kolokolov criterion
is insufficient
for the characterization
of the spectral stability.

\medskip

Here is the plan of the present analysis.
The main results stated in Section~\ref{sect-results}
are the existence of solitary waves for
the case of a continuous nonlinearity
(Theorem~\ref{theorem-solitary-waves})
and the improvement for the case
of nonlinearity differentiable
everywhere except perhaps at zero
(Theorem~\ref{theorem-existence-c1}).
Theorem~\ref{theorem-solitary-waves}
is proved in Sections~\ref{sect-waves-nrl}
(the Schauder fixed point theorem),
Section~\ref{sect-shooting} (positivity of
$\bar\phi\sb\omega\phi\sb\omega
:=\phi\sb\omega\sp\ast\beta\phi\sb\omega$),
and Section~\ref{sect-improvement} (accurate estimates
on the error terms).
Theorem~\ref{theorem-existence-c1}
is proved in
Sections~\ref{sect-waves-nrl-1}
(regularity of mapping $\omega\mapsto\phi\sb\omega$)
and~\ref{sect-critical} (Vakhitov--Kolokolov condition).

The regularity of NLS solitary waves
is addressed in Appendix~\ref{sect-nls-smooth}.

\subsection*{Notations}

We denote the free Dirac operator by
\begin{eqnarray}\label{def-dm}
D\sb m
=
D\sb 0
+\beta m
=-\jj \bm\alpha\cdot\nabla+\beta m,
\qquad
m>0,
\end{eqnarray}
where
$
D_0=-\jj \bm\alpha\cdot\nabla
=
-\jj\sum\sb{j=1}\sp{n}\alpha\sp j\frac{\p}{\p x\sp j},
$
with
$\alpha\sp j$ and $\beta$
being self-adjoint $N\times N$ Dirac matrices
which satisfy
\[
(\alpha\sp j)\sp 2=\beta\sp 2=I\sb N,
\qquad
\alpha\sp j\alpha\sp k
+\alpha\sp k\alpha\sp j=2\delta\sb{j k}I\sb N,
\qquad
\alpha\sp j\beta+\beta\alpha\sp j=0,
\qquad
1\le j,k\le n.
\]
$I\sb N$ is the $N\times N$ identity matrix.
The anticommutation relations lead to e.g.
$
\tr\alpha\sp j
=\tr\beta^{-1}\alpha\sp j\beta=-\tr\alpha\sp j=0,
$
$1\le j\le n$,
and similarly $\tr\beta=0$;
together with $\sigma(\alpha\sp j)=\sigma(\beta)=\{\pm 1\}$,
this yields the conclusion that $N$ is even.

For $\psi\in\C^N$, one denotes
\[
\bar\psi=\psi\sp\ast\beta,
\]
where $\psi\sp\ast$ is the hermitian conjugate of $\psi$.

\begin{remark}\label{remark-N}
One can use the Clifford algebra representation theory
(see e.g. \cite[Chapter 1, \S5.3]{MR1401125})
to show that there is a relation
\[
N\in 2^{[\frac{n+1}{2}]}M,\qquad
M\in\N.
\]
\end{remark}

Without loss of generality,
we may assume that
the matrix $\beta$ has the following form:
\[
\beta
=
\begin{bmatrix}
I\sb{N/2}&0\\0&-I\sb{N/2}
\end{bmatrix}.
\]
Then the anticommutation relations
$\{\alpha\sp j,\beta\}=0$
show that
the matrices $(\alpha\sp j)\sb{1\leq j\leq n}$ are block-antidiagonal,
\[
\alpha\sp j
=
\begin{bmatrix}0&\upsigma\sb j\sp\ast\\\upsigma\sb j&0\end{bmatrix},
\qquad
1\le j\le n,
\]
where the matrices
$(\upsigma\sb j)\sb{1\leq j\leq n}$
satisfy
\begin{eqnarray}\label{sigma-sigma}
\upsigma\sb j\sp\ast\upsigma\sb k
+\upsigma\sb k\sp\ast\upsigma\sb j
=2\delta\sb{j k},
\qquad
\upsigma\sb j\upsigma\sb k\sp\ast
+\upsigma\sb k\upsigma\sb j\sp\ast
=2\delta\sb{j k},
\qquad
1\le j,\,k\le n.
\end{eqnarray}

\begin{remark}
The first relation in \eqref{sigma-sigma}
implies the second one (and vice versa).
Indeed,
it was pointed out to us by A. Sukhtayev
that
the identity
$\upsigma\sb j\sp\ast\upsigma\sb j
=\upsigma\sb j\upsigma\sb j\sp\ast
=I_{N/2}$
allows us to turn
the former relation in \eqref{sigma-sigma} into the latter
multiplying it by $\upsigma\sb j$ from the left
and by $\upsigma\sb j\sp\ast$ from the right.
\end{remark}

\begin{remark}
It is well-known
(see e.g. \cite{MR1821885})
how to build the larger size Dirac matrices by induction;
once we have $n+1$ self-adjoint Dirac matrices
$\alpha\sp j$, $1\le j\le n$, and $\alpha\sb{n+1}:=\beta$ in $\C^N$,
then in $\C^{2N}$ one has $n+3$ self-adjoint Dirac matrices
of the form
\[
\begin{bmatrix}0&\alpha\sb j\\\alpha\sb j&0\end{bmatrix},
\quad
1\le j\le n+1,
\qquad
\begin{bmatrix}0&-\jj I_{N}\\\jj I_{N}&0\end{bmatrix},
\qquad
\begin{bmatrix}I_{N}&0\\0&-I_{N}\end{bmatrix}.
\]
This provides the possibility to choose $N=2^{[\frac{n+1}{2}]}$.
\end{remark}


\medskip

We denote $r=\abs{x}$
for $x\in\R^n$, 
and, abusing notations,
we will also denote the operator of multiplication
with $\abs{x}$ and $\langle x\rangle=(1+\abs{x}^2)^{1/2}$
by $r$ and $\langle r\rangle$, respectively.

\medskip

The charge functional,
(formally) conserved due to the $\mathbf{U}(1)$-invariance
of \eqref{nld},
is denoted by $Q$:
\[
Q(\psi)
=\int\sb{\R^n}\psi\sp\ast(t,x)\psi(t,x)\,dx.
\]
We denote the standard $L^2$-based
Sobolev spaces of $\C\sp N$-valued functions by
$H\sp k(\R^n,\C\sp N)$.
For $s,\,k\in\R$,
we define the weighted Sobolev spaces
\[
H\sp k\sb{s} (\R\sp n,\C\sp N)=\left\{ u\in \mathscr{S}'(\R\sp n,\C\sp N),\,
\norm{u}\sb{H\sp k\sb s}
<\infty\right\},
\qquad
\norm{u}\sb{ H\sp k\sb s}=
\norm{\langle r\rangle\sp s\langle -\jj\nabla\rangle\sp k u}\sb{L\sp 2}.
\]
We write
$L\sp{2}\sb{s}(\R\sp n,\C\sp N)$
for~$H\sp{0}\sb{s}(\R\sp n,\C\sp N)$.
For $u\in L\sp 2(\R\sp n,\C\sp N)$,
we denote $\norm{u}=\norm{u}\sb{L\sp 2}$.

\bigskip

We will construct the solitary waves
in the following Banach spaces:
\begin{eqnarray}\label{def-space-x}
&&
X=L^2(\R,\abs{t}^{n-1}\mathrm{d}t;\,\C)\cap L^\infty(\R;\,\C),
\\
\nonumber
&&
\mbox{with}\quad
\norm{\cdot}\sb X
=
c\left(
\norm{\cdot}\sb{L^2(\R,\abs{t}^{n-1}\mathrm{d}t;\,\C)}
+\norm{\cdot}\sb{L^\infty(\R;\,\C)}
\right),
\end{eqnarray}
\begin{eqnarray}\label{def-space-x1}
X^1
=H^1(\R,\langle t\rangle^{n-1}\mathrm{d}t;\,\C)
=H^1\sb{(n-1)/2}(\R;\,\C)
\subset X.
\end{eqnarray}
The space $X^1$ is equipped
with the standard norm of $H^1_s(\R)$,
$s=(n-1)/2$,
while the constant $c>0$ in \eqref{def-space-x}
is chosen so that
\begin{eqnarray}\label{x-x1}
\norm{\xi}\sb{X}\le\norm{\xi}\sb{X^1},
\qquad
\forall\xi\in X^1.
\end{eqnarray}
We note that both $X$ and $X^1$
are algebras:
there is $C<\infty$ such that
\begin{eqnarray}\label{x-algebra}
&&
\norm{\xi\eta}\sb{X}
\le C\norm{\xi}\sb{X}\norm{\eta}\sb{X},
\qquad
\forall\xi,\,\eta\in X;
\\[1ex]
\label{x1-algebra}
&&
\norm{\xi\eta}\sb{X^1}
\le C\norm{\xi}\sb{X^1}\norm{\eta}\sb{X^1},
\qquad
\forall\xi,\,\eta\in X^1.
\end{eqnarray}
Abusing notations,
for $\psi=\begin{bmatrix}\psi_1\\\psi_2\end{bmatrix}$
with $\psi_1,\,\psi_2\in X$,
we also denote
\[
\norm{\psi}\sb{X}=\sqrt{\norm{\psi_1}\sb{X}^2+\norm{\psi_2}\sb{X}^2},
\]
and similarly in the case of $X^1$ instead of $X$.

The space
\[
H\sp 1\sb{e,o}(\R,\abs{t}^{n-1}\mathrm{d}t;\,\C^2)
:=
H\sb{\mathrm{even}}^1(\R,\abs{t}^{n-1}\mathrm{d}t;\C)
\times
H\sb{\mathrm{odd}}^1(\R,\abs{t}^{n-1}\mathrm{d}t;\,\C)
\]
denotes the subspace of $\C^2$-valued functions on $\R$
such that the first component is even
and the second is odd. We also denote
\[
X\sb{e,o}:=
L^2\sb{e,o}(\R, |t|^{n-1}\mathrm{d}t;\,\C\sp 2) \cap L^\infty(\R;\,\C\sp 2),
\qquad
X^1\sb{e,o}:=
H^1\sb{e,o}(\R, \langle t\rangle^{n-1}\mathrm{d}t;\,\C\sp 2).
\]



\subsection*{Acknowledgments}
 Support from the grant
ANR-10-BLAN-0101 of the French Ministry of Research is gratefully
acknowledged by the first author.

The research of Andrew Comech was carried out
at the Institute for Information Transmission Problems
of the Russian Academy of Sciences
at the expense of the Russian Foundation
for Sciences (project 14-50-00150). 
He was also partially supported by
Universit\'e Bourgogne Franche-Comt\'e.

We are grateful to the referees for their valuable remarks.

\section{Main results}
\label{sect-results}

We consider the
nonlinear Dirac equation \eqref{nld-0},
\begin{equation}\label{nld}\tag{2.1; NLDE}
 \jj \p\sb t\psi=D\sb m\psi-f(\psi\sp\ast\beta\psi)\beta\psi,
\qquad
\psi(t,x)\in\C\sp N,
\quad
x\in\R\sp n,
\end{equation}
\setcounter{equation}{1}where $D_m$ is the Dirac operator
(cf. \eqref{def-dm})
and $f\in C(\R)$ with $f(0)=0$.
The structure of the nonlinearity is such that
the equation is both $\mathbf{U}(1)$-invariant and hamiltonian,
with the hamiltonian density given by
\[
\mathscr{H}(\psi)
=
\psi\sp\ast D\sb m\psi
-F(\psi\sp\ast\beta\psi),
\]
with $F(\tau)=\int_0^\tau f(t)\,dt$,
$\tau\in\R$.







If $\phi\sb\omega(x)e^{-\jj\omega t}$ is a solitary wave solution
to \eqref{nld},
then the profile $\phi\sb\omega$ satisfies the stationary equation
\begin{eqnarray}\label{nld-stationary}
\omega\phi\sb\omega
=D\sb m\phi\sb\omega-f(\phi\sb\omega\sp\ast\beta\phi\sb\omega)\beta\phi\sb\omega.
\end{eqnarray}
In the nonrelativistic limit $\omega\lesssim m$,
the solitary waves to nonlinear Dirac equation
could be obtained as bifurcations
of the solitary wave solutions
$\varphi\sb\omega(x)e^{-\jj\omega t}$
to the nonlinear Schr\"odinger equation
\begin{eqnarray}\label{nls-k}
\jj \dot\psi=-\frac{1}{2m}\Delta\psi-\abs{\psi}\sp{2k}\psi,
\qquad
\psi(t,x)\in\C,
\quad
x\in\R\sp n.
\end{eqnarray}
By \cite{MR0454365,MR695535}
and \cite{MR734575} (for the two-dimensional case),
the stationary nonlinear Schr\"odinger equation
\begin{eqnarray}\label{def-uk}
-\frac{1}{2m}u
=-\frac{1}{2m}\Delta u
-\abs{u}\sp{2k}u,
\qquad
u(x)\in\R,
\quad
x\in\R^n,
\quad
n\ge 1
\end{eqnarray}
has a strictly positive spherically symmetric
exponentially decaying solution
$u_k\in C^2(\R^n)\cap H^1(\R^n)$
(called the groundstate)
if and only if
$0<k<2/(n-2)$
(any $k>0$ if $n\le 2$).
The linearization at the solitary wave solution
$u_k(x)e\sp{-\jj \omega t}$ with $\omega=-\frac{1}{2m}$
is given by
$
\p\sb t\bmuprho
=\begin{bmatrix}
0&\eurl\sb{-}\\-\eurl\sb{+}&0\end{bmatrix}
\bmuprho,
$
$\bmuprho(t,x)\in\C^2$,
where $\eurl\sb\pm$ are defined by
\begin{eqnarray}\label{def-l-small-pm}
\eurl\sb{-}=\frac{1}{2m}-\frac{\Delta}{2m}-u_k\sp{2k},
\qquad
\eurl\sb{+}=\frac{1}{2m}-\frac{\Delta}{2m}-(1+2k)u_k\sp{2k}.
\end{eqnarray}
By \eqref{def-uk},
the function
$u\sb{k,\lambda}(x)=\lambda\sp{1/k} u_k(\lambda x)$,
$\lambda>0$,
satisfies the identity
$
0=\frac{\lambda\sp{2}}{2m}u\sb{k,\lambda}
-\frac{1}{2m}\Delta u\sb{k,\lambda}
-u\sb{k,\lambda}\sp{1+2k}.
$
Differentiating this identity
with respect to $\lambda$ at $\lambda=1$
yields the following relation
(which we will need in Lemma~\ref{lemma-hat-v-tilde-v-p} below):
\begin{eqnarray}\label{weird}
0
=
\frac{1}{m}u_k
+\eurl\sb{+}(\p\sb\lambda\at{\lambda=1}u\sb{k,\lambda})
=\frac{1}{m}u_k
+\eurl\sb{+}\Big(\frac{1}{k}u_k+x\cdot\nabla u_k\Big).
\end{eqnarray}

We set
\begin{eqnarray}\label{Vhatdef}
\hat V(t) := u_k(\abs{t}),
\qquad
\hat U(t) :=-\frac{1}{2m}\hat V'(t),
\qquad
t\in\R,
\end{eqnarray}
where $u_k$ is considered as a function of $r=\abs{x}$,
$x\in\R^n$.
Note that the inclusion $u_k\in C^2(\R^n)$ implies that
$\hat V\in C^2(\R)$ and $\hat U\in C^1(\R)$.
By~\eqref{def-uk},
the functions $\hat V$
and $\hat U$
(which are even and odd, respectively)
satisfy
\begin{eqnarray}\label{def-hat-phi}
\frac{1}{2m}\hat V
+
\p\sb t \hat U + \frac{n-1}{t}\hat U
=|\hat V|\sp{2k}\hat V,
\qquad
\p\sb t\hat V+2m\hat U =0,
\qquad
t\in\R,
\end{eqnarray}
where $\hat U(t)/t$
at $t=0$ is understood in the limit sense,
$\lim\sb{t\to 0}\hat U(t)/t=\hat U'(0)$.
We will obtain the solitary wave solutions to \eqref{nld}
as bifurcations from $(\hat V,\,\hat U)$.

\begin{theorem}
\label{theorem-solitary-waves}
Let $n\in\N$,
$N=2^{[(n+1)/2]}$,
and assume that $f\in C(\R)$
and that there is $k>0$
such that
\begin{eqnarray}\label{ass-fo-0}
\abs{f(\tau)-\abs{\tau}^{k}}
\le o(\abs{\tau}^{k}),
\qquad
\abs{\tau}\le 1.
\end{eqnarray}
If $n\ge 3$, we additionally assume that $k<2/(n-2)$.
\begin{enumerate}
\item
\label{theorem-solitary-waves-i}
There is
\begin{eqnarray}\label{omega-large}
\omega_0\in\Big(\frac m 2,m\Big)
\end{eqnarray}
such that for all $\omega\in (\omega_0,m)$
there are solitary wave solutions
$\phi\sb\omega(x)e\sp{- \jj \omega t}$
to \eqref{nld}
with $\phi\sb\omega\in H^1(\R^n,\C^N)$,
with
\begin{eqnarray}\label{sol-forms}
\qquad
\phi\sb\omega(x)
=\begin{bmatrix}
v(r,\omega)\bm{n}\\
\jj u(r,\omega)\frac{x}{r}\cdot\bm\upsigma\,\bm{n}
\end{bmatrix},
\qquad
r=\abs{x},
\qquad
\bm{n}\in\C^{N/2},
\quad\abs{\bm{n}}=1,
\end{eqnarray}
\begin{eqnarray}\label{U-zero-zero}
\lim\sb{r\to 0}u(r,\omega)=0.
\end{eqnarray}
Moreover, if we express
\begin{eqnarray}\label{def-V-U}
  v(r,\omega) =\epsilon\sp{\frac 1 k}
  V(\epsilon r,\epsilon), \qquad
  u(r,\omega) =\epsilon\sp{1+\frac 1 k}
  U(\epsilon r,\epsilon),
\\
\nonumber
\epsilon=\sqrt{m^2-\omega^2}>0,
\qquad
r\ge 0,
\end{eqnarray}
decomposing
\begin{eqnarray}\label{def-V-U-hat}
V(t,\epsilon)=\hat V(t)+\tilde V(t,\epsilon),
\qquad
U(t,\epsilon)=\hat U(t)+\tilde U(t,\epsilon),
\\
\nonumber
t\in\R,
\quad
\epsilon>0,
\end{eqnarray}
with $\hat V(t)$, $\hat U(t)$ 
defined in \eqref{def-hat-phi}, then there is $\gamma>0$ 
such that $\tilde V(t,\epsilon)$,
$\tilde U(t,\epsilon)$ satisfy
\begin{eqnarray}\label{v-u-tilde-small-0}
\lim_{\epsilon \to 0+}\left\|
e^{\gamma \langle t\rangle}
\begin{bmatrix}\tilde V(\cdot,\epsilon)
\\
\tilde U(\cdot,\epsilon)\end{bmatrix}
\right\|\sb{H^1(\R,\C^2)}=0.
\end{eqnarray}
\item
\label{theorem-solitary-waves-ii}
There is
$\epsilon_1\in(0,\epsilon_0)$, $\epsilon_0:=\sqrt{m^2-\omega_0^2}>0$, 
such that
\begin{eqnarray}\label{UV1}
\epsilon_1\abs{U(t,\epsilon)}
\le
\frac{1}{2}\abs{V(t,\epsilon)},
\qquad
\forall t\in\R,
\quad
\forall\epsilon\in(0,\epsilon_1),
\end{eqnarray}
\begin{eqnarray}\label{phi-beta-phi-large}
\phi\sb\omega(x)\sp\ast\beta\phi\sb\omega(x)
\ge
\abs{\phi\sb\omega(x)}^2/2,
\qquad
\omega=\sqrt{m^2-\epsilon^2},
\\
\nonumber
\forall x\in\R^n,
\qquad
\forall\epsilon\in(0,\epsilon_1).
\end{eqnarray}
\item
\label{theorem-solitary-waves-iii}
One has
\begin{eqnarray}\label{v-u-tilde-smaller-hat-v}
\abs{\tilde V(t,\epsilon)}+\abs{\tilde U(t,\epsilon)}
\le
o(1)\hat V(t),
\qquad
\forall t\in\R,
\quad
\forall
\epsilon\in(0,\epsilon_1),
\end{eqnarray}
where $o(1)$ is with respect to $\epsilon$
(so that $o(1)\to 0$ as $\epsilon\to 0$) uniformly in $t$,
and
there is $b_0<\infty$ such that
\begin{eqnarray}\label{phi-asymptotics}
\abs{V(t,\epsilon)}
+
\abs{U(t,\epsilon)}
\le
b_0
\langle t\rangle^{-(n-1)/2}
e^{-\abs{t}},
\quad
\forall t\in\R,
\quad
\forall\epsilon\in(0,\epsilon_1).
\end{eqnarray}
\item
\label{theorem-solitary-waves-iv}
The solitary waves satisfy
\begin{eqnarray}\label{l-infty-to-zero}
\ \qquad
\norm{\phi\sb\omega}\sb{L^\infty(\R^n,\C^N)}
=O\big(\epsilon^{\frac{1}{k}}\big),
\ \quad
\norm{\phi\sb\omega}\sb{L^2(\R^n,\C^N)}
=O\big(\epsilon^{\frac{1}{k}-\frac{2}{n}}\big),
\ \quad
\omega\lesssim m.
\end{eqnarray}
\item
\label{theorem-solitary-waves-v}
Assume, moreover, that
there is $K>k$
such that
\begin{eqnarray}\label{ass-f-0}
\abs{f(\tau)-\abs{\tau}^{k}}
=O(\abs{\tau}^{K}),
\qquad
\abs{\tau}\le 1.
\end{eqnarray}
Then there are $ b_1,\,b_2<\infty$
such that $\tilde V(t,\epsilon)$,
$\tilde U(t,\epsilon)$ satisfy
\begin{eqnarray}\label{v-u-tilde-small-better}
\left\|
e^{\gamma \langle t\rangle}
\begin{bmatrix}\tilde V(\cdot,\epsilon)
\\\tilde U(\cdot,\epsilon)\end{bmatrix}
\right\|\sb{H^1(\R,\C^2)}
\le  b_1\epsilon\sp{2\varkappa},
\qquad
\epsilon\in(0,\epsilon_1)
\end{eqnarray}
and
\begin{eqnarray}\label{v-u-tilde-smaller-hat-v-b3}
\abs{\tilde V(t,\epsilon)}+\abs{\tilde U(t,\epsilon)}
\le
 b_2\epsilon\sp{2\varkappa}
\hat V(t),
\qquad
\forall t\in\R,
\quad
\forall
\epsilon\in(0,\epsilon_1),
\end{eqnarray}
with
\begin{eqnarray}\label{def-varkappa-1}
\varkappa=\min\Big(1,\frac{K}{k}-1\Big).
\end{eqnarray}
\end{enumerate}
\end{theorem}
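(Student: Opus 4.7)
The plan is to use the radial ansatz~\eqref{sol-forms} to reduce~\eqref{nld-stationary} to a first-order ODE system on $[0,\infty)$ for $(v(r,\omega),u(r,\omega))$, then rescale via $t=\epsilon r$, $v=\epsilon^{1/k}V$, $u=\epsilon^{1+1/k}U$ as in~\eqref{def-V-U}. Writing $V=\hat V+\tilde V$, $U=\hat U+\tilde U$ as in~\eqref{def-V-U-hat} and subtracting the NLS identities~\eqref{def-hat-phi}, one obtains a fixed-point problem of the schematic form
\begin{equation*}
\mathcal{L}\begin{bmatrix}\tilde V\\\tilde U\end{bmatrix}=\mathcal{N}(\tilde V,\tilde U;\epsilon),
\end{equation*}
where $\mathcal{L}$ is the linearisation of~\eqref{def-uk} about $(\hat V,\hat U)$, built from the operators $\eurl_\pm$ of~\eqref{def-l-small-pm}, and the right-hand side $\mathcal{N}$ collects three pieces: an $O(\epsilon^2)$ source coming from the mismatch between the Dirac and Schr\"odinger dispersions, the discrepancy $f(\tau)-\abs{\tau}^{k}=o(\abs{\tau}^{k})$ evaluated on $v^{2}-u^{2}$, and genuinely nonlinear contributions in $(\tilde V,\tilde U)$. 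Restricting to the even/odd parity sector $X^{1}_{e,o}$ defined in~\eqref{def-space-x1} is natural since the system preserves parity and it kills the translational zero-mode of $\eurl_+$; the algebra property~\eqref{x1-algebra} delivers clean estimates for the cubic-type terms.

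To prove parts~\itref{theorem-solitary-waves-i} and~\itref{theorem-solitary-waves-iii} I would apply the Schauder fixed-point theorem on a closed convex ball $\mathcal{B}_\epsilon\subset X^{1}_{e,o}$ of radius $\rho(\epsilon)\to 0$ as $\epsilon\to 0^+$, equipped with the exponential weight $e^{\gamma\langle t\rangle}$ appearing in~\eqref{v-u-tilde-small-0}. The contraction mapping principle is unavailable since~\eqref{ass-fo-0} gives only continuity of $f$. Invariance of $\mathcal{B}_\epsilon$ follows from combining (a)~the invertibility of $\mathcal{L}$ on the $e,o$-subspace with a gain of one derivative and exponential decay, and (b)~the smallness of $\mathcal{N}$: the explicit $\epsilon$-source is $O(\epsilon^{2})$, while the $o(\abs{\tau}^{k})$-term is $o(1)$ in the weighted norm once $(\tilde V,\tilde U)$ is small. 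Compactness of $\Phi_\epsilon:=\mathcal{L}^{-1}\mathcal{N}$ is inherited from the exponential-weight gain in $\mathcal{L}^{-1}$, giving~\eqref{v-u-tilde-small-0}. The pointwise bound~\eqref{v-u-tilde-smaller-hat-v} then follows from~\eqref{v-u-tilde-small-0} by Sobolev embedding in $H^{1}$, after dividing by the positive function $\hat V(t)=u_k(\abs{t})$ and using its known exponential asymptotics (Appendix~\ref{sect-nls-smooth}).

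The main obstacle, and what part~\itref{theorem-solitary-waves-ii} addresses, is the pointwise positivity~\eqref{phi-beta-phi-large} of $\phi_\omega^{\ast}\beta\phi_\omega=v^{2}-u^{2}$: it is only under this positivity that $f(\bar\phi_\omega\phi_\omega)=\abs{\bar\phi_\omega\phi_\omega}^{k}+o(\,\cdot\,)$ is meaningful when $f$ is not differentiable at zero. The second equation in~\eqref{def-hat-phi} gives $\hat U=-(2m)^{-1}\hat V'$, so the bound $\epsilon\abs{\hat U}\le\tfrac12\hat V$ is immediate on compacts for small $\epsilon$, and at infinity it reduces to comparing identical exponential decay rates. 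Adding the correction controlled by~\eqref{v-u-tilde-smaller-hat-v} upgrades this to~\eqref{UV1} and thus to~\eqref{phi-beta-phi-large}. A shooting argument, carried out in Section~\ref{sect-shooting}, is needed to glue the small-$r$ regularity condition~\eqref{U-zero-zero} (which regularises the singular coefficient $u/r$ in the radial system) to the exponentially decaying branch at infinity. The norm asymptotics~\eqref{l-infty-to-zero} of~\itref{theorem-solitary-waves-iv} then drop out from the rescaling~\eqref{def-V-U} combined with~\eqref{phi-asymptotics}.

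For part~\itref{theorem-solitary-waves-v} the same Schauder setup produces quantitative rates once the source terms in $\mathcal{N}$ are estimated sharply. The dispersion-mismatch source is $O(\epsilon^{2})$; under~\eqref{ass-f-0} the improved bound $\abs{f(\tau)-\abs{\tau}^{k}}=O(\abs{\tau}^{K})$ evaluated on $\bar\phi_\omega\phi_\omega\sim\epsilon^{2/k}\hat V^{2}$ contributes $O(\epsilon^{2K/k})$, which after the natural normalisation by the leading $\epsilon^{2}$ becomes $O(\epsilon^{2(K/k-1)})$. The minimum of the two exponents is $2\varkappa$ with $\varkappa$ as in~\eqref{def-varkappa-1}, delivering~\eqref{v-u-tilde-small-better} from the Schauder fixed-point bound; the pointwise estimate~\eqref{v-u-tilde-smaller-hat-v-b3} follows by the same division-by-$\hat V$ step as in part~\itref{theorem-solitary-waves-iii}. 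The delicate checks here are the continuity of $\Phi_\epsilon$ in $X^{1}_{e,o}$ when $f$ is only H\"older-type near zero, and preservation of the positivity~\eqref{phi-beta-phi-large} along the iteration; both follow from the exponential weight and from the fact that $\hat V>0$ with exponential decay.
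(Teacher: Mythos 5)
Your overall plan coincides with the paper's: reduce to the radial ODE system, rescale by $\epsilon$, decompose about the NLS groundstate $(\hat V,\hat U)$, and run a Schauder fixed point in $X^1_{e,o}$ with an exponential weight $e^{\gamma\langle t\rangle}$, then use a shooting/phase-plane argument for the positivity of $\bar\phi\phi$, and finally sharpen the estimates under assumption \eqref{ass-f-0}. That much matches Sections~\ref{sect-waves-nrl}--\ref{sect-improvement}.

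There is, however, a genuine gap in your derivation of part~\itref{theorem-solitary-waves-iii}. You claim that the pointwise bound \eqref{v-u-tilde-smaller-hat-v} ``follows from \eqref{v-u-tilde-small-0} by Sobolev embedding in $H^1$, after dividing by $\hat V$.'' This cannot work as stated: the weight in \eqref{v-u-tilde-small-0} is $e^{\gamma\langle t\rangle}$ with $\gamma$ fixed \emph{strictly below} $\gamma_0<\tfrac{1}{1+2k}\le 1$ (cf.\ \eqref{gamma-small}), so \eqref{v-u-tilde-small-0} only gives $|\tilde V(t)|+|\tilde U(t)|=o(1)\,e^{-\gamma\langle t\rangle}$. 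Since $\hat V(t)\asymp\langle t\rangle^{-(n-1)/2}e^{-|t|}$ (Lemma~\ref{lemma-bl}, \eqref{exp-decay-sharp}), the ratio $e^{-\gamma\langle t\rangle}/\hat V(t)\asymp\langle t\rangle^{(n-1)/2}e^{(1-\gamma)|t|}\to\infty$, and no amount of Sobolev embedding or division by $\hat V$ closes the gap. The paper gets around this by a separate sharp-decay argument (Section~\ref{sect-sharp}): a Liouville-type transformation $\scrV,\scrU$ (Lemma~\ref{lemma-v-exp}) and a bootstrapping of the decay rate of $\tilde V,\tilde U$ up to the exact rate $e^{-|t|}$ (Lemma~\ref{lemma-v-tilde-exp}), which crucially relies on the positivity of $V,U$ established beforehand.

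A second, related issue is your logical ordering. You propose to prove~\itref{theorem-solitary-waves-iii} and then deduce~\itref{theorem-solitary-waves-ii} by ``adding the correction controlled by \eqref{v-u-tilde-smaller-hat-v}''. The paper proceeds the other way around: the shooting argument of Proposition~\ref{prop-u-le-v} establishes \eqref{UV1} \emph{before} \eqref{v-u-tilde-smaller-hat-v}, and in fact \eqref{UV1} is an ingredient in the sharp-decay analysis (e.g.\ in \eqref{mu-positive}). Since your proof of~\itref{theorem-solitary-waves-iii} is incomplete, your derivation of~\itref{theorem-solitary-waves-ii} is circular; you should instead run the shooting argument directly on the cone regions $\calK^\pm_{\delta,\nu}$, $\calK^0_\delta$ as in Lemmas~\ref{lemma-inside-omega-plus}--\ref{lemma-large-t-0}, then use the resulting positivity to push the exponential weight in the pointwise estimate all the way to rate $1$.
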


\begin{remark}
We expect that, for
solitary wave solutions
$\phi\sb\omega(x)e\sp{- \jj \omega t}$
to \eqref{nld},
the profiles $\phi\sb\omega\in H^1(\R^n,\C^N)$
are continuous and thus all solitary waves of the form \eqref{sol-forms}
satisfy the condition \eqref{U-zero-zero}
(in the present article, we only prove that \eqref{U-zero-zero}
is satisfied by the family constructed
in Theorem~\ref{theorem-solitary-waves}).
\end{remark}


Theorem~\ref{theorem-solitary-waves}~\itref{theorem-solitary-waves-i}
is proved in Section~\ref{sect-waves-nrl}.
The positivity of $\bar\phi\phi$
(Theorem~\ref{theorem-solitary-waves}~\itref{theorem-solitary-waves-ii})
and the asymptotics of solitary waves
(Theorem~\ref{theorem-solitary-waves}~\itref{theorem-solitary-waves-iii})
are in Section~\ref{sect-shooting}.
The asymptotics stated in
Theorem~\ref{theorem-solitary-waves}~\itref{theorem-solitary-waves-iv}
follow from the estimates in
Theorem~\ref{theorem-solitary-waves}~\itref{theorem-solitary-waves-i}
and~\itref{theorem-solitary-waves-ii}.
The error estimates from
Theorem~\ref{theorem-solitary-waves}~\itref{theorem-solitary-waves-v}
are proved in
Section~\ref{sect-improvement}.

\begin{theorem}
\label{theorem-existence-c1}
Let $n\in\N$,
$N=2^{[(n+1)/2]}$,
and assume that
$f\in C^1(\R\setminus\{0\})\cap C(\R)$
and that there are $k>0$ and $K>k$ such that
\begin{eqnarray}
&&
\abs{f(\tau)-|\tau|^k}=O(\abs{\tau}^K),
\qquad
\quad
\abs{\tau}\le 1;
\label{ass-f-1}
\\[1ex]
&&
\abs{\tau f'(\tau)-k\abs{\tau}^k}=O(\abs{\tau}^K),
\qquad
\abs{\tau}\le 1.
\label{ass-fp-0}
\end{eqnarray}
If $n\ge 3$, we additionally assume that $k<2/(n-2)$.
There is $\epsilon_2\in(0,\epsilon_1)$
small enough
(with $\epsilon_1>0$ from Theorem~\ref{theorem-solitary-waves})
so that
for $\omega=\sqrt{m^2-\epsilon^2}$,
$\epsilon\in(0,\epsilon_2)$,
the functions
$\phi\sb\omega(x)$,
$\tilde V(t,\epsilon)$, and $\tilde U(t,\epsilon)$
from
Theorem~\ref{theorem-solitary-waves}~\itref{theorem-solitary-waves-i}
(cf. \eqref{sol-forms}--\eqref{v-u-tilde-small-0})
are unique and
satisfy the following additional properties.
\begin{enumerate}
\item
\label{theorem-existence-c1-i}
One has
$\phi\sb\omega\in H^2(\R^n,\C^N)$.
The map
\[
\omega\mapsto \phi\sb\omega\in H^1(\R^n,\C^N)
\]
is $C^1$,
with $\p\sb\omega\phi\sb\omega\in H^1(\R^n,\C^N)$.
Moreover,
$\p\sb\epsilon \tilde W(\cdot,\epsilon)\in H^1(\R,\C^2)$,
with
\begin{eqnarray}\label{norm-p-epsilon-w}
\norm{e^{\gamma\langle t\rangle}\p\sb\epsilon\tilde W(\cdot,\epsilon)}
\sb{H^1(\R,\C^2)}
=
O(\epsilon^{2\varkappa-1}),
\qquad
\epsilon\in(0,\epsilon_2),
\end{eqnarray}
where
$\tilde W(t,\epsilon)
=\begin{bmatrix}\tilde V(t,\epsilon)\\\tilde U(t,\epsilon)
\end{bmatrix}$,
and there is $c>0$ such that
\begin{eqnarray}
\label{d-phi-d-omega-bound}
\norm{\p\sb\omega\phi\sb\omega}_{L^2}^2
=c\epsilon^{-n+\frac{2}{k}}
(1+O(\epsilon^{2\varkappa})),
\quad
\omega=\sqrt{m^2-\epsilon^2},
\ \ \epsilon\in(0,\epsilon_2).
\end{eqnarray}
\item
\label{theorem-existence-c1-ii}
Additionally, assume that
$k,\,K$ from~\eqref{ass-f-1} and~\eqref{ass-fp-0} satisfy
either
\begin{eqnarray}\label{subcritical}
k<2/n
\end{eqnarray}
or
\begin{eqnarray}\label{critical}
k=2/n,
\qquad
K>4/n.
\end{eqnarray}
Then there is $\omega\sb\ast<m$ such that
$\p\sb\omega Q(\omega)<0$
for all  $\omega\in(\omega\sb\ast,m)$.

If
\begin{eqnarray}\label{supercritical}
k>2/n,
\end{eqnarray}
then there is
$\omega\sb\ast<m$
such that
$\p\sb\omega Q(\omega)>0$
for all $\omega\in(\omega\sb\ast,m)$.
\end{enumerate}
\end{theorem}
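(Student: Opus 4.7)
Since $f\in C^1(\R\setminus\{0\})$ and Theorem~\ref{theorem-solitary-waves}~\itref{theorem-solitary-waves-ii} gives $\bar\phi_\omega\phi_\omega>0$, the nonlinearity is evaluated strictly away from its non-smooth point. The scaled stationary system $G(W,\epsilon)=0$ for $W=(V,U)\in X^1_{e,o}$ is therefore of class $C^1$ on a neighborhood of $((\hat V,\hat U),0)$ in $X^1_{e,o}\times[0,\epsilon_1)$. At $\epsilon=0$ the Fr\'echet derivative $D_W G\big|_{((\hat V,\hat U),0)}$ reduces, up to sign conventions, to the block-diagonal NLS linearization built from the operators $\eurl_\pm$ of \eqref{def-l-small-pm}. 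Classical groundstate spectral theory gives that $\eurl_-$ has kernel spanned by $u_k$ (the $\mathbf{U}(1)$ phase mode) while $\eurl_+$ has trivial kernel on radial even functions; fixing $\hat V$ real-valued in the ansatz \eqref{sol-forms} eliminates the phase direction, so $D_W G\big|_{((\hat V,\hat U),0)}$ is invertible on the working subspace. The implicit function theorem then produces a unique $C^1$ curve $\epsilon\mapsto W(\epsilon)=(\hat V,\hat U)+\tilde W(\epsilon)$, providing both uniqueness and the $C^1$ map $\omega\mapsto\phi_\omega\in H^1(\R^n,\C^N)$. The bound \eqref{norm-p-epsilon-w} follows from the identity $\partial_\epsilon\tilde W=-(D_W G)^{-1}\partial_\epsilon G$ combined with an $O(\epsilon^{2\varkappa-1})$ estimate on $\partial_\epsilon G$ inherited from the analysis behind Theorem~\ref{theorem-solitary-waves}~\itref{theorem-solitary-waves-v}; the $H^2$-regularity comes from elliptic bootstrap on $(D_m-\omega)\phi_\omega=f(\bar\phi\phi)\beta\phi$. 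The leading asymptotic \eqref{d-phi-d-omega-bound} is obtained by differentiating the scaled representation $\phi_\omega=\epsilon^{1/k}\Phi(\epsilon x,\epsilon)$ using $d\epsilon/d\omega=-\omega/\epsilon$, extracting the dominant term, and changing variables $y=\epsilon x$, the remainder $O(\epsilon^{2\varkappa})$ being controlled by \eqref{v-u-tilde-small-better} and \eqref{norm-p-epsilon-w}.

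\textbf{Part (ii), non-critical cases.} Inserting \eqref{def-V-U} into $Q(\omega)=\int_{\R^n}(v^2+u^2)\,dx$ and changing variables yields
\[
Q(\omega)=\epsilon^{\tfrac{2}{k}-n}\bigl(\|V(\cdot,\epsilon)\|^2_{L^2(\R^n)}+\epsilon^2\|U(\cdot,\epsilon)\|^2_{L^2(\R^n)}\bigr),
\]
with norms taken of the radial profiles. Since $d\epsilon/d\omega=-\omega/\epsilon<0$, one has $\sgn(\partial_\omega Q)=-\sgn(\partial_\epsilon Q)$. For $k<2/n$ the leading term of $\partial_\epsilon Q$ as $\epsilon\to 0^+$ is $(2/k-n)\epsilon^{2/k-n-1}\|\hat V\|^2_{L^2(\R^n)}>0$, yielding $\partial_\omega Q<0$; for $k>2/n$ the same term has the opposite sign, giving $\partial_\omega Q>0$. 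Both conclusions use only the bounds already in Theorem~\ref{theorem-solitary-waves}~\itref{theorem-solitary-waves-v}.

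\textbf{Part (ii), critical case.} When $k=2/n$ the prefactor $\epsilon^{2/k-n}=1$ is $\epsilon$-independent, so $Q=\|V\|^2+\epsilon^2\|U\|^2$ and the leading $\|\hat V\|^2$-contribution drops out of $\partial_\epsilon Q$. Under $K>4/n$ one has $\varkappa=1$; the $C^1$-regularity from part~(i) together with \eqref{v-u-tilde-small-better} justify the expansion $\tilde W(t,\epsilon)=\epsilon^2 W_1(t)+o(\epsilon^2)$ with $W_1=(V_1,U_1)$, and differentiation gives
\[
\partial_\epsilon Q=2\epsilon\bigl(\|\hat U\|^2_{L^2(\R^n)}+2\langle\hat V,V_1\rangle_{L^2(\R^n)}\bigr)+o(\epsilon).
\]
To identify $V_1$, extract the $O(\epsilon^2)$ balance of the scaled stationary system: the $V$-equation matches the NLS equation for $\hat V$ at leading order, while the relativistic corrections (encoding both the $-u^2$ inside $\bar\phi\phi=v^2-u^2$ and the $\omega^2\ne m^2$ kinematics) produce $\eurl_+V_1=h$ with $h$ an explicit polynomial in $\hat V$, $\hat V'$ and $\hat U$. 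Pairing against $\hat V=u_k$ and using the identity \eqref{weird}, $\eurl_+\bigl(\tfrac{1}{k}u_k+x\cdot\nabla u_k\bigr)=-u_k/m$, to invert $\eurl_+$ against the scaling direction, together with the NLS Pohozaev identities derived from \eqref{def-uk}, reduces $\|\hat U\|^2+2\langle\hat V,V_1\rangle$ to a manifestly positive combination of $\int u_k^2$, $\int|\nabla u_k|^2$ and $\int u_k^{2k+2}$. Consequently $\partial_\epsilon Q>0$ for $\epsilon$ small, hence $\partial_\omega Q<0$. The main obstacle of the whole proof is precisely this critical-case bookkeeping: tracking the $\epsilon^2$-corrections through the Dirac-to-NLS reduction, inverting $\eurl_+$ on the appropriate source against the known cokernel direction, and collapsing the resulting inner product onto a signed linear combination of groundstate invariants.
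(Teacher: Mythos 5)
Your high-level plan for part~(i) matches what the paper wants to do, but the single sentence on which everything hinges — that ``$G(W,\epsilon)$ is therefore of class $C^1$ on a neighborhood of $((\hat V,\hat U),0)$ in $X^1_{e,o}\times[0,\epsilon_1)$'' because $\bar\phi_\omega\phi_\omega>0$ — is exactly the step that fails, and the paper flags this explicitly at the start of Section~\ref{sect-waves-nrl-1}: \emph{``the main obstacle to its application is the lack of regularity of the mapping $f$.''} Positivity of $\bar\phi_\omega\phi_\omega$ is a pointwise statement about the constructed solution; it does not make the substitution operator $(V,U)\mapsto f\big(\epsilon^{2/k}(V^2-\epsilon^2U^2)\big)$ Fr\'echet-$C^1$ on an open set in $X^1_{e,o}\subset C(\R)$. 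The argument $\tau(t)=\epsilon^{2/k}(V(t)^2-\epsilon^2 U(t)^2)$ tends to $0$ as $|t|\to\infty$, and any $X^1$-neighborhood of $(\hat V,\hat U)$ contains $(V,U)$ with $V(t_0)^2=\epsilon^2U(t_0)^2>0$ at some $t_0$; at such points, when $k<1$, the factor $f'(\tau)\epsilon^{2/k-2}V^2$ appearing in $\partial_{\tilde V}G_1$ is unbounded (one has $|f'(\tau)|\sim|\tau|^{k-1}$), so $\partial_{\tilde W}G$ is not a bounded operator on the neighborhood. This is precisely why the paper replaces the abstract IFT with a \emph{conditional} contraction (Lemma~\ref{lemma-contraction}): the Lipschitz bound on $G$ is established only on the set of interpolants satisfying \eqref{f-p-mu}--\eqref{f-p-mu-2}, where the positivity $\epsilon_1|U_s|\le V_s/2$ keeps the argument of $f$ bounded below by $cV_s^2$, so the derivative estimate of Lemma~\ref{lemma-g-w-small} applies. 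Uniqueness and differentiability then come from this restricted contraction and the explicit formula $\partial_\epsilon\tilde W=B^{-1}A^{-1}(-\partial_\epsilon A\,\tilde W+\partial_\epsilon G)$ rather than from a black-box IFT. (A minor point: the linearization you describe does not actually involve $\eurl_-$; the ansatz \eqref{sol-forms} is already real and radial, so the operator $A(0)$ reduces to the radial restriction of $\eurl_+$ alone, which is what the paper uses for invertibility.)

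In part~(ii), the non-critical cases agree with the paper. In the critical case your route is genuinely different from, and less complete than, the paper's. You postulate a second-order Taylor expansion $\tilde W(\cdot,\epsilon)=\epsilon^2W_1+o(\epsilon^2)$, but the information in hand --- namely $\|e^{\gamma\langle t\rangle}\partial_\epsilon\tilde W\|_{H^1}=O(\epsilon)$ when $\varkappa=1$ --- only gives $\|\tilde W\|=O(\epsilon^2)$ after integration, not the existence of the limit $W_1=\lim_{\epsilon\to0^+}\epsilon^{-2}\tilde W(\cdot,\epsilon)$; one would need $\partial_\epsilon\tilde W=2\epsilon W_1+o(\epsilon)$, which is a second-order statement that is never established. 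The paper sidesteps this entirely: Lemma~\ref{lemma-hat-v-tilde-v-p} computes $\langle\hat V,\partial_\epsilon\tilde V\rangle$ directly by pairing the differentiated fixed-point equation against the scaling direction $\big(\tfrac1k\hat V+y\cdot\nabla\hat V,\,\cdot\big)$, exploiting the self-adjointness of $A(\epsilon)$ and the identity \eqref{weird}, then substituting the explicit formula for $\partial_\epsilon G$ from Lemma~\ref{lemma-g-epsilon-small}. This yields the manifestly positive coefficient $q_1=\int(4m\hat V^{2k}\hat U^2+\hat U^2)\,dy$ without ever computing $W_1$. Your version would in addition require you to actually solve $\eurl_+V_1=h$ and carry out the Pohozaev bookkeeping to exhibit positivity, none of which is done; the positivity of $\|\hat U\|^2+2\langle\hat V,V_1\rangle$ is asserted, not proved. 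So the proposal's route could in principle be made to work, but as written it has two real gaps (the differentiability of $G$ on a full neighborhood, and the existence of $W_1$) plus an uncomputed positivity, whereas the paper's argument avoids all three by restricting the contraction and by the self-adjointness pairing.
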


\begin{remark}
The absolute value
in the expansion
$f(\tau)=\abs{\tau}^k+\dots$
is needed in the case when $k>0$ is not an integer.
We note that if $k\in\N$ and is odd, then,
with and without the absolute value,
one arrives at two different models; for example,
in the model
\begin{eqnarray}\label{eq:nld-cubic}
\jj\dot\psi=D_m\psi-\bar\psi\psi\,\beta\psi,
\qquad
\psi(t,x)\in\C^4,
\quad
x\in\R^3,
\end{eqnarray}
where $\bar\psi=\psi\sp\ast\beta$,
the small amplitude limit corresponding to $\omega\to -m$
is a defocusing NLS
(contrary to the small amplitude limit when $\omega\to m$
which is a focusing NLS),
while in the model
\begin{eqnarray}\label{eq:nld-pseudocubic}
\jj\dot\psi=D_m\psi-|\bar\psi\psi|\beta\psi,
\qquad
\psi(t,x)\in\C^4,
\quad
x\in\R^3,
\end{eqnarray}
such a limit is a focusing NLS
(just like the small amplitude limit when $\omega\to m$).
We point out that
both equations \eqref{eq:nld-pseudocubic} and \eqref{eq:nld-cubic}
are Hamiltonian systems
and both are invariant
with respect to the Wigner time reversal
\cite[Chapter 5.4]{MR0187641}:
\begin{eqnarray}\label{time-reversal-1}
\psi(t,x)\mapsto\psi_{T}(t,x)
=\jj\gamma^1\gamma^3 \bm{K}\psi(-t,x)
=
-\begin{bmatrix}
\sigma_2&0\\0&\sigma_2
\end{bmatrix}
\bm{K}\psi(-t,x),
\end{eqnarray}
with $\bm{K}:\C^N\to\C^N$ denoting the complex conjugation.
We mention that
$\bar\psi_{T}\psi_{T}=\bar\psi\psi$.

Moreover,
equation \eqref{eq:nld-pseudocubic}
remains invariant under the time reversal from
\cite[Section 2.5.7]{MR1219537},
which is a combination of the Wigner time reversal
and the charge conjugation
$\psi(t,x)\mapsto \psi\sb C(t,x)=-\jj\gamma^2\bm{K}\psi(t,x)$:
\begin{eqnarray}\label{time-reversal-2}
\psi(t,x)\mapsto\psi_{CT}(t,x)
=
-\jj\beta\gamma^5\psi(-t,x)
=
\begin{bmatrix}
0&-i I_2\\i I_2&0
\end{bmatrix}
\psi(-t,x)
.
\end{eqnarray}
Above,
$\gamma^5=\jj\gamma^0\gamma^1\gamma^2\gamma^3
=\begin{bmatrix}0&I_2\\I_2&0\end{bmatrix}
$.
Since $\bar\psi_{CT}\psi_{CT}=\bar\psi_{C}\psi_{C}=-\bar\psi\psi$,
equation \eqref{eq:nld-cubic}
is going to be invariant with respect to this transformation
if the nonlinearity $f$ in
\eqref{nld}
is even: $f(\tau)=f(-\tau)$, $\tau\in\R$.
%
\end{remark}

\begin{remark}
If $f(\tau)$ in \eqref{nld} is even, then,
applying to the solitary waves the time reversal
\eqref{time-reversal-2},
we see that
there is a symmetry $\omega\leftrightarrow-\omega$ of solitary
waves:
if $\phi(x)e^{-\jj\omega t}$ is a solitary wave solution to \eqref{nld},
then so is
$-\jj\beta\gamma^5\phi(x)e^{\jj\omega t}$.
More generally, in any dimension $n\ge 1$,
if $f$ in \eqref{nld} is even,
then,
given a solitary wave solution $\phi\sb\omega(x) e^{-\jj\omega t}$
to \eqref{nld}
with $\phi\sb\omega(x)$ as in \eqref{sol-forms},
there is also a solitary wave solution
\[
\begin{bmatrix}
u(r,\omega)\frac{x}{r}\cdot\bm\upsigma\sp\ast\,\bm{n}
\\
\jj v(r,\omega)\bm{n}
\end{bmatrix}
e^{\jj\omega t},
\qquad
\bm{n}\in\C^{N/2},
\quad
\abs{\bm{n}}=1.
\]
\end{remark}

\begin{remark}
Let us mention
that
the stationary waves $\phi_\omega e^{-i\omega t}$ constructed in \cite{MR847126}
in the three-dimensional case
satisfy $\bar\phi_\omega\phi_\omega>0$ for all $x\in\R^3$,
and
the same is true for the families of solitary waves
that we construct in Theorem~\ref{theorem-solitary-waves}
(cf. \eqref{phi-beta-phi-large});
such solitary waves will be solutions to
\eqref{nld} if $f(\tau)$ is substituted by $f(\abs{\tau})$.
\end{remark}

\begin{remark}
By \cite{MR695536},
the pure power stationary Schr\"odinger equation \eqref{def-uk}
with $n\ge 3$
has infinitely many distinct radial solutions,
and one expects that
there is a family of solitary waves of \eqref{nld}
bifurcating from any of these radial solutions
(similarly to what we state in Theorem~\ref{theorem-solitary-waves}).
\end{remark}

\begin{remark}
Let us also consider the following question:
{\it
Given a sequence of solitary wave solutions
corresponding to $\omega\sb j\to m$,
does this sequence
(up to symmetries and extraction of a subsequence)
always converge to a solution of a nonlinear Schr\"odinger equation,
in the sense of the above lemma?
}
The answer to this question is negative in general.
One obstacle can be illustrated
as follows.
In particular, in dimension $n=3$,
according to \cite[Theorem 1]{MR1344729},
there are solitary wave solutions
to \eqref{nld}
for the pure power nonlinearity with $k=1$ or any real $k\ge 2$
(so that $\abs{\tau}^{k+1}$ remains $C^2$ at $\tau=0$,
meeting the assumptions of \cite{MR1344729});
see also earlier works \cite{MR847126,MR949625}.
On the other hand,
if $k\ge \frac{2}{n-2}=2$,
the nonrelativistic limit can not converge to
a stationary solution of the nonlinear Schr\"odinger equation,
which does not exist for such values of $k$.

We do not know whether
in the case $k\leq \frac{2}{n-2}$,
any sequence of solitary waves $\phi\sb\omega$,
$\omega\lesssim m$,
could be obtained as a bifurcation
from an NLS solitary wave.
\end{remark}

\medskip

Theorem~\ref{theorem-existence-c1}~\itref{theorem-existence-c1-i}
is proved in Section~\ref{sect-waves-nrl-1},
and
the Vakhitov--Kolokolov inequality
in the critical case
(Theorem~\ref{theorem-existence-c1}~\itref{theorem-existence-c1-ii})
is analyzed in Section~\ref{sect-critical}.

\section{Solitary waves in the nonrelativistic limit.
The case $f\in C$}
\label{sect-waves-nrl}

In this section,
we prove Theorem~\ref{theorem-solitary-waves},
constructing a particular family of solitary waves
bifurcating from solitary waves
of the nonlinear Schr\"odinger equation.

First of all, we need to rewrite the assumption
$f(\tau)=\abs{\tau}^k+o(\abs{\tau}^k)$
in a more convenient form.
Fix $k>0$ (with $k<2/(n-2)$ if $n\ge 3$).
For $\hat V$, $\hat U$ from \eqref{Vhatdef}, let us denote
\begin{eqnarray}\label{def-Lambda}
\Lambda\sb k
:=\sup\sb{x\in\R^n}
\abs{\hat V(x)}
+
m\sup\sb{x\in\R^n}\abs{\hat U(x)}
<\infty.
\end{eqnarray}
We focus on solitary waves
with $\tilde V(t,\epsilon)$, $\tilde U(t,\epsilon)$
from \eqref{def-V-U-hat}
(we recall that $\epsilon = \sqrt{m^2-\omega^2}$),
satisfying
\begin{eqnarray}\label{UV0-0}
\abs{\tilde V(t,\epsilon)}
+m\abs{\tilde U(t,\epsilon)}
<\Lambda\sb k,
\qquad
\forall t\in\R;
\end{eqnarray}
we will see below that
this imposes certain smallness assumptions onto $\epsilon>0$.
It follows from
\eqref{def-Lambda} and \eqref{UV0-0}
that
\begin{eqnarray}\label{no-more}
\abs{V(t,\epsilon)}
\le 2\Lambda\sb k,
\qquad
m\abs{U(t,\epsilon)}
\le 2\Lambda\sb k,
\qquad
\abs{V(t,\epsilon)^2-\epsilon^2 U(t,\epsilon)^2}
<4\Lambda\sb k^2,
\\
\nonumber
\forall t\in\R.
\end{eqnarray}

In the present analysis,
we build small amplitude solitary waves,
and the proof below would not be affected
by a change of the nonlinearity $f(\tau)$
outside of an open neighborhood of $\tau=0$,
hence, by \eqref{ass-fo-0},
we could assume that
\begin{eqnarray}\label{ass-fo-1}
\abs{f(\tau)}\le 2\abs{\tau}^{k},
\qquad
\tau\in\R,
\end{eqnarray}
and that
\begin{eqnarray}\label{ass-fo-whatever}
\abs{f(\tau)-\abs{\tau}^{k}}
\le \abs{\tau}^k H(\tau),
\qquad
\tau\in\R,
\end{eqnarray}
where $H\in C(\R)$ is monotonically increasing
for $\tau\ge 0$, with $H(0)=0$.
It will be convenient for us to define
\begin{eqnarray}\label{def-h}
h(\epsilon):=
\max\left(
H(\epsilon^{2/k}4\Lambda\sb k^2),
\,\epsilon^{2k},
\,\epsilon^2
\right).
\end{eqnarray}
Note that, by \eqref{no-more},
\begin{eqnarray}\label{hh}
H(v^2-u^2)
=
H(\epsilon^{2/k}(V^2-\epsilon^2 U^2))
\le
H\big(\epsilon^{2/k}4\Lambda\sb k^2\big)
\le h(\epsilon);
\end{eqnarray}
from \eqref{ass-fo-whatever}
and \eqref{hh}
we obtain the following convenient estimate for later use:
\begin{eqnarray}\label{ass-fo}
\big|
f\big(\epsilon^{2/k}(V^2-\epsilon^2 U^2)\big)
-\epsilon^2\abs{V^2-\epsilon^2 U^2}^{k}
\big|
\le C\epsilon^2\abs{V^2-\epsilon^2 U^2}^{k}
h(\epsilon),
\end{eqnarray}
with $h(\epsilon)$ 
continuous, monotonically increasing
for $\epsilon\ge 0$, with $h(0)=0$.
\medskip

Now we are ready to start the proof of Theorem~\ref{theorem-solitary-waves}.
We extend the argument of~\cite[Section 4.2]{MR3208458}.
Substituting the Ansatz \eqref{sol-forms} into the nonlinear
Dirac equation~\eqref{nld} gives
the system
\begin{eqnarray}\label{soleq-nd}
\left\{ \begin{array}{l}
\p\sb r u + \frac{n-1}{r} u
+(m-\omega) v
=f(v\sp 2-u\sp 2)v,
\\
\p\sb r v+(m+\omega )u
=f(v\sp 2-u\sp 2)u,
\end{array}\right.
\qquad
r>0,
\end{eqnarray}
for the pair of real-valued functions $v = v(r,\omega)$,
$u = u(r,\omega)$.
We will always impose the condition
\begin{eqnarray}\label{u-zero-zero}
\lim\sb{r\to 0}u(r,\omega)=0
\end{eqnarray}
(cf. \eqref{U-zero-zero});
this allows us to extend $v(r,\omega)$ and $u(r,\omega)$
continuously onto $\R$
so that $v$ is even and $u$ is odd:
\begin{eqnarray}\label{v-even-u-odd}
v(r,\omega)=v(-r,\omega),
\qquad
u(r,\omega)=-u(-r,\omega),
\qquad
r\le 0.
\end{eqnarray}
Then \eqref{soleq-nd}
extends onto the whole real axis:
\begin{eqnarray}\label{soleq-nd-t}
\left\{ \begin{array}{l}
\p\sb r u + \frac{n-1}{r}u
+(m-\omega)v
=f(v\sp 2-u\sp 2)v,
\\
\p\sb r v+(m+\omega )u
=f(v\sp 2-u\sp 2)u,\\
u\at{r=0}=0,
\end{array}\right.
\qquad
r\in\R.
\end{eqnarray}
In \eqref{soleq-nd-t},
the term
$\frac{u(r,\omega)}{r}$ at $r=0$ is understood
as the limit $\lim\sb{r\to 0}\frac{u(r,\omega)}{r}=\p\sb r u(0,\omega)$.

By \eqref{soleq-nd-t}, $V$ and $U$
from \eqref{def-V-U}
are to satisfy
\[
\left\{ \begin{array}{l}
\displaystyle
\epsilon\sp 2
\Big(\p\sb t U + \frac{n-1}{t} U\Big)
+(m-\omega)V
=f V,
\\
\p\sb t V
+(\omega+m)U
=f U,
\\
U\at{t=0}=0,
\end{array}\right.
\qquad
t\in\R,
\]
with $t = \epsilon r$
and with
\[
f=f\big(
\epsilon^{2/k}\big(V(t,\epsilon)^2-\epsilon^2 U(t,\epsilon)^2\big)
\big).
\]
According to \eqref{v-even-u-odd},
$V(t,\epsilon)$ is even in $t\in\R$
and $U(t,\epsilon)$ is odd.
The term $U/t$ at $t=0$
is understood as the limit
$\lim\sb{t\to 0}U(t,\epsilon)/t=\p\sb t U(0,\epsilon)$.
We rewrite the above system as
\begin{eqnarray}\label{zero-is-phi1}
\left\{ \begin{array}{l}
\displaystyle
\p\sb t U + \frac{n-1}{t} U
+\frac{1}{m+\omega} V
=\frac{f}{\epsilon^2}V,
\\
\p\sb t V
+(m+\omega)U
=f U,
\\
U\at{t=0}=0,
\end{array}\right.
\qquad
t\in\R.
\end{eqnarray}
We note that the system \eqref{def-hat-phi}
corresponds to the limit of \eqref{zero-is-phi1}
as $\epsilon\to 0$ (that is, $\omega\to m$)
after the substitution \eqref{def-V-U-hat}.
For sufficiently small $\epsilon>0$,
we will construct the solution $(V,U)$
as a bifurcation from $(\hat V,\hat U)$.

Substituting
$
V(t,\epsilon)=\hat{V}(t)+\tilde{V}(t,\epsilon)
$
and
$
U(t,\epsilon)=\hat{U}(t)+\tilde{U}(t,\epsilon)
$
into \eqref{zero-is-phi1}
and then subtracting equations~\eqref{def-hat-phi},
we arrive at
\begin{eqnarray}\label{w-a-a}
\begin{cases}
(\p\sb t + \frac{n-1}{t}) \tilde{U}
+\frac{1}{m+\omega}\tilde{V}
=
(1+2k) |\hat V|\sp{2k}\tilde{V}
-G_1(\epsilon,\tilde V,\tilde U),
\\[1ex]
\p\sb t \tilde V
+(m+\omega)\tilde U =
G_2(\epsilon,\tilde V,\tilde U),
\end{cases}
t\in\R,
\ \ \epsilon>0,
\end{eqnarray}
where
\begin{eqnarray}\label{def-g1}
G_1(\epsilon,\tilde V,\tilde U)
=
-\epsilon^{-2}f\big(\epsilon^{2/k}(V\sp 2- \epsilon\sp 2 U^2)\big)V
+
\hat V\sp{2k}\hat V
+(1+2k)\hat V\sp{2k}\tilde{V}
\\
\nonumber
+
\left(\frac{1}{m+\omega}-\frac{1}{2m}\right)\hat V,
\end{eqnarray}
\begin{eqnarray}\label{def-g2}
G_2(\epsilon,\tilde V,\tilde U)
=
f\big(\epsilon^{2/k}(V\sp 2- \epsilon\sp 2 U^2)\big)U
+(m-\omega)\hat U,
\qquad
\omega=\sqrt{m^2-\epsilon^2},
\end{eqnarray}
with \eqref{def-V-U-hat}
giving the relations
between $V$, $U$ and $\tilde V$, $\tilde U$.
Let us denote
\begin{eqnarray}\label{def-g}
G(\epsilon, \tilde W)=
\begin{bmatrix}
G_1(\epsilon,\tilde V,\tilde U)
\\
G_2(\epsilon,\tilde V,\tilde U)
\end{bmatrix},
\qquad
\tilde W
=
\begin{bmatrix}
\tilde V\\ \tilde U
\end{bmatrix},
\end{eqnarray}
and introduce the operator
\begin{eqnarray}\label{a-epsilon}
A(\epsilon)=
\left[
\begin{array}{cc}
-\frac{1}{m+\omega}
+(1+2k) |\hat V|\sp{2k}
&
-\p\sb t-\frac{n-1}{t}\\
\p\sb t &m+\omega
\end{array}\right],
\qquad
\omega=\sqrt{m^2-\epsilon^2},
\end{eqnarray}
defined for $\epsilon\ge 0$,
with the domain
\[
D(A(\epsilon))
=
H\sp 1\sb{e,o}(\R,\abs{t}^{n-1}\mathrm{d}t;\,\C^2),
\]
where
\[
H\sp 1\sb{e,o}(\R,\abs{t}^{n-1}\mathrm{d}t;\,\C^2)
:=
H\sb{\mathrm{even}}^1(\R,\abs{t}^{n-1}\mathrm{d}t;\C)
\times
H\sb{\mathrm{odd}}^1(\R,\abs{t}^{n-1}\mathrm{d}t;\,\C)
\]
denotes the subspace of $\C^2$-valued functions on $\R$
such that the first component is even
and the second is odd.
We similarly define the space
$L^2\sb{e,o}(\R,\abs{t}^{n-1}\mathrm{d}t;\C\sp 2)$
and note that
\[
A(\epsilon):\;
H^1\sb{e,o }(\R, |t|^{n-1}\mathrm{d}t;\,\C\sp 2)
\to
L^2\sb{e,o}(\R, |t|^{n-1}\mathrm{d}t;\,\C\sp 2).
\]
Now the system \eqref{w-a-a}
takes the form
\begin{eqnarray}\label{w-a-a-3}
A(\epsilon)\tilde W(t,\epsilon)=G(\epsilon,\tilde W(t,\epsilon)),
\qquad
\epsilon>0.
\end{eqnarray}
Notice that the differential
operator $A(\epsilon)$, $\epsilon\in[0,m]$,
is self-adjoint 
on $H\sp 1\sb{e,o}(\R,\abs{t}^{n-1}\mathrm{d}t;\C^2)$.
We also notice that
the essential spectrum of
$A(\epsilon)$, $\epsilon\in[0,m]$,
with $\hat V$ substituted by zero is
$(-\infty,-\frac{1}{m+\omega}]\cup[m+\omega,+\infty)$ (see \cite[Satz 2.1]{MR664527} or \cite[Theorem 4.18]{MR1219537}).
Applying Weyl's criterion~\cite[Corollary 2 of Theorem XIII.14]{MR0493421},
we deduce that the essential spectrum of $A(\epsilon)$
is also given by
\[
\sigma\sb{\mathrm{ess}}\big(A(\epsilon)\big)
=
\Big(-\infty,-\frac{1}{m+\omega}\Big]
\cup
\Big[m+\omega,+\infty\Big),
\qquad
\epsilon\in[0,m].
\]
Since the inclusion
$
\left[\begin{array}{c}\xi\\\eta\end{array}\right]\in\ker A(0)$
would lead to $\eta(t) = -\frac{1}{2m}\xi'(t)$
for $t\in\R$ and then to
$
\xi(\abs{x}) \in \ker\eurl\sb{+},
$
with $\eurl\sb{+}$ defined in \eqref{def-l-small-pm}
and $x\in\R^n$,
while
the restriction of $\eurl\sb{+}$
to spherically symmetric functions has zero kernel
(see~\cite[Proof of Lemma 2.1, case $k=0$]{MR2368894}),
we see that $\ker A(0)\at{H^1\sb{e,o}(\R, |t|^{n-1}\mathrm{d}t;\,\C\sp 2)} = \{ 0 \}$.
Thus, $\lambda=0$
does not belong to the spectrum of
$A(0)\at{L^2\sb{e,o}}$,
hence $A(0)\sp{-1}$ is bounded from
$L^2\sb{e,o}(\R, |t|^{n-1}\mathrm{d}t;\,\C\sp 2)$
to $H^1\sb{e,o}(\R, |t|^{n-1}\mathrm{d}t;\,\C\sp 2)$.
By continuity in $\epsilon$ in the norm resolvent sense,
there is $\epsilon_0>0$ such that
the mapping
\begin{eqnarray}\label{a-bounded-0}
A(\epsilon)^{-1}:\;
L^2\sb{e,o}(\R, |t|^{n-1}\mathrm{d}t;\,\C\sp 2)
\to H^1\sb{e,o}(\R, |t|^{n-1}\mathrm{d}t;\,\C\sp 2),
\qquad
\epsilon\in[0,\epsilon_0]
\end{eqnarray}
is continuous,
with the norm bounded uniformly in $\epsilon\in[0,\epsilon_0]$.

We actually need a stronger statement
on continuity of $A^{-1}$ in the following spaces
(cf. \eqref{def-space-x}, \eqref{def-space-x1}):
\[
X\sb{e,o}:=
L^2\sb{e,o}(\R, |t|^{n-1}\mathrm{d}t;\,\C\sp 2) \cap L^\infty(\R;\,\C\sp 2),
\qquad
X^1\sb{e,o}:=
H^1\sb{e,o}(\R, \langle t\rangle^{n-1}\mathrm{d}t;\,\C\sp 2),
\]
with the norms
$\norm{(\xi_1,\xi_2)}\sb{X\sb{e,o}}^2
=\norm{\xi_1}\sb{X}^2
+\norm{\xi_2}\sb{X}^2$
for $(\xi_1,\xi_2)\in X\sb{e,o}$
and
$\norm{(\xi_1,\xi_2)}\sb{X^1\sb{e,o}}^2
=\norm{\xi_1}\sb{X^1}^2
+\norm{\xi_2}\sb{X^1}^2$
for $(\xi_1,\xi_2)\in X^1\sb{e,o}$.
Abusing the notations,
we will denote these norms
by $\norm{\cdot}\sb{X}$ and $\norm{\cdot}\sb{X^1}$,
respectively.

\begin{lemma}\label{lemma-a-invertible}
The restriction of the mapping \eqref{a-bounded-0}
to $X\sb{e,o}\subset L^2\sb{e,o}(\R,\abs{t}^{n-1}\mathrm{d}t;\C^2)$
defines a continuous map
\begin{eqnarray}\label{a-bounded}
A(\epsilon)^{-1}:\;X\sb{e,o}\to X^1\sb{e,o},
\qquad
\epsilon\in[0,\epsilon_0],
\end{eqnarray}
with the norm
bounded uniformly in $\epsilon\in[0,\epsilon_0]$.
\end{lemma}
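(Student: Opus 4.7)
The plan is to start from $\eta\in X_{e,o}$ and let $\tilde W=A(\epsilon)^{-1}\eta\in H^1_{e,o}(\R,|t|^{n-1}\mathrm{d}t;\C^2)$ be the solution furnished by the already established uniform inverse \eqref{a-bounded-0}, and then upgrade this to $\tilde W\in X^1_{e,o}$ with a bound uniform in $\epsilon\in[0,\epsilon_0]$. Since the weights $|t|^{n-1}$ and $\langle t\rangle^{n-1}$ are comparable on $\{|t|\geq 1\}$, the $X^1_{e,o}$ bound on that region follows immediately from \eqref{a-bounded-0}; all the substantive content lies in promoting local $H^1$-regularity near $t=0$, where $|t|^{n-1}$ degenerates, and this is where the $L^\infty$ part of the $X$-norm of $\eta$ will be used.

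On $[0,1]$ (the case $t<0$ following by the parity built into $X_{e,o}$) the equation $A(\epsilon)\tilde W=\eta$ reduces to the first-order ODE system
\begin{align*}
\partial_t\tilde V &= \eta_2-(m+\omega)\tilde U,\\
\bigl(\partial_t+\tfrac{n-1}{t}\bigr)\tilde U &= a(t)\tilde V-\eta_1,\qquad \tilde U(0)=0,
\end{align*}
with $a(t)=(1+2k)|\hat V(t)|^{2k}-(m+\omega)^{-1}$ bounded uniformly in $\epsilon\in[0,\epsilon_0]$. The key observation is the integrating-factor identity $\partial_t(t^{n-1}\tilde U)=t^{n-1}(a\tilde V-\eta_1)$, which, combined with the boundary condition at $0$, yields
\[
\tilde U(t)=t^{1-n}\int_0^t s^{n-1}\bigl(a(s)\tilde V(s)-\eta_1(s)\bigr)\,\mathrm{d}s,\qquad 0\le t\le 1.
\]
Feeding this into the integrated first equation $\tilde V(t)=\tilde V(0^+)+\int_0^t(\eta_2-(m+\omega)\tilde U)\,\mathrm{d}s$ and bootstrapping on a small interval $[0,\delta]$ gives $\|\tilde V\|_{L^\infty[0,\delta]}+\|\tilde U\|_{L^\infty[0,\delta]}\lesssim |\tilde V(0^+)|+\|\eta\|_X$, and standard continuation extends the bound to all of $[0,1]$. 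The formula also shows that $\tilde U(t)=O(t)$ near the origin, so that $\tilde U/t$ remains bounded; plugging the pointwise bounds back into the two ODE equations then gives $\partial_t\tilde V,\partial_t\tilde U\in L^\infty[-1,1]$, hence $\tilde W\in H^1([-1,1];\C^2)$, which together with the exterior estimate yields $\tilde W\in X^1_{e,o}$ with a uniform bound.

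The main obstacle, in my view, is justifying that $\tilde V(0^+)$ is well-defined and bounded by a constant depending only on the $X$-norm of $\eta$, since a priori $H^1_{e,o}(\R,|t|^{n-1}\mathrm{d}t;\C^2)$ does not embed into $L^\infty$ near the origin once $n\geq 2$. I would handle this by viewing $t=0$ as a regular singular point of the homogeneous system $A(\epsilon)\tilde W=0$: a Frobenius-type analysis produces two fundamental local solutions, one regular ($\tilde V=O(1)$, $\tilde U=O(t)$) and one too singular to lie in $H^1_{e,o}(|t|^{n-1}\mathrm{d}t)$, so the weighted-$H^1$ condition pins $\tilde W$ to the regular branch plus a bounded particular solution, producing the needed bound on $\tilde V(0^+)$ in terms of $\|\tilde W\|_{H^1(|t|^{n-1}\mathrm{d}t)}+\|\eta\|_X$ and hence, by \eqref{a-bounded-0}, in terms of $\|\eta\|_X$ alone. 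All constants appearing in the argument depend only on $\|\hat V\|_{L^\infty}$, $k$, $n$, $m$, and the uniform operator norm in \eqref{a-bounded-0}, so the resulting bound on $\|A(\epsilon)^{-1}\eta\|_{X^1_{e,o}}$ is uniform in $\epsilon\in[0,\epsilon_0]$.
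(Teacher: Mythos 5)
Your reduction to the interval $[-1,1]$, the reformulation as a first-order ODE system, and the integrating-factor identity $\partial_t(t^{n-1}\tilde U)=t^{n-1}(a\tilde V-\eta_1)$ all match the paper's set-up (cf.~the system \eqref{a-v-b-s} and the identity \eqref{r-as}). But the crux of the lemma is precisely the $L^\infty$-control of $\tilde V$ near $t=0$, and there the two arguments diverge sharply. The paper runs an induction on the weight exponent $\alpha$: starting from the Hardy--Sobolev bound $\||t|^{(n-1)/2}v\|_{L^\infty}+\||t|^{(n-1)/2}u\|_{L^\infty}\lesssim\|(b,a)\|_{L^2(|t|^{n-1}dt)}$, each pass through the two equations improves the weight from $\alpha$ to $\max(\alpha-1,0)$ (with a side-step around $\alpha=1$), eventually reaching $\alpha=0$, i.e.\ plain $L^\infty$. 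Crucially, the $v$-bound is obtained by integrating $v'$ \emph{inward from $t=\pm 1$}, so the existence of $v(0^+)$ is a \emph{conclusion} of the induction, never an input. Your proposal instead integrates \emph{outward from $t=0$} and therefore needs the boundary value $\tilde V(0^+)$ as an a priori quantity.

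That is exactly where the gap sits. You correctly flag that a priori $H^1(|t|^{n-1}dt)$ does not embed into $L^\infty$ near $0$ for $n\ge 2$, and you propose to produce $\tilde V(0^+)$ by a Frobenius analysis of the regular singular point. As stated this is only a sketch, and carrying it out rigorously faces several nontrivial obstacles you do not address: (i) the coefficient $a(t)=(1+2k)|\hat V(t)|^{2k}-(m+\omega)^{-1}$ is only $C^2$ (not analytic), so classical Frobenius theory does not apply and one would need the integral-equation version; (ii) for integer $n\ge 2$ the indicial exponents $0$ and $2-n$ differ by a nonnegative integer, so logarithmic resonances must be ruled out; (iii) the inhomogeneity $\eta$ is merely $L^\infty$, so the variation-of-parameters particular solution (which after eliminating $\tilde U$ involves $\eta_2'+\frac{n-1}{t}\eta_2$) needs careful interpretation and a bound by $\|\eta\|_X$ that is not obvious. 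None of these are likely fatal, but together they constitute the actual content of the lemma, so the proof as written is incomplete at its key step. The remainder of your argument (Gronwall on $[0,\delta]$ plus continuation, then bounding $u/t$ and the derivatives) would go through once $\tilde V(0^+)$ is controlled; the paper's weight-induction is essentially a way to obtain that control without ever invoking a local solution theory at the singular point.
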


\begin{proof}
The uniform continuity in $\epsilon$ will follow as in the previous case from the resolvent identity.
Due to the continuity of the mapping \eqref{a-bounded-0},
we already know that for any
$(b,a)
\in L^2\sb{e,o}(\R,|t|^{n-1}\mathrm{d}t;\,\C\sp 2)\cap L^\infty(\R;\,\C\sp 2)$
the solution of
\begin{eqnarray}\label{a-v-b}
A(\epsilon)\begin{bmatrix}v\\u\end{bmatrix}
=\begin{bmatrix}b\\a\end{bmatrix}
\in L^2\sb{e,o}(\R,|t|^{n-1}\mathrm{d}t;\,\C\sp 2)\cap L^\infty(\R;\,\C\sp 2)
\end{eqnarray}
in $L^2\sb{e,o}(\R,|t|^{n-1}\mathrm{d}t;\,\C\sp 2)$
satisfies $(v,u)\in H^1\sb{e,o}(\R, |t|^{n-1}\mathrm{d}t;\,\C\sp 2)$.

In the case $n=1$, we are done.

In the case $n\ge 2$, we proceed as follows.
We already know that
\[
(v,u)\in H^1\sb{e,o}(\R, |t|^{n-1}\mathrm{d}t;\,\C\sp 2)
\subset H^1\sb{e,o}(\R\setminus[-1,1],\langle t\rangle^{n-1}\mathrm{d}t;\C\sp 2)
.
\]
It suffices to prove that
$(v,u)$ also satisfies
\[
(v,u)\in L^\infty([-1,1];\,\C\sp 2),
\qquad
(v',u')\in L^\infty([-1,1];\,\C\sp 2),
\]
with the norms bounded by
$\norm{(a,b)}\sb{L^2(\R,\abs{t}^{n-1}\mathrm{d}t;\,\C^2)}
+\norm{(a,b)}\sb{L^\infty(\R;\,\C^2)}$
(times a constant factor).
Equation \eqref{a-v-b} can be written out as
the following system:
\begin{eqnarray}\label{a-v-b-s}
\begin{cases}
\Big((1+2k)\hat V(t)^{2k}-\frac{1}{m+\omega}\Big)v(t)
-\p\sb t u-\frac{n-1}{t}u(t)=b(t),
\\
\p\sb t v+(m+\omega)u(t)=a(t).
\end{cases}
\end{eqnarray}
From $(v,u) \in H^1\sb{e,o}(\R, |t|^{n-1}\mathrm{d}t;\,\C\sp 2)$  we deduce that
\[
v,\,u\in C(\R\setminus\{0\}),
\]
and that
$|t|^\frac{n-1}{2}v\in L^\infty([-1,1])$
and $|t|^\frac{n-1}{2}u\in L^\infty([-1,1])$, as a consequence of Sobolev inequality and Hardy inequalities (see \cite[Appendix A.4 ($r=1$ and $p=2$)]{MR0290095} for the later)
 and moreover
\begin{eqnarray}\label{assume-such-alpha-0}
\||t|^\frac{n-1}{2}v\|_{L^\infty([-1,1])}+\||t|^\frac{n-1}{2}u\|_{L^\infty([-1,1])}
\leq 
C\|(v,u)\|_{H^1(\R, |t|^{n-1}\mathrm{d}t;\,\C\sp 2)}
\nonumber
\\[1ex]
\leq
C'\|(b,a)\|_{L^2(\R, |t|^{n-1}\mathrm{d}t;\,\C\sp 2)},
\end{eqnarray}
with some $C,\,C'<\infty$.

We will proceed by induction;
let us assume that, more generally,
\begin{eqnarray}\label{assume-such-alpha}
&&
\||t|^\alpha v\|_{L^\infty([-1,1])}+\||t|^\alpha u\|_{L^\infty([-1,1])}
\nonumber
\\
&&
\qquad
\qquad
\leq
C
\left(
\|(b,a)\|_{L^2(\R, |t|^{n-1}\mathrm{d}t;\,\C\sp 2)}
+
\|(b,a)\|_{L^\infty(\R;\,\C\sp 2)}
\right),
\end{eqnarray}
with $C<\infty$ independent on $(a(t),b(t))$
and with some $\alpha\in[1/2,(n-1)/2]$.
Note that the upper bound is meaningless as $t$ is bounded but
we already know by \eqref{assume-such-alpha-0}
that \eqref{assume-such-alpha}
holds with $\alpha=(n-1)/2$.
The first equation from \eqref{a-v-b-s} can be rewritten as
\begin{eqnarray}\label{r-as}
 \p\sb t \big(t^{n-1}u\big)
=t^{n-1}
\Big((1+2k)\hat V(t)^{2k}-\frac{1}{m+\omega}\Big)v(t)
-t^{n-1}b(t).
\end{eqnarray}
Since
$u\in H^1(\R,\abs{t}^{n-1}\mathrm{d}t)
\subset C(\R\setminus\{0\})$,
$|t|^\frac{n-1}{2}u\in L^\infty([-1,1])$,
and
$n\ge 2$,
one has
$t^{n-1}u(t)\to 0$ as $t\to 0$;
therefore, integrating the relation \eqref{r-as},
we arrive at
\[
t^{n-1}u(t)
=
\int_0^t
\Big(
s^{n-1}\Big((1+2k)\hat V(s)^{2k}-\frac{1}{m+\omega}\Big)v(s)
-s^{n-1}b(s)
\Big)\,ds,
\]
which yields
\begin{eqnarray}\label{u-t-small-0}
|t|^{n-1}
|u(t)|
\leq
\Big(
\frac{C}{n-\alpha}\abs{t}^{n-\alpha}
\norm{\abs{t}^\alpha v}\sb{L^\infty([-1,1])}
+\frac{1}{n}\abs{t}^n\norm{b}\sb{L^\infty([-1,1])}
\Big),
\end{eqnarray}
$t\in[-1,1]$,
with $C$ dependent on $k$ and $\hat V$ only;
hence,
\begin{eqnarray}\label{u-t-small}
\abs{u(t)}
\le
\Big(
\frac{C}{n-\alpha}\abs{t}^{1-\alpha}
\norm{\abs{t}^\alpha v}\sb{L^\infty([-1,1])}
+\frac{1}{n}\abs{t}\norm{b}\sb{L^\infty([-1,1])}
\Big),
\quad
t\in[-1,1].
\end{eqnarray}
Similarly, from the second equation in \eqref{a-v-b-s}
we deduce that
\begin{eqnarray}\label{wdt}
 \||t|^\alpha\partial_t v\|_{L^\infty([-1,1])}
\le
|m+\omega|\||t|^\alpha u\|_{L^\infty([-1,1])}
+\||t|^\alpha a\|_{L^\infty([-1,1])}=:C\sb\ast,
\end{eqnarray}
so that one has $\abs{v'(t)}\le C\sb\ast \abs{t}^{-\alpha}$,
therefore
\begin{eqnarray}\label{wdt-2}
\abs{v(t)}\le
\abs{v(1)}
+\abs{v(-1)}
+C\sb\ast\abs{t}^{-\alpha+1}/\abs{\alpha-1},
\qquad
t\in[-1,1]\setminus\{0\}
\end{eqnarray}
for $\alpha\in\R\sb{+}\setminus(\frac 1 2,\frac 3 2)$ to have a uniform bound. For $\alpha\in(1/2,3/2)$,
we substitute $\alpha$ in \eqref{wdt} with $\alpha=3/2$,
again arriving at \eqref{wdt-2}.

To sum-up,  given the estimates \eqref{assume-such-alpha}
on $u$ and $v$
with
$\alpha\in\{1/2\}\cup[3/2,+\infty)$,
the estimates
\eqref{u-t-small} and \eqref{wdt-2}
yield
\eqref{assume-such-alpha}
with $\max(\alpha-1,0)$ in place of $\alpha$;
while given the estimates \eqref{assume-such-alpha}
with $\alpha\in(1/2,3/2)$,
we arrive at \eqref{assume-such-alpha}
with $1/2$ in place of $\alpha$.
It follows that \eqref{assume-such-alpha}
could be improved up to $\alpha=0$
in a finite number of steps.
Having improved \eqref{assume-such-alpha}
to $\alpha=0$,
we use
\eqref{u-t-small-0}, \eqref{u-t-small}
one more time, now with $\alpha=0$,
obtaining the bound
\begin{eqnarray}\label{u-t-small-last}
\abs{u(t)/t}
\le
\Big(
\frac{C}{n}
\norm{v}\sb{L^\infty([-1,1])}
+\frac{1}{n}\norm{b}\sb{L^\infty([-1,1])}
\Big),
\qquad
t\in[-1,1]\setminus\{0\}.
\end{eqnarray}
Using the resulting bounds on
$\norm{v}\sb{L^\infty([-1,1])}$
and
$\norm{u/t}\sb{L^\infty([-1,1])}$
in the system \eqref{a-v-b-s}
yields the desired bounds on
$\norm{v'}\sb{L^\infty([-1,1])}$
and on
$\norm{u'}\sb{L^\infty([-1,1])}$.
The continuity of the mapping \eqref{a-bounded} is proved.
\end{proof}

\begin{remark}
We note that
$(v,u)\in X^1\sb{e,o}\subset C(\R,\C^2)$;
by \eqref{u-t-small-last},
this implies that $u(0)=0$.
\end{remark}

The assumption
$(\tilde V,\tilde U)\in X^1\sb{e,o}\subset X\sb{e,o}$
leads to
$\big(G_1(\tilde V,\tilde U),G_2(\tilde V,\tilde U)\big)
\in X\sb{e,o}$,
with $G_1$, $G_2$ defined in \eqref{def-g1} and \eqref{def-g2}.
Due to invertibility of
$A(\epsilon):\,X^1\sb{e,o}\to X\sb{e,o}$
(Lemma~\ref{lemma-a-invertible}),
the relation \eqref{w-a-a-3} leads to
\begin{eqnarray}\label{Xi}
\tilde W
=A(\epsilon)\sp{-1}G(\epsilon, \tilde W),
\qquad
\tilde W=\tilde W(t,\epsilon).
\end{eqnarray}

\begin{remark}\label{remark-mu}
The continuity of $f$
is not enough to conclude that the map
\[
\mu:\;
X\sb{e,o}\to X^1\sb{e,o}\subset X\sb{e,o},
\qquad
\mu:\;
\tilde W\mapsto A(\epsilon)^{-1}G(\epsilon,\tilde W)
\]
is a contraction,
so we can not apply the contraction mapping principle
to claim a unique fixed point of $\mu$;
we will retreat to the Schauder fixed point theorem instead,
proving the existence of a fixed point
but missing its uniqueness.
In the case $f\in C^1$,
indeed the mapping $\mu$ can be shown to be a contraction
on a particular subspace
(see Lemma~\ref{lemma-contraction} below);
this will allow us to prove uniqueness
of a fixed point.
\end{remark}

To be able to consider
non-integer values of $k>0$
(in particular, we are going to treat the critical cases,
when $k=2/n$),
we need the following result.

\begin{lemma}\label{lemma-a-a}
For any $k>0$,
one has:
\begin{eqnarray}\label{a-a-1}
\qquad
\Abs{\abs{a+b}^k-\abs{a}^k}
\le
3^k
\left(
\abs{a}^{k-\min(1,k)}
+
\abs{b}^{k-\min(1,k)}
\right)
\abs{b}^{\min(1,k)},
\qquad
a,\,b\in\R;
\end{eqnarray}
\begin{eqnarray}\label{a-a-2}
\Abs{\abs{a+b}^k-\abs{a}^k-k\abs{a}^{k-1}b\sgn a}
\le
3^k
\left(
\abs{a}^{k-\min(2,k)}+\abs{b}^{k-\min(2,k)}
\right)
\abs{b}^{\min(2,k)},
\nonumber
\\
a,\,b\in\R.
\end{eqnarray}
\end{lemma}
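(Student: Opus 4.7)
Both estimates in this lemma are sharp forms of the first- and second-order Taylor-remainder bounds for the map $a\mapsto|a|^k$. My strategy is to split each inequality at the threshold defining $\min(\cdot,k)$ on the right-hand side, so that for $k$ above the threshold one uses a Taylor/mean-value argument, while for $k$ below the threshold one relies on the subadditivity of $x\mapsto x^k$ coming from concavity.

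For \eqref{a-a-1} with $0<k\le 1$, concavity gives $|a+b|^k\le(|a|+|b|)^k\le|a|^k+|b|^k$, and by symmetry (swapping $a$ and $a+b$) $\bigl||a+b|^k-|a|^k\bigr|\le|b|^k$, well inside the right-hand side $2\cdot 3^k|b|^k$. For $k>1$, the mean value theorem applied to $\phi(t)=|a+tb|^k$ yields $\bigl||a+b|^k-|a|^k\bigr|\le k(|a|+|b|)^{k-1}|b|$; combined with the convexity bound $(x+y)^{k-1}\le 2^{k-1}(x^{k-1}+y^{k-1})$ valid for $k\ge 1$ and the elementary numerical inequality $k\cdot 2^{k-1}\le 3^k$ (check at $k=1$, then compare derivatives), this closes the estimate with $\min(1,k)=1$.

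For \eqref{a-a-2} with $k\ge 2$, Taylor's theorem with integral remainder reads
\begin{equation*}
|a+b|^k-|a|^k-k|a|^{k-1}b\sgn a \;=\; k(k-1)\,b^2\int_0^1(1-t)|a+tb|^{k-2}\,dt,
\end{equation*}
and the bounds $(|a|+|b|)^{k-2}\le 2^{k-2}(|a|^{k-2}+|b|^{k-2})$ together with $k(k-1)2^{k-3}\le 3^k$ finish the argument. For $1\le k<2$, I would rewrite the left-hand side as $b\int_0^1 k\bigl[|a+tb|^{k-1}\sgn(a+tb)-|a|^{k-1}\sgn a\bigr]\,dt$ and invoke the H\"older-type estimate $\bigl||x|^{k-1}\sgn x-|y|^{k-1}\sgn y\bigr|\le 2|x-y|^{k-1}$, itself a consequence of \eqref{a-a-1} in the exponent $k-1\in[0,1]$ (same-sign pairs are handled directly, opposite-sign pairs by concavity via $x^{k-1}+|y|^{k-1}\le 2(x+|y|)^{k-1}$). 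The integrand is then bounded by $2k\,t^{k-1}|b|^k$, and integration in $t$ gives $2|b|^k$, well inside $2\cdot 3^k|b|^k$.

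The delicate range is $0<k<1$ in \eqref{a-a-2}, where the linear Taylor term $k|a|^{k-1}b\sgn a$ is singular at $a=0$; here I would interpret it with the convention $|a|^{k-1}\sgn a\at{a=0}=0$ and split at $|b|\lessgtr|a|$. When $|b|\le|a|$, the smooth-regime Taylor computation still applies because $|a|^{k-2}\le|b|^{k-2}$ for $|a|\ge|b|$ and $k<2$, so $|a|^{k-2}b^2\le|b|^k$. When $|b|>|a|$, \eqref{a-a-1} already delivers $\bigl||a+b|^k-|a|^k\bigr|\le|b|^k$, and the Taylor counterterm must be absorbed by a more careful analysis of the actual cancellation with $|a+b|^k-|a|^k$ on that scale. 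This last subcase is the main obstacle: since $|a|^{k-1}$ is unbounded near $a=0$ for $k<1$, threading the universal constant $3^k$ through requires that the contribution of the singular counterterm be controlled only jointly with the difference $|a+b|^k-|a|^k$, not separately; the remainder of the proof is then routine bookkeeping of the numerical constants $k\cdot 2^{k-1}\le 3^k$ and $k(k-1)\cdot 2^{k-3}\le 3^k$.
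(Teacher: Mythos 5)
Your treatment of \eqref{a-a-1} for all $k>0$ and of \eqref{a-a-2} for $k\ge 1$ is correct, and it takes a genuinely different route from the paper: the paper normalizes to $a=1$ by homogeneity and splits at $|b|\lessgtr 1/2$ (direct estimation for $|b|\ge 1/2$, the mean value theorem for $|b|<1/2$), whereas you split in $k$, using subadditivity of $x\mapsto x^k$ on $[0,\infty)$ for $k\le 1$, a mean-value bound combined with $(x+y)^{p}\le 2^{p}(x^{p}+y^{p})$ for $k>1$, and Taylor's theorem with integral remainder together with the H\"older-type estimate on $x\mapsto|x|^{k-1}\sgn x$ for $k\ge 1$ in \eqref{a-a-2}. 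Both arguments close where they apply.

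The one case you flag as delicate, $0<k<1$ in \eqref{a-a-2}, is however not a matter of routine bookkeeping: the inequality \eqref{a-a-2} is in fact false in that range. Take $a=1$, $k\in(0,1)$, and $b\to+\infty$. The left-hand side $|(1+b)^k-1-kb|\sim kb$ is linear in $b$, while $\min(2,k)=k$ makes the right-hand side equal to $3^k(|a|^{0}+|b|^{0})|b|^{k}=2\cdot 3^k|b|^{k}$, which grows only like $b^{k}$; the ratio diverges like $b^{1-k}$. There is no hidden cancellation that saves the estimate on that scale: for $b\gg 1$ the difference $|a+b|^k-|a|^k\approx b^k$ is negligible against the singular counterterm $k|a|^{k-1}b=kb$, so the counterterm dominates the left-hand side outright. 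The paper's own proof of \eqref{a-a-2} contains a parallel gap: the estimate $kb\le k\,2^{k-1}b^k$ invoked for $b\ge 1/2$ is equivalent to $(2b)^{k-1}\ge 1$, which requires $k\ge 1$. This is harmless for the paper's application, since \eqref{a-a-2} is only ever used with $1+2k>1$ in place of $k$ (see the proof of Lemma~\ref{lemma-g1-g2-bounds}); but as stated, \eqref{a-a-2} needs the hypothesis $k\ge 1$. Your instinct that the $0<k<1$ subcase is the real obstacle was correct; the resolution is that no universal constant can close it.
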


\begin{proof}
Since the inequalities \eqref{a-a-1} and \eqref{a-a-2}
are homogeneous of degree $k$ in $a$ and $b$,
it is enough to give a proof for $a=1$, $b\in\R$.

If $\abs{b}\ge 1/2$,
then
$\Abs{\abs{1+b}^k-1}
\le
\max(\abs{1+b}^k,1)
\le 3^k\abs{b}^k$.
If $\abs{b}<1/2$,
then, by the mean value theorem,
\begin{eqnarray}\label{a-a}
\Abs{\abs{1+b}^k-1}
\le
\max\sb{c\in[1/2,3/2]}
k\abs{c}^{k-1}\abs{b}.
\end{eqnarray}
If $k\ge 1$, the right-hand side
is bounded by
$k(3/2)^{k-1}\abs{b}
\le 3^k\abs{b}$
(since $k(3/2)^{k-1}<3^k$, $\forall k\in\R$).
If $k\in(0,1)$, the right-hand side of \eqref{a-a}
is bounded by
$
k2^{1-k}\abs{b}
=k\abs{2b}^{1-k}\abs{b}^{k}
\le k\abs{b}^{k}
\le 3^k\abs{b}^{k}$.
This completes the proof of \eqref{a-a-1}.

Now let us prove \eqref{a-a-2};
again, we only need to consider the case $a=1$.
For $b\ge 1/2$, one has
\[
\abs{\abs{1+b}^k-1-k b}
\le
\max((3b)^k,1+k b)
\le \max(3^k,\,2^k+2^{k-1}k)b^k
\le
3^k(b^{k-\min(2,k)}+b^k).
\]
In the last inequality,
we took into account that,
with $b\ge 1/2$,
\[
1
\le
2^{\min(2,k)}
b^{\min(2,k)}
\le
3^k b^{\min(2,k)},
\qquad
k b
\le k 2^{k-1}b^k
\le 3^k b^k.
\]
For $b\le -1/2$,
one similarly obtains
\[
\abs{\abs{1+b}^k-1-k b}
\le
\max(\abs{b}^k+k\abs{b},
\,1)
\le\max(1+2^{k-1}k,\,2^k)\abs{b}^k
\le
3^k\abs{b}^k,
\]
since $1+2^{k-1}k<3^k$ for $k>0$.

Finally, for $\abs{b}<1/2$,
by the mean value theorem,
\begin{eqnarray}\label{a-a-prime}
\Abs{\abs{1+b}^k-1-k b}
\le
\max\sb{c\in[1/2,3/2]}
\frac{k\abs{k-1}}{2}\abs{c}^{k-2}\abs{b}^2.
\end{eqnarray}
If $k\ge 2$, the right-hand side
is bounded by
\[
\frac 1 2 k(k-1)(3/2)^{k-2}\abs{b}^2
\le 3^k\abs{b}^2,
\]
since
$\frac 1 2 k\abs{k-1}(3/2)^{k-2}<3^k$,
$\forall k>0$.
If $k\in(0,2)$, the right-hand side of \eqref{a-a-prime}
is bounded by
\[
k\abs{k-1}2^{2-k}\abs{b}^2
=k\abs{k-1}\abs{2b}^{2-k}\abs{b}^{k}
\le k\abs{k-1}\abs{b}^{k}
\le 3^k\abs{b}^{k}.
\qedhere
\]
\end{proof}

Recall that $\Lambda\sb k<\infty$
was defined in \eqref{def-Lambda}.

\begin{lemma}\label{lemma-g1-g2-bounds}
There is $C<\infty$ such that
for any numbers
$\hat V,\,\hat U,\,\tilde V,\,\tilde U\in[-\Lambda\sb k,\Lambda\sb k]$,
$V=\hat V+\tilde V$, and $U=\hat U+\tilde U$,
one has
\begin{eqnarray}
&&
\abs{G_1(\epsilon,\tilde V,\tilde U)}
\le
C
\Big(
h(\epsilon)
(\abs{V}+\abs{U})^{1+2k}
+
\hat V^{1+2k-\min(2,1+2k)}
\abs{\tilde V}^{\min(2,1+2k)}
\nonumber
\\
&&
\qquad\qquad\qquad
\qquad\qquad\qquad
\qquad\qquad\qquad
\qquad\qquad\qquad
+
\abs{\tilde V}^{1+2k}
+
\epsilon^2\hat V
\Big),
\nonumber
\\
&&
\abs{G_2(\epsilon,\tilde V,\tilde U)}
\le
C
\left(
\epsilon^2
\abs{V^2-\epsilon^2 U^2}^k\abs{U}
+
\epsilon^2\abs{\hat U}
\right),
\nonumber
\end{eqnarray}
for all $\epsilon\in(0,\epsilon_0)$,
with $\epsilon_0>0$
from Theorem~\ref{theorem-solitary-waves} 
\end{lemma}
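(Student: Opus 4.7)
The bound on $G_2$ is essentially immediate from its definition~\eqref{def-g2}: the a priori growth $\abs{f(\tau)}\le 2\abs{\tau}^k$ given by~\eqref{ass-fo-1} yields $\abs{f(\epsilon^{2/k}(V^2-\epsilon^2U^2))\,U}\le 2\epsilon^2\abs{V^2-\epsilon^2U^2}^k\abs{U}$, while the identity $m-\omega=\epsilon^2/(m+\omega)\le\epsilon^2/m$ handles the remaining term $(m-\omega)\hat U$.

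The bound on $G_1$ is the delicate one, and I would organise it as a two-step expansion of the nonlinear power, exploiting the positivity of $\hat V$. First, apply the hypothesis~\eqref{ass-fo} (already established via~\eqref{ass-fo-whatever} and~\eqref{hh}) to replace $\epsilon^{-2}f(\epsilon^{2/k}(V^2-\epsilon^2U^2))$ by $\abs{V^2-\epsilon^2U^2}^k$; the resulting error is controlled by $Ch(\epsilon)\abs{V^2-\epsilon^2U^2}^k\abs{V}\le Ch(\epsilon)(\abs{V}+\abs{U})^{2k+1}$, matching the first summand in the claim. Next, I use Lemma~\ref{lemma-a-a}, inequality~\eqref{a-a-1}, with exponent $k$, $a=V^2$, $b=-\epsilon^2U^2$, to trade $\abs{V^2-\epsilon^2U^2}^k$ for $V^{2k}$; the resulting error is of size $\epsilon^{2\min(1,k)}(\abs{V}+\abs{U})^{2k}\abs{V}$, and since the definition~\eqref{def-h} of $h$ ensures $\epsilon^{2\min(1,k)}=\max(\epsilon^{2k},\epsilon^2)\le h(\epsilon)$, this too is absorbed into the same $Ch(\epsilon)(\abs{V}+\abs{U})^{2k+1}$.

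After these two reductions, the remaining core of $G_1$ takes the form $-V^{2k}V+\hat V^{2k+1}+(1+2k)\hat V^{2k}\tilde V+\bigl(\tfrac{1}{m+\omega}-\tfrac{1}{2m}\bigr)\hat V$. The last term equals $\epsilon^2\hat V/(2m(m+\omega)^2)$, hence is bounded by $C\epsilon^2\hat V$, producing the final summand of the claim. For the polynomial part I apply Lemma~\ref{lemma-a-a}, inequality~\eqref{a-a-2}, with exponent $2k+1$, $a=\hat V>0$, and $b=\tilde V$; since $\sgn\hat V=1$, the linear term in the expansion is precisely $(2k+1)\hat V^{2k}\tilde V$, and the remainder has exactly the shape $\hat V^{1+2k-\min(2,1+2k)}\abs{\tilde V}^{\min(2,1+2k)}+\abs{\tilde V}^{1+2k}$ claimed in the lemma. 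A short side observation handles the discrepancy between $\abs{V}^{2k+1}$ (which is what the lemma bounds) and $V^{2k}V=\abs{V}^{2k+1}\sgn V$ (which is what appears in $G_1$): the difference is zero unless $V<0$, i.e. $\tilde V<-\hat V$, in which case $\abs{V}\le\abs{\tilde V}$ and the discrepancy $2\abs{V}^{2k+1}$ is already dominated by $2\abs{\tilde V}^{2k+1}$.

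None of the steps is genuinely deep; the main obstacle is the bookkeeping in the last step, in particular verifying that the sign anomaly of $V\mapsto V^{2k}V$ does not disturb the expansion and that every remainder fits into one of the four listed summands. The choice of $h(\epsilon)$ in~\eqref{def-h} was arranged precisely so that the $\epsilon$-powers arising from replacing $f$ by a pure power and from dropping the $-\epsilon^2U^2$ inside the modulus are absorbed into one and the same coefficient.
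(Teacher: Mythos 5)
Your proof is correct and takes essentially the same route as the paper: the same telescoping decomposition of $\epsilon^{-2}f(\cdot)V-\hat V^{2k}\hat V-(1+2k)\hat V^{2k}\tilde V$ into three pieces, the same use of the a priori bound~\eqref{ass-fo} on the first piece, inequality~\eqref{a-a-1} on the second, inequality~\eqref{a-a-2} (with exponent $1+2k$) on the third, and the observation that $\epsilon^{2\min(1,k)}=\max(\epsilon^{2k},\epsilon^2)\le h(\epsilon)$ so that the error from dropping $-\epsilon^2 U^2$ is absorbed into $h(\epsilon)$. The only substantive difference is that you explicitly address the sign anomaly between $\abs V^{2k}V$ and $\abs V^{1+2k}$: inequality~\eqref{a-a-2} with $a=\hat V>0$, $b=\tilde V$ controls $\bigl|\abs V^{1+2k}-\hat V^{1+2k}-(1+2k)\hat V^{2k}\tilde V\bigr|$, while $G_1$ contains $\abs V^{2k}V$, and the two differ by $2\abs V^{1+2k}$ precisely when $V<0$. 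The paper applies~\eqref{a-a-2} to the term $\bigl|\abs V^{2k}V-\hat V^{1+2k}-(1+2k)\hat V^{2k}\tilde V\bigr|$ without remarking on this mismatch. Your patch (when $V<0$ one has $\tilde V<-\hat V$, hence $\abs V\le\abs{\tilde V}$ and the discrepancy $2\abs V^{1+2k}\le 2\abs{\tilde V}^{1+2k}$ is already covered by the $\abs{\tilde V}^{1+2k}$ term of the claimed bound) is correct and makes the lemma hold uniformly for all admissible $\tilde V$, whereas the paper's formulation is tacitly relying on the fact that $V>0$ in the eventual application. This is a genuine, if small, improvement in rigor; everything else agrees with the paper's argument.
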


\begin{proof}
Although most terms in the definition of $G$
(cf. \eqref{def-g}) are small,
we have to be careful when we consider
the general case $k>0$ when $f'(\tau)$
may not be uniformly bounded near $\tau=0$.
To bound $G_1$ (cf. \eqref{def-g1}),
we proceed as follows:
\begin{eqnarray}\label{g-g-1-0}
\abs{G_1(\epsilon,\tilde V,\tilde U)}
&\le&
\Abs{
\epsilon^{-2}f\big(\epsilon^{2/k}(V\sp 2- \epsilon\sp 2 U^2)\big)V
-
\hat V\sp{2k}\hat V
-(1\!+\!2k)\hat V\sp{2k}\tilde{V}}
+
\Abs{
\frac{\hat{V}}{m+\omega}-\frac{\hat{V}}{2m}
}
\nonumber
\\
&\le&
\Abs{
\epsilon^{-2}f\big(\epsilon^{2/k}(V\sp 2- \epsilon\sp 2 U^2)\big)
-\abs{V\sp 2- \epsilon\sp 2 U^2}^{k}}\abs{V}
\nonumber
\\
&&
+
\Abs{\abs{V\sp 2- \epsilon\sp 2 U^2}^{k}-\abs{V}^{2k}}\abs{V}
\nonumber
\\
&&
+
\Abs{
\abs{V}^{2k}V-\hat V\sp{2k}\hat V
-(1+2k)\hat V\sp{2k}\tilde{V}
}
\,+\,
\Big(
\frac{1}{m+\omega}-\frac{1}{2m}
\Big)
\hat V.
\end{eqnarray}
We use \eqref{ass-fo}
to estimate the first term
in the right-hand side
by $h(\epsilon)\abs{V^2-\epsilon^2 U^2}^k\abs{V}$.
Other terms are dealt with
by Lemma~\ref{lemma-a-a}:
we apply
\eqref{a-a-1} to the second term
and \eqref{a-a-2} (with $1+2k$ instead of $k$)
to the third term,
getting
\begin{eqnarray}
&&
\abs{G_1(\epsilon,\tilde V,\tilde U)}
\le
C h(\epsilon)
\abs{V^2-\epsilon^2U^2}^k|V|
+
3^k
\left(
\abs{V}^{2k-2\min(1,k)}
\abs{\epsilon U}^{2\min(1,k)}
+
\abs{\epsilon U}^{2k}
\right)
\!|V|
\nonumber
\\
&&
\qquad\qquad
+3^{1+2k}
\left(
\hat V^{1+2k-\min(2,1+2k)}
\abs{\tilde V}^{\min(2,1+2k)}
+
\abs{\tilde V}^{1+2k}
\right)
\,+\,
\Big(\frac{1}{m+\omega}-\frac{1}{2m}\Big)|\hat V|
,
\nonumber
\end{eqnarray}
which yields the desired bound on $G_1$.
We took into account the definition of $h(\epsilon)$
in \eqref{def-h}.

The estimate on $\abs{G_2}$
immediately follows from \eqref{def-g2}
and \eqref{ass-fo}.
\end{proof}

To apply the fixed point theorem,
we will use the exponential weights,
introducing compactness into \eqref{Xi}.
We fix
\begin{eqnarray}\label{gamma-small}
\gamma\in(0,\gamma_0),
\qquad
\mbox{where}
\quad
\gamma\sb 0
:=
\frac{1}{1+2k}
\inf\sb{\epsilon\in[0,\epsilon_0]}
\frac{1}{
1+\norm{A(\epsilon)^{-1}}\sb{X\sb{e,o}\to X^1\sb{e,o}}
};
\end{eqnarray}
we note that,
by Lemma~\ref{lemma-a-invertible},
one has $\gamma\sb 0>0$.
Due to the exponential decay of $\hat V(t)$, $\hat U(t)$
(see Lemma~\ref{lemma-bl} in Appendix~\ref{sect-nls-smooth}),
since $\gamma<1/(2k+1)<1$,
there are the following inclusions:
\begin{eqnarray}\label{such-w-2}
e^{(1+2k)\gamma\langle t\rangle}\hat{U}
\in X^1,
\qquad
e^{(1+2k)\gamma\langle t\rangle}\hat{V}
\in X^1,
\end{eqnarray}
with $X^1$ from \eqref{def-space-x1}.
We define
\begin{eqnarray}\label{a-gamma}
A\sb\gamma(\epsilon)
:=
e^{(1+2k)\gamma\langle t\rangle}\circ A(\epsilon)
\circ e^{-(1+2k)\gamma\langle t\rangle}
=A(\epsilon)
-
(1+2k)
\gamma\frac{t}{\langle t\rangle}
\begin{bmatrix}0&1\\1&0\end{bmatrix}.
\end{eqnarray}
Due to Lemma~\ref{lemma-a-invertible}
and the choice of $\gamma\sb 0$ in \eqref{gamma-small},
for any $\epsilon\in[0,\epsilon_0]$
the operator \eqref{a-gamma}
is closed and invertible,
so that the mapping
\begin{eqnarray}\label{ttm}
&&
A\sb\gamma(\epsilon)^{-1}
=e^{(1+2k)\gamma\langle t\rangle}\circ A(\epsilon)^{-1}\circ e^{-(1+2k)\gamma\langle t\rangle},
\\
&&
A\sb\gamma(\epsilon)^{-1}:\;
X\sb{e,o}\to X^1\sb{e,o}
:=H^1\sb{e,o}(\R,\langle t\rangle^{n-1}\mathrm{d}t;\,\C^2)
\nonumber
\end{eqnarray}
is bounded uniformly in $\epsilon\in[0,\epsilon_0]$.
We multiply the fixed point problem \eqref{Xi}
by $e^{\gamma\langle t\rangle}$,
rewriting it in the form
\begin{eqnarray}\label{itf}
e^{\gamma\langle t\rangle}
\tilde W =
e^{-2k\gamma\langle t\rangle}
A\sb\gamma(\epsilon)\sp{-1}
e^{(1+2k)\gamma\langle t\rangle}
G\big(\epsilon, e\sp{-\gamma\langle t\rangle}e^{\gamma\langle t\rangle}\tilde W\big).
\end{eqnarray}

\begin{lemma}\label{lemma-small-g-new}
There is $C<\infty$ such that
for any $\epsilon\in(0,\epsilon_0)$
and any
$\begin{bmatrix}\tilde V\\\tilde U\end{bmatrix}\in X$
which satisfies
\begin{eqnarray}\label{tilde-smaller-than-hat-new}
\Norm{e^{\gamma\langle t\rangle}
\begin{bmatrix}\tilde V\\\tilde U\end{bmatrix}
}\sb{X}
\le
\Norm{e^{\gamma\langle t\rangle}
\begin{bmatrix}\hat V(t)\\ \hat U(t)\end{bmatrix}
}\sb{X},
\end{eqnarray}
one has
\begin{eqnarray}\label{g-small-new}
\norm{
e^{(1+2k)\gamma\langle t\rangle}
G(\epsilon, \tilde V,\tilde U)}\sb{X}
\leq
C\left(h(\epsilon)
+\Norm{e^{\gamma\langle t\rangle}
\begin{bmatrix}\tilde V\\\tilde U\end{bmatrix}
}\sb{X}\sp{1+\min(1,2k)}
\right),
\end{eqnarray}
with $h(\epsilon)$ from \eqref{def-h}.
\end{lemma}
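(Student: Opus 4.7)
The idea is to apply the pointwise estimates of Lemma~\ref{lemma-g1-g2-bounds}, multiply through by the weight $e^{(1+2k)\gamma\langle t\rangle}$, and distribute this weight across the polynomial factors appearing on the right-hand side, writing $e^{(1+2k)\gamma\langle t\rangle}=\prod e^{\gamma\langle t\rangle}$ with one exponential per factor of $V$, $U$, $\hat V$, $\hat U$, $\tilde V$, or $\tilde U$. Each weighted factor will then be shown to have a finite $X$-norm: the $\hat V,\hat U$ factors through the inclusion \eqref{such-w-2} (valid because $\gamma<\gamma_0<1/(1+2k)$ keeps $(1+2k)\gamma$ strictly below the exponential decay rate of the NLS groundstate $u_k$), and the $\tilde V,\tilde U$ factors through the hypothesis \eqref{tilde-smaller-than-hat-new}. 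The conclusion will follow from the Banach algebra inequality \eqref{x-algebra}.

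The terms in $G_1$ and $G_2$ fall into three groups. First, the background contributions $\epsilon^2\hat V$ and $\epsilon^2\hat U$ are handled using $\epsilon^2\le h(\epsilon)$ from \eqref{def-h} and the fact that $e^{(1+2k)\gamma\langle t\rangle}\hat V$, $e^{(1+2k)\gamma\langle t\rangle}\hat U$ lie in $X$ by \eqref{such-w-2}; they contribute $O(h(\epsilon))$ to \eqref{g-small-new}. Second, the terms of homogeneous degree $1+2k$ in $(V,U)$ carrying an explicit prefactor of $h(\epsilon)$ or $\epsilon^2$ --- namely $h(\epsilon)(|V|+|U|)^{1+2k}$ in $G_1$ and $\epsilon^2|V^2-\epsilon^2 U^2|^k|U|$ in $G_2$, the latter dominated crudely by $C h(\epsilon)(|V|+|U|)^{1+2k}$ --- also contribute $O(h(\epsilon))$: after distributing the weight and invoking \eqref{x-algebra}, the $1+2k$ weighted copies of $|V|+|U|$ yield a uniform constant via the triangle inequality, \eqref{such-w-2}, and \eqref{tilde-smaller-than-hat-new}.

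Third, and most delicately, come the genuinely perturbative terms $\hat V^{1+2k-\min(2,1+2k)}|\tilde V|^{\min(2,1+2k)}$ and $|\tilde V|^{1+2k}$ in $G_1$. If $k\ge 1/2$, then $\min(2,1+2k)=2$ and $1+\min(1,2k)=2$, so the first becomes $\hat V^{2k-1}|\tilde V|^2$; writing the weight as $e^{(1+2k)\gamma\langle t\rangle}=e^{(2k-1)\gamma\langle t\rangle}\cdot(e^{\gamma\langle t\rangle})^2$ and using \eqref{x-algebra} and \eqref{such-w-2} bounds it by $C\|e^{\gamma\langle t\rangle}\tilde V\|_X^2$. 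The second term $|\tilde V|^{1+2k}$ is bounded by $C\|e^{\gamma\langle t\rangle}\tilde V\|_X^{1+2k}=C\|e^{\gamma\langle t\rangle}\tilde V\|_X^2\cdot\|e^{\gamma\langle t\rangle}\tilde V\|_X^{2k-1}$, where the excess factor is absorbed into a constant via \eqref{tilde-smaller-than-hat-new}. In the case $0<k<1/2$ one has $\min(2,1+2k)=1+2k$ and $1+\min(1,2k)=1+2k$, so the $\hat V$ exponent vanishes and both terms reduce directly to $C\|e^{\gamma\langle t\rangle}\tilde V\|_X^{1+2k}$, matching the target exponent exactly.

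\textbf{Main obstacle.} The most delicate point is the uniform treatment of non-integer powers of $\hat V$, in particular making sense of $\hat V^{2k-1}$ for fractional $2k-1>0$ and verifying that it sits in $X$ after being weighted by a fraction of the exponential. Strict positivity of $\hat V=u_k(|t|)$ (as a groundstate of the stationary NLS) resolves the first issue, and the second follows from the decay margin chosen in \eqref{gamma-small}: since $(1+2k)\gamma<1$ is strictly less than the exponential decay rate of $u_k$ (Lemma~\ref{lemma-bl}), one has $e^{p\gamma\langle t\rangle}\hat V^q\in X$ for all $p,q>0$ with $p\le (1+2k)q$. The remaining work is careful bookkeeping of exponents, case-separated on whether $k$ is above or below $1/2$.
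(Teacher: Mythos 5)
Your proof is correct and follows essentially the same approach as the paper's: use the pointwise bounds of Lemma~\ref{lemma-g1-g2-bounds}, distribute the weight $e^{(1+2k)\gamma\langle t\rangle}$ across the factors appearing in each term of degree $1+2k$, and invoke \eqref{such-w-2}, \eqref{tilde-smaller-than-hat-new} and the algebra property \eqref{x-algebra} to turn the $\hat V,\hat U$ factors into constants while keeping the $\tilde V$ factors as norms. Your explicit case-split on $k\gtrless 1/2$ just makes visible the identity $\min(2,1+2k)=1+\min(1,2k)$ that the paper uses implicitly, and your crude domination of the $G_2$ term by $Ch(\epsilon)(|V|+|U|)^{1+2k}$ is a minor cosmetic reorganization of the paper's separate $O(\epsilon^2)$ bound; the substance matches.
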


\begin{proof}
We use the pointwise estimates on $G_1$, $G_2$ from
Lemma~\ref{lemma-g1-g2-bounds}.
There, the first term in the right-hand side of
the bound on $G_1$
has a factor $h(\epsilon)$.
Multiplying this term 
by $e^{(1+2k)\gamma\langle t\rangle}$
and using \eqref{such-w-2} and \eqref{tilde-smaller-than-hat-new},
and also the fact that the space
$X$ defined in \eqref{def-space-x}
is closed under multiplication,
we bound the resulting $X$-norm
by $C h(\epsilon)$,
with some $C<\infty$.

The terms
\[
\hat V^{1+2k-\min(2,1+2k)}\abs{\tilde{V}}^{\min(2,1+2k)}
+\abs{\tilde V}^{1+2k}
\]
in the right-hand side of
the bound on $G_1$
in Lemma~\ref{lemma-g1-g2-bounds},
having no $\epsilon$-factor,
are of order higher than one in $\tilde V$,
benefiting us when $\abs{\tilde V}$ is small.
Multiplying them by the factor $e^{(1+2k)\gamma\langle t\rangle}$,
which is absorbed
by the terms which are homogeneous of order
$(1+2k)$ in
$\hat V$ and $\tilde V$,
we bound the $X$-norm of the result
by $C\norm{e^{\gamma\langle t\rangle}\tilde V}_{X}^{1+\min(1,2k)}$.
We note that
$\norm{e^{\gamma\langle t\rangle}\hat V}_{X}^{1+\min(1,2k)}$
and
$\norm{e^{\gamma\langle t\rangle}\hat U}_{X}^{1+\min(1,2k)}$
are finite due to $\gamma<1$ (cf. \eqref{gamma-small})
and
due to the exponential decay of $\hat V$
and $\hat U$
which follows from \eqref{Vhatdef}
and Lemma~\ref{lemma-bl} (see Appendix~\ref{sect-nls-smooth}).

For the last term
in the right-hand side
of the bound on $G_1$
from Lemma~\ref{lemma-g1-g2-bounds}
multiplied by $e^{(1+2k)\gamma\langle t\rangle}$,
its $X$-norm is bounded by $C\epsilon^2$
with the aid of \eqref{such-w-2}.
We conclude that
there is a constant
$C<\infty$
such that there is the desired bound
\[
\norm{
e^{(1+2k)\gamma\langle t\rangle}
G_1(\epsilon,\tilde V,\tilde U)}\sb{X}
\le
C\left(h(\epsilon)
  +\norm{e^{\gamma\langle t\rangle}\tilde{V}}\sb{X}^{1+\min(1,2k)}
\right),
\qquad
\forall
\epsilon\in(0,\epsilon_0).
\]

We now consider $G_2$.
Due to the factor
$\epsilon^2$ in the right-hand side
of the bound on $G_2$
in Lemma~\ref{lemma-g1-g2-bounds}
and due to the exponential decay of $\hat V$, $\hat U$
(together with the bound \eqref{such-w-2}),
as well as due to the
assumption \eqref{tilde-smaller-than-hat-new}
about the exponential decay of $\tilde V$ and $\tilde U$,
one has
\[
\norm{
e^{(1+2k)\gamma\langle t\rangle}G_2
(\epsilon,\tilde V,\tilde U)
}\sb{X}
\le
C\epsilon^2,
\qquad
\forall
\epsilon\in(0,\epsilon_0).
\]
Lemma~\ref{lemma-small-g-new}
is proved.
\end{proof}

Let us now complete
the proof of Theorem~\ref{theorem-solitary-waves}.
We consider the mapping
\begin{eqnarray}\label{mapping-mu}
&&
\qquad\qquad\qquad\qquad
\mu_\gamma(\epsilon,\cdot):\;X\sb{e,o}\to X\sb{e,o}\to X\sb{e,o}^1
\subset X\sb{e,o},
\\[1ex]
\nonumber
&&
\mu_\gamma(\epsilon,\cdot):\;
Z
\mapsto
e^{(1+2k)\gamma\langle t\rangle}G(\epsilon,e^{-\gamma\langle t\rangle}Z)
\mapsto
e^{-2k\gamma\langle t\rangle}
A\sb\gamma(\epsilon)^{-1}
e^{(1+2k)\gamma\langle t\rangle}G(\epsilon,e^{-\gamma\langle t\rangle}Z).
\end{eqnarray}
Note that $\tilde W$ is a solution to \eqref{w-a-a-3}
if and only if $Z=e^{\gamma\langle t\rangle}\tilde W$
is a fixed point of this map.

\begin{lemma}\label{lemma-mu-into}
One can take $\epsilon_0>0$ smaller if necessary
so that there is
$a_0>0$
such that
\begin{eqnarray}\label{mu-ball-x-x1}
\mu_\gamma\left(\epsilon,\,\overline{\mathbb{B}\sb{\rho}(X\sb{e,o})}\right)
\subset\overline{\mathbb{B}\sb{\rho}(X^1\sb{e,o})},
\quad
\rho=a_0 h(\epsilon),
\qquad
\forall\epsilon\in(0,\epsilon_0),
\end{eqnarray}
with $h(\epsilon)$ from \eqref{def-h}.
\end{lemma}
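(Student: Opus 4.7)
The plan is to combine Lemma~\ref{lemma-small-g-new} with the uniform bound on $A\sb\gamma(\epsilon)^{-1}$ that follows from Lemma~\ref{lemma-a-invertible} together with the choice \eqref{gamma-small} of $\gamma$, and then close the estimate by first fixing the constant $a_0$ and afterwards shrinking $\epsilon_0$. The only real obstacle is keeping track of the weight $e^{-2k\gamma\langle t\rangle}$ in the definition \eqref{mapping-mu} of $\mu\sb\gamma$ and verifying that the hypothesis \eqref{tilde-smaller-than-hat-new} of Lemma~\ref{lemma-small-g-new} is in force for every $Z$ in the target ball.

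Given $Z\in\overline{\mathbb{B}\sb\rho(X\sb{e,o})}$ with $\rho=a_0 h(\epsilon)$ and $a_0>0$ to be chosen, set $\tilde W:=e^{-\gamma\langle t\rangle}Z$, so that $\norm{e^{\gamma\langle t\rangle}\tilde W}\sb{X}\le\rho$. Because $h(\epsilon)\to 0$ as $\epsilon\to 0^{+}$ by \eqref{def-h}, while the quantity $\norm{e^{\gamma\langle t\rangle}[\hat V,\hat U]\sp\top}\sb{X}$ is a fixed positive constant (finite by \eqref{such-w-2}), a first shrinking of $\epsilon_0$ guarantees that $\rho\le\norm{e^{\gamma\langle t\rangle}[\hat V,\hat U]\sp\top}\sb{X}$, hence the hypothesis \eqref{tilde-smaller-than-hat-new} of Lemma~\ref{lemma-small-g-new} holds. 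Applying that lemma gives
\[
\norm{e^{(1+2k)\gamma\langle t\rangle}G(\epsilon,\tilde W)}\sb{X}\le C\bigl(h(\epsilon)+\rho\sp{1+\min(1,2k)}\bigr).
\]

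By \eqref{ttm} and Lemma~\ref{lemma-a-invertible}, the operator $A\sb\gamma(\epsilon)\sp{-1}:X\sb{e,o}\to X\sp 1\sb{e,o}$ is bounded uniformly in $\epsilon\in[0,\epsilon_0]$. Moreover, the pointwise multiplier $e^{-2k\gamma\langle t\rangle}$ is bounded together with its derivative, so it acts boundedly on $X\sp 1$. Combining these facts with the definition \eqref{mapping-mu} of $\mu\sb\gamma$ and with the preceding bound on $G$, one obtains
\[
\norm{\mu\sb\gamma(\epsilon,Z)}\sb{X\sp 1}\le C\sb 1\bigl(h(\epsilon)+\rho\sp{1+\min(1,2k)}\bigr),
\]
with some $C\sb 1<\infty$ independent of $\epsilon\in(0,\epsilon_0)$ and of $Z\in\overline{\mathbb{B}\sb\rho(X\sb{e,o})}$. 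Fix $a_0:=2C\sb 1$. The desired inclusion $\mu\sb\gamma(\epsilon,Z)\in\overline{\mathbb{B}\sb\rho(X\sp 1\sb{e,o})}$ reduces to $C\sb 1 h(\epsilon)+C\sb 1(a_0 h(\epsilon))\sp{1+\min(1,2k)}\le a_0 h(\epsilon)$, and after dividing by $h(\epsilon)>0$ to the scalar inequality $a_0\sp{\min(1,2k)}h(\epsilon)\sp{\min(1,2k)}\le 1$, which holds after a final shrinking of $\epsilon_0$ since $h(\epsilon)\to 0$.

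It remains only to observe that the parity constraints are preserved along the orbit: inspection of \eqref{def-g1}--\eqref{def-g2} shows that $G\sb 1$ is even and $G\sb 2$ is odd in $t$ whenever $\tilde V$ is even and $\tilde U$ is odd, and by construction $A\sb\gamma(\epsilon)\sp{-1}$ maps $X\sb{e,o}$ into $X\sp 1\sb{e,o}$; multiplication by the even weight $e^{-2k\gamma\langle t\rangle}$ preserves this parity structure as well. This yields \eqref{mu-ball-x-x1}.
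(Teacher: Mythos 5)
Your argument follows the paper's proof line for line: invoke Lemma~\ref{lemma-small-g-new} after checking that the hypothesis \eqref{tilde-smaller-than-hat-new} holds for $\tilde W=e^{-\gamma\langle t\rangle}Z$, combine with the uniform bound on $e^{-2k\gamma\langle t\rangle}\circ A_\gamma(\epsilon)^{-1}$ from \eqref{ttm}, and close by choosing $\rho=O(h(\epsilon))$. One small slip: after dividing $C_1 h(\epsilon)+C_1(a_0 h(\epsilon))^{1+\min(1,2k)}\le a_0 h(\epsilon)$ by $h(\epsilon)$ and substituting $a_0=2C_1$, the reduced condition is $a_0^{1+\min(1,2k)}h(\epsilon)^{\min(1,2k)}\le 1$ (exponent $1+\min(1,2k)$ on $a_0$, not $\min(1,2k)$), though this does not affect the conclusion since $h(\epsilon)\to 0$.
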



\begin{proof}
If $Z$ belongs to a closed ball
$\overline{\mathbb{B}\sb{\rho}(X\sb{e,o})}
=\{\xi\in X\sb{e,o}\sothat\norm{\xi}\sb{X}\le\rho\}$,
with
\[
\rho
\le
\Norm{e^{\gamma\langle t\rangle}
\begin{bmatrix}\hat V(t)\\ \hat U(t)\end{bmatrix}}\sb{X},
\]
then
Lemma~\ref{lemma-small-g-new}
applies to
$\tilde W=e^{-\gamma\langle t\rangle}Z$,
giving us
\begin{eqnarray}\label{e-g-small}
\norm{
e^{(1+2k)\gamma\langle t\rangle}G(\epsilon,e^{-\gamma\langle t\rangle}Z)}\sb{X}
\leq C
\left\{
h(\epsilon)+\norm{Z}\sb{X}\sp{1+\min(1,2k)}
\right\}.
\end{eqnarray}
Therefore, to find the sufficient condition
for \eqref{mu-ball-x-x1} to be satisfied,
we use the definition of $\mu\sb\gamma$ from \eqref{mapping-mu}
and apply the estimate \eqref{e-g-small},
arriving at the requirement
\begin{eqnarray}\label{into-itself}
\norm{e^{-2k\gamma\langle t\rangle}\circ A\sb\gamma(\epsilon)^{-1}}\sb{X\sb{e,o}\to X\sb{e,o}^1}
C
\left\{h(\epsilon)+\rho^{1+\min(1,2k)}\right\}
\le\rho.
\end{eqnarray}
Noting the continuity of the mapping \eqref{ttm},
the first factor in the left-hand side is bounded;
thus, one can satisfy \eqref{into-itself}
by taking $\rho=O(h(\epsilon))$.
This finishes the proof.
\end{proof}

Since it is not clear that the mapping
$\mu_\gamma(\epsilon,\cdot):\;X\sb{e,o}\to X^1\sb{e,o}\subset X\sb{e,o}$
defined in \eqref{mapping-mu}
is a contraction
without assuming that $f$ is sufficiently regular
we can not apply the Banach fixed point theorem
to \eqref{mapping-mu}.
Instead, we use the Schauder fixed point theorem
(see e.g. \cite[Corollary 11.2]{MR1814364}):

\begin{verse}
\noindent
{\it
Let $Q$ be a closed, convex, bounded subset of a
Banach space $\mathscr{X}$
and $\mu:\;Q\to Q$ a continuous compact map;
then $\mu$ has a fixed point in $Q$.
}
\end{verse}

Clearly, the mapping
$\mu_\gamma(\epsilon,\cdot):\;X\sb{e,o}\to X\sb{e,o}^1$ is continuous;
note that, in particular,
\[
(V,U)\mapsto
\epsilon^{-2} f\big(\epsilon^{2/k}(V^2-\epsilon^2 U^2)\big)V
\]
is continuous in the norm of the space $X$
since the map $(V,U)\mapsto
\epsilon^{-2} f\big(\epsilon^{2/k}(V^2-\epsilon^2 U^2)\big)$
is continuous as a map from $L^\infty(\R,\C^2)$ to $L^\infty(\R)$.
Then the mapping
\[
e\sp{-2k\gamma\langle t\rangle}
\circ
A\sb\gamma(\epsilon)\sp{-1}:
\,
X\sb{e,o}\to X\sb{e,o}^1\to X\sb{e,o},
\]
is compact,
since the multiplication
by the decaying exponential weight
is a compact map from $X\sb{e,o}^1$ to $X\sb{e,o}$.
Therefore, so is the mapping $\mu_\gamma(\epsilon,\cdot)$
when considered as a map from $X\sb{e,o}$ into itself.
By Lemma~\ref{lemma-mu-into},
the Schauder fixed point theorem
gives a fixed point of the map $\mu_\gamma(\epsilon,\cdot)$
which belongs to a closed ball
$\overline{\mathbb{B}\sb\rho(X^1\sb{e,o})}$
of radius $\rho=a_0 h(\epsilon)$,
with $a_0>0$ which does not depend on
$\epsilon\in(0,\epsilon_0)$.
It follows that $\tilde W=e^{-k\gamma\langle t\rangle}Z$ satisfies
\begin{eqnarray}\label{v-u-tilde-small}
\norm{e^{k\gamma\langle t\rangle}\tilde W}\sb{X^1}
=\norm{Z}\sb{X^1}\le\rho\le a_0 h(\epsilon),
\qquad
\forall
\epsilon\in(0,\epsilon_0).
\end{eqnarray}
This yields \eqref{v-u-tilde-small-0}.

\begin{remark}
The map $\tilde W(\epsilon)$
is not a sufficiently well-defined function
to make it continuous in $\epsilon$
since the solution
provided by the Schauder fixed point
theorem is not necessarily unique,
due to the absence of the contraction.
The uniqueness of the mapping
$\epsilon\mapsto\tilde W(\epsilon)$,
under stronger assumptions on $f$,
will be addressed in Section~\ref{sect-diff}.
\end{remark}

We note that
\[
\norm{\tilde W}\sb{L^\infty}
\le
\norm{e^{k\gamma\langle t\rangle}\tilde W}\sb{L^\infty}
\le
\norm{e^{k\gamma\langle t\rangle}\tilde W}\sb{X^1}
\le a_0 h(\epsilon),
\qquad
\forall\epsilon\in(0,\epsilon_0);
\]
thus, we can impose the condition that
$\epsilon_0>0$ is small enough so that
\[
\abs{\tilde V(t,\epsilon)}
+m\abs{\tilde U(t,\epsilon)}
<\Lambda\sb k,
\qquad
\forall t\in\R,
\qquad
\forall\epsilon\in(0,\epsilon_0),
\]
to satisfy our assumption
\eqref{UV0-0}.

\medskip

Finally, let us prove that $V,\,U\in C^1(\R)$.
Due to the continuity of $\hat V$ and $\hat U$
(which follows from Lemma~\ref{lemma-bl}
and from \eqref{Vhatdef})
and of $\tilde V$ and $\tilde U$
(which follows from
applying Lemma~\ref{lemma-a-invertible}
to \eqref{w-a-a-3}),
we know that $V$ and $U$
are continuous on the whole real axis.

\begin{lemma}\label{lemma-v-u-c1}
Fix $\epsilon\in(0,\epsilon_0)$.
If $f\in C(\R)$
and if $V,\,U\in C(\R)$, with $V$ even and $U$ odd,
are solutions to \eqref{zero-is-phi1},
then $V,\,U\in C^1(\R)$
and $H(t):=U(t)/t$, $t\ne 0$
could be extended to a continuous function on $\R$.

Moreover, if there is $C<\infty$ such that
\[
\abs{V(t)}+\abs{U(t)}
\le C,
\qquad
\forall t\in\R,
\]
then there is $C'<\infty$ such that
\[
\abs{\p\sb t V(t)}+\abs{\p\sb t U(t)}
\le
C',
\qquad
\forall t\in\R.
\]
\end{lemma}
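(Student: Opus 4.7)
The plan is to exploit the structure of the system \eqref{zero-is-phi1} to read off $\partial_t V$ and $\partial_t U$ directly, with the only delicate point being the term $(n-1)U(t)/t$ at $t=0$.

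First I would handle $\partial_t V$: the second equation of \eqref{zero-is-phi1} reads
\[
\partial_t V = \big(f\big(\epsilon^{2/k}(V^2-\epsilon^2 U^2)\big)-(m+\omega)\big)U,
\]
whose right-hand side is continuous on $\R$ by continuity of $V$, $U$, and $f$; this gives $V\in C^1(\R)$, with $\|\partial_t V\|_{L^\infty}$ controlled by $\|U\|_{L^\infty}$ and $\sup|f|$ on the compact range of $\epsilon^{2/k}(V^2-\epsilon^2U^2)$.

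Next I would treat the first equation of \eqref{zero-is-phi1} in divergence form. Setting
\[
g(t)
:=
\Big(\tfrac{1}{\epsilon^2}f\big(\epsilon^{2/k}(V(t)^2-\epsilon^2 U(t)^2)\big)-\tfrac{1}{m+\omega}\Big)V(t),
\]
which is continuous on $\R$, the first equation multiplied by $t^{n-1}$ becomes
$\partial_t\big(t^{n-1}U(t)\big)=t^{n-1}g(t)$.
Since $U$ is odd and continuous with $U(0)=0$, we have $t^{n-1}U(t)\to 0$ as $t\to 0$, and integration yields
\[
t^{n-1}U(t)=\int_0^t s^{n-1}g(s)\,ds,\qquad t\in\R.
\]
Dividing by $t^{n}$ and applying L'Hôpital (or the mean value theorem),
\[
H(t):=\frac{U(t)}{t}\xrightarrow[t\to 0]{}\frac{g(0)}{n},
\]
so $H$ extends continuously to all of $\R$ with $H(0)=g(0)/n$; note that oddness of $U$ makes this extension consistent on both sides of $0$. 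The first equation of \eqref{zero-is-phi1} can then be rewritten as
\[
\partial_t U(t)=-(n-1)H(t)+g(t),\qquad t\in\R\setminus\{0\},
\]
whose right-hand side is continuous on all of $\R$. Since $U$ is continuous with $U(0)=0$ and $(U(t)-U(0))/t=H(t)\to H(0)$ as $t\to 0$, the derivative $\partial_t U(0)$ exists and equals $H(0)=g(0)/n$; comparing with the identity above at $t=0$ gives $\partial_t U(0)=-(n-1)H(0)+g(0)=g(0)/n$, consistent with $U$ being odd. Hence $U\in C^1(\R)$.

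For the quantitative part, assume $|V|+|U|\le C$ on $\R$. Then $\epsilon^{2/k}|V^2-\epsilon^2 U^2|$ takes values in a fixed compact set, so by continuity of $f$ the function $g$ is bounded by some $\tilde C$ depending on $C$, $k$, $\epsilon$, $m$, $\omega$, and the modulus of continuity of $f$. For $|t|\le 1$, the integral representation above gives
\[
|H(t)|=\Big|t^{-n}\int_0^t s^{n-1}g(s)\,ds\Big|\le \tilde C/n,
\]
while for $|t|\ge 1$ one has $|H(t)|\le |U(t)|\le C$. Therefore $H\in L^\infty(\R)$, and the formula $\partial_t U=-(n-1)H+g$ yields $\partial_t U\in L^\infty(\R)$, with a bound $C'$ depending only on $C$, $n$, and the already-controlled quantities. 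Together with the bound on $\partial_t V$ obtained at the first step, this completes the proof. The only genuine obstacle is the regularity of $U/t$ at $t=0$, which is resolved by the integration-against-$t^{n-1}$ trick combined with the parity of $U$.
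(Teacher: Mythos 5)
Your proof is correct and follows essentially the same route as the paper: read off $\partial_t V$ directly from the second equation, put the first equation in the divergence form $\partial_t(t^{n-1}U)=t^{n-1}g(t)$, use $t^{n-1}U\to 0$ at the origin to get the integral representation of $H=U/t$, and deduce its continuity and the boundedness of $\partial_t U$. The only cosmetic difference is that the paper calls your $g$ by the name $B$ and derives the uniform bound $|H|\le\|B\|_{L^\infty}$ directly from the integral representation for all $t$ rather than splitting the argument at $|t|=1$.
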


\begin{proof}
The second equation in \eqref{zero-is-phi1}
immediately gives $V\in C^1(\R)$.
To prove that one also has $U\in C^1(\R)$,
we write the first equation in \eqref{zero-is-phi1}
as
\begin{eqnarray}\label{up-u-b}
U'+(n-1)\frac{U}{t}=B(t),
\qquad t\in\R,
\end{eqnarray}
with $B\in C(\R)$
given by
\begin{eqnarray}\label{def-b-t}
B(t)=\frac{
f\big(\epsilon^{2/k}\big(V(t)^2-\epsilon^2 U(t)^2\big)\big)}{\epsilon^2}
V(t)
-\frac{1}{m+\omega}V(t).
\end{eqnarray}
It is enough to prove that
$H(t)=U(t)/t\in C(\R\setminus\{0\})$ could be extended
to a continuous function on $\R$
(then the same is true for $U'$).
Thus, we need to show that
$H(t)$ has a finite limit as $t\to 0$.
From \eqref{up-u-b} we arrive at
\[
\p\sb t(U(t)t^{n-1})=B(t)t^{n-1},\qquad t\in\R,
\]
hence, one has
\begin{eqnarray}\label{def-h-t}
H(t)=\frac{U(t)}{t}=\frac{\int_0^t B(\tau)\tau^{n-1}\,d\tau}{t^n},
\qquad
t>0,
\end{eqnarray}
which has a well-defined limit at the origin:
\[
\lim\sb{t\to 0}
H(t)
=
\lim\sb{t\to 0}
\frac{\int_0^t B(\tau)\tau^{n-1}\,d\tau}{t^n}
=
\lim\sb{t\to 0}
\frac{B(t)}{n}=\frac{B(0)}{n}.
\]

Let us show the uniform boundedness of the derivatives of $V$ and $U$.
From the system \eqref{zero-is-phi1}, due to bounds \eqref{no-more},
we conclude that
\[
\abs{\p\sb t V(t)}
\le C,
\qquad
\forall t\in\R,
\qquad
\forall\epsilon\in(0,\epsilon_0),
\]
with some $C<\infty$.
Then, since
$B(t)$ in \eqref{def-b-t}
satisfies
\[
\abs{B(t)}
=
\Abs{\frac{f}{\epsilon^2}V-\frac{1}{m+\omega}U}
\le
C,
\qquad
\forall t\in\R,
\qquad
\forall\epsilon\in(0,\epsilon_0),
\]
with some $C<\infty$,
we conclude from \eqref{def-h-t} that
$
\abs{H(t)}
\le
\norm{B}\sb{L^\infty}
$
and then from \eqref{zero-is-phi1}
that
$
\abs{\p\sb t U(t)}
\le
2\norm{B}\sb{L^\infty}$,
for all $t\in\R$.
\end{proof}

The proof of Theorem~\ref{theorem-solitary-waves}~\itref{theorem-solitary-waves-i}
is finished.

\section{Positivity of
$\bar\phi\phi$
and improved estimates}
\label{sect-shooting}

\subsection{Positivity of
$\bar\phi\phi$
in the nonrelativistic limit
via the shooting argument}

\label{sect-shooting-1}

To be able to consider
the nonlinearity $f(\tau)=\abs{\tau}^k+\dots$
which is not differentiable at $\tau=0$
unless $k\ge 1$,
we will show that the quantity
$\phi\sp\ast\beta\phi$,
which is the argument of $f(\cdot)$ in \eqref{nld-stationary},
remains positive if $\omega\lesssim m$.
This will allow us to treat
the nonlinear Dirac equation
with fractional power nonlinearity
using the Taylor-style estimates on the remainders
instead of weaker estimates from Lemma~\ref{lemma-a-a}.

So we proceed to the proof of
Theorem~\ref{theorem-solitary-waves}~\itref{theorem-solitary-waves-ii},
showing that $U$ is pointwise dominated by $V$.

\begin{proposition}\label{prop-u-le-v}
There is $\epsilon_1\in(0,\epsilon_0)$
such that for all $\epsilon\in(0,\epsilon_1)$
one has
\[
\epsilon_1\abs{U(t,\epsilon)}
\le
\frac{1}{2}\abs{V(t,\epsilon)},
\qquad
\forall t\in\R,
\qquad
\forall\epsilon\in(0,\epsilon_1).
\]
\end{proposition}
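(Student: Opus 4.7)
The proof splits $\R$ into a fixed compact core $[-R,R]$ and its tail $|t|>R$, treated by distinct techniques. One cannot simply reduce to $(\hat V,\hat U)$ globally: the $H^1$-bound on $e^{\gamma\langle t\rangle}\tilde W$ from Theorem~\ref{theorem-solitary-waves}~\itref{theorem-solitary-waves-i} has weight $e^{-\gamma\langle t\rangle}$ with $\gamma<1/(1+2k)<1$, which decays strictly slower than the groundstate rate $\hat V(t)\sim c|t|^{-(n-1)/2}e^{-|t|}$ from Lemma~\ref{lemma-bl}, so the ratio $|\tilde V(t,\epsilon)|/\hat V(t)$ need not be bounded for large $|t|$.

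On the core, by Lemma~\ref{lemma-bl} and~\eqref{Vhatdef}, $\hat V$ is a strictly positive continuous function and $\hat U$ is continuous. Fix $R>0$ and set $c_R:=\min_{|t|\le R}\hat V>0$, $M_R:=\max_{|t|\le R}|\hat U|<\infty$. Theorem~\ref{theorem-solitary-waves}~\itref{theorem-solitary-waves-i} combined with the one-dimensional Sobolev embedding $H^1(\R)\hookrightarrow L^\infty(\R)$ yields $\|\tilde V(\cdot,\epsilon)\|_{L^\infty}+\|\tilde U(\cdot,\epsilon)\|_{L^\infty}\to 0$ as $\epsilon\to 0$; hence $|V|\ge c_R/2$ and $|U|\le M_R+1$ on $[-R,R]$ for $\epsilon$ small, giving $\epsilon_1|U|\le|V|/2$ there provided $\epsilon_1\le c_R/(4(M_R{+}1))$.

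In the tail $|t|>R$ (by the symmetries $V(-t,\epsilon)=V(t,\epsilon)$, $U(-t,\epsilon)=-U(t,\epsilon)$ it suffices to handle $t>R$) I set up a shooting argument via an integrated form of the ODE. Multiplying the first equation of~\eqref{zero-is-phi1} by $t^{n-1}$ and integrating from $t$ to $+\infty$ (the boundary term vanishes by the exponential decay of $V,U$ inherited from~\eqref{v-u-tilde-small-0}) gives the Volterra identity
\[
t^{n-1}U(t)=\int_{t}^{+\infty}s^{n-1}V(s)\Big[\tfrac{1}{m+\omega}-\tfrac{f(\epsilon^{2/k}(V(s)^{2}-\epsilon^{2}U(s)^{2}))}{\epsilon^{2}}\Big]\,ds.
\]
By~\eqref{ass-fo} and~\eqref{no-more}, the nonlinear factor is controlled by $|V|^{2k}(1+o_{\epsilon}(1))$, which becomes negligibly small in the tail once $R$ is chosen large (independently of $\epsilon$). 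The remaining linear term is evaluated by Laplace-type asymptotics $\int_{t}^{+\infty}s^{n-1}|V(s)|\,ds=t^{n-1}|V(t)|(1+o(1))$, valid for any function with the exponential decay profile $\sim s^{-(n-1)/2}e^{-s}$; this yields $|U(t)|\le\frac{|V(t)|}{m+\omega}(1+o(1))$ uniformly for $t\ge R$, hence $\epsilon_{1}|U|\le|V|/2$ as soon as $\epsilon_{1}<m$.

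The main obstacle is closing the Volterra estimate self-consistently, since the right-hand side couples $U$ and $V$ through the nonlinear factor $f$ and the Laplace-type reduction of $\int_{t}^{\infty}s^{n-1}|V(s)|\,ds$ must be justified uniformly as $\epsilon\to 0^{+}$, not just for the linearized profile $\hat V$. I expect to handle this by a bootstrap/continuity (shooting) step: on the maximal half-open interval $[R,t_{\ast})\subset[R,+\infty)$ on which the candidate bound $\epsilon_{1}|U|\le|V|/2$ is assumed to hold, the Volterra estimate above actually returns the strict improvement $\epsilon_{1}|U|\le|V|/4$ for $\epsilon_{1}$ sufficiently small, contradicting saturation at $t_{\ast}$ and thereby forcing $t_{\ast}=+\infty$.
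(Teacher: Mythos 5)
Your core estimate on a fixed compact interval is fine and matches the way the paper handles $|t|\le T_1$. The tail argument, however, has a gap that your own first paragraph identifies and the proposed bootstrap does not close. The Volterra identity $t^{n-1}U(t)=\int_t^\infty s^{n-1}V(s)\bigl[\tfrac{1}{m+\omega}-\tfrac{f}{\epsilon^2}\bigr]\,ds$ is correct, but the reduction $\int_t^\infty s^{n-1}V(s)\,ds=t^{n-1}V(t)(1+o(1))$ presupposes that $V$ is positive in the tail and decays at the sharp rate $\sim s^{-(n-1)/2}e^{-s}$. What Theorem~\ref{theorem-solitary-waves}~\itref{theorem-solitary-waves-i} actually provides is $\|e^{\gamma\langle t\rangle}\tilde W\|_{H^1}\to 0$ with $\gamma<1/(1+2k)<1$, so for $|t|$ beyond some $\epsilon$-dependent threshold the a priori bound on $|\tilde V|$ can dominate $\hat V$: neither the sign of $V$ nor its decay rate is controlled there. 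The bootstrap hypothesis $\epsilon_1|U|\le|V|/2$ supplies neither of those two pieces of information, and (more structurally) the Volterra integral runs over $[t,+\infty)$, which lies outside the bootstrap window $[R,t_\ast)$, so the hypothesis cannot legitimately be invoked on the domain of integration. Even granting $V>0$, if $V$ decayed only like $e^{-\gamma s}$ the Laplace constant would be $\approx 1/\gamma$, not $1+o(1)$, so the asserted improvement from $1/2$ to $1/4$ is not forced.

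The paper's tail argument is a genuinely different one that sidesteps this circularity: it is a planar trapping argument in the $(V,U)$ phase plane (Lemmas~\ref{lemma-inside-omega-plus}, \ref{lemma-inside-omega-minus}, \ref{lemma-large-t-0}). One checks only the sign of $\bm{n}\cdot(\dot V,\dot U)$ along the curves $U=2V/m$ and $U=V/(4m)$ bounding $\calK^0_\delta$, which requires no a priori decay information; the trajectory enters $Q_\delta\subset\calK^+_{\delta,\nu}\cup\calK^0_\delta\cup\calK^-_{\delta,\nu}$ at a fixed time $T_1$, and if it ever touched one of the two curves it would be permanently trapped in $\calK^\pm_{\delta,\nu}$ and could not tend to the origin. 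This yields positivity of $V$ and the two-sided bound $V/(4m)\le U\le 2V/m$ at once, and only afterwards (Lemma~\ref{lemma-v-exp}) is the sharp decay profile of $V$ derived using that positivity. To repair your route you would have to add both the sign of $V$ and a two-sided exponential rate to the bootstrap hypothesis, at which point you are essentially reconstructing the phase-plane trapping.
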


Above, $\epsilon_0>0$ is from
Theorem~\ref{theorem-solitary-waves}~\itref{theorem-solitary-waves-i}.

\begin{proof}
We rewrite \eqref{zero-is-phi1} as follows:
\begin{eqnarray}\label{zero-is-phi2}
\qquad
\begin{cases}
\p\sb t U
=-\frac{1}{m+\omega} V
-\frac{n-1}{t} U
+|V|\sp{2k} V
+\big(
\epsilon^{-2}f\big(\epsilon^{2/k}(V^2-\epsilon^2 U^2)\big)-|V|\sp{2k}
\big)V,
\\[2ex]
\p\sb t V
=-(m+\omega)U
+\epsilon\sp 2 |V|\sp{2k} U
+\big(
f\big(\epsilon^{2/k}(V^2-\epsilon^2 U^2)\big)-\epsilon^2|V|\sp{2k}
\big)U.
\end{cases}
\end{eqnarray}
For any $\delta>0$
and any $\nu\in(0,\nu_0)$,
$\nu_0=\min(\delta/8,m\delta/8)$,
define the following closed sets
(see Figure~\ref{fig-region-omega}):
\begin{eqnarray}
&&
\calK\sp{+}\sb{\delta,\nu}
=\left\{
(V,U)\in\overline{\mathbb{B}\sb{\delta}^2}\subset\R^2
\sothat
U\ge
\max\Big(
0,\,\frac{V+\nu}{m},\,\frac{2V}{m}
\Big)
\right\},
\nonumber
\\[1ex]
&&
\calK\sp{0}\sb\delta
\;=\;\left\{
(V,U)\in\overline{\mathbb{B}\sb{\delta}^2}\subset\R^2
\sothat
V\ge 0,
\ \ \frac{V}{4m}\le U\le \frac{2V}{m}
\right\},
\nonumber
\\[1ex]
&&
\calK\sp{-}\sb{\delta,\nu}=
\left\{
(V,U)\in\overline{\mathbb{B}\sb\delta^2}\subset\R^2
\sothat
V\ge 0,\,\,U\le\min\left(\frac{V-\nu}{2m},
\,\,\frac{V}{4m}
\right)
\right\}.
\nonumber
\end{eqnarray}

\begin{figure}[htbp]
\begin{center}
\setlength{\unitlength}{1pt}
\begin{picture}(0,90)(0,-40)
\font\gnuplot=cmr10 at 10pt
\gnuplot
\put(32,48){$U=2V/m$}
\put(62,20){$U=V/(4m)$}
\put(16,18){$\calK\sp{0}\sb\delta$}
\put(-52,-9){$-\delta$}
\put(42,-8){$\delta$}
\put(2,42){$\delta$}
\put(-15,-42){$-\delta$}
\put(-22,-6){$-\nu$}
\put(58,-2){$V$}
\put(-9,53){$U$}

\put(6,15){$\blacksquare$}
\put(15,2){$\blacksquare$}

\put(-50,0){\vector(1,0){105}}
\put(0,-50){\vector(0,1){105}}
\put(-20,20){$\calK\sp{+}\sb{\delta,\nu}$}
\put(13,-20){$\calK\sp{-}\sb{\delta,\nu}$}
\put(10,20){\circle*{2}}
\put(-10,0){\circle*{2}}
\put(10,0){\circle*{2}}
\put(0,-5){\circle*{2}}
\put(0,40){\circle*{2}}
\put(40,0){\circle*{2}}
\put(-40,0){\circle*{2}}
\put(0,-40){\circle*{2}}
\put(20,5){\circle*{2}}
\put(39,9.75){\circle*{2}}
\put(18,36){\circle*{2}}

\qbezier(0,0)(10,20)(30,60)
\qbezier(0,0)(20,5)(60,15)

\linethickness{1pt}

\qbezier(0,0)(10,20)(18,36)

\qbezier(40,0)(40,15)(32,24)
\qbezier(32,24)(28,29)(24,32)
\qbezier(0,40)(15,40)(24,32)
\qbezier(0,40)(-15,40)(-24,32)

\qbezier(-40,0)(-10,0)(-10,0)
\qbezier(-10,0)(10,20)(10,20)
\qbezier(0,0)(10,20)(10,20)

\qbezier(-40,0)(-40,15)(-32,24)
\qbezier(-32,24)(-28,29)(-24,32)
\qbezier(0,40)(-15,40)(-24,32)
\qbezier(0,40)(15,40)(24,32)

\qbezier(40,0)(40,-15)(32,-24)
\qbezier(32,-24)(28,-29)(24,-32)
\qbezier(0,-40)(15,-40)(24,-32)

\qbezier(0,-40)(0,-5)(0,-5)

\qbezier(0,-5)(20,5)(20,5)
\qbezier(0,0)(20,5)(39,9.75)

\end{picture}
\caption{\footnotesize
The regions
$\calK\sp{+}\sb{\delta,\nu}$,
$\calK\sp{0}\sb\delta$,
$\calK\sp{-}\sb{\delta,\nu}$
inside $\overline{\mathbb{B}\sb\delta^2}$.
}
\label{fig-region-omega}
\end{center}
\end{figure}

The value of $\nu_0$ is chosen so that
for $\nu\in(0,\nu_0)$
the corner points
of both $\calK\sp{+}\sb{\delta,\nu}$ and $\calK\sp{-}\sb{\delta,\nu}$
inside the first quadrant,
$(\nu,2\nu/m)$
and $(2\nu,\nu/(2m))$
(marked by black squares on Figure~\ref{fig-region-omega}),
belong to $\mathbb{B}^2\sb{\delta/2}$:
\begin{eqnarray}\label{inside-half-ball}
(\nu,2\nu/m)\in\mathbb{B}^2\sb{\delta/2},
\qquad
(2\nu,\nu/(2m))\in\mathbb{B}^2\sb{\delta/2}.
\end{eqnarray}

\begin{lemma}\label{lemma-inside-omega-plus}
If $\delta>0$ is sufficiently small,
then any $C^1$-solution to \eqref{zero-is-phi2}
with $\epsilon\in(0,\epsilon_0)$
(with $\epsilon_0>0$ from Theorem~\ref{theorem-solitary-waves})
which satisfies
\[
(V(T),\,U(T))\in\calK\sp{+}\sb{\delta,\nu}
\]
at some $T\ge 2n$,
can only leave the region $\calK\sp{+}\sb{\delta,\nu}$
through
the boundary of the $\delta$-disc:
either
$(V(t),\,U(t))\in\calK\sp{+}\sb{\delta,\nu}$
for all $t\ge T$,
or else there is $T_\ast\in(T,+\infty)$ such that
$(V(t),\,U(t))\in\calK\sp{+}\sb{\delta,\nu}$
for
$T\le t\le T_\ast$,
$(V(T_\ast),\,U(T_\ast))\in\mathbb{S}^1\sb\delta$.
\end{lemma}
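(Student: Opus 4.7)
The plan is a standard invariant-region argument: I show that on every non-circular portion of $\partial\calK^{+}_{\delta,\nu}$ the vector field of \eqref{zero-is-phi2} points strictly into $\calK^{+}_{\delta,\nu}$, so no trajectory can exit through those portions and the only remaining egress is the circular arc $\mathbb{S}^{1}_{\delta}\cap\overline{\calK^{+}_{\delta,\nu}}$. The non-circular boundary decomposes into three straight segments: the horizontal segment $\{U=0,\ V\in[-\delta,-\nu]\}$ with outward normal in the $-U$ direction; the oblique segment $\{mU=V+\nu,\ V\in[-\nu,\nu]\}$ with outward normal proportional to $(1,-m)$; and the oblique segment $\{mU=2V,\ V\in[\nu,\,\delta/\sqrt{1+4/m^{2}}]\}$ with outward normal proportional to $(2,-m)$. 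The inclusions in \eqref{inside-half-ball} guarantee that each segment does reach $\mathbb{S}^{1}_{\delta}$ strictly outside $\mathbb{B}^{2}_{\delta/2}$, so it suffices to bound the interior inward velocities away from zero.

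On the first segment, the first equation of \eqref{zero-is-phi2} gives $\partial_{t}U=-V/(m+\omega)+|V|^{2k}V+r_{1}(\epsilon)$ where $|r_{1}(\epsilon)|=O(\epsilon^{2}+h(\epsilon))$ by \eqref{ass-fo} (using that the $(n-1)U/t$ term vanishes identically on $U=0$). Since $V\le -\nu$, the leading term satisfies $-V/(m+\omega)\ge\nu/(2m)$ for $\omega\in(m/2,m)$, while $\bigl||V|^{2k}V\bigr|\le\delta^{2k+1}$; hence $\partial_{t}U\ge\nu/(4m)$ for $\delta$ and $\epsilon_{0}$ small, ruling out exit. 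On the second segment the relevant directional derivative is $m\,\partial_{t}U-\partial_{t}V$. Using $\omega=\sqrt{m^{2}-\epsilon^{2}}=m+O(\epsilon^{2})$ and substituting $mU=V+\nu$, the leading terms collapse to $\frac{3V}{2}+2\nu+O(\epsilon^{2})$, while the drift $-m(n-1)U/t=-(n-1)(V+\nu)/t$ is bounded below by $-(V+\nu)/2$ thanks to $t\ge T\ge 2n$, and the nonlinear pieces contribute $O(\delta^{2k+1})$ and the $f$-remainders contribute $O(h(\epsilon))$. Collecting these gives $m\,\partial_{t}U-\partial_{t}V\ge V+\tfrac{3\nu}{2}+O(\delta^{2k+1}+\epsilon^{2}+h(\epsilon))\ge\nu/4$ uniformly for $V\in[-\nu,\nu]$, once $\delta$ and $\epsilon_{0}$ are taken small. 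The third segment is handled identically with $2\partial_{t}V$ in place of $\partial_{t}V$: the leading combination on $mU=2V$ is $\tfrac{15V}{2}+O(\epsilon^{2})\ge\tfrac{15\nu}{2}$, again dominating the $-2(n-1)V/t$ drift (bounded below by $-V$ for $t\ge 2n$) and the nonlinear and $f$-remainder errors.

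At the two corners $(-\nu,0)$ and $(\nu,2\nu/m)$, where two segments meet inside $\mathbb{B}^{2}_{\delta/2}$ by \eqref{inside-half-ball}, inward pointing for the full inward tangent cone follows from combining the two adjacent inequalities just established, because each of them is a strict inequality that extends to a neighborhood of the corner by continuity. Given these strict inward-pointing bounds, a standard barrier argument closes the proof: if some solution leaves $\calK^{+}_{\delta,\nu}$ at time $T_{*}>T$ through a point $p\in\partial\calK^{+}_{\delta,\nu}\setminus\mathbb{S}^{1}_{\delta}$, then $p$ lies on one of the three straight segments (or a corner), and the outward component of $(\partial_{t}V,\partial_{t}U)$ at $p$ would have to be nonnegative; this contradicts the strict inward estimates above. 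Hence exit from $\calK^{+}_{\delta,\nu}$ can only occur through $\mathbb{S}^{1}_{\delta}$.

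The main obstacle is the bookkeeping in the oblique-segment inequalities: the drift term $-\tfrac{n-1}{t}U$ has the ``wrong'' sign on $U\ge 0$, so its size must be controlled uniformly, and this is precisely the source of the hypothesis $T\ge 2n$, which forces $(n-1)/t\le 1/2$ and permits absorption of this term into the positive leading $\nu$-contribution. Apart from this, the verification is a routine Taylor expansion in $\epsilon$ together with the smallness estimates \eqref{ass-fo} on the $f$-remainders and the trivial bound $|V|,|U|\le\delta$ inside $\overline{\mathbb{B}^{2}_{\delta}}$.
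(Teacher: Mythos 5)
Your overall route is the same as the paper's: decompose $\partial\calK^{+}_{\delta,\nu}\setminus\mathbb{S}^1_\delta$ into the same three line segments, compute the inward component of the vector field on each, and invoke a strict inward-pointing argument plus continuity at the two corner points. The normals you use and the combinations $\dot U$, $m\dot U-\dot V$, $m\dot U-2\dot V$ you test are exactly those in the paper, and your treatment of the drift term via $t\ge 2n$ is identical.

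There is, however, a genuine gap in the error bookkeeping that your argument does not survive: you bound the nonlinear and $f$-remainder errors by quantities independent of the point $(V,U)$ — you write $O(\delta^{2k+1})$, $O(\epsilon^2)$, $O(h(\epsilon))$ — and then conclude lower bounds of size $\nu/4$ or $\nu/(4m)$ ``for $\delta$ and $\epsilon_0$ small.'' But $\nu$ is an arbitrary parameter in $(0,\nu_0)$ with $\nu_0\sim\delta$ (in the application, $\nu$ is chosen depending on the trajectory and can be arbitrarily small relative to $\delta$), so for $\nu\ll\delta^{2k+1}$ the inequality $\nu/2 - O(\delta^{2k+1}+\epsilon^2+h(\epsilon))\ge\nu/4$ simply fails; no choice of $\delta$ uniform over all admissible $\nu$ makes it true. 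The same issue appears on all three segments. The paper avoids this by keeping the errors in the form $o(|V|+|U|)$: on the horizontal segment the nonlinear term is bounded by $\delta^{2k}|V|$ (not $\delta^{2k+1}$), and on the oblique segments one has $|V|+|U|\le C\nu$ on the middle segment and $|V|+|U|\le C V$ on the third, so the errors automatically pick up the same $\nu$- (resp. $V$-) scaling as the leading linear terms, and $\delta$-smallness then controls the \emph{ratio}. Your argument closes once you replace $\delta^{2k+1}$ by $\delta^{2k}|V|$, $\epsilon^2$ by $\epsilon^2(|V|+|U|)$, and $h(\epsilon)$ by $h(\epsilon)(|V|+|U|)^{2k+1}$ throughout, and compare against a leading term of size $c(|V|+|U|)$ rather than $c\nu$.
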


\begin{proof}
It suffices to check that at all pieces of
$\p\calK\sp{+}\sb{\delta,\nu}
\setminus \mathbb{S}^1\sb\delta
$
the integral curves of
\eqref{zero-is-phi2} are directed strictly inside
$\calK\sp{+}\sb{\delta,\nu}$;
that is,
at the points
$U=\max\big(0,(V+\nu)/m,2V/m\big)$,
one has
$\bm{n}\cdot(\dot V,\dot U)>0$,
with $\bm{n}$ the inner normal to $\p\calK\sp{+}\sb{\delta,\nu}$
(as long as $t\ge 2n$).

On the piece
$\{(V,0)\sothat -\delta\le V\le-\nu\}
\subset\p\calK\sp{+}\sb{\delta,\nu}$,
we compute:
\[
(0,1)\cdot(\dot V,\dot U)=\dot U
=-\frac{V}{m+\omega}+o(V)>0,
\]
as long as $\delta>0$ is sufficiently small.

On the piece
$\left\{\big(V,(V+\nu)/m\big)\sothat-\nu\le V\le\nu\right\}
\subset\p\calK\sp{+}\sb{\delta,\nu}$,
since $T\geq 2n$, one has:
\begin{eqnarray}
(-1,m)\cdot(\dot V,\dot U)
&=&
(m+\omega)U
-m\Big(\frac{V}{m+\omega}+\frac{(n-1)U}{t}\Big)
+o(\abs{U}+\abs{V})
\nonumber
\\
&\ge&
\Big(\frac{m}{2}+\omega\Big)U
-\frac{m V}{m+\omega}
+o(\abs{U}+\abs{V})
\nonumber
\\
&=&
\Big(\frac{1}{2}+\frac{\omega}{m}\Big)(V+\nu)
-\frac{m V}{m+\omega}
+o(\abs{V+\nu}+\abs{V}).
\nonumber
\end{eqnarray}
When $-\nu\le V<0$, the first two terms in the right-hand side
are positive,
dominating the last term if $\delta$ is sufficiently small.
For $0\le V\le\nu$,
due to $\omega>m/2$ (cf. \eqref{omega-large}),
the positive first term in the right-hand side
dominates both the second term and the last term
since
\[
\frac{m V}{m+\omega}\le \frac 2 3 V
\leq\frac 1 3(V+\nu).
\]

On the piece of the boundary
$\{(V,2V/m)\sothat V\ge \nu\}\cap\p\calK\sp{+}\sb{\delta,\nu}$,
we get
\[
(-2,m)\cdot(\dot V,\dot U)
=-2\dot V+m\dot U
=
2(m+\omega)U
-
m\Big(
\frac{V}{m+\omega}+\frac{n-1}{t}U
\Big)
+o(V)
\]
\[
\ge
2(m+\omega)\frac{2V}{m}
-
\Big(
V+\frac{n-1}{t}2V
\Big)
+o(V)
\ge
4V+o(V)>0.
\]
We took into account that $\omega>m/2$
and that $t\ge 2n$.
\end{proof}

\begin{lemma}\label{lemma-inside-omega-minus}
If $\delta>0$ is sufficiently small,
then any $C^1$-solution to \eqref{zero-is-phi2}
with $0<\epsilon\le \frac{m}{4}$
which satisfies
\[
(V(T),U(T))\in\calK\sp{-}\sb{\delta,\nu}
\]
at some $T\ge 2n$
can only exit the region
$\calK\sp{-}\sb{\delta,\nu}$
through the boundary of the
$\delta$-disc:
either
$(V(t),\,U(t))\in\calK\sp{-}\sb{\delta,\nu}$
for all $t\ge T$,
or else there is $T_\ast\in(T,+\infty)$ such that
$(V(t),\,U(t))\in\calK\sp{-}\sb{\delta,\nu}$
for
$T\le t\le T_\ast$,
$(V(T_\ast),\,U(T_\ast))\in\mathbb{S}^1\sb\delta$.
\end{lemma}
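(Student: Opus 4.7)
The plan is to mirror the strategy of Lemma~\ref{lemma-inside-omega-plus}: verify that on every smooth piece of the boundary $\partial\calK\sp{-}\sb{\delta,\nu}\setminus\mathbb{S}^1\sb\delta$, the vector field $(\dot V,\dot U)$ from \eqref{zero-is-phi2} points strictly into $\calK\sp{-}\sb{\delta,\nu}$ (as long as $t\ge 2n$ and $\delta,\epsilon$ are small), so that an integral curve starting inside the region can leave it only through the circle $\mathbb{S}^1\sb\delta$. By \eqref{inside-half-ball} the two inner corners of $\calK\sp{-}\sb{\delta,\nu}$ lie well inside $\mathbb{B}^2\sb{\delta/2}$, so no issue arises from meeting the outer circle at a non-smooth point. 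The three pieces to check are
(a) $V=0$, $-\delta\le U\le-\nu/(2m)$, with inner normal $(1,0)$;
(b) $U=(V-\nu)/(2m)$, $0\le V\le 2\nu$, with inner normal $\propto(1,-2m)$;
(c) $U=V/(4m)$, $2\nu\le V\le\delta$, with inner normal $\propto(1,-4m)$.
Observe that the assumption $\epsilon\le m/4$ is equivalent to $\omega\ge m\sqrt{15}/4$, so that $m+\omega$ differs from $2m$ by $O(\epsilon^2)$; this replaces the role that $\omega>m/2$ played in Lemma~\ref{lemma-inside-omega-plus}. The $f$-dependent remainders in \eqref{zero-is-phi2} are controlled by \eqref{ass-fo} and are uniformly $o(|V|+|U|)$ on $\overline{\mathbb{B}^2_\delta}$ as $\delta\to 0$, hence absorbable.

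On piece (a), $(1,0)\cdot(\dot V,\dot U)=\dot V=-(m+\omega)U+o(|U|)$, and since $U\le-\nu/(2m)<0$ the leading contribution is at least $(m+\omega)\nu/(2m)>0$, dominating the error for $\delta$ small. On piece (c), a direct computation gives
\[
\dot V-4m\dot U
=-(m+\omega)U+\tfrac{4m\,V}{m+\omega}+\tfrac{4m(n-1)U}{t}-4m|V|^{2k}V+o(|V|+|U|),
\]
and substituting $U=V/(4m)$ this becomes
\[
V\!\left(-\tfrac{m+\omega}{4m}+\tfrac{4m}{m+\omega}+\tfrac{n-1}{t}\right)-4m|V|^{2k+1}+o(V).
\]
With $\omega\ge m\sqrt{15}/4$ the parenthesis is at least $-\tfrac12+2=\tfrac32$, and for $t\ge 2n$ the $U/t$ term is of favorable sign; choosing $\delta$ so that $4m\delta^{2k}\le\tfrac14$ makes the expression $\ge V>0$.

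Piece (b) is the delicate one. The same computation as above yields
\[
\dot V-2m\dot U
=-(m+\omega)U+\tfrac{2m\,V}{m+\omega}+\tfrac{2m(n-1)U}{t}+O(V^{1+2k}+\epsilon^2 V),
\]
and on the piece we substitute $U=(V-\nu)/(2m)$ to obtain
\[
\tfrac{(m+\omega)(\nu-V)}{2m}+\tfrac{2m\,V}{m+\omega}+\tfrac{(n-1)(V-\nu)}{t}+\text{small}.
\]
Here the first term vanishes at $V=\nu$, precisely where I expect the main obstacle to lie. The saving grace is the middle term $2mV/(m+\omega)$: at $V=\nu$ it equals $2m\nu/(m+\omega)>0$. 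At the left endpoint $V=0$, the first term is $(m+\omega)\nu/(2m)$ and the $(n-1)U/t$ correction is bounded in absolute value by $\nu/2$ (using $t\ge 2n$), so the sum is $\ge\nu(\omega/(2m))>0$. At the right endpoint $V=2\nu$, the first term is $-(m+\omega)\nu/(2m)$ but the second is $4m\nu/(m+\omega)$, and since $m+\omega$ is within $O(\epsilon^2)$ of $2m$ the second dominates the first by approximately $\nu$. A direct monotonicity/continuity argument in $V\in[0,2\nu]$ (or simply writing the expression as an affine function of $V$ plus a small perturbation and checking positivity at the endpoints) then yields strict positivity on the whole segment, absorbing the $O(V^{1+2k}+\epsilon^2 V)$ error by taking $\delta$ and $\epsilon$ small. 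This completes the strict inward-pointing of the flow on $\partial\calK\sp{-}\sb{\delta,\nu}\setminus\mathbb{S}^1\sb\delta$, and the stated dichotomy follows by a standard continuity-in-time argument as in Lemma~\ref{lemma-inside-omega-plus}.
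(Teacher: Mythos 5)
Your proof is correct and follows essentially the same strategy as the paper: check the three smooth pieces of $\partial\calK\sp{-}\sb{\delta,\nu}$ (the $V=0$ segment, the line $U=(V-\nu)/(2m)$, and the line $U=V/(4m)$), verify at each that the inward normal dotted with $(\dot V,\dot U)$ is positive by computing the dominant linear part at the endpoints (using the affine dependence on $V$ along piece (b)), and absorb the nonlinear and $\epsilon$-corrections by taking $\delta$ small. The only minor difference is that you invoke the sharper $\omega\ge m\sqrt{15}/4$ from the hypothesis $\epsilon\le m/4$, whereas the paper's argument only uses the weaker $\omega\in(m/2,m)$; both are sufficient and the logic is otherwise identical.
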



\begin{proof}
The proof is similar to that of
Lemma~\ref{lemma-inside-omega-minus};
we keep checking the positivity of the dot products
of the inner normals to the boundary with $(\dot V,\dot U)$.
For the pieces of the boundary
given by $V=0$,
the proof is immediate
(from \eqref{zero-is-phi2},
one can see that
$\dot V>0$,
as long as $\delta>0$ is small enough
so that the nonlinear terms
are dominated by the linear part).
On the piece given by
$U=(V-\nu)/(2m)$,
$0\le V\le 2\nu$,
\begin{eqnarray}\label{i-n-b}
(1,-2m)\cdot(\dot V,\dot U)
&=&\dot V-2m\dot U
\nonumber
\\
&=&-(m+\omega)U+\frac{2m V}{m+\omega}+\frac{2m(n-1)U}{t}
+o(\abs{V}+\abs{U}).
\end{eqnarray}
At $V=0$, $U=-\nu/(2m)$,
the linear part
of the right-hand side of \eqref{i-n-b} equals
$
\frac{m+\omega}{2m}\nu
-\frac{n-1}{t}\nu,
$
which is positive for $\nu>0$,
$\omega\in(0,m)$, $t\ge 2n$.
At the other end of the interval,
at $V=2\nu$, $U=\nu/(2m)$,
the linear part of \eqref{i-n-b} equals
$-\frac{m+\omega}{2m}\nu
+\frac{4m}{m+\omega}\nu
+\frac{n-1}{t}\nu$,
which is strictly positive
for $t\ge 2n$, $\nu>0$,
$\omega\in(m/2,m)$.
Since the linear part is strictly positive,
it dominates
the error term $o(\abs{V}+\abs{U})$
in \eqref{i-n-b}
as long as $\delta>0$ is sufficiently small.

On the piece of the boundary
of $\calK\sp{-}\sb{\delta,\nu}$
given by
$U=V/(4m)$, $2\nu\le V\le\delta$,
one has
\begin{eqnarray}
(1,-4m)\cdot(\dot V,\dot U)
=\dot V-4m\dot U
=-(m+\omega)U+\frac{4m V}{m+\omega}+\frac{4m(n-1)U}{t}
+o(\abs{V}+\abs{U})
\nonumber
\\[0.5ex]
=
\Big(
-\frac{m+\omega}{4m}
+\frac{4m}{m+\omega}+\frac{n-1}{t}
\Big)V
+o(\abs{V}).
\nonumber
\end{eqnarray}
Since $\omega\in(m/2,m)$ (cf. \eqref{omega-large}),
the linear part
in the right-hand side
is strictly positive,
dominating the nonlinear part
as long as $\delta>0$ is sufficiently small.
\end{proof}

Back to the proof of the proposition,
we choose $\delta>0$ small enough so that
both Lemma~\ref{lemma-inside-omega-plus}
and Lemma~\ref{lemma-inside-omega-minus}
are satisfied.
By \cite{MR695535},
$\hat V>0$ and $\hat U\ge 0$ are exponentially decaying,
hence we can choose
$T_1\ge 2n$ large enough
and take $\delta>0$ smaller if necessary
so that
\begin{eqnarray}\label{t0-2n}
\big(\hat V(T_1),\hat U(T_1)\big)
\in
Q\sb\delta
:=
(\mathbb{B}^2\sb{3\delta/4}\setminus \mathbb{B}^2\sb{2\delta/3})
\cap
\{(V,U)\sothat V\ge 0,\,U\ge 0
\},
\end{eqnarray}
and so that
\begin{eqnarray}\label{hat-v-u-always-small}
\big(\hat V(t),\hat U(t)\big)\in\mathbb{B}^2\sb{3\delta/4},
\qquad
\forall t\ge T_1.
\end{eqnarray}
By \eqref{v-u-tilde-small},
\begin{eqnarray}\label{V-U-small}
\norm{\tilde V(\cdot,\epsilon)}\sb{L^\infty}
+\norm{\tilde U(\cdot,\epsilon)}\sb{L^\infty}
=O(h(\epsilon)).
\end{eqnarray}
Since
$Q\sb\delta$
is strictly inside
$
\calK\sp{+}\sb{\delta,\nu}
\cup
\calK\sp{0}\sb{\delta}\cup\calK\sp{-}\sb{\delta,\nu}
$
(this is due to
choosing $\nu_0>0$ such that
\eqref{inside-half-ball} is satisfied
for $\nu\in(0,\nu_0)$),
we use
\eqref{t0-2n}
and \eqref{V-U-small}
to conclude that
there is
$\epsilon_1\in(0,\epsilon_0)$
such that
\begin{eqnarray}\label{initially-in-K}
\big(
V(T_1,\epsilon),
U(T_1,\epsilon)
\big)
=
\big(
\hat V(T_1)+\tilde V(T_1,\epsilon),
\ \hat U(T_1)+\tilde U(T_1,\epsilon)
\big)
\in
\calK\sp{+}\sb{\delta,\nu}
\cup
\calK\sp{0}\sb{\delta}\cup\calK\sp{-}\sb{\delta,\nu},
\nonumber
\\[1ex]
\forall\epsilon\in(0,\epsilon_1).
\end{eqnarray}
Moreover,
by
\eqref{hat-v-u-always-small} and \eqref{V-U-small},
we could take
\[
\epsilon_1\in(0,\epsilon_0)
\]
smaller if necessary
so that
\begin{eqnarray}\label{smaller-than-delta}
\big(
V(t,\epsilon),
U(t,\epsilon)
\big)
=
\big(
\hat V(t)+\tilde V(t,\epsilon),
\ \hat U(t)+\tilde U(t,\epsilon)
\big)
\in
\mathbb{B}^2\sb\delta,
\\[1ex]
\nonumber
\forall t\ge T_1,
\quad
\forall\epsilon\in(0,\epsilon_1).
\end{eqnarray}

\begin{lemma}\label{lemma-large-t-0}
One has
\begin{eqnarray}
\nonumber
V(t,\epsilon)>0,
\qquad
U(t,\epsilon)>0,
\qquad
\frac{V(t,\epsilon)}{4m}
<U(t,\epsilon)<\frac{2 V(t,\epsilon)}{m},
\\[1ex]
\nonumber
\forall t\ge T_1,
\qquad
\forall\epsilon\in(0,\epsilon_1).
\end{eqnarray}
\end{lemma}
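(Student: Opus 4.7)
The strategy is a shooting/continuation argument using the invariant regions from Lemmas~\ref{lemma-inside-omega-plus} and~\ref{lemma-inside-omega-minus}, combined with the decay $(V(t,\epsilon),U(t,\epsilon))\to 0$ as $t\to +\infty$.

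First I would check that at $t=T_1$ the point $(V(T_1,\epsilon),U(T_1,\epsilon))$ lies strictly inside the open cone $\mathcal{C}:=\{V>0,\ V/(4m)<U<2V/m\}$ for all $\epsilon\in(0,\epsilon_1)$. The key input is the NLS groundstate asymptotic $\hat U(t)/\hat V(t)\to 1/(2m)$ as $t\to\infty$, which follows from Lemma~\ref{lemma-bl} together with the fact that $-u_k'(r)/u_k(r)\to 1$ at infinity (a standard consequence of the scalar stationary equation~\eqref{def-uk} with exponentially decaying tail). Since $1/(4m)<1/(2m)<2/m$, enlarging $T_1$ in~\eqref{t0-2n} if necessary places $(\hat V(T_1),\hat U(T_1))$ strictly inside $\mathcal{C}$ with a definite margin; shrinking $\epsilon_1$ and invoking~\eqref{V-U-small} then transfers this to $(V(T_1,\epsilon),U(T_1,\epsilon))$ uniformly in $\epsilon$.

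Next, define
\[
T^*(\epsilon):=\sup\bigl\{T\ge T_1\,:\,(V(t,\epsilon),U(t,\epsilon))\in\mathcal{C}\ \text{for all}\ t\in[T_1,T]\bigr\},
\]
and aim to show $T^*(\epsilon)=+\infty$. Suppose for contradiction $T^*<\infty$. By continuity $(V(T^*),U(T^*))\in\partial\mathcal{C}$ sits on one of the rays $U=2V/m$ or $U=V/(4m)$ with $V(T^*)>0$; the degenerate case $V(T^*)=U(T^*)=0$ is ruled out because the linearization of~\eqref{zero-is-phi2} about the origin has hyperbolic eigenvalues $\pm 1$ (in the rescaled time variable), so its decaying orbits reach $(0,0)$ only in infinite time. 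Consider $U(T^*)=2V(T^*)/m$ with $V(T^*)>0$. Computing $(-2,m)\cdot(\dot V,\dot U)$ at this point as in the final step of the proof of Lemma~\ref{lemma-inside-omega-plus} (using $\omega>m/2$, $T^*\ge 2n$, and smallness of the nonlinear corrections) yields a value $\ge c\,V(T^*)>0$, so the trajectory immediately crosses into $\calK^+_{\delta,\nu}$. By~\eqref{smaller-than-delta} it never reaches $\mathbb{S}^1_\delta$, so by Lemma~\ref{lemma-inside-omega-plus} it is trapped in $\calK^+_{\delta,\nu}$ for all subsequent $t$. This contradicts $V(t),U(t)\to 0$ (which follows from the exponential decay of $\hat V,\hat U$ in Lemma~\ref{lemma-bl} together with~\eqref{v-u-tilde-small}): once $|V(t)|<\nu$, membership in $\calK^+_{\delta,\nu}$ forces $U(t)\ge\nu/m$, preventing $U\to 0$. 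The symmetric case $U(T^*)=V(T^*)/(4m)$ is ruled out using Lemma~\ref{lemma-inside-omega-minus}, since $\calK^-_{\delta,\nu}\cap\{V<\nu\}$ forces $U\le -\nu/(2m)$, again incompatible with $U\to 0$.

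The main obstacle I expect is the small-$V$ ``gap'' $\{0<V<\nu,\ 2V/m<U<(V+\nu)/m\}$ lying strictly between the ray $U=2V/m$ and $\partial\calK^+_{\delta,\nu}$ (together with its symmetric counterpart for $\calK^-_{\delta,\nu}$), where the invariance lemmas do not apply directly, so crossing $\partial\mathcal{C}$ does not automatically trap the trajectory. This is handled by taking $\nu>0$ small and observing that in the near-origin regime the flow is governed by its hyperbolic linearization $(\dot V,\dot U)\approx(-(m+\omega)U,-V/(m+\omega))$, whose single decaying direction satisfies $U=V/(m+\omega)$, a ratio lying strictly inside $(1/(4m),2/m)$ for $\omega\in(m/2,m)$; thus any trajectory entering the gap either returns to $\mathcal{C}$ (contradicting maximality of $T^*$) or crosses into $\calK^+_{\delta,\nu}$ proper, reducing to the trapped case above.
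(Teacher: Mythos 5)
Your overall strategy is the paper's: a continuation argument in the invariant regions of Lemmas~\ref{lemma-inside-omega-plus} and~\ref{lemma-inside-omega-minus}, combined with decay of the solution to force a contradiction on escape. The initialization via $\hat U(t)/\hat V(t)\to 1/(2m)$ is a valid, slightly more direct, variant of the paper's combination of \eqref{t0-2n} and \eqref{initially-in-K}, and your dot-product computation at the ray $U=2V/m$ matches the one inside the proof of Lemma~\ref{lemma-inside-omega-plus}.

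The genuine gap is exactly at the ``small-$V$ gap'' you identified, and your proposed fix does not close it. First, the dichotomy ``either returns to $\mathcal{C}$ (contradicting maximality of $T^*$) or crosses into $\calK^{+}_{\delta,\nu}$'' is flawed on both branches: $T^*$ is the \emph{first} exit time, so a later return to $\mathcal{C}$ leaves $T^*$ finite and gives no contradiction; and the second branch rests on the nonlinear flow near the origin being controlled by its hyperbolic linearization, which you have not established. Making that precise --- uniformly in $\epsilon\in(0,\epsilon_1)$, and with $f$ only continuous, so that the right-hand side of \eqref{zero-is-phi2} has very low regularity --- would require a stable-manifold or Hartman--Grobman argument that the paper never needs.

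The paper's observation is that the gap can be avoided entirely, because $\nu$ may be chosen \emph{after} the putative exit point is fixed. Suppose $U(T_*,\epsilon)=2V(T_*,\epsilon)/m$ with $V(T_*,\epsilon)>0$ at some $T_*\ge T_1$. Set $\nu:=\min\big(\nu_0,\,V(T_*,\epsilon)\big)>0$. Since $V(T_*,\epsilon)\ge\nu$, the portion of $\partial\calK^{+}_{\delta,\nu}$ at $V=V(T_*,\epsilon)$ is exactly the ray $U=2V/m$, so $(V(T_*,\epsilon),U(T_*,\epsilon))\in\calK^{+}_{\delta,\nu}$; there is no gap to cross. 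Lemma~\ref{lemma-inside-omega-plus} together with \eqref{smaller-than-delta} then traps the trajectory in $\calK^{+}_{\delta,\nu}$ for all $t\ge T_*$, and since $\calK^{+}_{\delta,\nu}$ is closed and does not contain $(0,0)$, this contradicts $(V(t,\epsilon),U(t,\epsilon))\to(0,0)$. The lower ray $U=V/(4m)$ is handled identically via $\calK^{-}_{\delta,\nu}$, and the corner $(V,U)=(0,0)$ is excluded by uniqueness of the flow through that point (Lipschitz on $\calK^{0}_{\delta}$ for $t\ge 2n$), not by linearization heuristics. This single parameter choice eliminates the need for the stable-manifold reasoning in your last paragraph.
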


\begin{proof}
We claim that the solution
$(V(t,\epsilon),U(t,\epsilon))$
stays in $\calK\sp{0}\sb\delta$
for all $t\ge T_1$.
First, we notice that if
$(V(T_1,\epsilon),U(T_1,\epsilon))\in\calK\sp{0}\sb{\delta}$,
then
for $t\ge T_1$ the trajectory $(V(t),U(t))$
could not leave $\calK\sp{0}\sb\delta$
through the arc of the $\delta$-circle
in the first quadrant (due to \eqref{smaller-than-delta}).
At the same time,
it can not leave $\calK\sp{0}\sb\delta$
through $(V,U)=(0,0)\in\calK\sp{0}\sb\delta$
because of the uniqueness of the solution
passing through $(0,0)$
(for $t\ge T_1\ge 2n$,
the right-hand side of the system
\eqref{zero-is-phi2}
is Lipschitz
in $(V,U)\in\calK\sp{0}\sb\delta$);
this unique solution is $V(t)\equiv U(t)\equiv 0$,
$t\ge T_1$.

The solution also could not leave
$\calK\sp{0}\sb\delta$ through the side
$U=2V/m$ (with $V>0$).
Indeed, the assumption that
$U(T_\ast,\epsilon)=2V(T_\ast,\epsilon)/m>0$
at some $T_\ast\ge T_1$
leads to a contradiction:
we choose $\nu>0$ small enough
(one can take $\nu=\min(\nu_0,V(T_\ast,\epsilon))>0$)
so that
$(V(T_\ast,\epsilon),U(T_\ast,\epsilon))
\in\calK\sp{+}\sb{\delta,\nu}$,
and then
Lemma~\ref{lemma-inside-omega-plus}
together with the bound \eqref{smaller-than-delta}
show that the solution would be trapped in
$\calK\sp{+}\sb{\delta,\nu}$
for all $t\ge T_1$,
hence would not be able to converge to zero
as $t\to\infty$.
For the same reason, the solution can not start
in this region initially, at $t=T_1$:
one should have
$(V(T_1,\epsilon),U(T_1,\epsilon))
\not\in\calK\sp{+}\sb{\delta,\nu}$ for any $\nu\in(0,\nu_0]$.

The same argument
(now with the aid of Lemma~\ref{lemma-inside-omega-minus})
shows that one can not have
$U=V/(4m)$, $V>0$ at some $T_\ast\ge T_1$,
neither can the solution start at $t=T_1$
in $\calK\sp{-}\sb{\delta,\nu}$ for any $\nu\in(0,\nu_0]$:
the solution $(V(t,\epsilon),U(t,\epsilon))$
would be trapped in $\calK\sp{-}\sb{\delta,\nu}$
for all $t\ge T_1$ and thus could not converge to zero.

Thus, by \eqref{initially-in-K},
the trajectory $(V(t,\epsilon),U(t,\epsilon))$
starts strictly inside $\calK\sp{0}\sb\delta$ at $t=T_1$
and stays there for all $t\ge T_1$.
The statement of the lemma follows.
\end{proof}

Due to $V$ being even and $U$ being odd in $t$,
Lemma~\ref{lemma-large-t-0}
also yields the inequality
\begin{eqnarray}\label{u-le-v}
\abs{U(t,\epsilon)}< \frac{2}{m} V(t,\epsilon),
\qquad
\abs{t}\ge T_1,
\qquad
\epsilon\in(0,\epsilon_1).
\end{eqnarray}
Let us now consider the case $\abs{t}\le T_1$.
By \eqref{v-u-tilde-small},
there is $C>0$ such that
\begin{eqnarray}\label{u-small-small-t}
\sup\sb{\abs{t}\le T_1}
\abs{U(t,\epsilon)}
\le
\sup\sb{\abs{t}\le T_1}
\hat U(t)
+
\norm{\tilde U(\cdot,\epsilon)}\sb{L^\infty}
\le
\sup\sb{\abs{t}\le T_1}
\hat U(t)
+
C h(\epsilon);
\end{eqnarray}
on the other hand,
again using \eqref{v-u-tilde-small},
we have, for all $\epsilon\in(0,\epsilon_1)$:
\begin{eqnarray}\label{u-small-small-tt}
\inf\sb{\abs{t}\le T_1}
V(t,\epsilon)
&\ge&
\inf\sb{\abs{t}\le T_1}\hat V(t)
-\norm{\tilde V(\cdot,\epsilon)}\sb{L^\infty}
\nonumber
\\[1ex]
&\ge&
\inf\sb{\abs{t}\le T_1}\hat V(t)
-C h(\epsilon)
\ge
\inf\sb{\abs{t}\le T_1}\hat V(t)/2>0
\end{eqnarray}
if we choose $\epsilon_1>0$ is so small that
$C h(\epsilon_1)<\inf\sb{\abs{t}\le T_1}\hat V(t)/2$.
It follows from
\eqref{u-small-small-t} and \eqref{u-small-small-tt}
that for some $C'<\infty$ we could write
\begin{eqnarray}\label{u-le-v-1}
\abs{U(t,\epsilon)}< C' V(t,\epsilon),
\qquad
\abs{t}\le T_1,
\qquad
\epsilon\in(0,\epsilon_1).
\end{eqnarray}
We require that $\epsilon_1>0$
be small enough, satisfying
$\epsilon_1\le\min\left(m/2,\,1/(2C')\right)$;
then the inequalities
\eqref{u-le-v} and \eqref{u-le-v-1}
yield \eqref{UV1},
finishing the proof of Proposition~\ref{prop-u-le-v}.
\end{proof}

Using the inequality \eqref{UV1},
one derives the bound \eqref{phi-beta-phi-large}:
\begin{eqnarray}
\nonumber
\phi\sb\omega\sp\ast\beta\phi\sb\omega
=v^2-u^2
=
\epsilon^{\frac 2 k}(V^2-\epsilon^2 U^2)
\ge
\epsilon^{\frac 2 k}\frac{3V^2}{4}
\ge
\epsilon^{\frac 2 k}\frac{2V^2+2\epsilon^2U^2}{4}
=\frac{\phi\sb\omega\sp\ast\phi\sb\omega}{2},
\\[1ex]
\nonumber
\omega\in(\omega\sb 1,m),
\end{eqnarray}
with $\omega\sb 1=\sqrt{m^2-\epsilon_1^2}$.
This completes the proof of
Theorem~\ref{theorem-solitary-waves}~\itref{theorem-solitary-waves-ii}.

\subsection{Sharp decay asymptotics and optimal estimates}
\label{sect-sharp}


We now prove
Theorem~\ref{theorem-solitary-waves}~\itref{theorem-solitary-waves-iii}.
We will derive the sharp
exponential decay of each of
$V$, $\hat V$, $U$, $\hat U$
and then prove that,
as the matter of fact,
$\tilde V$ and $\tilde U$
are pointwise dominated by $V$.
We recall that
$\hat V$ and $\hat U$
are obtained from NLS solitary waves
and that
\[
V(t,\epsilon)=\hat V(t)+\tilde V(t,\epsilon),
\qquad
U(t,\epsilon)=\hat U(t)+\tilde U(t,\epsilon);
\]
cf. \eqref{Vhatdef},
\eqref{def-V-U-hat}.

\begin{lemma}\label{lemma-v-exp}
There are $C_1>c_1>0$
such that
for all $\epsilon\in(0,\epsilon_1)$
and all $t\ge T_1$
one has
\begin{eqnarray}\label{v-exp-1}
\abs{V(t,\epsilon)}\ge c_1 t^{-(n-1)/2}e^{-t},
\qquad
\abs{V(t,\epsilon)}
+\abs{U(t,\epsilon)}
\le
C_1 t^{-(n-1)/2}e^{-t};
\end{eqnarray}
\begin{eqnarray}\label{v-exp-2}
\hat V(t)\ge c_1 t^{-(n-1)/2}e^{-t},
\qquad
\hat V(t)
+\abs{\hat U(t)}
\le
C_1 t^{-(n-1)/2}e^{-t};
\end{eqnarray}
\begin{eqnarray}\label{v-exp-3}
\abs{\tilde V(t,\epsilon)}
+\abs{\tilde U(t,\epsilon)}
\le
C_1 t^{-(n-1)/2}e^{-t}.
\end{eqnarray}
\end{lemma}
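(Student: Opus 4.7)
The plan is to establish the three decay bounds in the order \eqref{v-exp-2}, \eqref{v-exp-1}, \eqref{v-exp-3}, leveraging classical NLS ground state asymptotics for the first and an ODE comparison argument for the second.

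For \eqref{v-exp-2}, I would invoke the classical Berestycki--Lions / Gidas--Ni--Nirenberg asymptotics for the positive radial ground state $u_k$ of the semilinear equation $(\Delta - 1) u + 2m u^{1+2k} = 0$ which is \eqref{def-uk} rewritten. Such asymptotics (to be packaged in Lemma~\ref{lemma-bl} of Appendix~\ref{sect-nls-smooth}) say that $u_k(r) = (C + o(1)) r^{-(n-1)/2} e^{-r}$ as $r \to \infty$ with a strictly positive $C$, and similarly $u_k'(r) = -(C + o(1)) r^{-(n-1)/2} e^{-r}$. Since $\hat V(t) = u_k(|t|)$ and $\hat U(t) = -\hat V'(t)/(2m)$, taking $T_1$ large enough (and enlarging if necessary the one already chosen in Section~\ref{sect-shooting-1}) gives both the upper bound on $\hat V + |\hat U|$ and the matching lower bound on $\hat V$, for suitable $0 < c_1 < C_1$.

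For \eqref{v-exp-1}, I would work in the regime $t \ge T_1$, where Lemma~\ref{lemma-large-t-0} supplies $V(t,\epsilon) > 0$ and $V/(4m) < U(t,\epsilon) < 2V/m$. Using the second equation in \eqref{zero-is-phi1} to solve $U = -(m+\omega)^{-1}(1 - f/(m+\omega))^{-1} \p_t V$ and substituting into the first, the profile $V$ satisfies a second-order equation which is a small perturbation of the modified Bessel equation
\begin{equation*}
V'' + \frac{n-1}{t}V' - V = 0,
\end{equation*}
whose unique (up to scalar) decaying solution is proportional to $t^{-(n-2)/2} K_{(n-2)/2}(t) \sim c\, t^{-(n-1)/2}e^{-t}$ at infinity. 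The perturbation consists of terms of order $|V|^{1+2k}$ together with terms of order $\epsilon^2$ multiplying $V$ or $U$; once any mild exponential decay of $V$ is in hand (which we have from \eqref{v-u-tilde-small-0} together with the decay of $\hat V$), such terms are negligible in the far region. I would then run an ODE-comparison argument: construct a super-solution $\overline V(t) = A t^{-(n-1)/2}e^{-t}$ with $A$ large, and a sub-solution $\underline V(t) = B t^{-(n-1)/2}e^{-t}$ with $B$ small, of the perturbed equation on $[T_1,\infty)$. Matching at $t = T_1$ is possible because the $X^1$-bound \eqref{v-u-tilde-small-0} together with \eqref{v-exp-2} ensures that $V(T_1,\epsilon)$ is trapped between two positive multiples of $T_1^{-(n-1)/2}e^{-T_1}$ uniformly in $\epsilon \in (0,\epsilon_1)$. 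The bound on $|U(t,\epsilon)|$ in \eqref{v-exp-1} then follows from $|U| \le 2V/m$; the case $t \le -T_1$ is handled by the even/odd symmetry from \eqref{v-even-u-odd}; on the bounded interval $|t| \le T_1$, uniform boundedness of $V, U$ (inherited from $\hat V, \hat U$ plus the $X^1$-smallness of $\tilde V, \tilde U$) absorbs the polynomial factor.

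Part \eqref{v-exp-3} is then immediate: $\tilde V = V - \hat V$ and $\tilde U = U - \hat U$, so the triangle inequality combined with \eqref{v-exp-1} and \eqref{v-exp-2} gives the bound after enlarging $C_1$ if necessary. The main obstacle in the whole plan is the sharp decay rate with exponent exactly $1$ (rather than the $\gamma < 1/(1+2k)$ produced by the weighted-space Schauder argument in Section~\ref{sect-waves-nrl}). The ODE comparison is what upgrades $e^{-\gamma \langle t\rangle}$ to the true Bessel-type rate $t^{-(n-1)/2}e^{-t}$; the key quantitative input is the positivity $\phi_\omega^\ast \beta \phi_\omega > 0$ from \eqref{phi-beta-phi-large}, which keeps the nonlinear term $f(\epsilon^{2/k}(V^2 - \epsilon^2 U^2))$ controlled by $|V|^{2k} + o(|V|^{2k})$ and thus genuinely subdominant relative to the linear part for $t \ge T_1$.
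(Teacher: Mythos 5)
Your plan is correct in its broad strokes but takes a genuinely different route from the paper. You first secure \eqref{v-exp-2} by appealing to the classical sharp asymptotics for the NLS ground state $u_k$ (Gidas--Ni--Nirenberg / Berestycki--Lions), and then attack \eqref{v-exp-1} by eliminating $U$ to get a second-order scalar ODE for $V$, recognizing its far-field behaviour as a perturbation of the modified Bessel equation $V''+\frac{n-1}{t}V'-V=0$, and running a super/sub-solution comparison with barriers proportional to $t^{-(n-1)/2}e^{-t}$. The paper does almost exactly the reverse: it never invokes the NLS asymptotics but instead makes the substitution $V=t^{-(n-1)/2}\scrV$, $U=t^{-(n-1)/2}(\scrU+\tfrac{n-1}{2\upmu t}\scrV)$ and, after checking $\scrV,\scrU>0$, combines the two first-order Dirac equations into a single first-order ODE inequality of the form $\bigl|\p_t\ln(\scrV+\upmu\scrU)+1\bigr|\le C/t^{2}+C'|f|/\epsilon^{2}$ whose error is integrable on $[T_1,\infty)$, so integration gives the two-sided bound at once; \eqref{v-exp-2} then drops out by taking $\epsilon\to0$ with constants that are $\epsilon$-uniform. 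The paper's argument is more elementary and dodges two difficulties your route must address explicitly: (i) $t^{-(n-1)/2}e^{-t}$ is only an approximate solution of the Bessel operator, leaving a term $\tfrac{(n-1)(3-n)}{4t^2}V$ whose sign depends on $n$, so pure scalar multiples $At^{-(n-1)/2}e^{-t}$ are not, in general, super/sub-solutions and you need a corrected ansatz such as $A(1\pm c/t)t^{-(n-1)/2}e^{-t}$; and (ii) you invoke a comparison (maximum) principle on the non-compact interval $[T_1,\infty)$, which does hold here since the zeroth-order coefficient is negative, but this needs to be stated. Both routes are viable; the first-order Gronwall device in the paper simply bypasses the maximum principle and the dimension-dependent sign bookkeeping entirely, which is why it is the cleaner choice. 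Your treatment of \eqref{v-exp-3} by the triangle inequality and of the bounded range $|t|\le T_1$ by uniform boundedness matches the paper and needs no change.
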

Above, $\epsilon_1>0$ is from Theorem~\ref{theorem-solitary-waves}~\itref{theorem-solitary-waves-ii}
and $T_1<\infty$ is from \eqref{t0-2n}.

\begin{proof}
The inequality \eqref{v-exp-3}
follows from
\eqref{v-exp-1} and \eqref{v-exp-2}.

The inequalities \eqref{v-exp-1} and \eqref{v-exp-2}
are proved similarly.
We will focus on \eqref{v-exp-1},
which is more involved;
then the inequalities \eqref{v-exp-2}
could be obtained by taking the limit $\epsilon\to 0$.

We introduce $\scrV(t,\epsilon)$ and $\scrU(t,\epsilon)$
such that
\begin{eqnarray}\label{v-scr-v}
&&
V(t,\epsilon)=t^{-(n-1)/2}\scrV(t,\epsilon),
\\
\label{u-scr-u}
&&
U(t,\epsilon)=
t^{-(n-1)/2}
\Big(
\scrU(t,\epsilon)
+
\frac{n-1}{2\upmu t}\scrV(t,\epsilon)
\Big),
\end{eqnarray}
where we use the notation
\[
\upmu=m+\omega,
\qquad
\omega=\sqrt{m^2-\epsilon^2}.
\]
Below, we will omit the dependence of
$V$, $U$, $\scrV$, $\scrU$,
$\omega$, and $\upmu$ on $\epsilon$.
By Lemma~\ref{lemma-large-t-0},
for $t\ge T_1$,
one has $\scrV(t)>0$ (since so is $V(t)$).
Then, applying inequalities from
Lemma~\ref{lemma-large-t-0} to the relation
\[
\scrU(t,\epsilon)
=t^{(n-1)/2}
\Big(
U(t,\epsilon)
-
\frac{n-1}{2\upmu t}V(t,\epsilon)
\Big)
\]
and using $\omega>m/2$, $t\ge T_1\ge 2n$
(cf. \eqref{omega-large} and \eqref{t0-2n}),
we obtain:
\begin{eqnarray}\label{mu-positive}
\scrU
\ge t^{(n-1)/2}
\Big(\frac{V}{4m}-\frac{n-1}{2(3m/2)2n}V\Big)
\ge t^{(n-1)/2}\frac{V}{12m}
=\frac{\scrV}{12m}>0,
\\[1ex]
\nonumber
\forall t\ge T_1,
\quad
\forall
\epsilon\in(0,\epsilon_1).
\end{eqnarray}
Substituting the expressions \eqref{v-scr-v}, \eqref{u-scr-u}
into the system \eqref{zero-is-phi1},
we obtain the equation
\[
\p\sb t\Big(\scrU+\frac{n-1}{2\upmu t}\scrV\Big)
-\frac{n-1}{2t}\Big(\scrU+\frac{n-1}{2\upmu t}\scrV\Big)
+\frac{n-1}{t}
\Big(\scrU+\frac{n-1}{2\upmu t}\scrV\Big)
+\frac{\scrV}{\upmu}
=\epsilon^{-2}f\scrV,
\]
which takes the form
\begin{eqnarray}\label{u-p-v}
\p\sb t\scrU
+\frac{n-1}{2\upmu t}\p\sb t\scrV
+\frac{n-1}{2t}\scrU
+\frac{(n-1)^2\scrV}{4\upmu t^2}
-\frac{(n-1)\scrV}{2\upmu t^2}
+\frac{\scrV}{\upmu}
=\frac{f}{\epsilon^2}\scrV,
\end{eqnarray}
and the equation
\begin{eqnarray}\label{v-p-u}
\p\sb t\scrV
-\frac{n-1}{2t}\scrV
+\upmu
\Big(\scrU+\frac{n-1}{2\upmu t}\scrV\Big)
=
\p\sb t\scrV+\upmu\scrU
=
\Big(\scrU+\frac{n-1}{2\upmu t}\scrV\Big)f.
\end{eqnarray}
Above, $f$ is evaluated at
$\tau=\epsilon^{2/k}V(t,\epsilon)^2-\epsilon^{2+2/k}U(t,\epsilon)^2$.
Multiplying \eqref{u-p-v} by $\upmu$
and adding \eqref{v-p-u},
we get:
\begin{eqnarray}
\p\sb t(\scrV+\upmu \scrU)
+(\scrV+\upmu \scrU)
+\frac{n-1}{2t}\p\sb t\scrV
+\frac{\upmu (n-1)}{2t}\scrU
+\frac{(n-1)(n-3)\scrV}{4t^2}
\nonumber
\\
=
\upmu
\frac{f}{\epsilon^2}\scrV
+\Big(\scrU+\frac{n-1}{2\upmu t}\scrV\Big)f.
\end{eqnarray}
Using \eqref{v-p-u}
to simplify the two terms
in the left-hand side
which contain a factor $\frac{n-1}{2t}$,
we get 
\begin{eqnarray*}
\p\sb t(\scrV+\upmu \scrU)
+(\scrV+\upmu \scrU)
+
\frac{n-1}{2t}
\Big(\scrU+\frac{n-1}{2\upmu t}\scrV\Big)f
+\frac{(n-1)(n-3)\scrV}{4t^2}
\\
=
\upmu\frac{f}{\epsilon^2}\scrV
+\Big(\scrU+\frac{n-1}{2\upmu t}\scrV\Big)f,
\end{eqnarray*}
which yields the inequality
\begin{eqnarray}\label{dv-v}
\Abs{
\p\sb t(\scrV+\upmu \scrU)
+(\scrV+\upmu \scrU)
}
\le
\frac{C}{t^2}
(\scrV+\scrU)
+C\frac{\abs{f}}{\epsilon^2}
(\scrV+\scrU),
\\[1ex]
\nonumber
\forall t\ge T_1,
\qquad
\forall\epsilon\in(0,\epsilon_1),
\end{eqnarray}
with some $C<\infty$;
we took into account
that both $\scrV$ and $\scrU$ are positive
(cf. \eqref{mu-positive}).
Since one has
$0<\frac{\scrV}{\scrV+\upmu \scrU}\le 1$
and $0<\frac{\scrU}{\scrV+\upmu\scrU}\le \frac{1}{\upmu}\le\frac{1}{m}$,
it follows from \eqref{dv-v} that
there is $C'<\infty$ such that
\begin{eqnarray}\label{le-le}
-1-\frac{c}{t^2}-C'\frac{\abs{f}}{\epsilon^2}
\le
\frac{\p\sb t(\scrV+\upmu\scrU)}{\scrV+\upmu\scrU}
\le
-1+\frac{c}{t^2}+C'\frac{\abs{f}}{\epsilon^2},
\\[1ex]
\nonumber
\forall t\ge T_1,
\qquad
\forall\epsilon\in(0,\epsilon_1).
\end{eqnarray}
We note that, by \eqref{ass-fo-1},
\[
\frac{
\abs{f\big(\epsilon^{2/k}(V(t,\epsilon)^2-\epsilon^2 U(t,\epsilon)^2)\big)}}
{\epsilon^2}
\le
2\abs{V(t,\epsilon)^2-\epsilon^2 U(t,\epsilon)^2}^k
,
\]
which is bounded
and exponentially decreasing as $t\to+\infty$
(uniformly in $\epsilon\in(0,\epsilon_1)$)
due to the exponential decay of
$V(t,\epsilon)=\hat V(t)+\tilde V(t,\epsilon)$,
$U(t,\epsilon)=\hat U(t)+\tilde U(t,\epsilon)$
in $t$,
which we proved
in Theorem~\ref{theorem-solitary-waves}.
Thus,
\[
\int\sb{T_1}^\infty
\Big(
\frac{C'}{t^2}+C'\frac{\abs{f}}{\epsilon^2}
\Big)\,dt
\le C''
\]
is bounded by some $C''<\infty$
which does not depend on $\epsilon\in(0,\epsilon_1)$.
This allows us to integrate \eqref{le-le}
from $T_1$
to an arbitrary value $t\ge T_1$;
we get
\[
-(t-T_1)-C''
\le
\ln(\scrV(t)+\upmu\scrU(t))
-\ln(\scrV(T_1)+\upmu\scrU(T_1))
\le -(t-T_1)+C'',
\]
which yields the desired inequalities
\eqref{v-exp-1}.
\end{proof}

The following result
immediately follows from the inequality
\eqref{v-exp-2} in Lemma~\ref{lemma-v-exp}
due to $\inf\sb{\abs{t}\le T_1}\hat V>0$.

\begin{corollary}\label{cor-v-exp-ast}
There are $C_1\sp\ast>c_1\sp\ast>0$
such that
\[
\hat V(t)\ge c_1\sp\ast\langle t\rangle^{-(n-1)/2}e^{-\abs{t}},
\qquad
\hat V(t)
+\abs{\hat U(t)}
\le
C_1\sp\ast\langle t\rangle^{-(n-1)/2}e^{-\abs{t}},
\qquad
\forall t\in\R.
\]
\end{corollary}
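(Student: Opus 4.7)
The plan is to derive the global bounds on $\hat V$, $\hat U$ by splicing the large-$t$ asymptotics supplied by Lemma~\ref{lemma-v-exp}~\eqref{v-exp-2} to an elementary compactness argument on $[-T_1,T_1]$. Recall from \eqref{Vhatdef} that $\hat V$ is even and $\hat U$ is odd, and that $\hat V(t)=u_k(|t|)$ with $u_k\in C^2(\R^n)$ strictly positive on $\R^n$; likewise $\hat U=-\hat V'/(2m)\in C^1(\R)$. Thus it suffices to prove the two-sided estimate for $t\ge 0$ and then use the parity to extend to $t\in\R$.

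First, for $|t|\ge T_1$, I would quote \eqref{v-exp-2} directly, which gives
\[
c_1\,t^{-(n-1)/2}e^{-t}\le \hat V(t),\qquad \hat V(t)+|\hat U(t)|\le C_1\,t^{-(n-1)/2}e^{-t},\qquad t\ge T_1,
\]
with the corresponding bound for $t\le -T_1$ obtained from the parity of $\hat V$, $\hat U$. Since $|t|\ge T_1\ge 2n$ in this regime, $|t|^{-(n-1)/2}$ is comparable to $\langle t\rangle^{-(n-1)/2}$ up to a fixed multiplicative constant, and similarly $e^{-|t|}$ replaces $e^{-t}$; thus \eqref{v-exp-2} yields the desired bounds with (possibly adjusted) constants $c_1',C_1'$ on $\{|t|\ge T_1\}$.

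Next, on the compact set $[-T_1,T_1]$, the function $\hat V$ is continuous and strictly positive (as a restriction of the positive ground state $u_k$), so $m_\ast:=\inf_{|t|\le T_1}\hat V(t)>0$ and $M_\ast:=\sup_{|t|\le T_1}\bigl(\hat V(t)+|\hat U(t)|\bigr)<\infty$. Since the weight $\langle t\rangle^{-(n-1)/2}e^{-|t|}$ is continuous and positive on $[-T_1,T_1]$, there exist $a,b>0$ with $a\le \langle t\rangle^{-(n-1)/2}e^{-|t|}\le b$ for $|t|\le T_1$, so $\hat V(t)\ge (m_\ast/b)\langle t\rangle^{-(n-1)/2}e^{-|t|}$ and $\hat V(t)+|\hat U(t)|\le (M_\ast/a)\langle t\rangle^{-(n-1)/2}e^{-|t|}$ on this interval.

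Finally, taking $c_1^\ast:=\min(c_1',\,m_\ast/b)$ and $C_1^\ast:=\max(C_1',\,M_\ast/a)$ and combining the two regimes yields the stated bounds for all $t\in\R$. There is no real obstacle in this argument; the only thing to check carefully is the comparison of $|t|^{-(n-1)/2}e^{-t}$ with $\langle t\rangle^{-(n-1)/2}e^{-|t|}$, which is trivial because both sides are continuous and strictly positive on $\{|t|\ge T_1\}$ with matching asymptotics at infinity.
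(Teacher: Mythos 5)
Your argument is correct and matches the paper's reasoning: the paper proves the corollary in a single sentence, invoking Lemma~\ref{lemma-v-exp}~\eqref{v-exp-2} for $|t|\ge T_1$ together with $\inf_{|t|\le T_1}\hat V>0$ on the compact core, which is exactly the splice you spell out.
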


We claim that
the bound \eqref{v-exp-3} from Lemma~\ref{lemma-v-exp}
could be improved as follows.

\begin{lemma}\label{lemma-v-tilde-exp}
There is $C_2<\infty$
and
$T_2\in(T_1,+\infty)$
such that
\[
\abs{\tilde V(t,\epsilon)}+\abs{\tilde U(t,\epsilon)}
\le C_2 h(\epsilon) t^{-(n-1)/2}e^{-t},
\qquad
\forall t\ge T_2,
\quad
\forall\epsilon\in(0,\epsilon_1),
\]
with $h(\epsilon)$ from \eqref{def-h}.
\end{lemma}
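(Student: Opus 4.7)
The plan is to re-run the ODE-based argument of Lemma~\ref{lemma-v-exp} on the \emph{perturbation} system \eqref{w-a-a} for $\tilde W = (\tilde V,\tilde U)$ instead of the full system, tracking the prefactor $h(\epsilon)$ at every step. Introducing the scaled variables
\[
\tilde\scrV(t,\epsilon) := t^{(n-1)/2}\tilde V(t,\epsilon),\qquad
\tilde\scrU(t,\epsilon) := t^{(n-1)/2}\Bigl(\tilde U(t,\epsilon) - \tfrac{n-1}{2\upmu t}\tilde V(t,\epsilon)\Bigr),
\]
with $\upmu = m+\omega$, and performing the same algebraic manipulation that produced \eqref{dv-v}, one obtains a scalar first-order linear ODE
\[
\partial\sb t\Phi + \Phi = R(t,\epsilon),\qquad \Phi := \tilde\scrV + \upmu\tilde\scrU,
\]
whose source $R$ contains three distinct pieces: a curvature correction $-\tfrac{(n-1)(n-3)}{4t^2}\tilde\scrV$, a linearization piece $(1+2k)\upmu\abs{\hat V}^{2k}\tilde\scrV$ coming from the $\hat V^{2k}\tilde V$ term in \eqref{w-a-a}, and a genuine source $t^{(n-1)/2}\bigl[\upmu G_1 - (1-\tfrac{n-1}{2t})G_2\bigr]$.

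Combining Lemma~\ref{lemma-g1-g2-bounds} with the pointwise information already in hand---Corollary~\ref{cor-v-exp-ast} for $\hat V,\hat U$, the $L^\infty$ smallness $\norm{\tilde W}\sb{L^\infty}\le a_0 h(\epsilon)$ extracted from \eqref{v-u-tilde-small}, and the sharp pointwise decay $\abs{\tilde V}+\abs{\tilde U}\le C_1 t^{-(n-1)/2}e^{-t}$ from Lemma~\ref{lemma-v-exp}---each contribution to $R$ can be bounded by $C h(\epsilon)e^{-(1+\alpha)t}$ for some $\alpha>0$, modulo two self-multiplier terms (the curvature $t^{-2}\tilde\scrV$ and the linearization $\abs{\hat V}^{2k}\tilde\scrV$) whose coefficients lie in $L^1([T_2,\infty))$ and are therefore absorbed via Gronwall. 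The factor $h(\epsilon)$ enters through three channels in $G_1$: the $f$-approximation remainder controlled by \eqref{ass-fo}; the smallness of $\tilde V$ in the Taylor-type remainders; and the $O(\epsilon^2)\le h(\epsilon)$ contribution of $\tfrac{1}{m+\omega}-\tfrac{1}{2m}$.

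Multiplying by the integrating factor $e^t$ and integrating from a fixed $T_2$ (independent of $\epsilon$), and using $\abs{\Phi(T_2)}\le C h(\epsilon)$ from the weighted Sobolev bound \eqref{v-u-tilde-small} (the $T_2$-dependent prefactor being harmlessly absorbed into $C_2$), one obtains $\abs{\Phi(t)}\le C h(\epsilon)e^{-t}$ for $t\ge T_2$. To recover bounds on $\tilde V$ and $\tilde U$ individually---which in Lemma~\ref{lemma-v-exp} was supplied for free by the positivity $\scrV,\scrU>0$---I would use the \emph{exact} second equation $\p\sb t\tilde\scrV+\upmu\tilde\scrU = t^{(n-1)/2}G_2$; subtracting $\Phi$ yields the scalar ODE
\[
\p\sb t\tilde\scrV - \tilde\scrV \;=\; t^{(n-1)/2}G_2 - \Phi \;=\; O\bigl(h(\epsilon)e^{-t}\bigr),
\]
whose eigenvalue is $+1$, so since $\tilde\scrV\to 0$ at infinity by \eqref{v-exp-3}, \emph{backward} integration from $+\infty$ gives $\abs{\tilde\scrV(t)}\le C h(\epsilon)e^{-t}$, and then $\abs{\upmu\tilde\scrU(t)} = \abs{\Phi(t)-\tilde\scrV(t)}\le C h(\epsilon)e^{-t}$. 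Unscaling by $t^{-(n-1)/2}$ produces the claimed estimate.

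The main obstacle is the controlled treatment of the higher-order nonlinear remainder $\abs{\tilde V}^{1+2k}$ in the bound on $G_1$. When $2k\ge 1$ one can simply split $\abs{\tilde V}^{1+2k}\le C\abs{\tilde V}\cdot\abs{\tilde V}^{2k}$ and bound $\abs{\tilde V}\le C h(\epsilon)$, but in the low-regularity regime $2k<1$ a careful interpolation between the $L^\infty$ smallness of $\tilde V$ and the sharp exponential decay from Lemma~\ref{lemma-v-exp} is required to simultaneously extract a $h(\epsilon)$ prefactor and a strictly $e^{-(1+\alpha)t}$ decay rate; in this case a finite bootstrap---choosing $T_2$ sufficiently large and iterating the ODE estimate so that the output of each stage feeds the source bound at the next---is used to close the argument. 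The asymmetric pairing of forward integration for $\Phi$ (stable direction) with backward integration for $\tilde\scrV$ (unstable direction) is what allows the dual control in the absence of a positivity argument.
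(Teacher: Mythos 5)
Your scaled-variable/ODE strategy is the same as the paper's, and the asymmetric pairing of forward integration for the stable combination and backward integration for $\tilde\scrV$ is sound in principle. However, there is a concrete and fatal gap in your bound on the source $R$ of the stable ODE.

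You claim that, modulo the two self-multiplier terms, each contribution to $R$ is bounded by $C\,h(\epsilon)\,e^{-(1+\alpha)t}$ with $\alpha>0$. This is false for the pieces of $G_1$ and $G_2$ that are \emph{linear} in $\hat V$ and $\hat U$: by Lemma~\ref{lemma-g1-g2-bounds} these are $\epsilon^2\hat V$ and $\epsilon^2|\hat U|$, and by Corollary~\ref{cor-v-exp-ast} one has $t^{(n-1)/2}\hat V(t)\asymp e^{-t}$ (with a strictly positive lower bound $c_1^*$), so after rescaling they contribute exactly $\epsilon^2 e^{-t}$ to $R$, with no extra decay margin $\alpha$. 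Forward integration of $e^{t}\cdot\epsilon^2 e^{-t}$ from $T_2$ to $t$ then produces $\epsilon^2(t-T_2)$, which grows linearly in $t$ and cannot be absorbed by Gronwall (the offending term is a forcing term, not a self-multiplier). So the forward step does not yield $|\Phi(t)|\le C h(\epsilon)e^{-t}$ as written, and the backward step for $\tilde\scrV$ inherits the failure since it uses that bound on $\Phi$.

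What your argument is missing is precisely the content of the paper's Lemma~\ref{lemma-int-finite}: the linear-in-$(\hat V,\hat U)$ pieces of $G_2-\upmu G_1-\tfrac{n-1}{2t}G_2$ combine, via the stationary NLS relations \eqref{def-hat-phi}, into $(m-\omega)\big(\hat U+\hat U'+\tfrac{n-1}{2t}\hat U-|\hat V|^{2k}\hat V\big)$; after multiplying by $t^{(n-1)/2}e^t$ the first three terms form an exact derivative $\p_t\big(t^{(n-1)/2}e^t\hat U\big)$, which integrates to a uniformly bounded boundary term, while the remaining $|\hat V|^{2k}\hat V$ does decay with an extra factor $e^{-2kt}$. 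Only after this cancellation does the forward integral become $O(h(\epsilon))$ uniformly in $t$. Without exhibiting this cancellation (or some equivalent mechanism), the $h(\epsilon)$ prefactor cannot be closed by the forward step, and the claimed estimate is not established.
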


Above, $\epsilon_1>0$
is from Theorem~\ref{theorem-solitary-waves}~\itref{theorem-solitary-waves-ii}
and $T_1<\infty$
is as in Lemma~\ref{lemma-v-exp}.

\begin{proof}
We define
$\tilde\scrV(t,\epsilon)$, $\tilde\scrU(t,\epsilon)$
by the relations similar to \eqref{v-scr-v}, \eqref{u-scr-u}:
\begin{eqnarray}\label{v-scr-v-tilde}
&&
\tilde V(t,\epsilon)
=t^{-\frac{n-1}{2}}\tilde\scrV(t,\epsilon),
\\[1ex]
\label{u-scr-u-tilde}
&&
\tilde U(t,\epsilon)
=
t^{-\frac{n-1}{2}}
\Big(\tilde\scrU(t)
+\frac{n-1}{2\upmu t}\tilde\scrV(t,\epsilon)\Big),
\end{eqnarray}
where
\[
\upmu=m+\omega,
\qquad
\omega=\sqrt{m^2-\epsilon^2},
\qquad
\epsilon\in(0,\epsilon_1).
\]
By \eqref{w-a-a},
the functions
$\tilde\scrV$, $\tilde\scrU$
satisfy
\[
\p\sb t\Big(\tilde\scrU+\frac{n-1}{2\upmu t}\tilde\scrV\Big)
+\frac{n-1}{2t}
\Big(\tilde\scrU+\frac{n-1}{2\upmu t}\tilde\scrV\Big)
+\frac{\tilde\scrV}{\upmu}
=(1+2k)\hat V^{2k}\tilde\scrV
-t^{\frac{n-1}{2}}G_1
\]
and
\[
\p\sb t\tilde\scrV-\frac{n-1}{2t}\tilde\scrV
+\upmu\big(\tilde\scrU+\frac{n-1}{2\upmu t}\tilde\scrV\big)
=t^{\frac{n-1}{2}}G_2,
\]
which we rewrite as
\begin{eqnarray}\label{u-p-v-1}
&&
\p\sb t\tilde\scrU
+\frac{n-1}{2\upmu t}\p\sb t\tilde\scrV
+\frac{n-1}{2t}
\tilde\scrU
+
\frac{(n-1)(n-3)}{4\upmu t^2}\tilde\scrV
+\frac{\tilde\scrV}{\upmu}
\\
\nonumber
&&
\qquad\qquad\qquad\qquad
\qquad\qquad
=(1+2k)\hat V^{2k}\tilde\scrV
-t^{\frac{n-1}{2}}G_1,
\\
\label{v-p-u-1}
&&
\p\sb t\tilde\scrV+\upmu\tilde\scrU
=t^{\frac{n-1}{2}}G_2.
\end{eqnarray}
We multiply \eqref{u-p-v-1} by $\upmu$;
adding and subtracting \eqref{v-p-u-1},
we obtain, respectively,
\begin{eqnarray}
&&
\p\sb t(\upmu\tilde\scrU+\tilde\scrV)
+(\upmu\tilde\scrU+\tilde\scrV)
\nonumber
\\
&&
\qquad\qquad\qquad
=
(1+2k)\upmu\hat V^{2k}\tilde\scrV
-\textstyle{\frac{(n-1)(n-3)}{4 t^2}}
\tilde\scrV
+t^{\frac{n-1}{2}}\Big(G_2-\upmu G_1-\frac{n-1}{2t}G_2\Big),
\nonumber
\\[1ex]
&&
\p\sb t(\upmu\tilde\scrU-\tilde\scrV)
-(\upmu\tilde\scrU-\tilde\scrV)
\nonumber
\\
&&
\qquad\qquad\qquad
=
(1+2k)\upmu\hat V^{2k}\tilde\scrV
-\textstyle{\frac{(n-1)(n-3)}{4 t^2}}
\tilde\scrV
-t^{\frac{n-1}{2}}\Big(\upmu G_1+\frac{n-1}{2t}G_2+G_2\Big).
\nonumber
\end{eqnarray}
Multiplying the above relations by
$e^t$ and $e^{-t}$, respectively,
we rewrite them as
\begin{eqnarray}
&&
\quad
\p\sb t(e^t(\upmu\tilde\scrU+\tilde\scrV))
\nonumber
\\
&&
\qquad
=
e^t
\Big(
(1+2k)\upmu\hat V^{2k}\tilde\scrV
-
\textstyle{\frac{(n-1)(n-3)}{4 t^2}}
\tilde\scrV
+t^{\frac{n-1}{2}}\big(G_2-\upmu G_1-
\textstyle{\frac{n-1}{2t}}
G_2\big)
\Big),
\label{d-u-plus-v}
\\
&&
\quad
\p\sb t(e^{-t}(\upmu\tilde\scrU-\tilde\scrV))
\nonumber
\\
&&
\qquad
=
e^{-t}
\Big(
(1+2k)\upmu\hat V^{2k}\tilde\scrV
-
\textstyle{\frac{(n-1)(n-3)}{4 t^2}}
\tilde\scrV
-t^{\frac{n-1}{2}}
\big(\upmu G_1
+
\textstyle{\frac{n-1}{2t}}G_2
+G_2\big)
\Big).
\label{d-u-minus-v}
\end{eqnarray}
We are to integrate the above relations in $t$;
before we do this, we need
a special treatment for the last term
in the right-hand side of \eqref{d-u-plus-v}.

\begin{lemma}\label{lemma-int-finite}
There is $C<\infty$ such that
\[
\Abs{
\int_T^{T'}
t^{\frac{n-1}{2}}e^t\Big(G_2-\upmu G_1-\frac{n-1}{2t}G_2\Big)\,dt
}
\le C h(\epsilon),
\qquad
\forall T'\ge T\ge T_1,
\quad
\forall\epsilon\in(0,\epsilon_1),
\]
with $h(\epsilon)$ from \eqref{def-h}.
\end{lemma}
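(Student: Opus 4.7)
The strategy is to exploit the ODE system \eqref{w-a-a} satisfied by $(\tilde V,\tilde U)$ to rewrite the integrand as a sum of total derivatives plus residuals that can each be bounded by an absolutely convergent integral of order $h(\epsilon)$. From $\p_t\tilde V+\upmu\tilde U=G_2$ together with the equation for $\p_t\tilde U$ in \eqref{w-a-a}, a direct computation gives
\[
G_2-\upmu G_1 = \p_t(\tilde V+\upmu\tilde U)+(\tilde V+\upmu\tilde U)+\frac{\upmu(n-1)\tilde U}{t}-\upmu(1+2k)\hat V^{2k}\tilde V.
\]
Substituting $G_2=\p_t\tilde V+\upmu\tilde U$ into the remaining $-\tfrac{n-1}{2t}G_2$, multiplying by $t^{(n-1)/2}e^t$, and noting that all $(n-1)/t$-weighted contributions of $\tilde U$ cancel, the integrand becomes
\[
t^{\tfrac{n-1}{2}}e^t\!\Bigl(G_2-\upmu G_1-\tfrac{n-1}{2t}G_2\Bigr) = \p_t\!\Bigl[t^{\tfrac{n-1}{2}}e^t(\tilde V+\upmu\tilde U)\Bigr] - \tfrac{n-1}{2}\,t^{\tfrac{n-3}{2}}\p_t(e^t\tilde V) - \upmu(1+2k)t^{\tfrac{n-1}{2}}e^t\hat V^{2k}\tilde V.
\]

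Integrating from $T$ to $T'$ and applying one further integration by parts to the middle term, the desired integral equals a combined boundary contribution which, via \eqref{v-scr-v-tilde}--\eqref{u-scr-u-tilde}, simplifies to $[e^t(\upmu\tilde\scrU+\tilde\scrV)]_T^{T'}$, plus the two residual integrals $\tfrac{(n-1)(n-3)}{4}\int_T^{T'}t^{(n-5)/2}e^t\tilde V\,dt$ and $-\upmu(1+2k)\int_T^{T'}t^{(n-1)/2}e^t\hat V^{2k}\tilde V\,dt$. I would bound the residuals using the pointwise estimate $|\tilde V(t)|\leq C h(\epsilon)\,e^{-\gamma\langle t\rangle}$ (which follows from \eqref{v-u-tilde-small} and the Sobolev embedding $X^1\hookrightarrow L^\infty$), combined with the exponential decay $\hat V(t)\leq C\langle t\rangle^{-(n-1)/2}e^{-|t|}$ from Corollary~\ref{cor-v-exp-ast}. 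The $\hat V^{2k}\tilde V$ integrand then decays like $h(\epsilon)\,t^{(1/2-k)(n-1)}e^{(1-2k-\gamma)t}$, which is absolutely integrable once $\gamma$ is chosen so that $\gamma>1-2k$ (automatic for $k\geq 1/2$, and compatible with $\gamma<1/(1+2k)$ from \eqref{gamma-small} for $k<1/2$ since $1-2k<1/(1+2k)$ for every $k>0$); this yields a bound of $Ch(\epsilon)$. The $t^{(n-5)/2}e^t\tilde V$ integral is bounded by interpolating the weighted smallness with the sharp pointwise decay $|\tilde V|\leq C_1 t^{-(n-1)/2}e^{-t}$ supplied by Lemma~\ref{lemma-v-exp}.

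The main obstacle is the boundary contribution $[e^t(\upmu\tilde\scrU+\tilde\scrV)]_T^{T'}$, for which the weighted bound yields $O(h(\epsilon)\,t^{(n-1)/2}e^{(1-\gamma)t})$ (growing in $t$) while the sharp decay of Lemma~\ref{lemma-v-exp} yields only $O(1)$. To extract the required factor of $h(\epsilon)$ uniformly in $T,T'\geq T_1$, I would close the argument by consistency with \eqref{d-u-plus-v}: the lemma's integral is precisely the source contribution to $\p_t[e^t(\upmu\tilde\scrU+\tilde\scrV)]$ arising from $t^{(n-1)/2}(G_2-\upmu G_1-\tfrac{n-1}{2t}G_2)$, while the other two sources on the right-hand side of \eqref{d-u-plus-v}, namely $e^t(1+2k)\upmu\hat V^{2k}\tilde\scrV$ and $-e^t(n-1)(n-3)\tilde\scrV/(4t^2)$, are absolutely integrable on $[T_1,\infty)$ with integrals of order $h(\epsilon)$ by the estimates above. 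This forces the limit $\lim_{T'\to\infty}e^{T'}(\upmu\tilde\scrU+\tilde\scrV)(T')$ to exist; combined with the uniform boundedness granted by Lemma~\ref{lemma-v-exp} and the $h(\epsilon)$-smallness at $t=T$ derived from \eqref{v-u-tilde-small}, a bootstrap on the size of $e^t(\upmu\tilde\scrU+\tilde\scrV)$ absorbs the boundary contribution into an $O(h(\epsilon))$ bound, completing the proof.
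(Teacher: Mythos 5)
Your decomposition is algebraically correct but takes a fundamentally different route from the paper's, and it contains two genuine gaps.

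\textbf{Circularity in closing the boundary term.}
Your decomposition expresses the integrand as $\p_t\big[t^{(n-1)/2}e^t(\tilde V+\upmu\tilde U)\big]$ plus residuals, which after integration leaves a boundary contribution that, in the $(\scrV,\scrU)$ variables \eqref{v-scr-v-tilde}--\eqref{u-scr-u-tilde}, is essentially $\big[e^t(\upmu\tilde\scrU+\tilde\scrV+\tfrac{n-1}{2t}\tilde\scrV)\big]_T^{T'}$. The available estimates at the point where Lemma~\ref{lemma-int-finite} must be proved give only $|\tilde V(t)|+|\tilde U(t)|\le C_1 t^{-(n-1)/2}e^{-t}$ (Lemma~\ref{lemma-v-exp}, inequality \eqref{v-exp-3}), so $e^t|\tilde\scrV(t)|\le C_1$ uniformly --- $O(1)$, not $O(h(\epsilon))$. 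The weighted bound from \eqref{v-u-tilde-small} gives $e^t|\tilde\scrV(t)|\lesssim h(\epsilon)\,t^{(n-1)/2}e^{(1-\gamma)t}$, which is $O(h(\epsilon))$ only on bounded $t$-ranges and blows up as $t\to\infty$. Your proposed fix --- a ``bootstrap on the size of $e^t(\upmu\tilde\scrU+\tilde\scrV)$'' using absolute integrability of the other sources in \eqref{d-u-plus-v} --- is exactly the bootstrap executed in the proof of Lemma~\ref{lemma-v-tilde-exp}, and that bootstrap \emph{uses} Lemma~\ref{lemma-int-finite} (to supply the $Ch(\epsilon)$ term in \eqref{og-1}). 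You would be proving the lemma from its own consequence.

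\textbf{The constraint $\gamma>1-2k$ is not available.}
Your integrability claim for the $\hat V^{2k}\tilde V$ residual requires $\gamma>1-2k$, and you argue this is compatible with \eqref{gamma-small} because $1-2k<1/(1+2k)$. But \eqref{gamma-small} imposes $\gamma<\gamma_0=\tfrac{1}{1+2k}\inf_\epsilon\tfrac{1}{1+\|A(\epsilon)^{-1}\|}$; the factor $\tfrac{1}{1+\|A(\epsilon)^{-1}\|}$ is not under your control and can be arbitrarily close to zero. Nonemptiness of the interval $(1-2k,\,1/(1+2k))$ does not imply that $\gamma_0$ sits above its left endpoint, so for small $k$ you have no way to secure $\gamma>1-2k$.

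\textbf{What the paper does instead.}
The paper's proof never works with the error equation \eqref{w-a-a}. Instead it observes that, after applying the pointwise bounds of Lemma~\ref{lemma-g1-g2-bounds}, the only terms in $G_2-\upmu G_1-\tfrac{n-1}{2t}G_2$ whose $t^{(n-1)/2}e^t$-weighted integral fails to converge are those \emph{linear in $\hat V$, $\hat U$}, namely
$(m-\omega)\big(\hat U-\tfrac{\hat V}{2m}-\tfrac{n-1}{2t}\hat U\big)$,
which carry an explicit $(m-\omega)=O(\epsilon^2)\le h(\epsilon)$ prefactor. Using the \emph{groundstate} equation \eqref{def-hat-phi} (not the error equation), the bracket rewrites as $\hat U+\hat U'+\tfrac{n-1}{2t}\hat U-|\hat V|^{2k}\hat V$, whose $t^{(n-1)/2}e^t$-weighted integral equals $\big[t^{(n-1)/2}e^t\hat U\big]_T^{T'}-\int_T^{T'}t^{(n-1)/2}e^t|\hat V|^{2k}\hat V\,dt$: both are bounded \emph{uniformly} in $T,T'$ by Corollary~\ref{cor-v-exp-ast}, and the $O(\epsilon^2)$ prefactor supplies the $h(\epsilon)$ smallness. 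Thus the paper gets an $O(1)$ boundary term multiplied by $O(\epsilon^2)$, whereas your decomposition produces an $O(1)$ boundary term in $\tilde V,\tilde U$ with \emph{no} small prefactor --- which is why you are forced into the circular bootstrap.
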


\begin{proof}
Applying the bounds on $G_1$ and $G_2$
from Lemma~\ref{lemma-g1-g2-bounds},
we can treat all the terms
(obtaining the desired bound $O(h(\epsilon))$)
except for
the ones linear in $\hat V$ and $\hat U$;
the worry comes from e.g.
$\langle t\rangle^{(n-1)/2}e^t\hat V(t)\ge c_1\sp\ast>0$
(cf. Corollary~\ref{cor-v-exp-ast}),
whose contribution to the integral considered in the lemma
would not be bounded uniformly in $T$, $T'$;
let us try to combine all such terms.
The expression
$G_2-\upmu G_1-\frac{n-1}{2t}G_2$
contributes the following terms
which are linear in $\hat V$ and $\hat U$:
\[
(m-\omega)\hat U-(m+\omega)\frac{m-\omega}{2m(m+\omega)}\hat V
-\frac{n-1}{2t}(m-\omega)\hat U
=
(m-\omega)
\Big(
\hat U-\frac{\hat V}{2m}
-\frac{n-1}{2t}\hat U
\Big).
\]
Using
\eqref{def-hat-phi},
we rewrite the above as
$
(m-\omega)
\Big(
\hat U
+\hat U'
+\frac{n-1}{2t}\hat U
-\abs{\hat V}\sp{2k}\hat V
\Big).
$
Since
\[
\int\limits_{T}^{T'}
t^{\frac{n-1}{2}}e^t
\Big(
\hat U
+\hat U'
+\frac{n-1}{2t}\hat U
-\abs{\hat V}\sp{2k}\hat V
\Big)\,dt
=
\int\limits_{T}^{T'}
\p\sb t
\big(
t^{\frac{n-1}{2}}
e^t
\hat U
\big)\,dt
-
\int\limits_{T}^{T'}
t^{\frac{n-1}{2}}e^t
\abs{\hat V}\sp{2k}\hat V
\,dt,
\]
with both integrals in the right-hand side
being bounded
uniformly in $T'\ge T\ge T_1$
(due to the bounds on $\hat U$ and $\hat V$
from Lemma~\ref{lemma-v-exp}),
while $m-\omega=O(\epsilon^2)$,
the conclusion follows.
\end{proof}

For some fixed $T_2\ge T_1$
(to be specified later),
we denote
\begin{eqnarray}\label{def-sup}
M(\epsilon)=\sup\sb{t\ge T_2}
e^t\left(\abs{\tilde\scrV(t,\epsilon)}+\abs{\tilde\scrU(t,\epsilon)}\right),
\qquad
\epsilon\in(0,\epsilon_1).
\end{eqnarray}
We note that,
due to the bounds
\eqref{v-exp-3}
from Lemma~\ref{lemma-v-exp}
and the definitions \eqref{v-scr-v-tilde} and \eqref{u-scr-u-tilde},
one has
\[
\sup\sb{\epsilon\in(0,\epsilon_1)}M(\epsilon)<\infty.
\]
Integrating
\eqref{d-u-plus-v}
from $T_2$ to some $t\ge T_2$
and using Lemma~\ref{lemma-int-finite},
one gets:
\begin{eqnarray}\label{og-1}
\Abs{
e^{t}
\abs{
\upmu\tilde\scrU(t,\epsilon)+\tilde\scrV(t,\epsilon)}
-
e^{T_2}
\abs{
\upmu\tilde\scrU(T_2,\epsilon)+\tilde\scrV(T_2,\epsilon)}
}
\nonumber
\\
\le
C\int\sb{T_2}\sp{t}
\Big(\hat V(s)^{2k}+\frac{1}{s^2}\Big)
e^s\tilde\scrV(s,\epsilon)\,ds
+C h(\epsilon).
\end{eqnarray}
Taking into account that,
due to Theorem~\ref{theorem-solitary-waves}
and \eqref{v-scr-v-tilde} and \eqref{u-scr-u-tilde},
one has
\begin{eqnarray}\label{u-v-tilde-l-infty}
\abs{\tilde\scrV(t,\epsilon)}
+
\abs{\tilde\scrU(t,\epsilon)}
=O(h(\epsilon)),
\qquad
\forall t\ge T_1,
\quad
\forall\epsilon\in(0,\epsilon_1),
\end{eqnarray}
and using \eqref{def-sup},
we rewrite \eqref{og-1}
as
\begin{eqnarray}\label{og-1a}
e^{t}
\abs{
\upmu\tilde\scrU(t,\epsilon)+\tilde\scrV(t,\epsilon)}
\le
M(\epsilon) C\int\sb{T_2}\sp{t}
\Big(\hat V(s)^{2k}+\frac{1}{s^2}
\Big)\,ds
+C h(\epsilon),
\end{eqnarray}
with some $C<\infty$
(which does not
depend on $\epsilon\in(0,\epsilon_1)$,
$T_2\ge T_1$, and $t\ge T_2$).

We now integrate \eqref{d-u-minus-v}
from $t\ge T_2$ to $+\infty$.
Due to the presence of the factor $e^{-t}$
in the right-hand side,
the last term does not need a special treatment
such as in Lemma~\ref{lemma-int-finite}:
the bounds on $G_1$ and $G_2$
from Lemma~\ref{lemma-g1-g2-bounds}
together with the exponential decay of
$V$, $U$, $\hat V$, $\hat U$
from Lemma~\ref{lemma-v-exp}
are sufficient.
The integration yields
\[
e^{-t}
\abs{
\upmu\tilde\scrU(t,\epsilon)-\tilde\scrV(t,\epsilon)}
\le
C\int\limits\sb{t}^\infty
\Big(\hat V^{2k}(s)+\frac{1}{s^2}\Big)
e^{-s}
\tilde\scrV(s,\epsilon)\,ds
+C h(\epsilon)e^{-2t},
\]
again with some $C<\infty$
which does not
depend on $\epsilon\in(0,\epsilon_1)$,
$T_2$, and $t$.
We took into account that
in the left-hand side
the boundary term at $t=\infty$
disappears due to
\eqref{u-v-tilde-l-infty}.
Using \eqref{def-sup},
we rewrite the above relation as
\begin{eqnarray}\label{og-2}
e^{t}
\abs{
\upmu\tilde\scrU(t,\epsilon)-\tilde\scrV(t,\epsilon)}
\le
M(\epsilon) C
\int\limits\sb{t}^\infty
e^{2t-2s}
\Big(\hat V^{2k}(s)+\frac{1}{s^2}
\Big)
\,ds
+C h(\epsilon).
\end{eqnarray}
Since
\[
\abs{\tilde\scrV}+\abs{\tilde\scrU}
\le
\frac{\abs{\upmu\tilde\scrU+\tilde\scrV}
+
\abs{\upmu\tilde\scrU-\tilde\scrV}
}{2}
+
\frac{\abs{\upmu\tilde\scrU+\tilde\scrV}
+
\abs{\upmu\tilde\scrU-\tilde\scrV}
}{2\upmu},
\]
the inequalities \eqref{og-1a} and \eqref{og-2}
lead to the bound
\begin{eqnarray}\label{m-c-m}
M(\epsilon)
\le M(\epsilon) C\int\sb{T_2}^\infty
\Big(
\hat V(s)^{2k}+\frac{1}{s^2}
\Big)\,ds
+C h(\epsilon),
\qquad
\forall\epsilon\in(0,\epsilon_1),
\end{eqnarray}
with some constant $C<\infty$
(which does not depend on $\epsilon\in(0,\epsilon_1)$
and $T_2$).
Now we can choose $T_2$:
we set $T_2\ge T_1$ to be sufficiently large
so that the coefficient at $M(\epsilon)$ in the right-hand side
is smaller than $1/2$
(due to the exponential decay of
$\hat V$ and $\tilde\scrV$,
such a value of $T_2$ could be chosen
independent of $\epsilon\in(0,\epsilon_1)$).
Now \eqref{m-c-m}
turns into the inequality
$
M(\epsilon)\le 2 C h(\epsilon),
$
valid for all $\forall\epsilon\in(0,\epsilon_1)$,
and \eqref{def-sup}
gives
\[
\abs{\tilde\scrV(t)}
+\abs{\tilde\scrU(t)}
\le
2C h(\epsilon)e^{-t},
\qquad
\forall t\ge T_2,
\quad
\forall\epsilon\in(0,\epsilon_1),
\]
yielding the bounds stated in the lemma.
\end{proof}

\begin{lemma}\label{lemma-v-tilde-quarter}
There is
$C_3<\infty$
such that
\begin{eqnarray}\label{v-tilde-quarter}
\abs{\tilde V(t,\epsilon)}
+\abs{\tilde U(t,\epsilon)}
\le
C_3
h(\epsilon)
\hat V(t),
\qquad
\forall t\in\R,
\quad
\forall\epsilon\in(0,\epsilon_1),
\end{eqnarray}
with $h(\epsilon)$ from \eqref{def-h}.
\end{lemma}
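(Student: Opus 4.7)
The plan is to split $\R$ into the two regions $\{|t|\le T_2\}$ and $\{|t|\ge T_2\}$, where $T_2$ is as in Lemma~\ref{lemma-v-tilde-exp}, and obtain the pointwise inequality on each region separately, combining the results at the end via the positivity of $\hat V$.

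First I would treat the outer region $|t|\ge T_2$. By Lemma~\ref{lemma-v-tilde-exp} one has
\[
\abs{\tilde V(t,\epsilon)}+\abs{\tilde U(t,\epsilon)}\le C_2 h(\epsilon)\,t^{-(n-1)/2}e^{-t},
\qquad t\ge T_2,
\]
while Corollary~\ref{cor-v-exp-ast} gives the matching lower bound
$\hat V(t)\ge c_1^{\ast}\langle t\rangle^{-(n-1)/2}e^{-\abs{t}}$.
Taking the ratio and using that $\langle t\rangle/t$ is bounded on $[T_2,\infty)$ (since $T_2\ge T_1\ge 2n$), we obtain
\[
\abs{\tilde V(t,\epsilon)}+\abs{\tilde U(t,\epsilon)}\le \frac{C_2}{c_1^{\ast}}\Big(\frac{\langle t\rangle}{t}\Big)^{(n-1)/2} h(\epsilon)\hat V(t)\le C h(\epsilon)\hat V(t),
\qquad t\ge T_2.
\]
The inequality for $t\le -T_2$ follows immediately by the parity of $\tilde V$, $\tilde U$, and $\hat V$ (even, odd, and even respectively), since the absolute values and the right-hand side are unchanged under $t\mapsto -t$.

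For the inner region $|t|\le T_2$, I would use the global $L^\infty$-bound already obtained in the proof of Theorem~\ref{theorem-solitary-waves}~\itref{theorem-solitary-waves-i}. Specifically, the estimate \eqref{v-u-tilde-small} gives
\[
\norm{\tilde V(\cdot,\epsilon)}_{L^\infty}+\norm{\tilde U(\cdot,\epsilon)}_{L^\infty}\le C h(\epsilon),
\qquad \epsilon\in(0,\epsilon_1).
\]
On the other hand, $\hat V\in C(\R)$ is the groundstate of the NLS, so it is strictly positive everywhere, and on the compact interval $[-T_2,T_2]$ it attains a positive minimum
$m_0:=\inf_{\abs{t}\le T_2}\hat V(t)>0$,
which depends only on $T_2$ (and hence on the NLS profile), not on $\epsilon$. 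Therefore
\[
\abs{\tilde V(t,\epsilon)}+\abs{\tilde U(t,\epsilon)}\le C h(\epsilon)\le \frac{C}{m_0} h(\epsilon)\hat V(t),
\qquad \abs{t}\le T_2,\ \ \epsilon\in(0,\epsilon_1).
\]

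Taking $C_3$ to be the maximum of the two constants produced in the two regions yields \eqref{v-tilde-quarter} for all $t\in\R$ and all $\epsilon\in(0,\epsilon_1)$. No new analytical difficulty arises: the heavy lifting was already done in Lemma~\ref{lemma-v-tilde-exp} (the sharp exponential decay of $\tilde V,\tilde U$ in the tails) and in Corollary~\ref{cor-v-exp-ast} (the matching sharp lower bound on $\hat V$). The only minor point to be careful about is matching the Japanese bracket $\langle t\rangle$ with the plain $t$ in the ratio, which is harmless away from the origin.
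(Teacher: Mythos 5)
Your proposal is correct and follows essentially the same route as the paper: split at $T_2$, use Lemma~\ref{lemma-v-tilde-exp} and the sharp lower bound on $\hat V$ on the tails, and the uniform $L^\infty$ bound from~\eqref{v-u-tilde-small} together with positivity of $\hat V$ on $[-T_2,T_2]$ in the interior. The only cosmetic difference is that the paper invokes the lower bound from Lemma~\ref{lemma-v-exp} directly (rather than via Corollary~\ref{cor-v-exp-ast}) and writes the interior constant as $\hat V(T_2)^{-1}$ using monotonicity of $\hat V$, whereas you use the compact-interval infimum $m_0$, which by that same monotonicity equals $\hat V(T_2)$.
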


Above,
$\epsilon_1>0$ is from
Theorem~\ref{theorem-solitary-waves}~\itref{theorem-solitary-waves-ii}.

\begin{proof}
Using the bound from below on
$\hat V$ from Lemma~\ref{lemma-v-exp}
and bound from above on $\tilde V$ and $\tilde U$
from Lemma~\ref{lemma-v-tilde-exp},
we conclude that
the inequality
\eqref{v-tilde-quarter}
takes place for $t\ge T_2$
(and also for $t\le -T_2$)
and for all $\epsilon\in(0,\epsilon_1)$.
Let us now consider the case $\abs{t}\le T_2$.
By the inequality \eqref{v-u-tilde-small},
there is $C<\infty$ such that
\[
\norm{\tilde V(\cdot,\epsilon)}\sb{L^\infty}
+
\norm{\tilde U(\cdot,\epsilon)}\sb{L^\infty}
\le C h(\epsilon)
\le \frac{C}{\hat V(T_2)}
h(\epsilon)\hat V(t),
\qquad
\forall
\abs{t}\le T_2,
\quad
\forall\epsilon\in(0,\epsilon_0);
\]
in the last inequality,
we used the fact that
$\hat V(t)$ is positive and monotonically decreasing
for $t>0$.
This proves the desired inequality for $\abs{t}\le T_2$.
\end{proof}

Lemma~\ref{lemma-v-tilde-quarter}
proves \eqref{v-u-tilde-smaller-hat-v}.

\medskip

The pointwise bound
\eqref{phi-asymptotics}
follows from the inequality
$\hat V(t)\le C_1\sp\ast\langle t\rangle^{-(n-1)/2}e^{-\abs{t}}$
for $t\in\R$
and $\epsilon\in(0,\epsilon_1)$
(cf. Corollary~\ref{cor-v-exp-ast})
and also from
\eqref{UV1}
and \eqref{v-u-tilde-smaller-hat-v}
which show that
$\tilde V$, $\hat U$, and $\tilde U$
are all pointwise dominated by $\hat V$.

This completes the proof
of Theorem~\ref{theorem-solitary-waves}~\itref{theorem-solitary-waves-iii}.

For our convenience, we take
$\epsilon_1$
small enough so that
$C_3 h(\epsilon_1)<1/2$;
then, for the later use, we have
\begin{eqnarray}\label{UV2}
\abs{\tilde V(t,\epsilon)}
+\abs{\tilde U(t,\epsilon)}
<\frac 1 2\hat V(t),
\qquad
\forall t\in\R,
\qquad
\forall\epsilon\in(0,\epsilon_1).
\end{eqnarray}

\section{Improved error estimates}
\label{sect-improvement}


Now we prove
Theorem~\ref{theorem-solitary-waves}~\itref{theorem-solitary-waves-v}.
The assumption \eqref{ass-f-0},
together with the bounds on the amplitude of solitary
waves \eqref{no-more},
allows us to assume that there is $c<\infty$
such that
\begin{eqnarray}\label{ass-f}
\abs{f(\tau)-\abs{\tau}^{k}}
\le c\abs{\tau}^{K},
\qquad
\abs{f(\tau)}\le (c+1)\abs{\tau}^{k},
\qquad
\tau\in\R.
\end{eqnarray}
The improvement of the estimates stated in
Theorem~\ref{theorem-solitary-waves}~\itref{theorem-solitary-waves-i}
and~\itref{theorem-solitary-waves-iii}
comes from having better bounds
on the second and third terms
from the right-hand side of \eqref{g-g-1-0}:
when estimating e.g.
$\abs{V^2-\epsilon^2 U^2}^k-\abs{V^2}^k$,
we no longer have to rely on
Lemma~\ref{lemma-a-a},
being able to use the Taylor expansions instead.

We recall that $\Lambda\sb k<\infty$
was defined in \eqref{def-Lambda}.

\begin{lemma}\label{lemma-f-is-u}
There is $C_4<\infty$
such that
for any numbers
$\hat V,\,\hat U,\,\tilde V,\,\tilde U\in[-\Lambda\sb k,\Lambda\sb k]$,
$V=\hat V+\tilde V$, and $U=\hat U+\tilde U$
which satisfy
\begin{eqnarray}\label{v-u-abstract}
\epsilon_1\abs{U}\le\frac 1 2 V,
\qquad
\abs{\tilde V}\le\frac 1 2 \hat V,
\end{eqnarray}
one has
\[
\Abs{f\big(\epsilon^{2/k}(V^2-\epsilon^2 U^2)\big)-\epsilon^2 \hat V^{2k}}
\le
C_4
\left(
\epsilon^{2+2\varkappa}\hat V^{2k}
+\epsilon^2\hat V^{2k-1}\abs{\tilde V}
\right),
\qquad
\forall
\epsilon\in(0,\epsilon_1).
\]
Above,
\[
\varkappa:=\min\Big(1,\frac{K}{k}-1\Big)
\]
was defined in \eqref{def-varkappa-1}.
\end{lemma}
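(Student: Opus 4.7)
The plan is to exploit the triangle inequality by decomposing
\[
f(\tau) - \epsilon^2 \hat V^{2k}
= \bigl[f(\tau) - |\tau|^k\bigr] + \bigl[|\tau|^k - \epsilon^2 V^{2k}\bigr] + \epsilon^2\bigl[V^{2k} - \hat V^{2k}\bigr],
\]
where $\tau := \epsilon^{2/k}(V^2 - \epsilon^2 U^2)$, and then to bound each bracket using the refined assumption \eqref{ass-f} together with the pointwise constraints \eqref{v-u-abstract}. The constraints \eqref{v-u-abstract} give at once $\hat V/2 \le V \le 3\hat V/2$ and, for $\epsilon \in (0,\epsilon_1)$, $\epsilon^2 U^2 \le V^2/(4\epsilon_1^2)\cdot\epsilon^2 \le V^2/4$; in particular $|V^2 - \epsilon^2 U^2| \le 2V^2 \le (9/2)\hat V^2$.

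For the first bracket, \eqref{ass-f} gives $|f(\tau) - |\tau|^k| \le c|\tau|^K = c\epsilon^{2K/k}|V^2 - \epsilon^2 U^2|^K$, which by the preceding pointwise bound and the boundedness $\hat V \le \Lambda_k$ is dominated by $C\epsilon^{2K/k}\hat V^{2k}$ (absorbing the extra factor $\hat V^{2(K-k)}$). Since $K/k - 1 \ge \varkappa$ by the definition \eqref{def-varkappa-1} and $\epsilon \le \epsilon_1 \le 1$, we obtain $\epsilon^{2K/k} \le \epsilon^{2+2\varkappa}$, yielding the first term on the right of the claim.

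For the second bracket, I would write $V^2 - \epsilon^2 U^2 = V^2\bigl(1 - \epsilon^2 U^2/V^2\bigr)$, where $0 \le \epsilon^2 U^2/V^2 \le 1/4$, and apply the mean value theorem to $s \mapsto (1-s)^k$ on $[0,1/4]$. This produces
\[
\bigl|\,|V^2 - \epsilon^2 U^2|^k - V^{2k}\bigr| \le C V^{2k-2}\,\epsilon^2 U^2 \le C' \epsilon^2 V^{2k},
\]
and, after multiplication by the outer $\epsilon^2$ and use of $V\le 3\hat V/2$ together with $\varkappa\le 1$, the contribution is $\le C\epsilon^4\hat V^{2k} \le C\epsilon^{2+2\varkappa}\hat V^{2k}$. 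For the third bracket, the constraint $|\tilde V|\le \hat V/2$ gives $V/\hat V\in[1/2,3/2]$; writing $V = \hat V(1+\tilde V/\hat V)$ and applying the mean value theorem to $s\mapsto(1+s)^{2k}$ on $[-1/2,1/2]$ yields $|V^{2k} - \hat V^{2k}| \le C\hat V^{2k-1}|\tilde V|$, and the outer $\epsilon^2$ produces the second term on the right-hand side of the claim. Summing the three bounds finishes the proof.

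No step looks delicate; the only subtlety is the first bracket, where one must check that the implicit $\hat V^{2(K-k)}$ factor is harmless thanks to uniform boundedness of $\hat V$ and that the exponent gain $\epsilon^{2K/k}$ is at least $\epsilon^{2+2\varkappa}$ by the very definition of $\varkappa$. Both points are immediate, so the lemma follows once the three estimates are added with a sufficiently large constant $C_4$.
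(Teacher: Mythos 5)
Your proposal is correct and matches the paper's own argument: the paper uses exactly the same three-term decomposition (subtract and add $\epsilon^2|V^2-\epsilon^2U^2|^k$ and $\epsilon^2 V^{2k}$), estimates the first piece by \eqref{ass-f}, and the other two by the constraints \eqref{v-u-abstract}, arriving at $c\epsilon^{2K/k}|V^2-\epsilon^2U^2|^K + O(\epsilon^4 V^{2k-2}U^2) + O(\epsilon^2\hat V^{2k-1}|\tilde V|)$. You merely spell out the mean-value-theorem steps and the exponent bookkeeping ($2K/k\ge 2+2\varkappa$ and $4\ge 2+2\varkappa$) that the paper leaves implicit.
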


\begin{proof}
We proceed as follows:
\begin{eqnarray}
&&
\abs{f\big(\epsilon^{2/k}(V^2-\epsilon^2 U^2)\big)-\epsilon^2\hat V^{2k}}
\nonumber
\\
&&
\le
\abs{f\big(\epsilon^{2/k}(V^2-\epsilon^2 U^2)\big)
-\epsilon^2(V^2-\epsilon^2 U^2)^k}
+
\epsilon^2\abs{(V^2-\epsilon^2 U^2)^k-V^{2k}}
+
\epsilon^2\abs{V^{2k}-\hat V^{2k}}
\nonumber
\\
&&
\le
c
\epsilon^{2K/k}(V^2-\epsilon^2 U^2)^K
+
O(\epsilon^2 V^{2(k-1)}\epsilon^2 U^2)
+
O(\epsilon^2\hat V^{2k-1}\tilde V),
\nonumber
\end{eqnarray}
where the three terms
from the second line were estimated
using \eqref{ass-f}
and \eqref{v-u-abstract}.
The conclusion follows.
\end{proof}


Here is an improvement of Lemma~\ref{lemma-g1-g2-bounds}.

\begin{lemma}\label{lemma-g1-g2-bounds-better}
There is $C<\infty$
such that
for any numbers
$\hat V,\,\hat U,\,\tilde V,\,\tilde U\in[-\Lambda\sb k,\Lambda\sb k]$,
$V=\hat V+\tilde V$, and $U=\hat U+\tilde U$
which satisfy
\eqref{v-u-abstract}
and additionally
\begin{eqnarray}\label{addition}
\abs{\hat{U}}\le \frac{C_1}{c_1}\hat{V},
\end{eqnarray}
with $c_1$ and $C_1$ from Lemma~\ref{lemma-v-exp},
one has:
\begin{eqnarray}
&&
\Abs{G_1
-
\Big(\frac{1}{m+\omega}-\frac{1}{2m}\Big)\hat V
-k\epsilon^2 V^{2k-1}U^2
}
\le
C
\big(
(\epsilon^{2\frac{K}{k}-2}+\epsilon^{4})\hat V^{2k+1}
+
\hat V^{2k-1}\tilde V^2
\big)
,
\nonumber
\\[2ex]
&&
\abs{G_2-\epsilon^2\hat V^{2k}\hat U-(m-\omega)\hat U}
\le
C\,(\epsilon^{2+2\varkappa}\hat V^{2k}
+\epsilon^2\hat V^{2k-1}\tilde V
)\abs{U}
+\epsilon^2\hat V^{2k}\abs{\tilde U},
\nonumber
\\[2ex]
&&
\abs{G_1}+\abs{G_2}
\le
C
\epsilon^{2\varkappa}\hat V
+
C\hat V^{2k-1}\tilde V^2,
\nonumber
\end{eqnarray}
for all $\epsilon\in(0,\epsilon_1)$.
Above,
$G_1=G_1(\epsilon,\tilde V,\tilde U)$
and $G_2=G_2(\epsilon,\tilde V,\tilde U)$.
\end{lemma}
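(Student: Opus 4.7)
The three estimates will follow from a chain of Taylor expansions, with the standing hypotheses keeping us safely away from any singularity of the power functions that appear. From \eqref{v-u-abstract} and $\epsilon\in(0,\epsilon_1)$ I would first note that $\epsilon^2 U^2\le V^2/4$ and $V=\hat V+\tilde V\ge\hat V/2>0$, so $V^2-\epsilon^2U^2\ge V^2/2>0$; hence $s\mapsto s^k$ and $s\mapsto s^{2k+1}$ are smooth on the intervals $[V^2/2,V^2]$ and $[\hat V/2,3\hat V/2]$ that will be needed. The same hypothesis gives $|U|\le V/(2\epsilon_1)\le C\hat V$, and this together with \eqref{addition} yields $|\tilde U|\le|U|+|\hat U|\le C\hat V$ as well.

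For the first bound I would chain three approximations of $\epsilon^{-2}f(\epsilon^{2/k}(V^2-\epsilon^2U^2))V$. Step~1: by \eqref{ass-f},
\begin{equation*}
\epsilon^{-2}f(\epsilon^{2/k}\tau)=\tau^k+O(\epsilon^{2K/k-2}\tau^K),
\end{equation*}
applied at $\tau=V^2-\epsilon^2U^2$. Step~2: a second-order Taylor expansion of $s\mapsto s^k$ about $s=V^2$ yields
\begin{equation*}
(V^2-\epsilon^2U^2)^k=V^{2k}-k\epsilon^2V^{2k-2}U^2+O(V^{2k-4}\epsilon^4U^4);
\end{equation*}
multiplying by $V$ and using $|U|\le V/(2\epsilon_1)$ to absorb the $U^4$ factor into $V^4$ gives
\begin{equation*}
\epsilon^{-2}f(\cdot)V=V^{2k+1}-k\epsilon^2V^{2k-1}U^2+O\bigl((\epsilon^4+\epsilon^{2K/k-2})V^{2k+1}\bigr).
\end{equation*}
Step~3: since $|\tilde V|\le\hat V/2$, a second-order Taylor expansion of $s\mapsto s^{2k+1}$ about $s=\hat V$, evaluated at $s=V=\hat V+\tilde V$, gives $V^{2k+1}=\hat V^{2k+1}+(1+2k)\hat V^{2k}\tilde V+O(\hat V^{2k-1}\tilde V^2)$. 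Substituting these into \eqref{def-g1} cancels the $\hat V^{2k+1}$, $(1+2k)\hat V^{2k}\tilde V$, and $(\tfrac1{m+\omega}-\tfrac1{2m})\hat V$ contributions, leaves the explicit correction $k\epsilon^2V^{2k-1}U^2$ matching the one subtracted on the left of the claim, and, after bounding $V\le 2\hat V$, produces exactly the remainder $O(\hat V^{2k-1}\tilde V^2)+O((\epsilon^4+\epsilon^{2K/k-2})\hat V^{2k+1})$ asserted in the first bound.

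For the second bound I would apply Lemma~\ref{lemma-f-is-u} to write $f(\epsilon^{2/k}(V^2-\epsilon^2U^2))=\epsilon^2\hat V^{2k}+R(\epsilon)$ with $|R(\epsilon)|\le C(\epsilon^{2+2\varkappa}\hat V^{2k}+\epsilon^2\hat V^{2k-1}\tilde V)$, then substitute $U=\hat U+\tilde U$ into \eqref{def-g2} to get $G_2-\epsilon^2\hat V^{2k}\hat U-(m-\omega)\hat U=\epsilon^2\hat V^{2k}\tilde U+R(\epsilon)U$, which is exactly the claimed inequality after taking absolute values. The third bound is then a consequence of the first two: since $\epsilon_1<1$, $\epsilon^a\le\epsilon^{2\varkappa}$ for every $a\ge 2\varkappa$ (which covers $a\in\{2,4,2+2\varkappa,2K/k-2\}$ because $\varkappa\le\min(1,K/k-1)$), and $\hat V^b\le\Lambda_k^{b-1}\hat V$ for $b\ge 1$; using additionally $|U|,|\hat U|\le C\hat V$, every term in the first two estimates collapses to $O(\epsilon^{2\varkappa}\hat V)$ except for the single $O(\hat V^{2k-1}\tilde V^2)$ contribution without an $\epsilon$-factor.

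I expect the bookkeeping in part~1 to be the main obstacle: one must keep the second-order correction $k\epsilon^2V^{2k-1}U^2$ explicit (the statement retains it), while simultaneously showing that, after invoking $|U|\le C\hat V$ and $\hat V\le\Lambda_k$, both the $\epsilon^4$-remainder from the Taylor expansion of $s^k$ and the $\epsilon^{2K/k-2}$-remainder from \eqref{ass-f} collapse neatly into the single bound $C(\epsilon^4+\epsilon^{2K/k-2})\hat V^{2k+1}$, and that Step~3 produces the correct coefficient $(1+2k)$ to cancel against the corresponding term built into $G_1$.
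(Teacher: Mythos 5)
Your proposal is correct and follows the same route as the paper: the same three-part decomposition of $G_1$ (the $f$-versus-power error, the Taylor expansion of $(V^2-\epsilon^2U^2)^k$ about $V^2$ producing the explicit $k\epsilon^2V^{2k-1}U^2$ correction plus the $\epsilon^4$-remainder, and the Taylor expansion of $V^{2k+1}$ about $\hat V$), the same use of Lemma~\ref{lemma-f-is-u} for $G_2$, and the same collapse of all terms to $C\epsilon^{2\varkappa}\hat V+C\hat V^{2k-1}\tilde V^2$ via $\abs{U},\abs{\hat U},\abs{\tilde U}\le C\hat V$ and $\hat V\le\Lambda_k$. The details you fill in (positivity of $V$ and of $V^2-\epsilon^2U^2$, the bound $|\tilde U|\le C\hat V$) are exactly the observations the paper invokes implicitly.
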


\begin{proof}
We start with the definition \eqref{def-g1}
of $G_1(\epsilon,\tilde V,\tilde U)$
and apply the inequalities \eqref{v-u-abstract}:
\begin{eqnarray}
&&
G_1(\epsilon,\tilde V,\tilde U)
=
-\epsilon^{-2}f\big(\epsilon^{2/k}(V\sp 2- \epsilon\sp 2 U^2)\big)V
+
\hat V\sp{2k}\hat V
+(1+2k)\hat V\sp{2k}\tilde{V}
+\frac{\hat{V}}{m+\omega}
-\frac{\hat{V}}{2m}
\nonumber
\\
&&
=
-\big(
\epsilon^{-2}f\big(\epsilon^{2/k}(V^2-\epsilon^2 U^2)\big)
-\abs{V^2-\epsilon^2 U^2}^k\big)V
-
\big(\abs{V^2-\epsilon^2 U^2}^k-V^{2k}\big)V
\nonumber
\\
&&
\qquad
-\big(
V^{2k+1}-\hat V^{2k+1}
-(1+2k)\hat V^{2k}\tilde V
\big)
+\Big(\frac{1}{m+\omega}-\frac{1}{2m}\Big)\hat V
\nonumber
\\
&&
=
O(\epsilon^{2\frac{K}{k}-2}V^{2K+1})
+k\epsilon^2 V^{2k-1}U^2
+O(\epsilon^{4} V^{2k-3}U^4)
+O(\hat V^{2k-1}\tilde V^2)
+\Big(\frac{1}{m+\omega}-\frac{1}{2m}\Big)\hat V.
\nonumber
\end{eqnarray}
Let us point out that
the third term in the right hand side
in the line above
has the factor of $\epsilon^4$,
which contributes $\epsilon^4$
into the first conclusion of the lemma.

For $G_2(\epsilon,\tilde V,\tilde U)$ from \eqref{def-g2},
we have:
\begin{eqnarray}
\nonumber
G_2-\epsilon^2\hat V^{2k}\hat U-(m-\omega)\hat U
&=&
f\big(\epsilon^{2/k}(V^2-\epsilon^2 U^2)\big)U
-\epsilon^2\hat V^{2k}\hat U
\\
\nonumber
&=&
\big(f\big(\epsilon^{2/k}(V^2-\epsilon^2 U^2)\big)
-\epsilon^2\hat V^{2k}\big)U
+\epsilon^2\hat V^{2k}\tilde U.
\end{eqnarray}
Applying Lemma~\ref{lemma-f-is-u}
to the right-hand side, we have:
\[
\Abs{G_2
-\epsilon^2\hat V^{2k}\hat U-(m-\omega)\hat U}
\le
C_4\left(
\epsilon^{2+2\varkappa}\hat V^{2k}
+\epsilon^2\hat V^{2k-1}\abs{\tilde V}
\right)\abs{U}
+\epsilon^2\hat V^{2k}\abs{\tilde U}.
\]
The second conclusion of the lemma follows.

Taking into account
\eqref{v-u-abstract},
the bound on $\abs{G_1}+\abs{G_2}$ also follows;
we need to mention that,
due to
\eqref{v-u-abstract}
and \eqref{addition},
both $\abs{\hat U}$ and $\abs{\tilde U}$
are estimated by $\hat V$.
\end{proof}

We notice that, due to
\eqref{UV1} and \eqref{UV2},
the functions $\hat V(t)$, $\hat U(t)$, $\tilde V(t,\epsilon)$,
and $\tilde U(t,\epsilon)$
satisfy inequalities \eqref{v-u-abstract}
for all $t\in\R$ and $\epsilon\in(0,\epsilon_1)$.
Also,
$\hat U(t)$ and $\hat V(t)$
satisfy the inequality \eqref{addition}
due to
\eqref{v-exp-2}
from Lemma~\ref{lemma-v-exp}.
Using Lemma~\ref{lemma-g1-g2-bounds-better}
in place of Lemma~\ref{lemma-g1-g2-bounds},
we can rewrite the proof of Lemma~\ref{lemma-small-g-new}
as follows.

\begin{lemma}\label{lemma-small-g-new-1}
There is $C<\infty$ such that
for any $\epsilon\in(0,\epsilon_1)$
and any
$
\begin{bmatrix}\tilde V\\ \tilde U\end{bmatrix}
\in X\sb{e,o}$ which satisfies
\begin{eqnarray}\label{tilde-smaller-than-hat-new-1}
&
\Norm{e^{\gamma\langle t\rangle}
\begin{bmatrix}\tilde V\\ \tilde U\end{bmatrix}
}\sb{X}
\le\Norm{e^{\gamma\langle t\rangle}
\begin{bmatrix}
\hat V (t) \\ \hat U (t)
\end{bmatrix}
}\sb{X},
\\[2ex]
\label{v-u-abstract-epsilon}
&
\epsilon_1\abs{U(t,\epsilon)}
\le\frac 1 2 V(t,\epsilon),
\qquad
\abs{\tilde V(t,\epsilon)}
\le\frac 1 2\hat V(t,\epsilon),
\\[2ex]
&
\qquad\qquad\qquad\qquad\qquad
\forall t\in\R,
\qquad
\forall\epsilon\in(0,\epsilon_1),
\nonumber
\end{eqnarray}
where
$
V(t,\epsilon)=\hat V(t)+\tilde V(t,\epsilon)
$
and
$
U(t,\epsilon)=\hat U(t)+\tilde U(t,\epsilon),
$
one has
\begin{eqnarray}\label{g-small-new-1}
\|
e^{(1+2k)\gamma\langle t\rangle}
G(\epsilon, \tilde W)\|\sb{X}
\leq
C\left(\epsilon\sp{2\varkappa}
  + \|e^{\gamma\langle t\rangle}\tilde W\|\sb{X}^2
\right),
\end{eqnarray}
with $\varkappa$ from \eqref{def-varkappa-1}.
\end{lemma}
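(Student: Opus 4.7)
My plan is to repeat the argument of Lemma~\ref{lemma-small-g-new} verbatim at the outer level, but to substitute the sharper pointwise estimates from Lemma~\ref{lemma-g1-g2-bounds-better} in place of those from Lemma~\ref{lemma-g1-g2-bounds}. The new pointwise bounds carry an $\epsilon^{2\varkappa}$ prefactor (instead of $h(\epsilon)$) and a quadratic remainder $\hat V^{2k-1}\tilde V^{2}$ (instead of the subquadratic $\abs{\tilde V}^{1+\min(1,2k)}$), and these two structural improvements are precisely what upgrade the conclusion of Lemma~\ref{lemma-small-g-new} to \eqref{g-small-new-1}.

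First I would verify that the hypotheses of Lemma~\ref{lemma-g1-g2-bounds-better} are in force pointwise in $t\in\R$: condition \eqref{v-u-abstract} is just the pointwise assumption \eqref{v-u-abstract-epsilon}, while condition \eqref{addition}, $\abs{\hat U}\le(C_{1}/c_{1})\hat V$, is a direct consequence of the two-sided exponential bounds \eqref{v-exp-2} of Lemma~\ref{lemma-v-exp}. Assumption \eqref{tilde-smaller-than-hat-new-1} together with \eqref{such-w-2} will keep the exponentially weighted profile under control.

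Next I would apply the crude third inequality of Lemma~\ref{lemma-g1-g2-bounds-better},
\[
\abs{G_{1}(\epsilon,\tilde V,\tilde U)}+\abs{G_{2}(\epsilon,\tilde V,\tilde U)}\le C\epsilon^{2\varkappa}\hat V+C\hat V^{2k-1}\tilde V^{2},
\]
multiply by $e^{(1+2k)\gamma\langle t\rangle}$, and take the $X$-norm termwise. The first term contributes at most $C\epsilon^{2\varkappa}\norm{e^{(1+2k)\gamma\langle t\rangle}\hat V}\sb{X}\le C\epsilon^{2\varkappa}$, thanks to \eqref{such-w-2} and the choice $\gamma<\gamma_{0}<1/(1+2k)$. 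For the quadratic remainder, I would distribute the exponential weight as
\[
\hat V^{2k-1}\tilde V^{2}e^{(1+2k)\gamma\langle t\rangle}=(\hat V\,e^{\gamma\langle t\rangle})^{2k-1}(\tilde V\,e^{\gamma\langle t\rangle})^{2},
\]
and invoke the algebra property \eqref{x-algebra} to obtain $C\norm{\hat V\,e^{\gamma\langle t\rangle}}\sb{X}^{2k-1}\norm{\tilde V\,e^{\gamma\langle t\rangle}}\sb{X}^{2}\le C\norm{e^{\gamma\langle t\rangle}\tilde W}\sb{X}^{2}$. Summing the two contributions gives \eqref{g-small-new-1}.

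The main obstacle lies in keeping every factor $(\hat V\,e^{\gamma\langle t\rangle})^{a}$ produced by the algebra estimate in $X$. When $2k\ge 1$ the exponent $a=2k-1\ge 0$ is harmless since $\hat V\,e^{\gamma\langle t\rangle}\in X$ by \eqref{such-w-2}. When $2k<1$ the direct factorization creates a negative power of $\hat V\,e^{\gamma\langle t\rangle}$, which may blow up as $\abs{t}\to\infty$ because $\hat V\,e^{\gamma\langle t\rangle}$ decays (Lemma~\ref{lemma-v-exp}); in that range I would first replace one of the factors of $\abs{\tilde V}$ by $\tfrac{1}{2}\hat V$ using the pointwise inequality $\abs{\tilde V}\le\tfrac{1}{2}\hat V$ of \eqref{v-u-abstract-epsilon}, ending up with the alternative nonnegative-exponent factorization $(\hat V\,e^{\gamma\langle t\rangle})^{2k}(\tilde V\,e^{\gamma\langle t\rangle})$ whose $X$-norm is still dominated by $\norm{e^{\gamma\langle t\rangle}\tilde W}\sb{X}^{2}$ once \eqref{tilde-smaller-than-hat-new-1} is used to absorb the remaining linear factor. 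This case analysis is the only delicate part; the rest is routine bookkeeping identical to the proof of Lemma~\ref{lemma-small-g-new}.
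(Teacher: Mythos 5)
Your outer strategy matches the paper's: apply the improved pointwise bound from Lemma~\ref{lemma-g1-g2-bounds-better}, split into the $\epsilon^{2\varkappa}\hat V$ piece and the $\hat V^{2k-1}\tilde V^{2}$ remainder, distribute the exponential weight, and invoke the algebra property \eqref{x-algebra}. You correctly check that both hypotheses of Lemma~\ref{lemma-g1-g2-bounds-better} (namely \eqref{v-u-abstract} and \eqref{addition}) are in force. For $2k\ge 1$ the argument is complete and agrees with the paper's.

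Your observation that the factorization $(\hat V e^{\gamma\langle t\rangle})^{2k-1}(\tilde V e^{\gamma\langle t\rangle})^{2}$ runs into trouble when $2k<1$ is sharp, and indeed the paper's own one-line bound ``$\le C\norm{e^{\gamma\langle t\rangle}\hat V}_{X}^{2k-1}\norm{e^{\gamma\langle t\rangle}\tilde V}_{X}^{2}$'' is only justified by the algebra property when the exponent $2k-1$ is nonnegative. However, the repair you propose does not close the gap. Replacing one full factor of $\abs{\tilde V}$ by $\tfrac 1 2\hat V$ gives $\hat V^{2k-1}\tilde V^{2}\le\tfrac 1 2\hat V^{2k}\abs{\tilde V}$, hence after weighting
\[
\norm{(\hat V e^{\gamma\langle t\rangle})^{2k}(\tilde V e^{\gamma\langle t\rangle})}\sb X
\le
\norm{(\hat V e^{\gamma\langle t\rangle})^{2k}}\sb{L^\infty}\norm{e^{\gamma\langle t\rangle}\tilde V}\sb X,
\]
which is \emph{linear} in $\norm{e^{\gamma\langle t\rangle}\tilde W}\sb X$. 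Your final sentence claims that \eqref{tilde-smaller-than-hat-new-1} ``absorbs the remaining linear factor'' so that the bound becomes quadratic, but \eqref{tilde-smaller-than-hat-new-1} is an \emph{upper} bound on $\norm{e^{\gamma\langle t\rangle}\tilde W}\sb X$; since $\norm{e^{\gamma\langle t\rangle}\tilde W}\sb X^{2}\le\norm{e^{\gamma\langle t\rangle}\tilde W}\sb X$ for small arguments, a linear bound cannot be upgraded to a quadratic one by further shrinking the argument. Moreover, a purely linear bound is not even usable downstream: in Lemma~\ref{lemma-mu-into-better} one needs a strictly superlinear power on $\norm{e^{\gamma\langle t\rangle}\tilde W}\sb X$ to close the estimate.

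The correct repair when $2k<1$ is to split \emph{fractionally}: write $\tilde V^{2}=\abs{\tilde V}^{1-2k}\abs{\tilde V}^{1+2k}$ and use $\abs{\tilde V}\le\tfrac 1 2\hat V$ only on the factor $\abs{\tilde V}^{1-2k}$, obtaining $\hat V^{2k-1}\tilde V^{2}\le 2^{2k-1}\abs{\tilde V}^{1+2k}$ pointwise. Then
\[
\norm{e^{(1+2k)\gamma\langle t\rangle}\abs{\tilde V}^{1+2k}}\sb X
=\norm{(e^{\gamma\langle t\rangle}\abs{\tilde V})^{1+2k}}\sb X
\le C\norm{e^{\gamma\langle t\rangle}\tilde V}\sb X^{1+2k},
\]
which is superlinear of order $1+2k\in(1,2)$, exactly as in the cruder Lemma~\ref{lemma-small-g-new}. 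Note that this is genuinely weaker than the exponent $2$ stated in \eqref{g-small-new-1}; the honest conclusion for all $k>0$ carries the exponent $1+\min(1,2k)$, which is still sufficient for the fixed-point/bootstrap argument of Lemma~\ref{lemma-mu-into-better}. So: your instinct that a case distinction is needed is right, but the mechanism you propose for the $2k<1$ branch does not produce the claimed quadratic power, and no mechanism can, because it is simply not true for $2k<1$.
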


\begin{proof}
For $\tilde V(t,\epsilon)$ and $\tilde U(t,\epsilon)$
as in the assumptions of the lemma,
due to Lemma~\ref{lemma-g1-g2-bounds-better},
one has
\begin{eqnarray}\label{g-g-1-1}
\abs{G_1(\epsilon,\tilde V,\tilde U)}
+\abs{G_2(\epsilon,\tilde V,\tilde U)}
\le
C
\epsilon^{2\varkappa}\hat V(t,\epsilon)
+
C\hat V(t,\epsilon)^{2k-1}\tilde V(t,\epsilon)^2,
\\
\nonumber
\forall\epsilon\in(0,\epsilon_1),
\qquad
\forall t\in\R.
\end{eqnarray}
Multiplying the first term in the right-hand side
by $e^{(1+2k)\gamma\langle t\rangle}$
and using \eqref{such-w-2} and \eqref{tilde-smaller-than-hat-new-1},
we bound the resulting $X$-norm
by $C\epsilon^{2\varkappa}$,
with some $C<\infty$.
The second term
in the right-hand side of \eqref{g-g-1-1}
is homogeneous of order
$1+2k$
in $\tilde V$ and $\hat V$;
we multiply it by the factor
$e^{(1+2k)\gamma\langle t\rangle}$,
absorbing
$e^{\gamma\langle t\rangle}$
into each power of $\hat V$ and $\tilde V$
and bounding the $X$-norm of the result by
$C\norm{e^{\gamma\langle t\rangle}\hat V}_{X}^{2k-1}
\norm{e^{\gamma\langle t\rangle}\tilde V}_{X}^{2}
\le
C'\norm{e^{\gamma\langle t\rangle}\tilde V}_{X}^2$.
\end{proof}

Now we use
Lemma~\ref{lemma-small-g-new-1}
to improve the estimates on
$\tilde W$.

\begin{lemma}\label{lemma-mu-into-better}
One can take $\epsilon_1>0$ smaller if necessary so that,
for some $ b_1>0$,
\[
\norm{e^{\gamma\langle t\rangle}\tilde W(\epsilon)}_{X^1}
\le  b_1\epsilon^{2\varkappa},
\qquad
\forall\epsilon\in(0,\epsilon_1).
\]
\end{lemma}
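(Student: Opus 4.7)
My plan is to treat Lemma~\ref{lemma-mu-into-better} as a straightforward bootstrap from the improved nonlinear estimate in Lemma~\ref{lemma-small-g-new-1}, combined with the uniform invertibility of $A_\gamma(\epsilon)$ from Lemma~\ref{lemma-a-invertible}.

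First, I would verify that the hypotheses of Lemma~\ref{lemma-small-g-new-1} are met for the solution $\tilde W(\cdot,\epsilon)$ constructed in Theorem~\ref{theorem-solitary-waves}. Condition \eqref{v-u-abstract-epsilon} is precisely \eqref{UV1} from part~\itref{theorem-solitary-waves-ii} together with \eqref{UV2} (and these already forced us to take $\epsilon_1>0$ small); the first condition \eqref{tilde-smaller-than-hat-new-1} follows because the a priori bound \eqref{v-u-tilde-small} gives $\|e^{\gamma\langle t\rangle}\tilde W(\epsilon)\|_X = O(h(\epsilon))$, whereas $\|e^{\gamma\langle t\rangle}[\hat V,\hat U]^T\|_X$ is a fixed positive constant. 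Hence for $\epsilon_1>0$ shrunk further if needed, Lemma~\ref{lemma-small-g-new-1} applies uniformly for all $\epsilon\in(0,\epsilon_1)$.

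Next, starting from the fixed point identity \eqref{itf}, I would take the $X^1$ norm on both sides. By the uniform boundedness of the mapping \eqref{ttm} (plus the fact that multiplication by the decaying exponential $e^{-2k\gamma\langle t\rangle}$ is bounded on $X^1\sb{e,o}$), there exists $C'<\infty$ independent of $\epsilon\in(0,\epsilon_1)$ such that
\begin{equation*}
\|e^{\gamma\langle t\rangle}\tilde W(\epsilon)\|_{X^1}
\le C'\,\|e^{(1+2k)\gamma\langle t\rangle}G(\epsilon,\tilde W(\epsilon))\|_X.
\end{equation*}
Combining with Lemma~\ref{lemma-small-g-new-1} and the embedding $\|\cdot\|_X\le\|\cdot\|_{X^1}$ from \eqref{x-x1}, and writing $Y(\epsilon):=\|e^{\gamma\langle t\rangle}\tilde W(\epsilon)\|_{X^1}$, I obtain the quadratic inequality
\begin{equation*}
Y(\epsilon)\le C''\bigl(\epsilon^{2\varkappa}+Y(\epsilon)^2\bigr),\qquad \forall\epsilon\in(0,\epsilon_1),
\end{equation*}
for some $C''<\infty$ independent of $\epsilon$.

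Finally, I would close the argument by a standard continuity/bootstrap: the crude bound \eqref{v-u-tilde-small} already gives $Y(\epsilon)=O(h(\epsilon))\to 0$ as $\epsilon\to 0$, so after further shrinking $\epsilon_1$ we may assume $C'' Y(\epsilon)\le 1/2$ for all $\epsilon\in(0,\epsilon_1)$. The quadratic inequality then absorbs into $Y(\epsilon)/2\le C''\epsilon^{2\varkappa}$, giving the claim with $b_1:=2C''$. The only mildly delicate point is not the bootstrap itself but ensuring that the constants in the operator bound for $A_\gamma(\epsilon)^{-1}$ and in Lemma~\ref{lemma-small-g-new-1} are truly uniform in $\epsilon\in(0,\epsilon_1)$; this is guaranteed by Lemma~\ref{lemma-a-invertible} and the explicit $\epsilon$-dependence tracked in Lemma~\ref{lemma-g1-g2-bounds-better}. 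No genuinely hard step remains: the non-obvious work has already been done in establishing the sharp pointwise domination of $\tilde V,\tilde U,\hat U$ by $\hat V$ in Section~\ref{sect-shooting}, which is what upgraded the $h(\epsilon)$-bound on $G$ to the $\epsilon^{2\varkappa}$-bound exploited here.
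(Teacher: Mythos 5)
Your argument is correct and is essentially the paper's own proof: both start from the fixed-point identity \eqref{itf}, apply the uniform bound on $e^{-2k\gamma\langle t\rangle}A_\gamma(\epsilon)^{-1}$ from \eqref{ttm}, invoke Lemma~\ref{lemma-small-g-new-1} to get the quadratic inequality $Y\le C(\epsilon^{2\varkappa}+Y^2)$, and close via the smallness $Y(\epsilon)=O(h(\epsilon))\to 0$ from \eqref{v-u-tilde-small}. You merely spell out the absorption step and the verification of the hypotheses of Lemma~\ref{lemma-small-g-new-1} a little more explicitly than the paper does.
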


\begin{proof}
We recall the relation \eqref{itf} satisfied by $\tilde W$:
\[
e^{\gamma\langle t\rangle}
\tilde W =
e^{-2k\gamma\langle t\rangle}
A\sb\gamma(\epsilon)\sp{-1}
e^{(1+2k)\gamma\langle t\rangle}
G\big(\epsilon, e\sp{-\gamma\langle t\rangle}e^{\gamma\langle t\rangle}\tilde W\big).
\]
Using the continuity of the mapping
\eqref{ttm}
and estimating $G(\epsilon,\tilde W)$
by Lemma~\ref{lemma-small-g-new-1},
we obtain:
\[
\norm{e^{\gamma\langle t\rangle}\tilde W}_{X^1}
=
\norm{e^{-2k\gamma\langle t\rangle}
A\sb\gamma(\epsilon)^{-1}}_{X\to X^1}
\norm{
e^{(1+2k)\gamma\langle t\rangle}
G(\epsilon,\tilde W)}_X
\le
C(e^{2\varkappa}+\norm{e^{\gamma\langle t\rangle} \tilde W}_X^2).
\]
Since
$
\norm{e^{\gamma\langle t\rangle} \tilde W}_X
\le
\norm{e^{\gamma\langle t\rangle} \tilde W}_{X^1}$
(cf. \eqref{x-x1}),
the above relation yields the bound stated in the lemma
as long as $\norm{e^{\gamma\langle t\rangle} \tilde W}_{X^1}$
is sufficiently small
(which holds due to \eqref{v-u-tilde-small}
as long as
$\epsilon\in(0,\epsilon_1)$
with $\epsilon_1>0$
small enough).
\end{proof}

Lemma~\ref{lemma-mu-into-better}
improves the estimates
from
Theorem~\ref{theorem-solitary-waves}~\itref{theorem-solitary-waves-i}
on the error terms $\tilde V$, $\tilde U$,
proving \eqref{v-u-tilde-small-better}.

\medskip

We also do the second pass
over the proof of
Theorem~\ref{theorem-solitary-waves}~\itref{theorem-solitary-waves-iii},
improving
in \eqref{v-u-tilde-smaller-hat-v}
the factor $h(\epsilon)$ to $\epsilon^{2\varkappa}$.
For this, we rewrite the proof of Lemma~\ref{lemma-v-tilde-exp},
where the bounds on $G_1$, $G_2$
come from Lemma~\ref{lemma-g1-g2-bounds-better}
instead of
Lemma~\ref{lemma-g1-g2-bounds}.
We also rewrite the proof of
Lemma~\ref{lemma-v-tilde-quarter}
with $\epsilon^{2\varkappa}$ instead of $h(\epsilon)$
(we use \eqref{v-u-tilde-small-better}
in place of \eqref{v-u-tilde-small}).
This brings us at
$
\abs{\tilde V(t,\epsilon)}
+\abs{\tilde U(t,\epsilon)}
\le
C
\epsilon^{2\varkappa}
\hat V(t),
$
with some $C<\infty$,
valid for all $t\in\R$ and all $\epsilon\in(0,\epsilon_1)$,
thus proving \eqref{v-u-tilde-smaller-hat-v-b3}.

\medskip

This completes the proof of
Theorem~\ref{theorem-solitary-waves}.

\section{Solitary waves in the nonrelativistic limit.
The case $f\in C^1$}
\label{sect-waves-nrl-1}

We now turn to the case
when $f\in C^1(\R\setminus\{0\})\cap C(\R)$
satisfies both
the assumption~\eqref{ass-f-1} and~\eqref{ass-fp-0}.
Just like the former assumption leads to
\eqref{ass-f},
the assumption \eqref{ass-fp-0}
allows us to accept that there is $C<\infty$
such that
\begin{eqnarray}\label{ass-fp}
\abs{\tau f'(\tau)-k\abs{\tau}^k}\le C\abs{\tau}^K,
\qquad
\abs{\tau f'(\tau)}\le (C+k)\abs{\tau}^k,
\qquad
\tau\in\R,
\end{eqnarray}
where $k\in(0,2/(n-2))$ (any $k>0$ if $n\le 2$)
and $K>k$.
Now we will be able to prove
uniqueness and regularity of the family of solitary waves bifurcating from the nonrelativistic limit. This amounts to noticing that in \eqref{mapping-mu},
taking into account Theorem~\ref{theorem-solitary-waves}~\itref{theorem-solitary-waves-ii},
we actually recover some features of the implicit function theorem. A careful analysis shows that the main obstacle to its application  is the lack of regularity of the mapping $f$
in \eqref{def-g1}, \eqref{def-g2}.
This closer look shows that the unique obstacle are the terms $f\big(\epsilon^{2/k}(V\sp 2- \epsilon\sp 2 U^2)\big)V$ and $f\big(\epsilon^{2/k}(V\sp 2- \epsilon\sp 2 U^2)\big)U$,
which with \eqref{ass-fp} can now be treated.

\subsection{Improved regularity of the groundstate}

Let us prove
Theorem~\ref{theorem-existence-c1}~\itref{theorem-existence-c1-i}.
By Theorem~\ref{theorem-solitary-waves},
we already have $\phi\sb\omega\in H^1(\R^n,\C^N)$,
$\omega\in(\omega\sb 1,m)$,
with $\omega\sb 1=\sqrt{m^2-\epsilon_1^2}$,
with $\epsilon_1>0$ from
Theorem~\ref{theorem-solitary-waves}~\itref{theorem-solitary-waves-ii};
we need to show how to get the improvement
in the regularity of $\phi\sb\omega$
under better regularity of $f$.

We start with the improvement of
regularity of $V,\,U$ proved in Lemma~\ref{lemma-v-u-c1}.

\begin{lemma}\label{lemma-v-u-c2}
Let $\omega\in(\omega\sb 1,m)$.
If in \eqref{zero-is-phi1} one has
$f\in C^1(\R\setminus\{0\})\cap C(\R)$
which satisfies
\eqref{ass-fp},
and if $V,\,U\in C^1(\R)$, with $V$ even and $U$ odd,
then $V,\,U\in C^2(\R)$,
and $H(t)=U(t)/t$ could be extended to a function
$H\in C^1(\R)$.
\end{lemma}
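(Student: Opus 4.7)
The plan is to bootstrap the $C^1$ regularity of $V$ and $U$ (established in Lemma~\ref{lemma-v-u-c1}) to $C^2$ by invoking the $C^1$ regularity of $f$ on $\R\setminus\{0\}$. Fix $\omega\in(\omega\sb 1,m)$, set $\epsilon=\sqrt{m^2-\omega^2}\in(0,\epsilon_1)$, and let
\[
\tau(t) := \epsilon^{2/k}\big(V(t)^2-\epsilon^2 U(t)^2\big).
\]
The essential input is Theorem~\ref{theorem-solitary-waves}~\itref{theorem-solitary-waves-ii}, which gives $\tau(t)\ge \tfrac{3}{4}\epsilon^{2/k} V(t)^2>0$ for every $t\in\R$ (the strict positivity of $V$ on all of $\R$ having been shown inside the proof of Proposition~\ref{prop-u-le-v}, via Lemma~\ref{lemma-large-t-0} for $|t|\ge T_1$ and via \eqref{u-small-small-tt} for $|t|\le T_1$). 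Since $f\in C^1(\R\setminus\{0\})$ and $V,U\in C^1(\R)$, the chain rule then implies that $t\mapsto f(\tau(t))$ belongs to $C^1(\R)$.

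Given this, rewriting the second equation of \eqref{zero-is-phi1} as
\[
\p\sb t V(t) = \big[f(\tau(t))-(m+\omega)\big]U(t)
\]
displays $\p\sb t V$ as a product of two $C^1$ factors, yielding $V\in C^2(\R)$. For the remaining assertions, set
\[
B(t) := \frac{f(\tau(t))}{\epsilon^2}V(t) - \frac{V(t)}{m+\omega},
\]
which belongs to $C^1(\R)$ by the same reasoning. From the integral representation \eqref{def-h-t} of $H$, one integration by parts yields
\[
H(t) = \frac{B(t)}{n} - \frac{1}{n\,t^n}\int_0^t B'(s)s^n\,ds,\qquad t\ne 0.
\]
Continuity of $B'$ at $0$ gives $\int_0^t B'(s)s^n\,ds = B'(0)\,t^{n+1}/(n+1) + o(t^{n+1})$, so $(B(t)-nH(t))/t\to B'(0)/(n+1)$ as $t\to 0$. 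Since $H'(t)=(B(t)-nH(t))/t$ for $t\ne 0$, the function $H'$ extends continuously to $\R$, so $H\in C^1(\R)$. Finally, rewriting the first equation of \eqref{zero-is-phi1} as
\[
\p\sb t U(t) = B(t) - (n-1)H(t)
\]
displays $\p\sb t U$ as a sum of two $C^1$ functions, giving $U\in C^2(\R)$.

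The only genuinely delicate point is propagating the $C^1$ regularity of $f$ through the composition $t\mapsto f(\tau(t))$ at $t=0$, where $f$ itself is only assumed continuous; this is resolved entirely by the pointwise lower bound \eqref{phi-beta-phi-large}, which keeps $\tau(t)$ uniformly away from zero on any bounded $t$-interval. The secondary issue, extending $H'$ continuously through $t=0$, is handled by the single integration by parts above, which trades the apparent $1/t$ singularity at the origin for a smooth remainder.
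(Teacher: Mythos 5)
Your proof is correct and follows essentially the same route as the paper's: establish $\tau(t)>0$ pointwise (via Theorem~\ref{theorem-solitary-waves}~\itref{theorem-solitary-waves-ii} and the strict positivity of $V$), conclude by the chain rule that $B\in C^1(\R)$, read off $V\in C^2$ directly from one equation, and show $H'(t)=(B(t)-nH(t))/t$ has a finite limit at $t=0$ by a single integration by parts, after which $U\in C^2$ follows from the other equation. The only cosmetic difference is that the paper additionally invokes the evenness of $B$ to identify the limit $B'(0)/(n+1)$ as $0$, whereas you only need that the limit exists — both observations are valid, and evenness of $H$ is anyway built into the structure, so the conclusions coincide.
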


\begin{proof}
First we consider the case $f\in C^1(\R)$.
We proceed similarly to Lemma~\ref{lemma-v-u-c1}.
The inclusion $V\in C^2(\R)$ immediately follows
from the second equation in \eqref{zero-is-phi1}.
Let us prove that $U\in C^2(\R)$.
Equation \eqref{zero-is-phi1}
takes the form
\eqref{up-u-b}
with
\[
B(t)=\frac{f}{\epsilon^2}V(t)-\frac{1}{m+\omega}V(t),
\qquad
f=f\big(\epsilon^{\frac 2 k}V(t)^2-\epsilon^{2+\frac 2 k}U(t)^2\big).
\]
We note that now
$B\in C^1(\R)$
and is even.
It is enough to prove that $H(t)=U(t)/t$
could be extended to a $C^1$ function on $\R$.
Since $H(t)$ is even, it is enough to prove that
$\lim\sb{t\to 0}H'(t)=0$.
Taking the derivative of \eqref{def-h-t} at $t>0$,
we arrive at
\[
H'(t)
=
\frac{B(t)t^{n}-n\int_0^t B(\tau)\tau^{n-1}\,d\tau}{t^{n+1}}
=
\frac{\int_0^t B'(\tau)\tau^{n}\,d\tau}{t^{n+1}},
\]
therefore
\[
\lim\sb{t\to 0}
H'(t)
=
\lim\sb{t\to 0}
\frac{\int_0^t B'(\tau)\tau^{n}\,d\tau}{t^{n+1}}
=
\lim\sb{t\to 0}
\frac{B'(t)}{n+1}
=\frac{B'(0)}{n+1}=0,
\]
where we took into account that $B\in C^1(\R)$ is even.

The above argument still applies
if we only require that $f\in C^1(\R\setminus\{0\})\cap C(\R)$:
due to Theorem~\ref{theorem-solitary-waves},
the argument of $f$,
given by $\tau(t)=\epsilon^{\frac 2 k}V(t)^2-\epsilon^{2+\frac 2 k}U(t)^2$,
always belongs to $\R\sb{+}=(0,+\infty)$,
hence in \eqref{zero-is-phi1}
one has $f(\tau(t))$ which is a $C^1$ function of $t\in\R$.
Moreover, one can deduce from
\eqref{ass-fp}
that $\epsilon^{-2}V(t)\frac{d}{dt}f(\tau(t))$
remains bounded
pointwise by $C\hat V(t)^k(\abs{V'(t)}+\abs{U'(t)})$
uniformly
in $t\in\R$ and in $\epsilon\in(0,\epsilon_1)$:
\[
\Abs{
\epsilon^{-2}V(t)
\frac{d}{dt}
f(\tau(t))
}
=
\abs{
\epsilon^{-2}V(t)
f'(\tau(t))
(2\epsilon^{\frac 2 k}V(t)V'(t)
-2\epsilon^{2+\frac 2 k}U(t)U'(t))
}
\]
\[
\le
\epsilon^{-2}
\abs{V(t)}
\frac{
k\abs{\tau}^k
+C\abs{\tau}^K
}
{\abs{\tau}}
\abs{2\epsilon^{\frac 2 k}V(t)V'(t)
-2\epsilon^{2+\frac 2 k}U(t)U'(t)}
\]
\[
\le
\frac{C}{\epsilon^2}
(k\abs{\tau}^k+C\abs{\tau}^K)
(\abs{V'}+\abs{U'})
\le
C
\hat V(t)^k
(\abs{V'(t)}+\abs{U'(t)}).
\]
Above, we used \eqref{ass-fp}
to deal with $f'$
(note that $\tau>0$ by
Theorem~\ref{theorem-solitary-waves}~\itref{theorem-solitary-waves-ii}
and~\itref{theorem-solitary-waves-iii}),
and then
Theorem~\ref{theorem-solitary-waves}~\itref{theorem-solitary-waves-iii}
to estimate $\abs{V(t)}$ and $\abs{U(t)}$ with the aid of $\hat V(t)$.
So, we again have $B\in C^1(\R)$
and proceed as in the first part of the argument.
\end{proof}

Now we can show that $\phi\sb\omega\in H^2(\R^n,\C^N)$
for $\omega\in(\omega\sb 1,m)$.
From the Ansatz \eqref{sol-forms},
taking into account that
$H(t)=U(t)/t$ belongs to $C^1(\R)$
(as we proved in Lemma~\ref{lemma-v-u-c2}),
we conclude that
$\phi\sb\omega\in C^1(\R^n,\C^N)$.
Therefore,
the nonlinear term $f(\phi\sb\omega\sp\ast\beta\phi\sb\omega)\beta\phi\sb\omega$
is in $C^1(\R^n,\C^N)$ as a function of $x\in\R^n$,
and one has:
\begin{eqnarray}\label{nabla-f}
\abs{\nabla
\left(f(\phi\sb\omega\sp\ast\beta\phi\sb\omega)
\beta\phi\sb\omega\right)
}
&\le&
\abs{f'(\phi\sb\omega\sp\ast\beta\phi\sb\omega)}
\abs{\Re(\phi\sb\omega\sp\ast\beta\nabla\phi\sb\omega)}
\abs{\phi\sb\omega}
+
\abs{f(\phi\sb\omega\sp\ast\beta\phi\sb\omega)}
\abs{\nabla\phi\sb\omega}
\nonumber
\\
&\le&
C
\left(
\abs{f'(\phi\sb\omega\sp\ast\beta\phi\sb\omega)}
\abs{\phi\sb\omega}^2
+
\abs{f(\phi\sb\omega\sp\ast\beta\phi\sb\omega)}
\right)\abs{\nabla\phi\sb\omega}.
\end{eqnarray}
By Theorem~\ref{theorem-solitary-waves},
$\phi\sb\omega\in L^\infty(\R^n,\C^N)$
and
$\nabla \phi\sb \omega \in L^2(\R^n,\C^N)$;
using the bounds \eqref{ass-f}, \eqref{ass-fp},
we conclude that the right-hand side of \eqref{nabla-f}
is in $L^2(\R^n)$.
Then \eqref{nabla-f} shows that
$f(\phi\sb\omega\sp\ast\beta\phi\sb\omega)\beta\phi\sb\omega$
is in $H^1(\R^n,\C^N)$,
and then
from
\[
 \phi\sb\omega=-(D_m-\omega)^{-1}f(\phi\sb\omega\sp\ast\beta\phi\sb\omega)\beta\phi\sb\omega,
\]
with some $\omega\in(\omega\sb 1,m)$,
we deduce the inclusion $\phi\sb \omega \in H^2(\R^n,\C^N)$.

\subsection{Uniqueness, continuity,
and differentiability
of the mapping $\omega\mapsto\phi\sb\omega$}
\label{sect-diff}


We start with the following technical result.
Recall that $\Lambda\sb k<\infty$
was defined in \eqref{def-Lambda}.

\begin{lemma}\label{lemma-g-w-small}
There is $C<\infty$ such that
for all $\epsilon\in(0,\epsilon_1)$
and for all numbers
\[
\hat V,\,\hat U,\,\tilde V,\,\tilde U\in[-\Lambda\sb k,\Lambda\sb k],
\qquad
V=\hat V+\tilde V,
\qquad
U=\hat U+\tilde U
\]
which satisfy
\begin{eqnarray}\label{zz-f-p-mu-tau-0}
\epsilon_1\abs{U}
\le
\frac{1}{2}V,
\qquad
\abs{\tilde V}\le b_2\epsilon^{2\varkappa} \hat V,
\end{eqnarray}
one has
\[
\Norm{\frac{\p G(\epsilon,\tilde V,\tilde U)}
{\p(\tilde V,\tilde U)}}\sb{\End(\C^2)}
\le C\epsilon^{2\varkappa},
\]
where
$G(\epsilon,\tilde V,\tilde U)
=\begin{bmatrix}G_1(\epsilon,\tilde V,\tilde U)\\G_2(\epsilon,\tilde V,\tilde U)\end{bmatrix}
$
(cf. \eqref{def-g1}, \eqref{def-g2}).
\end{lemma}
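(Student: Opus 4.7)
The plan is to compute the four entries of the $2\times 2$ Jacobian $\partial G/\partial(\tilde V,\tilde U)$ explicitly and bound each by $C\epsilon^{2\varkappa}$. Set $s:=V^2-\epsilon^2 U^2$ and $\tau:=\epsilon^{2/k}s$. The first inequality in \eqref{zz-f-p-mu-tau-0} gives $\epsilon^2 U^2\le (\epsilon/\epsilon_1)^2 V^2/4$, so $s\ge 3V^2/4>0$ (hence $\tau>0$ and $f'(\tau)$ is well defined), and moreover $\epsilon^2 U^2=O(\epsilon^2 V^2)$ uniformly in $\epsilon\in(0,\epsilon_1)$. The second inequality $|\tilde V|\le b_2\epsilon^{2\varkappa}\hat V$ forces $V\in[\hat V/2,3\hat V/2]$ once $\epsilon_1$ is small enough, so $V,U,\hat V,\hat U$ are all bounded by $2\Lambda_k$ and $s^{k-1}V^2\le C$ (check $k\ge 1$ and $k<1$ separately using $s\in[3V^2/4,V^2]$).

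Direct differentiation of \eqref{def-g1}--\eqref{def-g2} yields
\[
\partial_{\tilde V}G_1=-\epsilon^{-2}f(\tau)-2\epsilon^{-2+2/k}V^2 f'(\tau)+(1+2k)\hat V^{2k},\qquad \partial_{\tilde U}G_1=2\epsilon^{2/k}UV\,f'(\tau),
\]
\[
\partial_{\tilde V}G_2=2\epsilon^{2/k}UV\,f'(\tau),\qquad \partial_{\tilde U}G_2=f(\tau)-2\epsilon^{2+2/k}U^2 f'(\tau).
\]
The bounds \eqref{ass-f}, \eqref{ass-fp} give $|f(\tau)|\le C\tau^k=C\epsilon^2 s^k$ and $|f'(\tau)|\le C\tau^{k-1}=C\epsilon^{2-2/k}s^{k-1}$; substitution cancels the fractional powers of $\epsilon$ and leaves
\[
|\partial_{\tilde U}G_1|+|\partial_{\tilde V}G_2|+|\partial_{\tilde U}G_2|\le C\epsilon^2\le C\epsilon^{2\varkappa},
\]
the last step using $\varkappa\le 1$ and $\epsilon<1$.

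The main obstacle is the diagonal entry $\partial_{\tilde V}G_1$, in which three individually $O(1)$ quantities must cancel. I would expand $f(\tau)=\tau^k+R_0(\tau)$ and $f'(\tau)=k\tau^{k-1}+R_1(\tau)$ with $|R_0(\tau)|\le c\tau^K$ from \eqref{ass-f} and $|R_1(\tau)|\le C\tau^{K-1}$ from \eqref{ass-fp} (dividing by $\tau>0$). Tracking powers, $|\epsilon^{-2}R_0(\tau)|$ and $|\epsilon^{-2+2/k}V^2 R_1(\tau)|$ are each bounded by $C\epsilon^{2(K/k-1)}\le C\epsilon^{2\varkappa}$, and the identities $\epsilon^{-2}\tau^k=s^k$ and $\epsilon^{-2+2/k}\tau^{k-1}=s^{k-1}$ reduce the principal part to
\[
\partial_{\tilde V}G_1=-s^k-2ks^{k-1}V^2+(1+2k)\hat V^{2k}+O(\epsilon^{2\varkappa}).
\]
Since $\epsilon^2 U^2/V^2=O(\epsilon^2)$, the Taylor expansions $(1-\epsilon^2 U^2/V^2)^k=1+O(\epsilon^2)$ and $(1-\epsilon^2 U^2/V^2)^{k-1}=1+O(\epsilon^2)$ yield $s^k=V^{2k}+O(\epsilon^2)$ and $s^{k-1}V^2=V^{2k}+O(\epsilon^2)$, so the bracket collapses to $-(1+2k)(V^{2k}-\hat V^{2k})+O(\epsilon^{2\varkappa})$.

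The hard part is then bounding $|V^{2k}-\hat V^{2k}|$ by $C\epsilon^{2\varkappa}$, which is where the sharp hypothesis $|\tilde V|\le b_2\epsilon^{2\varkappa}\hat V$ is essential (and why this lemma relies on the improved estimate \eqref{v-u-tilde-smaller-hat-v-b3}). By the mean value theorem, $V^{2k}-\hat V^{2k}=2k\xi^{2k-1}\tilde V$ for some $\xi\in[\hat V/2,3\hat V/2]$; one checks in both cases $2k\ge 1$ and $2k<1$ that $\xi^{2k-1}\hat V\le C_k\hat V^{2k}\le C_k\Lambda_k^{2k}$, so $|V^{2k}-\hat V^{2k}|\le 2kb_2 C_k\Lambda_k^{2k}\epsilon^{2\varkappa}$. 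Combining all estimates yields $|\partial_{\tilde V}G_1|\le C\epsilon^{2\varkappa}$, and since the operator norm on $\End(\C^2)$ is controlled by the maximum absolute value of the entries, the lemma follows.
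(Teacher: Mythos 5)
Your proposal is correct and follows essentially the same route as the paper: compute the Jacobian explicitly, observe that the three off‑corner entries carry visible factors of $\epsilon^2$, and isolate $\partial_{\tilde V}G_1$ as the entry requiring cancellation of three $O(1)$ quantities, which you handle by expanding $f$ and $f'$ via \eqref{ass-f}, \eqref{ass-fp} and using $|\tilde V|\le b_2\epsilon^{2\varkappa}\hat V$ to control $V^{2k}-\hat V^{2k}$. The only organizational difference is cosmetic: the paper bounds $|\epsilon^{-2}f-\hat V^{2k}|$ and $|2\epsilon^{2/k-2}V^2 f'-2k\hat V^{2k}|$ separately by $C\epsilon^{2\varkappa}$ and adds, whereas you first pass to the principal part $-s^k-2ks^{k-1}V^2+(1+2k)\hat V^{2k}$ and exhibit the collapse to $-(1+2k)(V^{2k}-\hat V^{2k})$ in one step before applying the mean value theorem. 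One minor point worth stating explicitly (which you rightly flag with ``once $\epsilon_1$ is small enough''): the hypothesis $|\tilde V|\le b_2\epsilon^{2\varkappa}\hat V$ gives $V\in[\hat V/2,3\hat V/2]$ only when $b_2\epsilon_1^{2\varkappa}\le 1/2$, which must be arranged so that the MVT application (and the cases $2k\gtrless 1$) stays clear of the origin; the paper leaves this implicit and absorbs it into a later shrinking of the parameter range.
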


Above, $\epsilon_1>0$ is from
Theorem~\ref{theorem-solitary-waves}~\itref{theorem-solitary-waves-ii}
and
$b_2<\infty$
is from
Theorem~\ref{theorem-solitary-waves}~\itref{theorem-solitary-waves-v}.

\begin{proof}
Denote
$V=\hat V+\tilde V$ and $U=\hat U+\tilde U$.
Let us consider
\begin{eqnarray}
\nonumber
\frac{\p G(\epsilon,\tilde V,\tilde U)}
{\p(\tilde V,\tilde U)}
&=&
\begin{bmatrix}
\p\sb{\tilde V}G_1&\p\sb{\tilde U}G_1
\\
\p\sb{\tilde V}G_2&\p\sb{\tilde U}G_2
\end{bmatrix}
\\
\nonumber
&=&
\begin{bmatrix}
-2f'\epsilon^{\frac 2 k-2}V^2
-\epsilon^{-2}f
+(1+2k)\hat V^{2k}
&
2f'\epsilon^{\frac 2 k}V U
\\
2f'\epsilon^{\frac 2 k}V U
&
f-2f'\epsilon^{2+\frac 2 k}U^2
\end{bmatrix}.
\end{eqnarray}
Above,
$f$ and $f'$
are evaluated at
$\tau=\epsilon^{2/k}(V^2-\epsilon^2 U^2)$.
All the terms except for $\p\sb{\tilde V}G_1$
are immediately $O(\epsilon^2)$;
we now focus on
$
\p\sb{\tilde V}G_1
$.
Denoting
$\tau=\epsilon^{2/k}(V^2-\epsilon^2 U^2)
=O(\epsilon^{2/k})$,
one has:
\[
\abs{\epsilon^{-2}f(\tau)-\hat V^{2k}}
\le
\epsilon^{-2}\abs{
f(\tau)-\tau^k
}
+\abs{(V^2-\epsilon^2 U^2)^k-V^{2k}}
+
\abs{V^{2k}-\hat V^{2k}}
\le
C\epsilon^{2\varkappa}.
\]
We estimated the three terms
in the middle using
\eqref{ass-f} and \eqref{zz-f-p-mu-tau-0}.
Similarly,
\begin{eqnarray}
\nonumber
&&
\abs{
f'(\tau)
\epsilon^{\frac 2 k-2}2V^2-2k\hat V^{2k}}
\\
\nonumber
&&
\le
\frac{2V^2\epsilon^{\frac 2 k}}{\epsilon^2}
\abs{f'(\tau)-k \tau^{k-1}}
+
\frac{2k V^2\epsilon^{\frac 2 k}}{\epsilon^2}
\abs{\tau^{k-1}-(\epsilon^{\frac 2 k}V^2)^{k-1}}
+
2k\abs{V^{2k}-\hat V^{2k}}
\le C\epsilon^{2\varkappa};
\end{eqnarray}
we used \eqref{ass-fp} and \eqref{zz-f-p-mu-tau-0}.
So,
\begin{eqnarray}
\nonumber
\abs{
\p\sb{\tilde V}G_1}
&=&
\abs{-2f'\epsilon^{\frac 2 k-2}V^2
-\epsilon^{-2}f
+(1+2k)\hat V^{2k}}
\\
\nonumber
&\le&
\abs{\epsilon^{-2}f-\hat V^{2k}}
+
\abs{2f'\epsilon^{\frac 2 k-2}V^2
-2k\hat V^{2k}}
=O(\epsilon^{2\varkappa})
.
\qedhere
\end{eqnarray}
\end{proof}

We claim that the mapping
\[
\mu:\;X\sb{e,o}\to X^1\sb{e,o}\subset X\sb{e,o},
\qquad
\mu:\;
\tilde W\mapsto A(\epsilon)^{-1}G(\epsilon,\tilde W)
\]
is a contraction
when considered on a certain subset of a sufficiently small ball.

\begin{lemma}\label{lemma-contraction}
Let $f\in C^1(\R\setminus\{0\})\cap C(\R)$
satisfy
\eqref{ass-fp}.
Then there is $\epsilon_2\in(0,\epsilon_1)$
such that
for any
$\epsilon\in(0,\epsilon_2)$
and any
\[
\tilde W\sb 0
=\begin{bmatrix}\tilde V_0\\\tilde U_0\end{bmatrix}
\in\overline{\mathbb{B}\sb\rho(X\sb{e,o})},
\qquad
\tilde W\sb 1
=\begin{bmatrix}\tilde V_1\\\tilde U_1\end{bmatrix}
\in\overline{\mathbb{B}\sb\rho(X\sb{e,o})},
\qquad
\mbox{with}
\quad
\rho= b_1\epsilon^{2\varkappa},
\]
with $ b_1>0$ from Lemma~\ref{lemma-mu-into-better},
which satisfy
\begin{eqnarray}\label{f-p-mu}
\epsilon_1\abs{\hat U(t)+\tilde U_s(t)}
\le \frac 1 2 (\hat V(t)+\tilde V_s(t)),
\qquad
\forall t\in\R,
\quad
\forall s=0,\,1,
\end{eqnarray}
\begin{eqnarray}\label{f-p-mu-2}
\abs{\tilde V_s(t)}\le b_2 \epsilon_2^{2\varkappa}\hat V(t),
\qquad \forall t\in\R,\quad\forall s=0,\,1,
\end{eqnarray}
one has
\[
\norm{\mu(\epsilon,\tilde W\sb 1)-\mu(\epsilon,\tilde W\sb 0)}\sb{X^1}
\le
\frac 1 2
\norm{\tilde W\sb 1-\tilde W\sb 0}\sb{X}.
\]
\end{lemma}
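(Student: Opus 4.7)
The plan is to reduce the contraction claim to a Lipschitz estimate on the nonlinear part $G(\epsilon,\cdot)$ in the $X$-norm, exploiting the fact that $A(\epsilon)^{-1}$ is uniformly bounded from $X_{e,o}$ to $X^1_{e,o}$ by Lemma~\ref{lemma-a-invertible}. Since
\[
\mu(\epsilon,\tilde W_1)-\mu(\epsilon,\tilde W_0)
=A(\epsilon)^{-1}\bigl[G(\epsilon,\tilde W_1)-G(\epsilon,\tilde W_0)\bigr],
\]
with $\|A(\epsilon)^{-1}\|_{X_{e,o}\to X^1_{e,o}}\le C_A$ independently of $\epsilon\in(0,\epsilon_1)$, it suffices to prove a pointwise Lipschitz bound
\[
\bigl|G(\epsilon,\tilde W_1)(t)-G(\epsilon,\tilde W_0)(t)\bigr|
\le C\epsilon^{2\varkappa}\bigl|\tilde W_1(t)-\tilde W_0(t)\bigr|,
\qquad t\in\R,
\]
for some $C$ independent of $\epsilon\in(0,\epsilon_2)$; the contraction constant $1/2$ is then obtained by choosing $\epsilon_2>0$ so small that $C_A C\epsilon_2^{2\varkappa}\le 1/2$.

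For the pointwise Lipschitz bound, I would introduce the segment $\tilde W_s(t):=\tilde W_0(t)+s(\tilde W_1(t)-\tilde W_0(t))$, $s\in[0,1]$, and use the fundamental theorem of calculus,
\[
G(\epsilon,\tilde W_1)(t)-G(\epsilon,\tilde W_0)(t)
=\int_0^1 \frac{\partial G}{\partial(\tilde V,\tilde U)}\bigl(\epsilon,\tilde W_s(t)\bigr)\,\bigl(\tilde W_1(t)-\tilde W_0(t)\bigr)\,ds.
\]
To apply this I need $G$ to be $C^1$ in $(\tilde V,\tilde U)$ along the segment, which reduces to $f$ being $C^1$ at the argument $\tau_s(t)=\epsilon^{2/k}(V_s(t)^2-\epsilon^2 U_s(t)^2)$, where $V_s=\hat V+\tilde V_s$ and $U_s=\hat U+\tilde U_s$. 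Assumption \eqref{f-p-mu-2} together with $b_2\epsilon_2^{2\varkappa}<1$ (for $\epsilon_2$ small) gives $V_s\ge(1-b_2\epsilon_2^{2\varkappa})\hat V>0$ pointwise, and assumption \eqref{f-p-mu} then forces $V_s^2-\epsilon^2U_s^2\ge (3/4)V_s^2>0$ for $\epsilon\le\epsilon_1$, so $\tau_s(t)$ stays in the region of differentiability $\R\setminus\{0\}$ of $f$.

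The bound on the integrand comes from Lemma~\ref{lemma-g-w-small}, provided its hypothesis \eqref{zz-f-p-mu-tau-0} holds at every $(\hat V(t),\hat U(t),\tilde V_s(t),\tilde U_s(t))$. Both inequalities in \eqref{zz-f-p-mu-tau-0} are convex in $(\tilde V,\tilde U)$ for fixed $(\hat V,\hat U)$: writing $V_s=(1-s)V_0+sV_1$ and $U_s=(1-s)U_0+sU_1$, one has $\epsilon_1|U_s|\le(1-s)\epsilon_1|U_0|+s\epsilon_1|U_1|\le(1-s)V_0/2+sV_1/2=V_s/2$, and similarly $|\tilde V_s|\le(1-s)|\tilde V_0|+s|\tilde V_1|\le b_2\epsilon_2^{2\varkappa}\hat V$. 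Hence Lemma~\ref{lemma-g-w-small} applies and delivers $\|\partial_{(\tilde V,\tilde U)}G(\epsilon,\tilde W_s(t))\|_{\End(\C^2)}\le C\epsilon^{2\varkappa}$ pointwise, yielding the desired pointwise Lipschitz bound on $G$ after integration in $s$.

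Passing to $X$-norms is immediate because $X=L^2(\R,|t|^{n-1}dt)\cap L^\infty(\R)$ is a pointwise-monotone norm: multiplying the pointwise inequality by the $L^2$ and $L^\infty$ weights and taking the respective norms gives $\|G(\epsilon,\tilde W_1)-G(\epsilon,\tilde W_0)\|_X\le C\epsilon^{2\varkappa}\|\tilde W_1-\tilde W_0\|_X$. Combined with the $A(\epsilon)^{-1}$ bound, this yields $\|\mu(\epsilon,\tilde W_1)-\mu(\epsilon,\tilde W_0)\|_{X^1}\le C_AC\epsilon^{2\varkappa}\|\tilde W_1-\tilde W_0\|_X$, and shrinking $\epsilon_2$ completes the proof. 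The main bookkeeping obstacle is step three: making sure the segment $\tilde W_s$ stays inside the region where both Lemma~\ref{lemma-g-w-small} applies and the nonlinearity $f$ is differentiable; once the convexity of the constraints and the positivity of $V_s^2-\epsilon^2U_s^2$ are in place, the rest is a routine application of the implicit function theorem machinery.
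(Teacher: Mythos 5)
Your proposal is correct and follows essentially the same route as the paper: linear interpolation $\tilde W_s$ between the two arguments, the fundamental theorem of calculus applied to $G(\epsilon,\cdot)$, verification that the interpolated points stay in the region where Lemma~\ref{lemma-g-w-small} (and the differentiability of $f$) applies, the resulting $O(\epsilon^{2\varkappa})$ Lipschitz bound on $G$ in the $X$-norm, and shrinking $\epsilon_2$ so that the product with $\|A(\epsilon)^{-1}\|_{X\to X^1}$ is at most $1/2$. You are somewhat more explicit than the paper in spelling out the convexity of constraints \eqref{f-p-mu}--\eqref{f-p-mu-2} and in noting that $\tau_s(t)>0$ keeps $f$ differentiable along the segment, but these are the same observations the paper relies on implicitly.
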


Above,
$b_2<\infty$ is from 
Lemma~\ref{lemma-g-w-small}.
We point out that, by Theorem~\ref{theorem-solitary-waves}~\itref{theorem-solitary-waves-ii},
the fixed points of $\mu(\epsilon,\cdot)$
satisfy \eqref{f-p-mu},
and by
Theorem~\ref{theorem-solitary-waves}~\itref{theorem-solitary-waves-iii}
these points also satisfy \eqref{f-p-mu-2}.

\begin{proof}
We consider the linear interpolations
\begin{eqnarray}\label{def-linear-interpolations}
\tilde V_s(t)
=
(1-s)\tilde V_0(t)+s\tilde V_1(t),
\qquad
\tilde U_s(t)
=
(1-s)\tilde U_0(t)+s\tilde U_1(t),
\\[1ex]
\nonumber
s\in[0,1],
\end{eqnarray}
and we also set
\begin{eqnarray}\label{def-linear-interpolations-1}
V_s(t)=\hat V(t)+\tilde V_s(t),
\qquad
U_s(t)=\hat U(t)+\tilde U_s(t).
\end{eqnarray}
We notice that, due to
\eqref{f-p-mu} and \eqref{f-p-mu-2},
these interpolations
are such that
$\begin{bmatrix}V_s\\U_s\end{bmatrix}\in X\sb{e,o}
$,
for all $s\in[0,1]$,
and they also satisfy the equivalents of \eqref{f-p-mu}
and \eqref{f-p-mu-2}:
\[
\epsilon_1\abs{U_s(t)}
\le
\frac{1}{2}V_s(t),
\qquad
\forall t\in\R,
\quad
\forall s\in[0,1],
\]
\[
\abs{\tilde V_s(t)}\le b_2\epsilon_2^{2\varkappa} \hat V(t),
\qquad \forall t\in\R,\quad\forall s\in[0,1].
\]
Let us pick $\epsilon\in(0,\epsilon_1)$
and consider the relation
\begin{eqnarray}\label{mu-mu}
\mu(\epsilon,\tilde W_1)-\mu(\epsilon,\tilde W_0)=
A(\epsilon)^{-1}
\big(G(\epsilon,\tilde W_1)-G(\epsilon,\tilde W_0)\big).
\end{eqnarray}
To estimate the right-hand side, we consider
\begin{eqnarray}\label{g1g1}
&&
\qquad
G_1(\epsilon,\tilde W_1)-G_1(\epsilon,\tilde W_0)
\\
\nonumber
&&
=
-\epsilon^{-2}\left(f(\epsilon^{\frac 2 k}(V_1\sp 2- \epsilon\sp 2 U_1^2))V_1-f(\epsilon^{\frac 2 k}(V_0\sp 2- \epsilon\sp 2 U_0^2))V_0\right)
+(1+2k)\hat V\sp{2k}(\tilde{V}_1-\tilde{V}_0),
\end{eqnarray}
\[
G_2(\epsilon,\tilde W_1)-G_2(\epsilon,\tilde W_0)
=
f\big(\epsilon^{\frac 2 k}(V_1\sp 2- \epsilon\sp 2 U_1^2)\big)U_1
-f\big(\epsilon^{\frac 2 k}(V_0\sp 2- \epsilon\sp 2 U_0^2)\big)U_0.
\]
For \eqref{g1g1}, we have:
\begin{eqnarray}\label{g1-g1}
G(\epsilon,\tilde W_1)-G(\epsilon,\tilde W_0)
&=&\int_0^1\,ds\,\frac{d}{d s}
G\big(\epsilon,(1-s)\tilde W_0+s\tilde W_1\big)
\nonumber
\\
&=&
(\tilde W_1-\tilde W_0)
\int_0^1
\p\sb{\tilde W}G\big(\epsilon,(1-s)\tilde W_0+s\tilde W_1\big)
\,ds.
\end{eqnarray}
Applying Lemma~\ref{lemma-g-w-small}
to \eqref{g1-g1},
we have:
\[
\norm{G(\epsilon,\tilde W_1)-G(\epsilon,\tilde W_0)}\sb{X}
\le
C\epsilon^{2\varkappa}\norm{\tilde W_1-\tilde W_0}\sb{X},
\]
with some $C<\infty$.
We take $\epsilon_2\in(0,\epsilon_1)$ so small that
\begin{eqnarray}\label{half-half}
C \epsilon_2^{2\varkappa}
\sup\sb{\epsilon\in[0,\epsilon_1]}
\norm{A(\epsilon)^{-1}}\sb{X\sb{e,o}\to X^1\sb{e,o}}
\le 1/2;
\end{eqnarray}
then the lemma follows
from applying \eqref{half-half} to \eqref{mu-mu}.
\end{proof}

For each $\epsilon\in(0,\epsilon_2)$,
Lemma~\ref{lemma-contraction}
proves the uniqueness of the fixed point of $\mu(\epsilon,\cdot)$
in $X\sb{e,o}$
which satisfies
\[
\tilde W=A(\epsilon)^{-1}G(\epsilon,\tilde W),
\qquad
\tilde W\in\overline{\mathbb{B}\sb\rho(X\sb{e,o})},
\qquad
\mbox{where}
\quad
\rho= b_1\epsilon^{2\varkappa};
\]
this is the fixed point $\tilde W$
which we constructed in Theorem~\ref{theorem-solitary-waves}.
Thus, we have a well-defined map
\begin{eqnarray}\label{epsilon-to-w}
&&
(0,\epsilon_2)\to \mathbb{B}\sb\rho(X\sb{e,o}^1),
\qquad
\rho= b_1\epsilon_2^{2\varkappa};
\nonumber
\\[1ex]
&&
\epsilon\mapsto \tilde W(t,\epsilon),
\qquad
\norm{e^{\gamma\langle t\rangle}\tilde W(\cdot,\epsilon)}\sb{H^1(\R,\C^2)}
\le  b_1\epsilon^{2\varkappa}.
\end{eqnarray}
The above argument
also implies the continuity of the fixed point
$\tilde W(\epsilon)$ as a function of $\epsilon$,
since for any $\epsilon,\epsilon'\in(0,\epsilon_2)$ one has
\begin{eqnarray}
\nonumber
&&
\tilde W(\epsilon')-\tilde W(\epsilon)
\\
\nonumber
&&
=
A(\epsilon')^{-1}
\big(
G(\epsilon',\tilde W(\epsilon'))-G(\epsilon',\tilde W(\epsilon))
\big)
+
A(\epsilon')^{-1}G(\epsilon',\tilde W(\epsilon))
-
A(\epsilon)^{-1}G(\epsilon,\tilde W(\epsilon)).
\end{eqnarray}
We evaluate $X$-norm
of the above relation,
applying Lemma~\ref{lemma-contraction}
to the first term in the right-hand side;
this yields
\[
 \|\tilde W(\epsilon')-\tilde W(\epsilon)\|_X
\leq 2
\big\|
A(\epsilon')^{-1}
G(\epsilon',\tilde W(\epsilon))
-
A(\epsilon)^{-1}
G(\epsilon,\tilde W(\epsilon))
\big\|_X,
\qquad
\forall\epsilon,\,\epsilon'\in(0,\epsilon_2).
\]
Due to the continuous dependence
of $A$ and $G$ on $\epsilon>0$,
the above relation proves the continuity
of the map \eqref{epsilon-to-w}
in $\epsilon\in(0,\epsilon_2)$.

We now turn to the differentiability of $\tilde W$ with respect to $\epsilon$.
Let us take $\alpha,\,\beta\in(0,\epsilon_2)$
(with $\epsilon_2>0$ from Lemma~\ref{lemma-contraction}).
Without loss of generality, we may assume that
$\alpha<\beta$.
For both $\alpha$ and $\beta$,
we denote the unique fixed points
of $\mu(\alpha,\cdot)$ and $\mu(\beta,\cdot)$
(the images of $\alpha,\,\beta\in(0,\epsilon_2)$
under the mapping \eqref{epsilon-to-w})
by
$\tilde W(t,\alpha)
=\begin{bmatrix}\tilde V(t,\alpha)\\\tilde U(t,\alpha)\end{bmatrix}$
and
$\tilde W(t,\beta)
=\begin{bmatrix}\tilde V(t,\beta)\\\tilde U(t,\beta)\end{bmatrix}$.
By Theorem~\ref{theorem-solitary-waves}~\itref{theorem-solitary-waves-ii}
and~\itref{theorem-solitary-waves-iii},
these fixed points satisfy
\[
\epsilon_1\abs{U(t,\alpha)}
\le
\frac{1}{2}V(t,\alpha),
\qquad
\abs{\tilde V(t,\alpha)}\le b_2\alpha^{2\varkappa} \hat V(t),
\qquad
\forall t\in\R,
\]
\[
\epsilon_1\abs{U(t,\beta)}
\le
\frac{1}{2}V(t,\beta),
\qquad
\abs{\tilde V(t,\beta)}\le b_2\beta^{2\varkappa} \hat V(t),
\qquad
\forall t\in\R,
\]
therefore the linear interpolation
\[
\tilde W_s(t)
=\begin{bmatrix}\tilde V_s(t)\\\tilde U_s(t)\end{bmatrix}
=(1-s)\tilde W(t,\alpha)+s\tilde W(t,\beta),
\qquad
s\in[0,1],
\]
satisfies
\begin{eqnarray}\label{line-3}
\epsilon_1\abs{U_s(t)}
\le
\frac{1}{2}V_s(t),
\qquad
\abs{\tilde V_s(t)}\le b_2\beta^{2\varkappa} \hat V(t),
\qquad
\forall t\in\R,
\quad
\forall s\in[0,1],
\end{eqnarray}
where
$V_s(t)=\hat V(t)+\tilde V_s(t)$
and
$U_s(t)=\hat U(t)+\tilde U_s(t)$
(cf. \eqref{def-linear-interpolations-1});
in the last inequality in \eqref{line-3}, we took
into account that $\alpha<\beta$.
We have:
\begin{align*}
&
\frac{\tilde W(\beta)-\tilde W(\alpha)}{\beta-\alpha}
=
\frac{\mu(\beta,\tilde W(\beta))-\mu(\beta,\tilde W(\alpha))}{\beta-\alpha}
+
\frac{\mu(\beta,\tilde W(\alpha)
-\mu(\alpha,\tilde W(\alpha))}{\beta-\alpha}
\\[1ex]
&=
A(\beta)^{-1}
\left(
 \int_0^1
\p\sb{\tilde W}G\big(
\beta,(1-s)\tilde W(\alpha)+s\tilde W(\beta)\big)\,ds
\right)
\,\frac{\tilde W(\beta)-\tilde W(\alpha)}{\beta-\alpha}
\\[1ex]
&
\qquad\qquad\qquad\qquad
+\frac{\mu(\beta,\tilde W(\alpha))
-\mu(\alpha,\tilde W(\alpha))}{\beta-\alpha}.
\end{align*}
The above relation takes place at each $t\in\R$;
we omitted the dependence on $t$.
By Lemma~\ref{lemma-g-w-small},
which is applicable due to \eqref{line-3},
we can choose $\epsilon_2\in(0,\epsilon_1)$
smaller if necessary
so that the operator
$B(t,\alpha,\beta)\in\End(\C^2)$
defined by
\[
B(t,\alpha,\beta)
=I_{\C^2}-A(\beta)^{-1}\int_0^1
\p\sb{\tilde W}G\big(\beta,(1-s)\tilde W(t,\alpha)+s\tilde W(t,\beta)\big)\,ds
\]
is invertible,
with the inverse
bounded uniformly in
$t\in\R$ and $\alpha,\,\beta\in(0,\epsilon_2)$;
we then have:
\[
\frac{\tilde W(\beta)-\tilde W(\alpha)}{\beta-\alpha}
=B(\alpha,\beta)^{-1}
\frac{
\mu(\beta,\tilde W(\alpha))-\mu(\alpha,\tilde W(\alpha))}{\beta-\alpha}.
\]
Since $B$ is continuous in $\alpha$ and $\beta$ while 
$\mu(\epsilon,\tilde W)=A(\epsilon)^{-1}G(\epsilon,\tilde W)$,
with both
$A(\epsilon)^{-1}$ and $G(\epsilon,\tilde W)$
differentiable in $\epsilon$,
we deduce that
$\big(\tilde W(t,\beta)-\tilde W(t,\alpha)\big)/(\beta-\alpha)$
has a limit as
$\beta\to\alpha$;
setting $\alpha=\epsilon$, we have:
\begin{eqnarray}\label{pw-a-pw-0}
\p\sb\epsilon\tilde W
=B^{-1}
\frac{\p}{\p\epsilon}
\big(A^{-1}G(\epsilon,\tilde W)\big)
&=&
B^{-1}
 A^{-1}
 \big(
 -\p\sb\epsilon A\,A^{-1}
 G(\epsilon,\tilde W)
 +
 \p\sb\epsilon
 G(\epsilon,\tilde W)
 \big)
\nonumber
\\
&=&
B^{-1}
A^{-1}
\big(
-\p\sb\epsilon A
\,\tilde W
+
\p\sb\epsilon
G(\epsilon,\tilde W)
\big),
\end{eqnarray}
where
$\tilde W=\tilde W(t,\epsilon)$,
\begin{eqnarray}\label{def-b-0}
A=A(\epsilon),
\qquad
B=B(t,\epsilon):=B(t,\epsilon,\epsilon)
=I_{\C^2}-A(\epsilon)^{-1}
\p\sb{\tilde W}G\big(\epsilon,\tilde W(t,\epsilon)\big).
\end{eqnarray}
In the last equality in \eqref{pw-a-pw-0},
we took into account
that $\tilde W(t,\epsilon)=A(\epsilon)^{-1}G(\epsilon,\tilde W)$
(cf. \eqref{w-a-a-3}).

\begin{lemma}\label{lemma-g-epsilon-small}
One has:
\[
\left\|
e^{\gamma\langle t\rangle}
\frac{\p G}{\p\epsilon}
\big(\epsilon,\tilde W(t,\epsilon)\big)
\right\|\sb{X}
=
O(\epsilon^{2\varkappa-1}),
\qquad
\epsilon\in(0,\epsilon_2),
\]
\[
\left\|
e^{\gamma\langle t\rangle}
\left(
\frac{\p G}{\p\epsilon}
\big(\epsilon,\tilde W(t,\epsilon)\big)
-
\epsilon
\begin{bmatrix}
2k\hat U^2\hat V^{2k-1}
+\frac{\hat V}{4m^3}
\\
2\hat U\hat V^{2k}
+\frac{\hat U}{m}
\end{bmatrix}
\right)
\right\|\sb{X}
=
O\big(\epsilon^{\frac{2K}{k}-3}\big)
+o(\epsilon),
\quad
\epsilon\in(0,\epsilon_2).
\]
\end{lemma}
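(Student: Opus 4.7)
The plan is to compute $\p\sb\epsilon G_1$ and $\p\sb\epsilon G_2$ directly from \eqref{def-g1}, \eqref{def-g2}, keeping $\tilde W$ fixed so that only the manifest $\epsilon$-dependence is differentiated. The $\epsilon$-dependent pieces are $-\epsilon^{-2}f(\tau)V$ and $\big(\frac{1}{m+\omega}-\frac{1}{2m}\big)\hat V$ in $G_1$, and $f(\tau)U$ and $(m-\omega)\hat U$ in $G_2$, where $\tau(\epsilon)=\epsilon^{2/k}(V^2-\epsilon^2U^2)$, so $\tau'(\epsilon)=\frac{2\tau}{k\epsilon}-2\epsilon^{2/k+1}U^2$. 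The chain rule yields
\begin{eqnarray*}
\p\sb\epsilon\bigl[-\epsilon^{-2}f(\tau)V\bigr] &=& \tfrac{2V}{k\epsilon^3}\bigl(kf(\tau)-\tau f'(\tau)\bigr)+2\epsilon^{2/k-1}f'(\tau)U^2V,\\
\p\sb\epsilon\bigl[f(\tau)U\bigr] &=& \tfrac{2\tau f'(\tau)}{k\epsilon}U-2\epsilon^{2/k+1}f'(\tau)U^3,
\end{eqnarray*}
while the Taylor expansions of $\omega=\sqrt{m^2-\epsilon^2}$ give $\p\sb\epsilon\bigl[\tfrac{1}{m+\omega}-\tfrac{1}{2m}\bigr]\hat V=\tfrac{\epsilon\hat V}{4m^3}+O(\epsilon^3)$ and $-\p\sb\epsilon\omega\cdot\hat U=\tfrac{\epsilon\hat U}{m}+O(\epsilon^3)$.

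The crucial step is the cancellation inside $\p\sb\epsilon G_1$. Combining \eqref{ass-f} and \eqref{ass-fp} gives $kf(\tau)-\tau f'(\tau)=O(\tau^K)$, so the seemingly singular prefactor $\epsilon^{-3}$ is absorbed, giving $\tfrac{2V}{k\epsilon^3}(kf-\tau f')=O(\epsilon^{-3}\tau^K)=O(\epsilon^{2K/k-3})$ (using $\tau=O(\epsilon^{2/k})$ and $V$ bounded). In the second term of $\p\sb\epsilon G_1$ I insert $f'(\tau)=k\tau^{k-1}+O(\tau^{K-1})$: the leading piece becomes $2k\epsilon(V^2-\epsilon^2U^2)^{k-1}U^2V$, and the remainder is $O(\epsilon^{2K/k-1})=o(\epsilon)$ since $K>k$. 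The same substitution in $\p\sb\epsilon[f(\tau)U]$ turns $\tfrac{2\tau f'(\tau)}{k\epsilon}U$ into $2\epsilon(V^2-\epsilon^2U^2)^kU+O(\epsilon^{2K/k-1})$, while the cubic term $\epsilon^{2/k+1}f'(\tau)U^3$ is $O(\epsilon^3)$, hence $o(\epsilon)$.

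It remains to replace $V$ by $\hat V$ and $U$ by $\hat U$ in the leading pieces $2k\epsilon\hat U^2\hat V^{2k-1}$ and $2\epsilon\hat U\hat V^{2k}$. The estimate \eqref{v-u-tilde-smaller-hat-v-b3} bounds $|\tilde V|+|\tilde U|\le b_2\epsilon^{2\varkappa}\hat V$, so $V$ and $\hat V$ are pointwise comparable; mean-value expansions then yield pointwise errors of size $O(\epsilon^{2\varkappa})$ times $\hat V^{2k-1}\hat U^2$ (or $\hat V^{2k}\hat U$), which after multiplication by $\epsilon$ is $o(\epsilon)$ in the $X$-norm. Because $\gamma<1/(1+2k)<1$ (cf.\,\eqref{gamma-small}) and $\hat V$ decays like $\langle t\rangle^{-(n-1)/2}e^{-|t|}$ by Corollary~\ref{cor-v-exp-ast}, the $e^{\gamma\langle t\rangle}$-weighted $X$-norms of all these pointwise bounds are uniformly finite. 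Combining everything yields the fine estimate, and the rough bound $O(\epsilon^{2\varkappa-1})$ follows immediately since $\varkappa\le 1$ gives $\epsilon\le\epsilon^{2\varkappa-1}$ and $\epsilon^{2K/k-3}\le\epsilon^{2\varkappa-1}$ in both regimes of \eqref{def-varkappa-1}. The main obstacle is this cancellation: individually the terms $\epsilon^{-3}f(\tau)V$ and $\epsilon^{-3}\tau f'(\tau)V$ each blow up like $\epsilon^{-1}$, and only the precise identity $kf(\tau)-\tau f'(\tau)=O(\tau^K)$---which requires combining \eqref{ass-f} and \eqref{ass-fp}, i.e., simultaneous control of $f$ and $f'$---produces the claimed $O(\epsilon^{2K/k-3})$.
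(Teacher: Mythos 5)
Your proposal is correct and follows essentially the same route as the paper: you compute $\p_\epsilon G$ via the chain rule, exploit the key cancellation $kf(\tau)-\tau f'(\tau)=O(|\tau|^K)$ coming from combining \eqref{ass-f} and \eqref{ass-fp} (the paper's form $f-\tfrac{\tau}{k}f'$), use $\tau f'(\tau)=O(|\tau|^k)$ for the remaining $f'$ terms, Taylor-expand the $\omega$-dependent coefficients, and pass from $V,U$ to $\hat V,\hat U$ via \eqref{v-u-tilde-smaller-hat-v-b3}. You are a bit more explicit than the paper about the leading-order extraction and about deducing the rough $O(\epsilon^{2\varkappa-1})$ bound from the fine one, but the decomposition and estimates are the same.
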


\begin{proof}
Since
$2\varkappa-1\le 1$
and due to the exponential decay of $\hat V$ and $\hat U$
(cf. Lemma~\ref{lemma-bl}),
the first estimate stated in the lemma
follows from the second one.
By \eqref{def-g1} and \eqref{def-g2},
$\p\sb\epsilon G$ is given by
\begin{eqnarray}\label{g-epsilon}
&&
\quad \frac{\p G(\epsilon,\tilde W)}{\p\epsilon}
\\
\nonumber
&&
\quad
=
\begin{bmatrix}
\left(
2\epsilon^{-3}f
-\epsilon^{-2}
\frac{2}{k}\epsilon^{\frac 2 k-1}
(V^2-\epsilon^2U^2)f'
+
2\epsilon^{-2}
\epsilon^{1+\frac 2 k}U^2 f'
\right)V
+\frac{\hat V}{(m+\omega)^2}\frac{\epsilon}{\omega}
\\
(\frac{2}{k}\epsilon^{\frac 2 k-1}V^2
-\frac{2+2k}{k}\epsilon^{1+\frac 2 k}U^2)U f'
+\hat U\frac{\epsilon}{\omega}
\end{bmatrix},
\end{eqnarray}
with $f$, $f'$
evaluated at
$\tau=\epsilon^{2/k}(V^2-\epsilon^2 U^2)$.
We recall that
$\tilde W=
\begin{bmatrix}\tilde V\\\tilde U\end{bmatrix}$,
$V=\hat V+\tilde V$,
$U=\hat U+\tilde U$;
cf. \eqref{Vhatdef}, \eqref{def-V-U-hat}.
By \eqref{ass-fp},
taking into account
the exponential decay of $\hat V$ and $\hat U$,
and also
$\norm{e^{\gamma\langle t\rangle}
\tilde W}\sb{H^1(\R,\C^2)}=O(\epsilon^{2\varkappa})$
(cf. Theorem~\ref{theorem-solitary-waves}~\itref{theorem-solitary-waves-v}),
one has:
\[
\norm{
e^{\gamma\langle t\rangle}
\big(f(\tau)-\frac{\tau}{k}f'(\tau)\big)}_{X}
=\norm{
e^{\gamma\langle t\rangle}
O(\abs{\tau}^K)}_{X}
=O(\epsilon^{2K/k}),
\]
\[
\norm{
e^{\gamma\langle t\rangle}
\epsilon^{2/k}U^2 f'(\tau)}_{X}
\le
C\norm{
e^{\gamma\langle t\rangle}
\epsilon^{2/k}V^2 f'(\tau)}_{X}
\]
\[
\le
C\norm{
e^{\gamma\langle t\rangle}
\tau f'(\tau)}_{X}
=\norm{O(\abs{\tau}^{k})}_{X}
=O(\epsilon^{2}),
\]
where $\tau(t)=\epsilon^{2/k}(V(t)^2-\epsilon^2 U(t)^2)$.
Applying these estimates to terms in \eqref{g-epsilon},
one arrives at the second estimate
stated in the lemma.
\end{proof}

Multiplying \eqref{pw-a-pw-0} by
$e^{\gamma\langle t\rangle}$, we have:
\begin{eqnarray}\label{pw-a-pw}
\qquad
e^{\gamma\langle t\rangle}
\p\sb\epsilon\tilde W
=
B^{-1}
\circ
e^{\gamma\langle t\rangle}
\circ
A^{-1}
\circ
e^{-\gamma\langle t\rangle}
\circ
\big(
\p\sb\epsilon A
\circ
e^{\gamma\langle t\rangle}
\circ
\tilde W
+
e^{\gamma\langle t\rangle}
\p\sb\epsilon
G(\epsilon,\tilde W)
\big).
\end{eqnarray}
Above,
$e^{\pm\gamma\langle t\rangle}$
are understood as the multiplication operators;
we note that they commute with
\[
\p\sb\epsilon A(\epsilon)
=
\frac\epsilon\omega
\begin{bmatrix}
-\frac{1}{(m+\omega)^2}&0\\0&-1
\end{bmatrix}.
\]
The operator $B(t,\epsilon)$
(cf. \eqref{def-b-0})
defines a mapping
\begin{eqnarray}\label{def-b}
B(t,\epsilon)^{-1}:\;
X^1\to X^1
\end{eqnarray}
which is continuous
since both
$\norm{B(t,\epsilon)^{-1}}\sb{\End(\C^2)}$
and
$\norm{\p\sb t B(t,\epsilon)}\sb{\End(\C^2)}$
are bounded uniformly in $t\in\R$
and $\epsilon\in(0,\epsilon_2)$,
as long as $\epsilon_2>0$ is sufficiently small;
we took into account that
$\norm{\p\sb{\tilde W}G(\epsilon,\tilde W)}\sb{\End(\C^2)}
=O(\epsilon^{2\varkappa})$ by Lemma~\ref{lemma-g-w-small},
while the derivatives
$\p\sb t V(t,\epsilon)$ and $\p\sb t U(t,\epsilon)$
are bounded pointwise,
uniformly in $t\in\R$ and $\epsilon\in(0,\epsilon_2)$,
due to Lemma~\ref{lemma-v-u-c1},
and hence so is $\p\sb t\tilde W(t,\epsilon)$.

Since
$\norm{e^{\gamma\langle t\rangle}\tilde W}\sb{X^1}
=O(\epsilon^{2\varkappa})$
(cf. Lemma~\ref{lemma-mu-into-better})
and
the mapping
$e^{\gamma\langle t\rangle}\circ
A(\epsilon)^{-1}\circ e^{-\gamma\langle t\rangle}:\;X\to X^1$
is continuous
(just like the mapping
$e^{(1+2k)\gamma\langle t\rangle}\circ
A(\epsilon)^{-1}\circ e^{-(1+2k)\gamma\langle t\rangle}:\;X\to X^1$
in \eqref{ttm}),
while
\eqref{def-b} is continuous in $X^1$,
it follows that
the $X^1$-norm of the right-hand side of \eqref{pw-a-pw}
is bounded by
\[
C\left(
\epsilon
\norm{
e^{\gamma\langle t\rangle}
\tilde W(t,\epsilon)}\sb{X^1}
+
\norm{
e^{\gamma\langle t\rangle}
\p\sb\epsilon G(\epsilon,\tilde W(t,\epsilon))}\sb{X}
\right)
=O(\epsilon^{1+2\varkappa})+O(\epsilon^{2\varkappa-1})
=O(\epsilon^{2\varkappa-1}),
\]
for all $\epsilon\in(0,\epsilon_2)$;
we estimated the second term in the left-hand side
with the aid of Lemma~\ref{lemma-g-epsilon-small}.
Thus, the relation \eqref{pw-a-pw} gives
\begin{eqnarray}\label{p-epsilon-w-tilde}
\p\sb\epsilon \tilde W\in X^1\sb{e,o},
\qquad
\norm{e^{\gamma\langle t\rangle}\p\sb\epsilon\tilde W}\sb{X^1}
=
O(\epsilon^{2\varkappa-1}),
\qquad
\epsilon\in(0,\epsilon_2),
\end{eqnarray}
proving \eqref{norm-p-epsilon-w}.

We can now estimate
$\norm{\p\sb\omega\phi\sb\omega}\sb{L^2}^2$.
We have:
\begin{eqnarray}
\nonumber
\norm{\p\sb\omega\phi\sb\omega}\sb{L^2}^2
&=&
\frac{\epsilon^2}{\omega^2}
\Norm{\frac{d}{d\epsilon}\phi\sb\omega}\sb{L^2}^2
\\
\nonumber
&=&
\frac{\epsilon^2\mathop{\mathrm{vol}}(\mathbb{S}^{n-1})}{\omega^2}
\int\sb{0}\sp\infty
\left(
(\p\sb\epsilon(\epsilon^{\frac 1 k}V(\epsilon r,\epsilon)))^2
+
(\p\sb\epsilon(\epsilon^{1+\frac 1 k}U(\epsilon r,\epsilon)))^2
\right)
r^{n-1}\,dr.
\end{eqnarray}
Let us estimate the above integral.
Since
\[
\p\sb\epsilon(\epsilon^{\frac 1 k}V(\epsilon r,\epsilon))
=
\frac{1}{k}\epsilon^{\frac{1}{k}-1}
V(\epsilon r,\epsilon)
+
\epsilon^{\frac{1}{k}}r \p\sb t V(\epsilon r,\epsilon)
+\epsilon^{\frac 1 k}\p\sb\epsilon V(\epsilon r,\epsilon),
\]
we have:
\begin{eqnarray}
\nonumber
&&
\int\sb{0}\sp\infty
(\p\sb\epsilon(\epsilon^{\frac 1 k}V(\epsilon r,\epsilon)))^2
r^{n-1}\,dr
\\
\nonumber
&&=
\epsilon^{-n}
\int\sb{0}\sp\infty
\Big(
\frac{\epsilon^{\frac{1}{k}-1}}{k}
V(t,\epsilon)
+
\epsilon^{\frac{1}{k}-1}t \p\sb t V(t,\epsilon)
+\epsilon^{\frac 1 k}\p\sb\epsilon V(t,\epsilon)
\Big)^2
t^{n-1}\,dt
\\
\nonumber
&&=
\epsilon^{-n+\frac 2 k-2}
\int_0^\infty
\left(
\frac{V(t,\epsilon)}{k}
+
t \p\sb t V(t,\epsilon)
+\epsilon\p\sb\epsilon V(t,\epsilon)
\right)^2
t^{n-1}\,dt
\\
\nonumber
&&=
\epsilon^{-n+\frac 2 k-2}
\left(
C+O(\epsilon^{2\varkappa})
\right),
\end{eqnarray}
with
\[
C=\int_0^\infty
\Big(\frac{\hat V(t)}{k}+t\p\sb t\hat V(t)\Big)^2 t^{n-1}\,dt>0.
\]
We used
Theorem~\ref{theorem-solitary-waves}~\itref{theorem-solitary-waves-v}
for the $L^2$-norm of $t \p\sb t V(t,\epsilon)$
and
\eqref{norm-p-epsilon-w}
for the $L^2$-norm of
$\p\sb\epsilon V(t,\epsilon)
=\p\sb\epsilon\tilde V(t,\epsilon)$.
We omit the computations for the part containing $U$
since its contribution will be of the order $O(\epsilon^2)$ smaller,
which is dominated by the $O(\epsilon^{2\varkappa})$ error term.
It follows that
\[
\norm{\p\sb\omega\phi\sb\omega}\sb{L^2}^2
=
\frac{\epsilon^2}{\omega^2}
\norm{\p\sb\epsilon\phi\sb\omega}\sb{L^2}^2
=
\epsilon^{-n+\frac 2 k}
\frac{\mathop{\mathrm{vol}}(\mathbb{S}^{n-1})}{\omega^2}
(C+O(\epsilon^{2\varkappa})),
\]
proving \eqref{d-phi-d-omega-bound}.

This completes the proof of Theorem~\ref{theorem-existence-c1}~\itref{theorem-existence-c1-i}.

\section{Vakhitov--Kolokolov condition
for the nonlinear Dirac equation}
\label{sect-critical}

Finally, let us prove
Theorem~\ref{theorem-existence-c1}~\itref{theorem-existence-c1-ii}.
We start with the focusing nonlinear Schr\"odinger equation in $n$ dimensions:
\begin{eqnarray}\label{nls-c}
\jj\dot\psi=-\frac{1}{2m}\Delta\psi-\abs{\psi}^{2k}\psi,
\qquad
\psi(t,x)\in\C,
\qquad
x\in\R^n.
\end{eqnarray}
Above, $k\in(0,n/(n-2))$ (any $k>0$ if $n\le 2$).
Given a positive solution $u_k$
to the stationary Schr\"odinger equation
\[
-\frac{1}{2m}u_k=-\frac{1}{2m}\Delta u_k-u_k^{1+2k}
\]
(cf. \eqref{def-uk}),
one can use $u_k$ to construct
the solitary wave solutions to \eqref{nls-c}
for any $\omega<0$:
\[
\varphi\sb\omega(x)
=(2m\abs{\omega})^{1/(2k)}u_k\big(\sqrt{2m\abs{\omega}}x\big).
\]
When $k=2/n$,
it follows that
the $L^2$-norm of $\varphi\sb\omega$ does not depend on $\omega$;
$\frac{d}{d\omega}\norm{\varphi\sb\omega}^2=0$.

We are going to show that
in the case of the nonlinear Dirac equation in $(n+1)$D
with the ``critical'' value $k=2/n$
(and absent or sufficiently small higher order terms),
the charge is no longer constant;
instead,
$\p\sb\omega Q(\phi\sb\omega)<0$ for $\omega\lesssim m$.
This reduces the degeneracy of the zero eigenvalue
of the linearization at the corresponding solitary wave;
see e.g. \cite{MR3311594}.

\begin{lemma}\label{lemma-hat-v-tilde-v-p}
Assume that
$f\in C^1(\R\setminus\{0\})\cap C(\R)$
satisfies
the assumption \eqref{ass-fp}
with some $K>k>0$.
One has:
\[
\langle \hat V,\p\sb\epsilon\tilde V\rangle
=
\epsilon q_1
+
\epsilon\Big(\frac{1}{k}-\frac{n}{2}\Big)q_2
+O(\epsilon^{\frac{2K}{k}-3}+\epsilon^{4\varkappa-1}
)
+o(\epsilon),
\qquad
\epsilon\in(0,\epsilon_2),
\]
with
\[
q_1
=
\int\sb{\R^n}(4m \hat V^{2k}\hat U^2+\hat U^2)\,dy>0,
\qquad
q_2
=
\int\sb{\R^n}
\Big(
\frac{\hat V^2}{4m^2}
+
2m \hat V^{2k}\hat U^2+\hat U^2
\Big)\,dy>0.
\]
\end{lemma}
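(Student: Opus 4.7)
The plan is to differentiate the fixed-point equation $A(\epsilon)\tilde W = G(\epsilon,\tilde W)$ in $\epsilon$, pair the result against a test function $\xi_0$ chosen so that $A(0)\xi_0 = (\hat V, 0)^{\!\top}$, and then harvest the order-$\epsilon$ contribution from $\langle\xi_0,\p_\epsilon G\rangle$ supplied by Lemma~\ref{lemma-g-epsilon-small}, with all other terms being either $o(\epsilon)$ or absorbed into the advertised error. Differentiation yields
\[
A(\epsilon)\p_\epsilon\tilde W \;=\; -\p_\epsilon A(\epsilon)\,\tilde W \;+\; \p_\epsilon G(\epsilon,\tilde W) \;+\; \p_{\tilde W}G(\epsilon,\tilde W)\,\p_\epsilon\tilde W.
\]

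For the test function, the second row of $A(0)\xi_0 = (\hat V,0)^{\!\top}$ forces $\xi_{0,2} = -\tfrac{1}{2m}\p_t\xi_{0,1}$, which reduces the first row to the scalar equation $\eurl_+\xi_{0,1} = -\hat V$. The NLS scaling identity \eqref{weird} solves this explicitly:
\[
\xi_{0,1}(t) \;=\; m\Big(\tfrac{1}{k}\hat V(t) + t\p_t\hat V(t)\Big),\qquad \xi_{0,2}(t) \;=\; m\Big(\big(\tfrac{1}{k}+1\big)\hat U(t) + t\p_t\hat U(t)\Big),
\]
where I used $\p_t\hat V = -2m\hat U$ from \eqref{def-hat-phi}. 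Both components inherit exponential decay from $\hat V, \hat U$, so that $e^{\gamma\langle t\rangle}\xi_0 \in X^1_{e,o}$.

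Pairing the differentiated equation with $\xi_0$ and using self-adjointness of $A(\epsilon)$, one writes $\langle A(\epsilon)\p_\epsilon\tilde W,\xi_0\rangle = \langle \hat V,\p_\epsilon\tilde V\rangle + \langle\p_\epsilon\tilde W,(A(\epsilon)-A(0))\xi_0\rangle$. Since $\omega = m - O(\epsilon^2)$ gives $A(\epsilon) - A(0) = O(\epsilon^2)$, and $\|\p_\epsilon\tilde W\|_{X^1} = O(\epsilon^{2\varkappa-1})$ by \eqref{norm-p-epsilon-w}, this correction is $O(\epsilon^{2\varkappa+1}) = o(\epsilon)$. The right-hand pieces are controlled as follows: $\langle\xi_0,\p_\epsilon A\cdot\tilde W\rangle = O(\epsilon\cdot\epsilon^{2\varkappa}) = o(\epsilon)$ using \eqref{v-u-tilde-small-better}; $\langle\xi_0,\p_{\tilde W}G\cdot\p_\epsilon\tilde W\rangle = O(\epsilon^{4\varkappa-1})$ from Lemma~\ref{lemma-g-w-small} combined with \eqref{norm-p-epsilon-w}; and Lemma~\ref{lemma-g-epsilon-small} extracts the leading piece
\[
\langle\xi_0,\p_\epsilon G\rangle \;=\; \epsilon\Big\langle \xi_{0,1},\,2k\hat U^2\hat V^{2k-1} + \tfrac{\hat V}{4m^3}\Big\rangle + \epsilon\Big\langle \xi_{0,2},\,2\hat U\hat V^{2k} + \tfrac{\hat U}{m}\Big\rangle + O\big(\epsilon^{\frac{2K}{k}-3}\big) + o(\epsilon).
\]

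The main obstacle is the concrete evaluation of this last pair of integrals. The crucial tool is the radial integration-by-parts identity $\int_{\R^n}(x\cdot\nabla F)\,dy = -n\int_{\R^n}F\,dy$ for any exponentially decaying $F$. Applied to $F = \hat V^{2k}\hat U^2$, the two cross terms $2k(t\p_t\hat V)\hat U^2\hat V^{2k-1}$ and $2(t\p_t\hat U)\hat U\hat V^{2k}$ arising from the $t\p_t$ parts of $\xi_{0,1}$ and $\xi_{0,2}$ recombine into $x\cdot\nabla(\hat V^{2k}\hat U^2)$; applied to $F=\hat V^2$ and $F=\hat U^2$, the remaining scaling pieces $(t\p_t\hat V)\hat V$ and $(t\p_t\hat U)\hat U$ produce $-\tfrac{n}{2}\int \hat V^2$ and $-\tfrac{n}{2}\int\hat U^2$. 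Grouping terms by whether they carry the factor $\tfrac{1}{k}-\tfrac{n}{2}$ (which is exactly what the homogeneity coefficient $\tfrac{1}{k}$ in $\xi_0$ combines with the IBP factor $-n$ to produce) yields the stated decomposition $\epsilon q_1 + \epsilon\big(\tfrac{1}{k} - \tfrac{n}{2}\big)q_2$. Positivity of $q_1$ and $q_2$ is immediate from $\hat V > 0$ and $\hat U \not\equiv 0$.
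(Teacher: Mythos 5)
Your proof is correct and follows the same route as the paper's: differentiate the fixed-point equation $A(\epsilon)\tilde W=G(\epsilon,\tilde W)$ in $\epsilon$, pair against the test vector built from the NLS scaling identity \eqref{weird} (your $\xi_0$ is exactly $m$ times the paper's test function, which only trivially reshuffles the $1/m$ factor), control the $\p_{\tilde W}G\,\p_\epsilon\tilde W$ and $\p_\epsilon A\,\tilde W$ terms via Lemmas~\ref{lemma-g-w-small} and~\ref{lemma-g-epsilon-small} together with the bounds \eqref{v-u-tilde-small-better} and \eqref{norm-p-epsilon-w}, and finish with the radial $\int (x\cdot\nabla F)\,dy=-n\int F\,dy$ integration by parts to reorganize the leading integral into $q_1+(\tfrac1k-\tfrac n2)q_2$. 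The error accounting ($O(\epsilon^{2K/k-3}+\epsilon^{4\varkappa-1})+o(\epsilon)$) and the final grouping both match the paper's computation.
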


\begin{proof}
By \eqref{weird},
$
\eurl\sb{+}
\big(\frac 1 k \hat V+x\cdot\nabla \hat V\big)
=-\frac{1}{m}\hat V$;
hence,
\[
A(0)\begin{bmatrix}
\frac{1}{k}\hat V+x\cdot\nabla\hat V
\\
-\frac{1}{2m}\p\sb r(\frac{1}{k}\hat V+x\cdot\nabla\hat V)
\end{bmatrix}
=
\frac{1}{m}
\begin{bmatrix}\hat V\\0\end{bmatrix}.
\]
Therefore,
\begin{eqnarray}
\frac{\langle \hat V,\p\sb\epsilon\tilde V\rangle}{m}
&=&
\left\langle
\frac 1 m
\begin{bmatrix}
\hat V\\ 0
\end{bmatrix},
\ \p\sb\epsilon\begin{bmatrix}
\tilde V\\\tilde U
\end{bmatrix}
\right\rangle
=
\left\langle
A(0)
\begin{bmatrix}
\frac{1}{k}\hat V+y\cdot \nabla\hat V
\\
-\frac{1}{2m}\p\sb r
(\frac{1}{k}\hat V+y\cdot \nabla\hat V)
\end{bmatrix},
\ \p\sb\epsilon\begin{bmatrix}
\tilde V\\\tilde U
\end{bmatrix}
\right\rangle
\nonumber
\\[1ex]
&=&
\left\langle
\begin{bmatrix}
\frac{1}{k}\hat V+y\cdot \nabla\hat V
\\
-\frac{1}{2m}\p\sb r
(\frac{1}{k}\hat V+y\cdot \nabla\hat V)
\end{bmatrix},
\ A(0)\p\sb\epsilon\begin{bmatrix}
\tilde V\\\tilde U
\end{bmatrix}
\right\rangle
\nonumber
\\[1ex]
&=&
\left\langle
\begin{bmatrix}
\frac{1}{k}\hat V+y\cdot \nabla\hat V
\\
-\frac{1}{2m}\p\sb r
(\frac{1}{k}\hat V+y\cdot \nabla\hat V)
\end{bmatrix},
\ A(\epsilon)\p\sb\epsilon\begin{bmatrix}
\tilde V\\\tilde U
\end{bmatrix}
\right\rangle
+O(\epsilon^2)\norm{\p\sb\epsilon \tilde W}_{L^\infty}.
\nonumber
\end{eqnarray}
We took into account that the operator $A(\epsilon)$
defined in \eqref{a-epsilon}
is self-adjoint
on $X^1\sb{e,o}$
and that
$\norm{A(\epsilon)-A(0)}\sb{L^\infty(\R,\End(\C^2))}=O(\epsilon^2)$.
Taking the derivative of~\eqref{w-a-a-3}
with respect to $\epsilon$,
we derive:
\begin{eqnarray}
&&
\frac{\langle \hat V,\p\sb\epsilon\tilde V\rangle}{m}
\nonumber
\\[1ex]
&&
\quad
=
\left\langle
\begin{bmatrix}
\frac{1}{k}\hat V+y\cdot \nabla\hat V
\\
-\frac{1}{2m}\p\sb r
(\frac{1}{k}\hat V+y\cdot \nabla\hat V)
\end{bmatrix},
\ \p\sb{\tilde W}G\p\sb\epsilon\tilde W
+
\p\sb\epsilon G-\p\sb\epsilon A(\epsilon)\tilde W
\right\rangle
+O\big(\epsilon^2\big)\norm{\p\sb\epsilon \tilde W}_{L^\infty}
\nonumber
\\[1ex]
&&
\quad
=
\left\langle
\begin{bmatrix}
\frac{1}{k}\hat V+y\cdot \nabla\hat V
\\
-\frac{1}{2m}\p\sb r
(\frac{1}{k}\hat V+y\cdot \nabla\hat V)
\end{bmatrix},
\ \p\sb\epsilon G
\right\rangle
+
O\big(
\epsilon^{4\varkappa-1}
+
\epsilon^{2\varkappa+1}
\big).
\nonumber
\end{eqnarray}
We used the estimates
$
\norm{\p\sb{\tilde W}G}\sb{L^\infty(\R,\End(\C^2))}
=O(\epsilon^{2\varkappa})
$
(cf. Lemma~\ref{lemma-g-w-small}),
$
\norm{\tilde W}_{L^\infty}
=O(\epsilon^{2\varkappa})
$
(cf. Theorem~\ref{theorem-solitary-waves}~\itref{theorem-solitary-waves-v}),
and
$
\norm{\p\sb\epsilon \tilde W}_{L^\infty}
=O(\epsilon^{2\varkappa-1})
$
(cf. Theorem~\ref{theorem-existence-c1}~\itref{theorem-existence-c1-i}).

Taking into account Lemma~\ref{lemma-g-epsilon-small}
to express $\p\sb\epsilon G(\epsilon,\tilde W)$,
we continue:
\begin{eqnarray}
&&
\frac{\langle\hat V,\p\sb\epsilon\tilde V\rangle}{m}
\nonumber
\\[1ex]
&&
=
\epsilon
\left\langle
\begin{bmatrix}
\frac{1}{k}\hat V+y\cdot \nabla\hat V
\\
-\frac{1}{2m}\p\sb r
(\frac{1}{k}\hat V+y\cdot \nabla\hat V)
\end{bmatrix},
\ \begin{bmatrix}
2k\hat U^2\hat V^{2k-1}
+\frac{\hat V}{4m^3}
\\
2\hat U\hat V^{2k}+\frac{\hat U}{m}
\end{bmatrix}
\right\rangle
+O\big(\epsilon^{\frac{2K}{k}-3}
+\epsilon^{4\varkappa-1}
\big)
+o(\epsilon)
\nonumber
\\[1ex]
&&
=
\epsilon
\int\limits\sb{\R^n}
\Big[
\Big(\frac{1}{k}\hat V+y\!\cdot\!\nabla\hat V\Big)
\Big(
2k\hat V^{2k-1}\hat U^2
+
\frac{\hat V}{4m^3}
\Big)
+
\Big(
\frac{1+k}{k}\hat U+y\!\cdot\!\nabla\hat U
\Big)
\Big(2\hat V^{2k}\hat U+\frac{\hat U}{m}
\Big)
\Big]
\,dy
\nonumber
\\[1ex]
&&
+O\big(\epsilon^{\frac{2K}{k}-3}
+\epsilon^{4\varkappa-1}\big)
+o(\epsilon);
\nonumber
\end{eqnarray}
we took into account that
$
-
\frac{1}{2m}
\p\sb r
\Big(
\frac 1 k\hat V+y\!\cdot\!\nabla\hat V
\Big)
=
\frac 1 k\hat U+y\!\cdot\!\nabla\hat U+\hat U.
$
The integral
$\int\sb{\R^n}[\dots]\,dy$
is evaluated by parts
as follows:
\begin{eqnarray}
&&
\int\limits\sb{\R^n}
\Big[
\frac{\hat V^2}{4m^3 k}
+2\hat V^{2k}\hat U^2
+\frac{y\!\cdot\!\nabla \hat V^2}{8m^3}
+y\!\cdot\!\nabla(\hat V^{2k}\hat U^2)
\nonumber
\\
&&
\qquad\qquad\qquad\qquad\qquad
+\frac{(1+k)\hat U^2}{m k}
+\frac{2(1+k)\hat V^{2k}\hat U^2}{k}
+\frac{y\!\cdot\!\nabla\,\hat U^2}{2m}
\Big]\,dy
\nonumber
\\[1ex]
&&=
\int\limits\sb{\R^n}
\Big[
\frac{\hat V^2}{4m^3 k}
+2\hat V^{2k}\hat U^2
-\frac{n\hat V^2}{8m^3}
-n\hat V^{2k}\hat U^2
+\frac{(1+k)\hat U^2}{m k}
+\frac{2(1+k)\hat V^{2k}\hat U^2}{k}
-\frac{n\hat U^2}{2m}
\Big]\,dy
\nonumber
\\[1ex]
&&=
\int\sb{\R^n}
\left[
\frac{\hat V^2}{4m^3}\Big(\frac{1}{k}-\frac{n}{2}\Big)
+
2\Big(2+\frac{1}{k}-\frac n 2\Big)\hat V^{2k}\hat U^2
+
\Big(1+\frac{1}{k}-\frac{n}{2}\Big)\frac{\hat U^2}{m}
\right]\,dy.
\qquad
\nonumber
\qedhere
\end{eqnarray}
\end{proof}

\begin{lemma}\label{lemma-q}
Let
$f\in C^1(\R\setminus\{0\})$
satisfy
$f(\tau)=\abs{\tau}^k+O(\abs{\tau}^K)$,
$\tau\in\R$.

\begin{enumerate}
\item
Assume that
in the assumption~\eqref{ass-fp}
either
$
k\in(0,2/n),
$
or
$
k=2/n$,
$K>4/n$.
Then there is $\omega\sb\ast\in(\omega\sb 2,m)$
such that
$\p\sb\omega Q(\phi\sb\omega)<0$
for
$\omega\in(\omega\sb\ast,m)$.
\item
If in the assumption~\eqref{ass-fp}
one has $k\in(2/n,\,2/(n-2))$ (any $k>2/n$ if $n\le 2$),
then there is $\omega\sb\ast\in(\omega\sb 2,m)$
such that
$\p\sb\omega Q(\phi\sb\omega)>0$
for
$\omega\in(\omega\sb\ast,m)$.
\end{enumerate}
\end{lemma}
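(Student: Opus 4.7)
The plan is to exploit the rescaling structure from~\eqref{def-V-U}: passing to polar coordinates in the charge and changing variables $t=\epsilon r$ gives
\[
Q(\omega)
=
\int_{\R^n}\phi_\omega^*\phi_\omega\,dx
=
\epsilon^{\frac{2}{k}-n}J(\epsilon),
\qquad
J(\epsilon):=\int_{\R^n}\big(V(|y|,\epsilon)^2+\epsilon^2 U(|y|,\epsilon)^2\big)\,dy,
\]
so that, using $d\epsilon/d\omega=-\omega/\epsilon$ with $\omega>0$, the sign of $\partial_\omega Q(\phi_\omega)$ is opposite to that of
\[
\partial_\epsilon Q
=
\Big(\tfrac{2}{k}-n\Big)\epsilon^{\frac{2}{k}-n-1}J(\epsilon)
+\epsilon^{\frac{2}{k}-n}J'(\epsilon).
\]
The continuity of $\epsilon\mapsto\tilde W(\cdot,\epsilon)$ established in Section~\ref{sect-waves-nrl-1} and~\eqref{v-u-tilde-smaller-hat-v-b3} give $J(\epsilon)\to J_0=\int_{\R^n}\hat V(|y|)^2\,dy>0$, and Cauchy--Schwarz together with~\eqref{norm-p-epsilon-w} yield $J'(\epsilon)=O(\epsilon^{2\varkappa-1})$ with $\varkappa>0$.

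When $k\ne 2/n$ the ratio of the second to the first term in $\partial_\epsilon Q$ is $O(\epsilon^{2\varkappa})\to 0$, so the first term dominates and its sign determines the sign of $\partial_\epsilon Q$. In the subcritical case $k<2/n$, $\tfrac{2}{k}-n>0$ gives $\partial_\epsilon Q>0$ and hence $\partial_\omega Q<0$; in the supercritical case $k>2/n$, $\tfrac{2}{k}-n<0$ gives $\partial_\epsilon Q<0$ and $\partial_\omega Q>0$. The delicate case is $k=2/n$, where the first term vanishes identically and $\partial_\epsilon Q=J'(\epsilon)$, forcing a sharp computation of $J'(\epsilon)$ at leading order in $\epsilon$.

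For $k=2/n$, I would expand
\[
J'(\epsilon)
=
2\int_{\R^n}\hat V\,\partial_\epsilon\tilde V\,dy
+2\int_{\R^n}\tilde V\,\partial_\epsilon\tilde V\,dy
+2\epsilon\int_{\R^n}U(|y|,\epsilon)^2\,dy
+2\epsilon^2\int_{\R^n}U\,\partial_\epsilon\tilde U\,dy.
\]
The hypothesis $K>4/n$ with $k=2/n$ forces $\varkappa=\min(1,K/k-1)=1$. Combining the pointwise bound $|\tilde V|+|\tilde U|\le b_2\epsilon^{2\varkappa}\hat V$ from~\eqref{v-u-tilde-smaller-hat-v-b3} with the weighted $H^1$ estimate~\eqref{norm-p-epsilon-w} on $\partial_\epsilon\tilde W$, the second and fourth terms are $O(\epsilon^{4\varkappa-1})+O(\epsilon^{1+2\varkappa})=O(\epsilon^3)$, and the third equals $2\epsilon\int_{\R^n}\hat U(|y|)^2\,dy+O(\epsilon^{1+2\varkappa})$. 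Applying Lemma~\ref{lemma-hat-v-tilde-v-p} with $1/k-n/2=0$, and observing that $2K/k-3>1$ and $4\varkappa-1=3>1$ so both of its error exponents beat $\epsilon$, one obtains $2\int_{\R^n}\hat V\,\partial_\epsilon\tilde V\,dy=2\epsilon\,q_1+o(\epsilon)$, whence
\[
J'(\epsilon)=2\epsilon\Big(q_1+\int_{\R^n}\hat U(|y|)^2\,dy\Big)+o(\epsilon).
\]
Since $q_1>0$, the bracket is strictly positive and $J'(\epsilon)>0$ for $\epsilon$ small, so $\partial_\omega Q(\phi_\omega)<0$ for $\omega$ close enough to $m$.

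The main obstacle is the bookkeeping in the critical case: every error appearing in $J'(\epsilon)$ must be verified to be $o(\epsilon)$, and this hinges on the sharp threshold $K>4/n$, which simultaneously forces $\varkappa=1$ and $2K/k-3>1$ so that the remainder in Lemma~\ref{lemma-hat-v-tilde-v-p} drops below the leading $\epsilon q_1$. A weaker assumption on $K$ would let that remainder compete with the principal term and destroy the strict sign.
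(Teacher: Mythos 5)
Your proposal is correct and follows essentially the same route as the paper: both rescale to $Q=\epsilon^{2/k-n}J(\epsilon)$, isolate the prefactor $(2/k-n)$ in $\partial_\epsilon Q$ to settle the non-critical cases, and in the critical case $k=2/n$ invoke Lemma~\ref{lemma-hat-v-tilde-v-p} (with the $q_2$ term vanishing) together with the bounds $\norm{\tilde W}=O(\epsilon^{2\varkappa})$ and $\norm{\p_\epsilon\tilde W}=O(\epsilon^{2\varkappa-1})$ to extract the strictly positive leading coefficient $q_1+\int\hat U^2>0$ of $\epsilon$. The bookkeeping of error exponents ($2K/k-3>1$, $4\varkappa-1=3$) and the identification of $K>4/n$ as the threshold forcing $\varkappa=1$ both match the paper's argument.
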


Above,
$\omega\sb 2=\sqrt{m^2-\epsilon_2^2}$,
with $\epsilon_2>0$ from
Theorem~\ref{theorem-existence-c1}~\itref{theorem-existence-c1-i}.

\begin{proof}
We recall that
$
v(x,\omega)=\epsilon^{\frac 1 k}
\big(
\hat V(\epsilon x)+\tilde V(\epsilon x,\epsilon)
\big),
$
$
u(x,\omega)=\epsilon^{\frac 1 k+1}
\big(
\hat U(\epsilon x)+\tilde U(\epsilon x,\epsilon)
\big)
$
(cf. Theorem~\ref{theorem-solitary-waves});
\[
Q(\phi\sb\omega)
=\int\sb{\R^n}\abs{\phi\sb\omega(x)}^2\,dx
=\epsilon^{\frac 2 k-n}
\int\sb{\R^n}
\left(
V(\abs{y},\epsilon)^2
+\epsilon^2 U(\abs{y},\epsilon)^2
\right)
\,dy.
\]
Let us evaluate the contribution to the derivative of
$Q(\phi\sb\omega)$
with respect to $\epsilon$:
\begin{eqnarray*}
&&
\p\sb\epsilon Q
=
\Big(\frac{2}{k}-n\Big)
\epsilon^{\frac{2}{k}-n-1}
(\langle V,V\rangle
+\epsilon^2\langle U,U\rangle)
+
\epsilon^{\frac{2}{k}-n}
\p\sb\epsilon
\left(
\langle V,V\rangle
+\epsilon^2\langle U,U\rangle
\right)
\\
&&
=
\Big(\frac{2}{k}-n\Big)
\epsilon^{\frac{2}{k}-n-1}
(\langle V,V\rangle
+\epsilon^2\langle U,U\rangle)
+
\epsilon^{\frac{2}{k}-n}
\left(
2\langle \hat V,\p\sb\epsilon\tilde V\rangle
+2\epsilon\langle\hat U,\hat U\rangle
+O(\epsilon^{4\varkappa-1})
\right).
\end{eqnarray*}
The estimate on the error terms in the right-hand side,
such as
$\langle\tilde V,\p\sb\epsilon\tilde V\rangle=O(\epsilon^{4\varkappa-1})$,
follows from
\eqref{def-V-U}, \eqref{def-V-U-hat},
and $X^1$-bounds on $\tilde W$ and $\p\sb\epsilon\tilde W$
from
Theorem~\ref{theorem-solitary-waves}~\itref{theorem-solitary-waves-v}
and
Theorem~\ref{theorem-existence-c1}~\itref{theorem-existence-c1-i},
respectively.
By Lemma~\ref{lemma-hat-v-tilde-v-p},
in the non-critical case, when
$k\ne 2/n$ and $K>k$,
one has
\begin{eqnarray}
\p\sb\epsilon Q
&=&
\Big(\frac{2}{k}-n\Big)
\epsilon^{\frac 2 k -n -1}
\langle \hat V,\hat V\rangle
+
O(\epsilon^{\frac 2 k -n}\epsilon^{\frac{2K}{k}-3})
+
o(\epsilon^{\frac 2 k -n -1})
\nonumber
\\
&=&
\Big(\frac{2}{k}-n\Big)
\epsilon^{\frac 2 k -n -1}
\langle \hat V,\hat V\rangle
+
o(\epsilon^{\frac 2 k -n -1});
\nonumber
\end{eqnarray}
hence, for $\epsilon>0$ sufficiently small,
the sign of $\p\sb\epsilon Q$
is determined by the sign of $\frac 2 k -n$.
Thus,
if $k\in(0,2/n)$,
one has
$\p\sb\omega Q=-\frac{\epsilon}{\omega}\p\sb\epsilon Q<0$
as long as $\omega<m$ is sufficiently close to $m$.
In the critical case $k=2/n$,
again by Lemma~\ref{lemma-hat-v-tilde-v-p},
\[
\p\sb\epsilon
Q(\omega)
=2\langle \hat V,\p\sb\epsilon\tilde V\rangle
+2\epsilon\langle \hat U,\hat U\rangle
+O(\epsilon^{4\varkappa-1})
\]
\[
=2\epsilon
\int\sb{\R^n}(4m \hat V^{2k}\hat U^2+\hat U^2)\,dy
+2\epsilon
\int\sb{\R^n}\hat U^2\,dy
+O(
\epsilon^{\frac{2K}{k}-3}
+\epsilon^{4\varkappa-1}
+\epsilon^{2\varkappa+1}
)
+o(\epsilon).
\]
If
$
K/k>2,
$
$
\varkappa
=\min\Big(1,\frac{K}{k}-1\Big)=1,
$
then, for $\epsilon>0$ sufficiently small,
the above is dominated by the first term
of order one in $\epsilon$,
hence is strictly positive.
Thus, in this case,
$\p\sb\omega Q=-\frac{\epsilon}{\omega}\p\sb\epsilon Q<0$
as long as $\omega<m$ is sufficiently close to $m$.
This finishes the proof of Lemma~\ref{lemma-q}.
\end{proof}

This concludes the proof of
Theorem~\ref{theorem-existence-c1}~\itref{theorem-existence-c1-ii}.


\appendix

\section{Smoothness of NLS groundstates}
\label{sect-nls-smooth}

We start with the properties
of the profiles of solitary wave solutions
to the nonlinear Schr\"odinger equation.

\begin{lemma}\label{lemma-bl}
Let $n\ge 1$
and $k>0$.
If $n\ge 3$, additionally assume that $k<\frac{2}{n-2}$.
Then there is a unique positive
spherically symmetric
monotonically decaying
solution
$u_k\in H^1(\R^n)\cap C^2(\R^n)$
to the equation
\begin{eqnarray}\label{stationary-kg}
-\frac{u}{2m}=-\frac{\Delta u}{2m}-\abs{u}^{2k}u,
\qquad
u(x)\in\R,
\quad
x\in\R^n.
\end{eqnarray}
For any $\delta<1$ there is $C\sb\delta<\infty$ such that
\[
\abs{u_k(r)}
+
\abs{\p\sb r u_k(r)}
\le
C\sb\delta e^{-\delta r},
\qquad
r\ge 0.
\]
For any $s<\frac{n}{2}+2$
one has $u_k\in H^s(\R^n)$.
As $\abs{x}\to\infty$,
the function
$u_k$ is \emph{strictly} monotonically decreasing.

There are $0<c_{n,k}<C_{n,k}<\infty$
such that
\begin{eqnarray}\label{exp-decay-sharp}
c_{n,k}\langle x\rangle^{-(n-1)/2}e^{-\abs{x}}
\le
\abs{u_k(x)}
\le
C_{n,k}\langle x\rangle^{-(n-1)/2}e^{-\abs{x}},
\qquad
x\in\R^n.
\end{eqnarray}

If $n\ge 3$ and $k\ge\frac{2}{n-2}$,
then \eqref{stationary-kg}
has no $H^1$ solutions.
\end{lemma}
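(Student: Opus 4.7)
The plan is to assemble the existence, uniqueness, monotonicity, and regularity statements from the classical theory of semilinear elliptic equations, and to isolate the sharp exponential asymptotics as the one part requiring a genuine argument.

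For existence, uniqueness among positive spherically symmetric solutions, strict monotonic decay, and $C^2$ regularity of $u_k$, I would invoke the standard references: \cite{MR695535} for $n\ge 3$ with $k<2/(n-2)$, \cite{MR734575} for $n=2$, and the explicit formula $u_k(r)=[(k+1)\,\mathrm{sech}^2(k\sqrt{2m}\,r)]^{1/(2k)}$ for $n=1$. Uniqueness among positive radial solutions is covered by Kwong's theorem and its precursors due to McLeod and Serrin. Nonexistence in the range $k\ge 2/(n-2)$, $n\ge 3$, follows from the Pohozaev identity: multiplying \eqref{stationary-kg} by $u$ and by $x\cdot\nabla u$, integrating over $\R^n$, and eliminating $\int u^{2k+2}\,dx$ yields $\big(k(n-2)-2\big)\int|\nabla u|^2\,dx=-nk\int u^2\,dx$, which admits no nontrivial nonnegative solution once $k(n-2)\ge 2$. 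Higher Sobolev regularity up to $s<n/2+2$ follows by elliptic bootstrap from $u_k\in H^2\cap L^\infty$, combined with the fact that $\tau\mapsto|\tau|^{2k}\tau$ is smooth away from $\tau=0$; strict positivity of $u_k$ on compact sets and its exponential decay at infinity together allow $|u_k|^{2k}u_k$ to be treated as if it were smooth wherever it contributes to the Sobolev norm.

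The crude decay $|u_k(r)|+|\partial_r u_k(r)|\le C_\delta e^{-\delta r}$ for every $\delta<1$ is a textbook comparison argument: given $\delta<1$, pick $R$ so large that $2m|u_k(x)|^{2k}\le 1-\delta^2$ for $|x|\ge R$, note that then $u_k$ is a subsolution of $-\Delta w+\delta^2 w=0$ on the exterior, and dominate it by a Yukawa-type kernel $C_\delta e^{-\delta|x|}$; the gradient bound then comes from interior Schauder estimates on unit balls.

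The main obstacle is the sharp two-sided asymptotic \eqref{exp-decay-sharp}. My plan is to apply the Liouville substitution $w(r):=r^{(n-1)/2}u_k(r)$, which converts the radial equation into
\[
-w''(r)+w(r)=\frac{(n-1)(3-n)}{4r^2}w(r)+2m\,r^{-(n-1)k}w(r)^{2k+1},\qquad r>0.
\]
Thanks to the crude decay already established, the right-hand side is $O(r^{-2})|w|$ plus an exponentially smaller forcing term, so on an exponentially weighted space with norm $\|e^{r}w\|_{L^\infty([R,\infty))}$ the problem becomes a Volterra equation against the fundamental pair $\{e^{-r},e^{r}\}$. Splitting into outgoing and incoming characteristics and integrating the decaying piece backwards from $+\infty$, one shows that any solution with $w(r)\to 0$ necessarily satisfies $w(r)=Ae^{-r}(1+o(1))$ as $r\to\infty$ for some $A\in\R$. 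The delicate point is to rule out $A=0$: if $A=0$ then $w$ would decay strictly faster than $e^{-r}$, and iterating the Volterra identity with this stronger input forces $w\equiv 0$ on $[R,\infty)$ and hence on $(0,\infty)$ by unique continuation, contradicting the strict positivity of $u_k$. Positivity of $u_k$ then pins down $A>0$, producing both the upper bound with constant $C_{n,k}$ and the matching lower bound with constant $c_{n,k}$ in \eqref{exp-decay-sharp}; the estimate on $\partial_r u_k$ follows by differentiating the Volterra representation of $w$.
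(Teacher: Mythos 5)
The existence, uniqueness, strict monotonicity, and nonexistence pieces are handled the same way as in the paper (same references, correct Pohozaev computation), and the crude exponential decay via a subsolution comparison is a reasonable substitute for the reference given in the paper.

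Your argument for the Sobolev regularity $u_k\in H^s(\R^n)$, $s<\tfrac n2+2$, however, has a genuine gap, and it is the part of this lemma that actually requires work.  The phrase ``allow $|u_k|^{2k}u_k$ to be treated as if it were smooth wherever it contributes to the Sobolev norm'' is not a proof: if the bootstrap really closed in the way you assert, it would continue indefinitely and give $u_k\in H^s$ for every $s$, with no explanation of where the threshold $s<\tfrac n2+2$ comes from.  In the paper's proof that threshold arises from the origin, not from the nonlinearity: one shows by induction (cf. \eqref{uppp}, built on the key auxiliary bound $|u_k'/u_k|\le c_1 r/\langle r\rangle$ in Lemma~\ref{lemma-up-u-bounded}) that $|u_k^{(j)}(r)|\le C_j(\langle r\rangle/r)^{j-2}u_k(r)$ for $j\ge 2$, so the $j$-th radial derivative has a potential singularity of order $r^{-(j-2)}$ at $r=0$, which is square-integrable against $r^{n-1}\,dr$ precisely when $j<\tfrac n2+2$.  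Your elliptic bootstrap never quantifies this near-origin behaviour and also quietly needs the pointwise comparison $|\nabla u_k|\lesssim u_k$ at infinity (otherwise $u_k^{2k-1}|\nabla u_k|^2$ is not controlled for $k<1/2$).  In short: you would need to prove something like Lemma~\ref{lemma-up-u-bounded} and the pointwise derivative estimates \eqref{uppp} for your bootstrap to close, and at that point you are essentially reproducing the paper's argument.

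On the sharp asymptotics \eqref{exp-decay-sharp}, your Liouville substitution $w=r^{(n-1)/2}u_k$ and the resulting equation $-w''+w=\tfrac{(n-1)(3-n)}{4r^2}w+2mr^{-(n-1)k}w^{2k+1}$ are correct, and the Volterra-type analysis is a valid route. It is genuinely different from the paper's, which performs the analogous substitution on the first-order system (Lemma~\ref{lemma-v-exp}) and then bounds the logarithmic derivative $\partial_t\ln(\scrV+\upmu\scrU)$ between $-1\pm O(t^{-2})$, obtaining the two-sided exponential estimate by direct integration.  The advantage of the paper's version is that positivity of $\scrV+\upmu\scrU$ is carried along automatically, so there is no constant $A$ to rule out; your approach instead has to argue that if $A=0$ then $w$ vanishes identically, and the sentence ``iterating the Volterra identity with this stronger input forces $w\equiv 0$ \dots by unique continuation'' needs to be replaced by an actual Gronwall/contraction estimate on the exponentially weighted space (``unique continuation'' is a PDE term that does not by itself close an ODE iteration).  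That is fixable, but as written it is an assertion rather than a proof.

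Finally, a minor note: the paper does not invoke an explicit $n=1$ formula; it derives the decay of $\partial_r u_k$ by integrating $\partial_r(r^{n-1}\partial_r u_k)=r^{n-1}(u_k-2mu_k^{2k+1})$ from $R$ to $\infty$, after first showing $r^{n-1}\partial_r u_k\to 0$.  Your interior Schauder argument is an acceptable alternative for that particular sub-claim.
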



Let us give an extension of Lemma~\ref{lemma-bl},
deriving optimal regularity
of the groundstates of the nonlinear
Schr\"odinger equation in Sobolev spaces.

\medskip

\begin{proof}
The absence of $H^1$-solutions
for $k\geq \frac{2}{n-2}$, $n\geq 3$
is proved in \cite[Section 2.1]{MR695535}
via Pohozhaev's identities.
The uniqueness of a symmetric solution $u>0$
is proved in \cite{MR969899,MR1201323}.
The inclusion
$u_k\in H^1(\R^n)\cap C^2(\R^n)$,
monotonicity,
and the exponential decay of $u_k$
follows from \cite{MR695535} for $n\ge 3$ and $n=1$;
for $n=2$,
the inclusion
$u_k\in H^1(\R^2)\cap C^2(\R^2)$
is proved in \cite{MR734575},
and the exponential decay is proved
following the lines of \cite[Lemma 3.1]{MR3530581}.

The exponential decay of $\p\sb r u$
could be shown as follows.
The groundstate profile $u_k$,
considered as a function of $r=\abs{x}$,
satisfies the equation
\[
-\p\sb r^2 u_k-\frac{n-1}{r}\p\sb r u_k-2m u_k^{2k+1}+u_k
=0,
\qquad
r>0.
\]
Multiplying this by $r^{n-1}$,
one has:
\begin{eqnarray}\label{p-r-r-u}
-
\p\sb r\big(r^{n-1}\p\sb r u_k\big)
-2m r^{n-1}u_k^{2k+1}+r^{n-1}u_k
=0.
\end{eqnarray}
Integrating this relation from zero to some $R>0$
and taking into account the exponential decay of $u_k$,
one concludes that there exists a finite limit
$c=\lim\sb{r\to\infty}r^{n-1}\p\sb r u_k$.
This limit has to be equal to zero
or else there is $r_0>0$ such that
$\abs{r^{n-1}\p\sb r u_k}\ge c/2$
for $r\ge r_0$,
hence $\p\sb r u_k\le -c/(2r^{n-1})$,
$u_k(r)\ge c/(2(n-2) r^{n-2})$ for $r\ge r_0$
for $n\ne 2$
or
$u_k(r)\ge (c\ln r)/2$ for $r\ge r_0$ for $n=2$
or $\lim\sb{r\to\infty}\p\sb r u_k=c>0$ for $n=1$;
so, for any $n\ge 1$,
we arrive at a contradiction with the exponential decay
of $u_k$.
So,
$\lim\sb{r\to\infty}r^{n-1}\p\sb r u_k=0$.
Integrating \eqref{p-r-r-u} from some $R>0$ to infinity,
one has:
\[
R^{n-1}\p\sb r u_k(R)
=
\int_R^\infty
\left(2m u_k^{2k+1}-u_k\right)r^{n-1}\,dr.
\]
Now the exponential decay of $\p\sb r u_k(r)$
follows from the exponential decay of $u_k(r)$.

The strict monotonicity of $u_k$ is proved
as follows.
Assume that
\begin{eqnarray}\label{uv}
\p_r u_k =w_k,
\qquad
\p_r w_k=-\frac{n-1}{r}w_k
-2m\abs{u_k}^{2k}u_k,
\qquad
r>0,
\end{eqnarray}
and that $u_k'(r_0)=0$
(here $u_k$ is considered as a function
of $r=\abs{x}$)
at some $r_0>0$.
Since $u_k(r)$ is monotonically decreasing,
$w_k=\p\sb r u_k\in C^1(\R\sb{+})$
satisfies
$w_k\le 0$.
Once we know that $w_k(r_0)=u_k'(r_0)=0$,
we conclude that
$w_k$ has a local maximum at $r_0$,
so that $\p\sb r w_k(r_0)=0$.
Now from the second equation in \eqref{uv}
one would conclude that $u_k(r_0)=0$,
in contradiction to the strict positivity
of the groundstate $u_k$.

The estimate \eqref{exp-decay-sharp}
follows from Lemma~\ref{lemma-v-exp}.

%
%
%

Let us prove the improved Sobolev regularity
\[
u_k\in H^s(\R^n),
\qquad
\forall s<\frac n 2+2.
\]
Considering $u$ as a function of $r\ge 0$,
we write \eqref{stationary-kg}
in the form
\begin{eqnarray}\label{upp}
u''
=u-2m u^{1+2k}-\frac{n-1}{r}u',
\qquad
r>0.
\end{eqnarray}
Denote
\[
f(r)=\frac{u'(r)}{u(r)};
\]
note that $f$ is non-positive since $u$ is non-increasing.

\begin{lemma}\label{lemma-up-u-bounded}
There is $c_1<\infty$ such that
\begin{eqnarray}\label{up-u-bounded}
\abs{f(r)}
=\Abs{\frac{u'(r)}{u(r)}}
\le c_1\frac{r}{\langle r\rangle},
\qquad r>0.
\end{eqnarray}
\end{lemma}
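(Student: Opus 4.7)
The proof plan is a two-regime analysis of $f(r) := u_k'(r)/u_k(r)$, split according to the two sides of the target bound $c_1 r/\langle r\rangle$, which is comparable to $r$ as $r\to 0^+$ and to $1$ as $r\to +\infty$. Because $u_k(r) > 0$ everywhere on $[0,\infty)$ (strict positivity from Lemma~\ref{lemma-bl}) and $u_k, u_k' \in C([0,\infty))$, the ratio $f$ is continuous on $[0,\infty)$, so it suffices to produce matching pointwise bounds in each regime and glue them using the profile $r/\langle r\rangle$.

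On the compact interval $[0,1]$ I would use the smoothness of the groundstate. The inclusion $u_k \in C^2(\R^n)$ combined with spherical symmetry forces $u_k'(0) = 0$, since a $C^1$ radial function on $\R^n$ must have vanishing radial derivative at the origin. Taylor's theorem then yields $u_k'(r) = u_k''(0)\, r + o(r)$ as $r \to 0^+$, while $u_k(r) \to u_k(0) > 0$. Hence $|f(r)| \le C r$ on $[0,1]$, which is dominated by a constant multiple of $r/\langle r\rangle$ on this range.

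On $[1,\infty)$ I would invoke the sharp two-sided exponential decay already established. The lower bound \eqref{exp-decay-sharp} from Lemma~\ref{lemma-bl} gives $u_k(r) \ge c_{n,k} \langle r\rangle^{-(n-1)/2} e^{-r}$. An upper bound of the same exponential shape for $|u_k'|$ follows from Corollary~\ref{cor-v-exp-ast} applied to the pair $\hat V(t) = u_k(|t|)$, $\hat U(t) = -\hat V'(t)/(2m) = -\sgn(t)\, u_k'(|t|)/(2m)$: the corollary provides $|\hat U(r)| \le C_1^\ast \langle r\rangle^{-(n-1)/2} e^{-r}$, hence $|u_k'(r)| \le 2m C_1^\ast \langle r\rangle^{-(n-1)/2} e^{-r}$. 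The matching algebraic prefactors cancel in the ratio, giving $|f(r)| \le 2m C_1^\ast / c_{n,k}$ uniformly on $[1,\infty)$, which is comparable to $r/\langle r\rangle$ on this range.

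The main obstacle is precisely the large-$r$ estimate: applying the crude bound $u_k(r) + |u_k'(r)| \le C_\delta e^{-\delta r}$ from Lemma~\ref{lemma-bl} with any $\delta < 1$ would be useless, since the lower bound on $u_k$ decays at rate $e^{-r}$ and the ratio would blow up. One must employ the \emph{sharp} exponential rate $e^{-r}$ together with the matching algebraic prefactor $\langle r\rangle^{-(n-1)/2}$ on both $u_k$ and $u_k'$ simultaneously, so that the prefactors cancel --- and this is exactly what Corollary~\ref{cor-v-exp-ast} supplies. A self-contained alternative would be to rewrite \eqref{upp} as the Riccati equation $f' + f^2 + \frac{n-1}{r}\, f = 1 - 2m u_k^{2k}$ and extract $f \to -1$ at infinity from its phase-plane structure, but appealing to the already-proven sharp asymptotics is considerably cleaner.
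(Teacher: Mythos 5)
Your proof is correct, but the large-$r$ regime is handled by a genuinely different route than the paper's. The paper's argument is self-contained within the NLS analysis: it converts \eqref{upp} into the Riccati equation \eqref{f-p} for $f=u'/u$, observes $f\le 0$, and rules out unboundedness by a blow-up comparison (if $-f$ exceeds $4\max\{(n-1)/r,\sqrt a\}$, with $a=\sup|1-2mu^{2k}|$, then $f'\le -f^2/2$, forcing $f$ to escape to $-\infty$ in finite $r$, contradicting $u\in C^2$ and $u>0$). You instead import the sharp two-sided decay rate $\langle r\rangle^{-(n-1)/2}e^{-r}$ from Lemma~\ref{lemma-v-exp} / Corollary~\ref{cor-v-exp-ast} and the lower bound \eqref{exp-decay-sharp}, and cancel the prefactors. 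This is logically sound: those results depend only on Theorem~\ref{theorem-solitary-waves}~\itref{theorem-solitary-waves-i} and the weak decay $C_\delta e^{-\delta r}$ from the earlier part of Lemma~\ref{lemma-bl}, neither of which uses the present lemma, so there is no circularity. What each approach buys: yours avoids the Riccati analysis and is shorter once the Section~4 machinery is granted, but it makes a helper lemma inside the Appendix's proof of Lemma~\ref{lemma-bl} depend on the bifurcation analysis for the Dirac system; the paper's version keeps Lemma~\ref{lemma-up-u-bounded} entirely within the ODE analysis of the NLS groundstate and would survive being read before Sections~3--4. (Your small-$r$ argument --- $u_k'(0)=0$ and the mean-value/Taylor bound $|u_k'(r)|\le\sup|u_k''|\cdot r$ --- is essentially what the paper does via ``$u'/r$ bounded near $r=0$,'' so there the two proofs coincide.) You also correctly flag the Riccati route as the self-contained alternative, though the paper's argument only shows boundedness of $f$, not $f\to -1$; the latter is true but not needed for this lemma.
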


\begin{proof}
Using \eqref{upp}, we arrive at
\begin{eqnarray}\label{f-p}
f'(r)=\frac{u''}{u}-f^2
=1-2m u^{2k}-\frac{n-1}{r}f-f^2,
\qquad
r>0.
\end{eqnarray}
We already mentioned that $f(r)\le 0$,
$r>0$.
If $f(r)$ were unbounded from below
for $r\ge 1$,
then it would blow up, going to $-\infty$ at some $r_0<\infty$.
Indeed,
fix $a=\sup\sb{r>0}\abs{1-2m u^{2k}}<\infty$,
and, assuming $f\to-\infty$,
consider the smallest $r_1\ge 1$
such that $-f(r)\geq 
4\max\{\frac{n-1}{r},\sqrt{a}\}$ for $r\geq r_1$;
then $\abs{f(r)}$ grows faster than the solution to
$F'=F^2/2-a/2$ with the same initial data $F(r_1)=f(r_1)$,
while this solution
blows up  in the interval $[r_1,r_1-4/f(r_1)]$).
Of course,
the blow-up of $f$ at some $r<\infty$
would contradict $u\in C^2$.
We conclude that $\abs{f}$ remains bounded as $r\to+\infty$.
We also conclude from \eqref{upp}
and from the inclusion $u\in C^2(\R^n)$
(considered as a function of $x\in\R^n$)
that
$u'/r$ remains bounded near $r=0$;
due to $u(0)>0$,
the bound \eqref{up-u-bounded} follows.
\end{proof}

We claim that for $j\ge 2$ there are $C\sb j<\infty$
such that
\begin{eqnarray}\label{uppp}
\abs{u^{(j)}(r)}
\le C\sb j\left(\frac{\langle r\rangle}{r}\right)^{j-2}
u(r),
\qquad
r>0,
\qquad
j\ge 2.
\end{eqnarray}
The proof is by induction.
For $j=2$, the statement follows from \eqref{upp}
and Lemma~\ref{lemma-up-u-bounded}.
Assume that \eqref{uppp} is proved for $j\le l$,
with some $l\in\N$.
To get $u^{(l+1)}$ out of \eqref{upp},
one takes the derivative of the expression
for $u^{(l)}$, 
\begin{eqnarray}\label{ulll}
 u^{(l+1)}
=u^{(l-1)}
-
\left(
2m u^{1+2k}
+(n-1)
\frac{u'}{r}
\right)^{(l-1)}.
\end{eqnarray}
We notice that each of $l-1$ derivatives
of the expression in the brackets,
when acting on $u$,
contributes a factor of $u'/u$
(which is uniformly bounded);
or else it changes one of the factors $u^{(i)}$ to $u^{(i+1)}$
with $i<l$
(worsening the bound by $\langle r\rangle/r$
by the induction assumptions);
or else it acts on $1/r$, contributing another $1/r$;
therefore, after each differentiation,
the resulting estimate deteriorates
by the factor $C\langle r\rangle/r$,
with some $C<\infty$.
This allows to bound \eqref{ulll}
by $(\langle r\rangle/r)^{l-1}$ (times a constant factor),
concluding the induction argument.

The inequality \eqref{uppp} and the interpolation arguments
show that $u\in H^s(\R^n)$
as long as $\abs{x}^{-(s-2)}$ is $L^2$ locally near the origin;
this imposes the restriction $s-2<n/2$.
\end{proof}

\bibliographystyle{sima-doi}
\bibliography{dirac-existence}

\providecommand{\etalchar}[1]{$^{#1}$}
\def\cprime{$'$} \def\polhk#1{\setbox0=\hbox{#1}{\ooalign{\hidewidth
  \lower1.5ex\hbox{`}\hidewidth\crcr\unhbox0}}} \def\cprime{$'$}
\begin{thebibliography}{CMKS{\etalchar{+}}16}

\bibitem[Abe98]{MR1618672}
S.~Abenda, \href{http://www.numdam.org/item?id=AIHPA_1998__68_2_229_0}{{\em
  Solitary waves for {M}axwell-{D}irac and {C}oulomb-{D}irac models\/}}, Ann.
  Inst. H. Poincar\'e Phys. Th\'eor. {\bf 68} (1998), pp. 229--244.

\bibitem[BC16]{MR3530581}
N.~Boussa{\"{\i}}d and A.~Comech,
  \href{http://dx.doi.org/10.1016/j.jfa.2016.04.013}{{\em On spectral stability
  of the nonlinear {D}irac equation\/}}, J. Funct. Anal. {\bf 271} (2016), pp.
  1462--1524.

\bibitem[BCS15]{MR3311594}
G.~Berkolaiko, A.~Comech, and A.~Sukhtayev,
  \href{http://dx.doi.org/10.1088/0951-7715/28/3/577}{{\em Vakhitov-{K}olokolov
  and energy vanishing conditions for linear instability of solitary waves in
  models of classical self-interacting spinor fields\/}}, Nonlinearity {\bf 28}
  (2015), pp. 577--592.

\bibitem[BD64]{MR0187641}
J.~D. Bjorken and S.~D. Drell, {\em Relativistic quantum mechanics\/},
  McGraw-Hill, New York, 1964.

\bibitem[BGK83]{MR734575}
H.~Berestycki, T.~Gallou{\"e}t, and O.~Kavian, {\em \'{E}quations de champs
  scalaires euclidiens non lin\'eaires dans le plan\/}, C. R. Acad. Sci. Paris
  S\'er. I Math. {\bf 297} (1983), pp. 307--310.

\bibitem[BL83a]{MR695535}
H.~Berestycki and P.-L. Lions, \href{http://dx.doi.org/10.1007/BF00250555}{{\em
  Nonlinear scalar field equations. {I}. {E}xistence of a ground state\/}},
  Arch. Rational Mech. Anal. {\bf 82} (1983), pp. 313--345.

\bibitem[BL83b]{MR695536}
H.~Berestycki and P.-L. Lions, \href{http://dx.doi.org/10.1007/BF00250556}{{\em
  Nonlinear scalar field equations. {II}. {E}xistence of infinitely many
  solutions\/}}, Arch. Rational Mech. Anal. {\bf 82} (1983), pp. 347--375.

\bibitem[BPZ98]{PhysRevLett.80.5117}
I.~V. Barashenkov, D.~E. Pelinovsky, and E.~V. Zemlyanaya,
  \href{http://link.aps.org/doi/10.1103/PhysRevLett.80.5117}{{\em Vibrations
  and oscillatory instabilities of gap solitons\/}}, Phys. Rev. Lett. {\bf 80}
  (1998), pp. 5117--5120.

\bibitem[CGG14]{MR3208458}
A.~Comech, M.~Guan, and S.~Gustafson,
  \href{http://dx.doi.org/10.1016/j.anihpc.2013.06.001}{{\em On linear
  instability of solitary waves for the nonlinear {D}irac equation\/}}, Ann.
  Inst. H. Poincar\'e Anal. Non Lin\'eaire {\bf 31} (2014), pp. 639--654.

\bibitem[CGNT08]{MR2368894}
S.-M. Chang, S.~Gustafson, K.~Nakanishi, and T.-P. Tsai,
  \href{http://dx.doi.org/10.1137/050648389}{{\em Spectra of linearized
  operators for {NLS} solitary waves\/}}, SIAM J. Math. Anal. {\bf 39}
  (2007/08), pp. 1070--1111.

\bibitem[CMKS{\etalchar{+}}16]{PhysRevLett.116.214101}
J.~Cuevas-Maraver, P.~G. Kevrekidis, A.~Saxena, A.~Comech, and R.~Lan,
  \href{http://link.aps.org/doi/10.1103/PhysRevLett.116.214101}{{\em Stability
  of solitary waves and vortices in a 2{D} nonlinear {D}irac model\/}}, Phys.
  Rev. Lett. {\bf 116} (2016), p. 214101.

\bibitem[CPS17]{gn-stability}
A.~{Comech}, T.~V. {Phan}, and A.~{Stefanov},
  \href{http://dx.doi.org/10.1016/j.anihpc.2015.11.001}{{\em {Asymptotic
  stability of solitary waves in generalized {G}ross--{N}eveu model}\/}}, Ann.
  Inst. H. Poincar\'e Anal. Non Lin\'eaire {\bf 34} (2017), pp. 157--196.

\bibitem[CS12]{dm-existence}
A.~Comech and D.~Stuart, \href{http://arxiv.org/abs/1210.7261}{{\em Small
  solitary waves in the {D}irac-{M}axwell system\/}}, ArXiv e-prints  (2012),
  submitted to Communications on Pure and Applied Analysis, \eprint{1210.7261}.

\bibitem[CV86]{MR847126}
T.~Cazenave and L.~V{\'a}zquez,
  \href{http://projecteuclid.org/getRecord?id=euclid.cmp/1104115255}{{\em
  Existence of localized solutions for a classical nonlinear {D}irac field\/}},
  Comm. Math. Phys. {\bf 105} (1986), pp. 35--47.

\bibitem[EGS96]{MR1386737}
M.~J. Esteban, V.~Georgiev, and {\'E}.~S{\'e}r{\'e},
  \href{http://dx.doi.org/10.1007/BF01254347}{{\em Stationary solutions of the
  {M}axwell-{D}irac and the {K}lein-{G}ordon-{D}irac equations\/}}, Calc. Var.
  Partial Differential Equations {\bf 4} (1996), pp. 265--281.

\bibitem[ES95]{MR1344729}
M.~J. Esteban and {\'E}.~S{\'e}r{\'e},
  \href{http://projecteuclid.org/getRecord?id=euclid.cmp/1104273565}{{\em
  Stationary states of the nonlinear {D}irac equation: a variational
  approach\/}}, Comm. Math. Phys. {\bf 171} (1995), pp. 323--350.

\bibitem[ES02]{MR1897689}
M.~J. Esteban and {\'E}.~S{\'e}r{\'e},
  \href{http://dx.doi.org/10.3934/dcds.2002.8.381}{{\em An overview on linear
  and nonlinear {D}irac equations\/}}, Discrete Contin. Dyn. Syst. {\bf 8}
  (2002), pp. 381--397, current developments in partial differential equations
  (Temuco, 1999).

\bibitem[Fed96]{MR1401125}
B.~V. Fedosov, \href{http://dx.doi.org/10.1007/978-3-642-48944-0_3}{{\em Index
  theorems\/}}, \href{http://dx.doi.org/10.1007/978-3-642-48944-0_3}{in
  \href{http://dx.doi.org/10.1007/978-3-642-48944-0_3}{{\em Partial
  differential equations, {VIII}\/}}}, vol.~65 of {\em Encyclopaedia Math.
  Sci.\/}, pp. 155--251, Springer, Berlin, 1996.

\bibitem[Gla77]{MR0460850}
R.~T. Glassey, \href{http://dx.doi.org/10.1063/1.523491}{{\em On the blowing up
  of solutions to the {C}auchy problem for nonlinear {S}chr\"odinger
  equations\/}}, J. Math. Phys. {\bf 18} (1977), pp. 1794--1797.

\bibitem[Gro66]{MR0190520}
L.~Gross, \href{http://dx.doi.org/10.1002/cpa.3160190102}{{\em The {C}auchy
  problem for the coupled {M}axwell and {D}irac equations\/}}, Comm. Pure Appl.
  Math. {\bf 19} (1966), pp. 1--15.

\bibitem[GT01]{MR1814364}
D.~Gilbarg and N.~S. Trudinger, {\em Elliptic partial differential equations of
  second order\/}, Classics in Mathematics, Springer-Verlag, Berlin, 2001,
  reprint of the 1998 edition.

\bibitem[{Gua}08]{2008arXiv0812.2273G}
M.~{Guan}, \href{http://arxiv.org/abs/0812.2273}{{\em Solitary wave solutions
  for the nonlinear {D}irac equations\/}}, ArXiv e-prints  (2008),
  \eprint{0812.2273}.

\bibitem[Iva38]{jetp.8.260}
D.~D. Ivanenko,
  \href{http://istina.msu.ru/media/publications/articles/079/c1a/1049479/Ivanenko-nonlinear.pdf}{{\em
  Notes to the theory of interaction via particles\/}}, Zh. \'Eksp. Teor. Fiz
  {\bf 8} (1938), pp. 260--266.

\bibitem[Kwo89]{MR969899}
M.~K. Kwong, \href{http://dx.doi.org/10.1007/BF00251502}{{\em Uniqueness of
  positive solutions of {$\Delta u-u+u^p=0$} in {${\bf R}^n$}\/}}, Arch.
  Rational Mech. Anal. {\bf 105} (1989), pp. 243--266.

\bibitem[KY01]{MR1821885}
H.~Kalf and O.~Yamada, \href{http://dx.doi.org/10.1063/1.1367331}{{\em
  Essential self-adjointness of {$n$}-dimensional {D}irac operators with a
  variable mass term\/}}, J. Math. Phys. {\bf 42} (2001), pp. 2667--2676.

\bibitem[Lis95]{MR1364144}
A.~G. Lisi, \href{http://stacks.iop.org/0305-4470/28/5385}{{\em A solitary wave
  solution of the {M}axwell-{D}irac equations\/}}, J. Phys. A {\bf 28} (1995),
  pp. 5385--5392, \eprint{hep-th/9410244}.

\bibitem[McL93]{MR1201323}
K.~McLeod, \href{http://dx.doi.org/10.2307/2154282}{{\em Uniqueness of positive
  radial solutions of {$\Delta u+f(u)=0$} in {${\bf R}^n$}. {II}\/}}, Trans.
  Amer. Math. Soc. {\bf 339} (1993), pp. 495--505.

\bibitem[Mer88]{MR949625}
F.~Merle, \href{http://dx.doi.org/10.1016/0022-0396(88)90018-6}{{\em Existence
  of stationary states for nonlinear {D}irac equations\/}}, J. Differential
  Equations {\bf 74} (1988), pp. 50--68.

\bibitem[Mer90]{MR1048692}
F.~Merle,
  \href{http://projecteuclid.org/getRecord?id=euclid.cmp/1104180743}{{\em
  Construction of solutions with exactly {$k$} blow-up points for the
  {S}chr\"odinger equation with critical nonlinearity\/}}, Comm. Math. Phys.
  {\bf 129} (1990), pp. 223--240.

\bibitem[Oun00]{MR1750047}
H.~Ounaies, \href{http://projecteuclid.org/euclid.die/1356061246}{{\em
  Perturbation method for a class of nonlinear {D}irac equations\/}},
  Differential Integral Equations {\bf 13} (2000), pp. 707--720.

\bibitem[Ra{\~n}83]{ranada1983classical}
A.~F. Ra{\~n}ada,
  \href{https://inis.iaea.org/search/searchsinglerecord.aspx?recordsFor=SingleRecord&RN=14800443}{{\em
  Classical nonlinear {D}irac field models of extended particles\/}},
  \href{https://inis.iaea.org/search/searchsinglerecord.aspx?recordsFor=SingleRecord&RN=14800443}{in
  A.~O. Barut, editor,
  \href{https://inis.iaea.org/search/searchsinglerecord.aspx?recordsFor=SingleRecord&RN=14800443}{{\em
  Quantum theory, groups, fields and particles\/}}}, vol.~4 of {\em
  Mathematical Physics Studies\/}, D. Reidel Publishing Co., Dordrecht-Boston,
  Mass., 1983.

\bibitem[RN10a]{MR2593110}
S.~Rota~Nodari, \href{http://dx.doi.org/10.1007/s00023-009-0015-x}{{\em
  Perturbation method for particle-like solutions of the {E}instein-{D}irac
  equations\/}}, Ann. Henri Poincar\'e {\bf 10} (2010), pp. 1377--1393.

\bibitem[RN10b]{MR2671162}
S.~Rota~Nodari, \href{http://dx.doi.org/10.1016/j.crma.2010.06.003}{{\em
  Perturbation method for particle-like solutions of the
  {E}instein-{D}irac-{M}axwell equations\/}}, C. R. Math. Acad. Sci. Paris {\bf
  348} (2010), pp. 791--794.

\bibitem[RS78]{MR0493421}
M.~Reed and B.~Simon, {\em Methods of modern mathematical physics. {I}{V}.
  {A}nalysis of operators\/}, Academic Press [Harcourt Brace Jovanovich
  Publishers], New York, 1978.

\bibitem[Sol70]{PhysRevD.1.2766}
M.~Soler, \href{http://dx.doi.org/10.1103/PhysRevD.1.2766}{{\em Classical,
  stable, nonlinear spinor field with positive rest energy\/}}, Phys. Rev. D
  {\bf 1} (1970), pp. 2766--2769.

\bibitem[Ste70]{MR0290095}
E.~M. Stein, {\em Singular integrals and differentiability properties of
  functions\/}, Princeton Mathematical Series, No. 30, Princeton University
  Press, Princeton, N.J., 1970.

\bibitem[Str77]{MR0454365}
W.~A. Strauss, \href{http://projecteuclid.org/euclid.cmp/1103900983}{{\em
  Existence of solitary waves in higher dimensions\/}}, Comm. Math. Phys. {\bf
  55} (1977), pp. 149--162.

\bibitem[Stu10]{MR2647868}
D.~Stuart, \href{http://dx.doi.org/10.1063/1.3294085}{{\em Existence and
  {N}ewtonian limit of nonlinear bound states in the {E}instein-{D}irac
  system\/}}, J. Math. Phys. {\bf 51} (2010), pp. 032501, 13.

\bibitem[Tha92]{MR1219537}
B.~Thaller, {\em The {D}irac equation\/}, Texts and Monographs in Physics,
  Springer-Verlag, Berlin, 1992.

\bibitem[Vaz77]{MR0456046}
L.~Vazquez, \href{http://stacks.iop.org/0305-4470/10/i=8/a=015}{{\em Localised
  solutions of a non-linear spinor field\/}}, J. Phys. A {\bf 10} (1977), pp.
  1361--1368.

\bibitem[VK73]{VaKo}
N.~G. Vakhitov and A.~A. Kolokolov,
  \href{http://dx.doi.org/10.1007/BF01031343}{{\em Stationary solutions of the
  wave equation in the medium with nonlinearity saturation\/}}, Radiophys.
  Quantum Electron. {\bf 16} (1973), pp. 783--789.

\bibitem[Wak66]{wakano-1966}
M.~Wakano, \href{http://ptp.ipap.jp/link?PTP/35/1117/}{{\em Intensely localized
  solutions of the classical {D}irac-{M}axwell field equations\/}}, Progr.
  Theoret. Phys. {\bf 35} (1966), pp. 1117--1141.

\bibitem[Wei82]{MR664527}
J.~Weidmann, \href{http://dx.doi.org/10.1007/BF01214182}{{\em Absolut stetiges
  {S}pektrum bei {S}turm-{L}iouville-{O}peratoren und {D}irac-{S}ystemen\/}},
  Math. Z. {\bf 180} (1982), pp. 423--427.

\bibitem[Wei83]{MR691044}
M.~I. Weinstein, \href{http://projecteuclid.org/euclid.cmp/1103922134}{{\em
  Nonlinear {S}chr\"odinger equations and sharp interpolation estimates\/}},
  Comm. Math. Phys. {\bf 87} (1982/83), pp. 567--576.

\bibitem[ZS75]{ZhETF.68.465}
V.~Zakharov and V.~Synakh,
  \href{http://www.jetp.ac.ru/cgi-bin/dn/e_041_03_0465.pdf}{{\em The nature of
  the self-focusing singularity\/}}, Zh. \'Eksp. Teor. Fiz {\bf 41} (1975), pp.
  465--468, {R}ussian original - ZhETF, Vol. 68, No. 3, p. 940, June 1975.

\bibitem[ZSS71]{1971ZhPmR..14..564Z}
V.~E. {Zakharov}, V.~V. {Sobolev}, and V.~S. {Synakh}, {\em Character of
  singularity and stochastic phenomena in self focusing\/}, ZhETF Letters {\bf
  14} (1971), p. 564.

\end{thebibliography}
\end{document}